\newdimen\hoogte    \hoogte=12pt    % hoogte  van hokje
\newdimen\breedte   \breedte=14pt   % breedte van hokje
\newdimen\dikte     \dikte=0.5pt    % dikte lijn
\newenvironment{Young}{\begingroup
       \def\vr{\vrule height0.8\hoogte width\dikte depth 0.2\hoogte}
       \def\fbox##1{\vbox{\offinterlineskip
                    \hrule height\dikte
                    \hbox to \breedte{\vr\hfill##1\hfill\vr}
                    \hrule height\dikte}}
       \vbox\bgroup \offinterlineskip \tabskip=-\dikte \lineskip=-\dikte
            \halign\bgroup &\fbox{##\unskip}\unskip  \crcr }
       {\egroup\egroup\endgroup}
\def\diagram#1{\relax\ifmmode\vcenter{\,\begin{Young}#1\end{Young}\,}\else%
              $\vcenter{\,\begin{Young}#1\end{Young}\,}$\fi}
\theoremstyle{plain}
\newtheorem{thm}{Theorem}[section]
\newtheorem{lem}[thm]{Lemma}
\newtheorem{prp}[thm]{Proposition}
\newtheorem{cor}[thm]{Corollary}
\newtheorem{cnj}[thm]{Conjecture}
\newtheorem{exa}[thm]{Example}
\theoremstyle{remark}
\newtheorem{rmk}[thm]{Remark}
\numberwithin{equation}{section}
\newcommand{\af}{\alpha}
\newcommand{\bt}{\beta}
\newcommand{\gm}{\gamma}
\newcommand{\dt}{\delta}
\newcommand{\ep}{\varepsilon}
\newcommand{\te}{\theta}
\newcommand{\ld}{\lambda}
\newcommand{\sm}{\sigma}
\newcommand{\kp}{\kappa}
\newcommand{\Gm}{\Gamma}
\newcommand{\Dt}{\Delta}
\newcommand{\Q}{{\mathbb{Q}}}
\newcommand{\Z}{{\mathbb{Z}}}
\newcommand{\N}{{\mathbb{N}}}
\newcommand{\gl}{{\mathfrak{gl}}}
\newcommand{\g}{{\mathfrak{g}}}
\newcommand{\h}{{\mathfrak{h}}}
\newcommand{\n}{{\mathfrak{n}}}
\renewcommand{\b}{{\mathfrak{b}}}
\newcommand{\Uq}{U_q}
\newcommand{\PBW}{{\mathrm{PBW}}}
\newcommand{\A}{{\mathcal{A}}}
\newcommand{\I}{{\mathrm{I}}}
\newcommand{\F}{{\mathsf{F}}}
\newcommand{\W}{{\mathsf{W}}}
\renewcommand{\L}{\mathsf{L}}
\newcommand{\UU}{\mathsf{U}}
\renewcommand{\r}{\mathrm{R}}
\newcommand{\E}{{\mathrm{E}}}
\newcommand{\rF}{{\rm F}}
\renewcommand{\b}{{\mathrm{b}}}
\newcommand{\ui}{{\mathbf{i}}}
\newcommand{\uj}{{\mathbf{j}}}
\newcommand{\uh}{{\mathbf{h}}}
\newcommand{\uk}{{\mathbf{k}}}
\newcommand{\ul}{{\mathbf{l}}}
\newcommand{\shuffle}{{\diamond}}
\newcommand{\shq}{{\,\shuffle\,}}
\newcommand{\shqi}{{\,\bar{\shuffle}\,}}
\newcommand{\shqqi}{\,\shuffle_{q,q^{-1}}\,}
\newcommand{\preq}{\,{\preceq}\,}
\newcommand{\Wr}{{\W}}
\newcommand{\Lr}{\L}
\newcommand{\ind}{{\operatorname{Ind}}}
\newcommand{\res}{{\operatorname{Res}}}
\newcommand{\End}{{\operatorname{End}}}
\newcommand{\zero}{{\bar{0}}}
\newcommand{\one}{{\bar{1}}}
\newcommand{\iso}{{\mathsf{iso}}}
\newcommand{\niso}{{\mathsf{n}\text{-}\mathsf{iso}}}
\newcommand{\yy}{\mbox{\RIGHTcircle}}
\newcommand{\la}{{\langle}}
\newcommand{\ra}{{\rangle}}
\newcommand{\ad}{{\mathrm{ad}}}
\newcommand{\andeqn}{\,\,\,\,\,\, {\mbox{and}} \,\,\,\,\,\,}
\newcommand{\QED}{\rule{0.4em}{2ex}}
\newcommand{\set}[1]{\left\{#1\right\}}
\newcommand{\ang}[1]{\left\langle#1\right\rangle}
\renewcommand{\bar}[1]{\overline{#1}}
\newcommand{\Qq}{\Q(q)}
\newcommand{\bbinom}[2]{\begin{bmatrix}#1 \\ #2\end{bmatrix}}
\newcommand{\mc}{\mathcal}
\newcommand{\mf}{\mathfrak}
\newcommand{\varep}{\varepsilon}
\begin{document}
\title[Quantum Shuffles and Quantum Supergroups of Basic Type]{Quantum Shuffles and Quantum Supergroups \\ of Basic Type}

\author[Clark, Hill, Wang]{Sean Clark, David Hill, and Weiqiang Wang}
\address{Department of Mathematics, University of Virginia, Charlottesville, VA 22904}
\email{sic5ag@virginia.edu (Clark), \quad deh4n@virginia.edu (Hill), \quad ww9c@virginia.edu (Wang)}

\thanks{}
%\subjclass[2000]{Primary 20C08; Secondary 17B37}

\begin{abstract}
We initiate the study of several distinguished bases for the positive half of a quantum supergroup $U_q$ associated
to a general super Cartan datum $(\I, (\cdot,\cdot))$ of basic type inside a quantum shuffle superalgebra.
The combinatorics of words
for an arbitrary total ordering on  $\I$ is developed in connection with the root system associated to $\I$.
The monomial, Lyndon, and PBW bases of $U_q$ are constructed, and moreover,
a direct proof of the orthogonality of the PBW basis is provided within the framework of quantum shuffles.
Consequently, the canonical basis is constructed for $U_q$
associated to the standard super Cartan datum of type $\gl(n|1)$, $\mf{osp}(1|2n)$, or $\mf{osp}(2|2n)$
or an arbitrary non-super Cartan datum.
In the non-super case, this refines Leclerc's work and
provides a new self-contained construction of canonical bases.
The canonical bases of $U_q$, of its polynomial modules, as well as of Kac modules
in the case of quantum $\gl(2|1)$ are explicitly worked out.
\end{abstract}

\maketitle

\setcounter{tocdepth}{1}
 \tableofcontents

%%%%%%%%%%%%%
%%%%%%%%%%%%%
\section{Introduction}

%%%%%%%%%%%%%
\subsection{}

The Drinfeld-Jimbo quantum group associated to a simple Lie algebra admits extremely rich structures
with a wide variety of applications in representation theory, low-dimensional topology, and mathematical physics. In particular,
the positive half admits some remarkable bases with interesting geometric and categorical interpretations, including
PBW bases and  canonical bases introduced by Lusztig \cite{lu1, lu2, lu} (see also
\cite{ka} for another approach to canonical bases
from the viewpoint of crystals).

In contrast, the quantum supergroups associated to a simple Lie superalgebra %of basic type
are not well understood beyond the foundational work of Yamane \cite{ya,ya2}.
As  Lie superalgebras form an important extension of Lie algebras,  it is natural to ask which structural features carry over to the super setting.

Some reasons to hope such a structure exists are the recent categorification results for quantum supergroups in \cite{kh, hw, KKO, EL, ks},
following earlier pioneering works of Khovanov, Lauda and Rouquier \cite{khl, rq}.
However, due to various internal difficulties (e.g. lack of integral forms, isotropic odd roots, lack of positivity due to super signs),
no construction of a canonical basis existed or was even conjectured in the super setting until  recently
the authors \cite{CHW} constructed the
canonical bases for the integrable modules and the positive half of quantum supergroups
associated to the ``anisotropic" super Cartan datum, meaning no isotropic odd simple roots occur.
The anisotropic super Cartan datum
is distinguished among all super Cartan datum  in the sense that the corresponding Lie superalgebras and quantum supergroups
admit a semisimple category of integrable modules
in parallel to the usual Kac-Moody setting.
The only anisotropic super Cartan datum of finite type
corresponds to
{the}
Lie superalgebra $\mf{osp}(1|2n)$.

There are many other finite-dimensional simple Lie superalgebras besides $\mf{osp}(1|2n)$, among which
the most important class
are those of basic type.
Similar to semisimple Lie algebras, the Lie superalgebras of basic type admit non-degenerate even bilinear forms,
root systems, triangular decompositions, and so on (cf.  \cite{kac, CW12}).
However, there is no reasonable semisimple category of
finite-dimensional integrable modules for Lie superalgebras of basic type except for $\mf{osp}(1|2n)$.
Another phenomenon is the existence of non-conjugate simple systems for a general Lie superalgebra of basic type.
The quantum supergroups studied in \cite{ya} are associated to these basic Lie superalgebras.

Let $U_q$ denote the positive half of a quantum supergroup of basic type.
Benkart, Kang, and Kashiwara \cite{bkk}  constructed the crystal (but not the global)
bases for the polynomial representations of quantum $\gl(m|n)$,
and subsequently Kwon \cite{Kw1} constructed crystal bases for Kac modules of quantum $\gl(m|n)$
(also cf. \cite{Kw2} in the case of $\mathfrak{osp}(r|2n)$ and \cite{MZ} in the case of $\mathfrak{osp}(1|2n)$);
none of these authors constructed crystal bases or canonical bases for $U_q$.
As the works \cite{CHW,cflw} helped us to lift the mental block on the existence of canonical bases
for a class of quantum supergroups, we are motivated to reexamine the possibilities
for quantum supergroups of basic type.

Since the basic Lie superalgebras include simple Lie algebras as limiting cases, we require
an approach toward canonical bases which would work equally well for the usual quantum group of finite type.
However, Lusztig's geometric approach (via either perverse sheaves or quiver geometry) is not applicable for now,
while Kashiwara's algebraic approach requires a semisimple category of integrable modules and
hence works well only for the anisotropic quantum supergroups.

%%%\subsection{Goal}
\subsection{}
In this paper, we provide a first step toward the construction of canonical bases for quantum supergroups of basic type,
and give a description of $U_q$ which we believe will be useful for future studies on categorification
(cf. \cite{klram2,hks, hmm,mc,bkmc}).
Our approach through quantum shuffles is inspired by
the work of Leclerc \cite{lec} which, in turn, builds on other foundational works
of Lothaire, J.A.~Green,  Lalonde-Ram, and Rosso
\cite{grn, lo, lr, ro2} on relations among combinatorics of words, root systems,
quantum groups and quantum shuffles.
In this paper, we systematically develop a super version of the aforementioned works, and almost always
work in the most general setting of arbitrary (not merely the standard) simple systems of basic type.
The passage from the classical to the super setting is highly nontrivial, due largely to the lack of positivity in the formula
for the shuffle product. Moreover, our results go beyond those appearing in the literature, leading to new combinatorial proofs of classical results on quantized Lie algebras.

Among other results, we construct a family of monomial bases and orthogonal PBW bases of $U_q$,
one for each total ordering of the index set $\I$ labeling the simple roots.
We then construct an integral form in types $\gl(m|n)$, $\mf{osp}(1|2n)$ and $\mf{osp}(2|2n)$,
which yield a canonical basis for $U_q$ when the Cartan data is of type $\gl(m|1)$, $\mf{osp}(1|2n)$ and $\mathfrak{osp}(2|2n)$. We are also able to obtain a bar-invariant \emph{psuedo-canonical} basis for $\gl(m|n)$. However, this basis fails to be almost orthogonal with respect to the bilinear form and is not independent of the chosen ordering on $\I$.

Unlike in the non-super setting, the PBW bases constructed here are not known to be orthogonal \emph{a priori}.
To obtain this result, we generalize a main result of Leclerc \cite[Theorem 36]{lec}
and prove it directly from the combinatorics of Lyndon words
(Leclerc's proof used the orthogonality of PBW bases due to Lusztig); see Lemma~\ref{L:Lyndon max word}
and Theorem~\ref{T:max word}. In the special case of the natural ordering on $\I$ given in Table~ 1,
Yamane \cite{ya} constructs a PBW basis and proves that it is orthogonal through a case-by-case analysis.
Our proof is type independent for almost all orderings on $\I$.
% (see Definition \ref{D:STAR}), though it requires some case-by case analysis for 6 orderings in type $F(3|1)$
%and 2 orderings in type $G(3)$ (however, see also Remark~\ref{R:extension}).
Our argument applies equally well to the Cartan-Killing root datum,
yielding an independent proof of the orthogonality of the PBW bases and a new self-contained algebraic construction of the canonical basis of
the positive half of a Drinfeld-Jimbo quantum group of finite type. After completion of this paper, we learned of a similar construction of orthogonal PBW-type bases for Nichols algebras appearing in \cite{a}.

%%%%%%%%%%%%%%%%%%%
\subsection{}

We now provide a detailed description of the main results of the paper section by section.
In the preliminary Section~\ref{S:QuantGrps}, we collect various basic results on quantum superalgebras of basic type,
most of which can be found in Yamane's papers \cite{ya, ya2}.

In Section~\ref{S:Quantum Shuffle Algebra},
generalizing the work of Rosso \cite{ro2} and Green \cite{grn}, we embed the positive half of a quantum supergroup $U_q$ associated
to a general Cartan datum $(\I, (\cdot,\cdot))$ of basic type in a quantum shuffle superalgebra.
This should be viewed as a dual version to a construction of Lusztig who realized $U_q$ as a quotient of a free algebra
by the radical of a bilinear form. In the super setting we use (a variant of) a
non-degenerate bilinear form on $U_q$ constructed by Yamane \cite{ya}.

The combinatorics of super words, such as dominant words (also known as good words) and Lyndon words, is then
developed systematically in Section~\ref{S: Combinatorics of Words}. Superizing the constructions of Leclerc \cite{lec},
we construct monomial bases of $U_q$. More significantly,
we develop a \emph{highest word} theory for $U_q$ and establish a bijection between the set of dominant Lyndon words
and the reduced root system associated to $\I$,
generalizing a fundamental result of Lalonde-Ram \cite{lr}. Finally, we construct an auxiliary Lyndon basis for $U_q$
and obtain Lemma~\ref{L:Lyndon max word}.

In Section~\ref{S:Orthogonal PBW}, we give a construction of PBW bases of $U_q$. From Lemma~\ref{L:Lyndon max word} we deduce Theorem~\ref{T:max word}, prove a Levendorskii-Soibelman type formula, and prove that these bases are orthogonal, see Theorem~\ref{T:LS formula}, Lemma~\ref{Conj:PBW coproduct} and Theorem~\ref{T:OrthogonalPBW}. We note that Lemma~\ref{Conj:PBW coproduct} can be viewed as a combinatorial analog of \cite[Lemma 3.2]{mc}.

In Section~\ref{S:Computations},
we compute the dominant Lyndon words and root vectors explicitly for quantum supergroups of type $A$-$D$.
These PBW root vectors are very similar to those defined in \cite{ya}, though we express them in the basis of words. Additionally, we compute the inner product between any two root vectors. This information is also contained in \cite[$\S10.3$]{ya}, but as our sign convention on the bilinear form differs from that in \emph{loc. cit.} we derive the formulas directly. Theorem~\ref{T:OrthogonalPBW} explains how to compute the norm of any PBW basis vector.

In Section~\ref{S:Canonical Bases}, we introduce the integral form of $U_q$, where we
have to restrict ourselves to the standard simple systems, and to types $\gl(m|n)$, $\mf{osp}(1|2n)$
and $\mathfrak{osp}(2|2n)$, as well as any non-super type.
In the non-super specialization, this allows us to give a new self-contained
algebraic construction of a canonical basis of $U_q$;
more importantly, we obtain a canonical basis  of $U_q$ in types $\gl(m|1)$, $\mf{osp}(1|2n)$ and $\mf{osp}(2|2n)$.

The case of $\gl(2|1)$ is studied in detail in Section~\ref{S:gl(21)}.
Explicit formulas for the canonical basis of $U_q$ were already given in \cite{kh}.
We show that the canonical basis of $U_q$ descends to a
canonical basis of every polynomial representation and every Kac module of quantum $\gl(2|1)$.
On the other hand, we show that the canonical basis of $U_q$ fails to descend  to a canonical basis for certain
finite-dimensional simple modules of quantum $\gl(2|1)$. We conjecture these phenomena hold for general
$\gl(m|1)$ case.

%%%\subsection{}

\vspace{.2cm}
\noindent \textbf{Acknowledgements.}
W.W. is partially supported
by the NSF grant DMS-1101268.
The authors thank Institute of Mathematics, Academia Sinica,
Taipei for providing an excellent working environment and support, which greatly facilitated this research.
We also thank Bernard Leclerc for helpful discussions and clarifications regarding his paper.

%%%%%%%%%%%%%%%%%%%%%
%%%%%%%%%%%%%%%%%%%%%
\section{Quantum Supergroups of Basic Type}
\label{S:QuantGrps}

In this section, we review some fundamental properties of the positive half of a quantum supergroup of basic type,
including the bilinear form and defining relations.

%%%%%%%%%%%%%
\subsection{Root Data}\label{SS:Root Data}

Let $\g=\g_\zero\oplus\g_\one$ be a complex basic Lie superalgebra of rank $m+n+1=N$ of type $A$-$G$ \cite{kac, CW12}.
Let $\widetilde{\Phi}=\widetilde{\Phi}_\zero\sqcup\widetilde{\Phi}_\one$ be the root system for $\g$, and let
$$\Phi=\Phi_\zero \sqcup \Phi_\one =\left\{\bt\in\widetilde{\Phi}\,\big|\, \frac12\bt\notin\widetilde{\Phi}\right\}$$
be the reduced root system for $\g$, where $\Phi_s = \Phi \cap \widetilde{\Phi}_s$, for $s\in \{\zero,\one\}$;
as usual $\zero$ and $\one$ here and below indicate the even and odd (roots) respectively.
We will work with $\Phi$ and not $\widetilde{\Phi}$ until Section~\ref{S:Canonical Bases}.
Let $\Pi =\Pi_\zero \sqcup \Pi_\one =\{\af_i\mid i\in \I\}$ be a simple system for $\widetilde{\Phi}$ which is
labelled by $\I=\I_\zero\sqcup\I_\one=\{1,\ldots,N\}$, and let
$\Phi^+ \subseteq \Phi$ be the corresponding set of positive roots.
We define the parity function $p(\cdot)$ on $\I$ by letting $p(i) =s$ for $i\in \I_s$ with $s \in \{\zero, \one\}$.
Let $Q$ be the root lattice.
The monoid $Q^+ :=\bigoplus_{i\in \I}\Z_{\geq0}\af_i$ is $\Z_2$-graded by declaring $p(\af_i)=p(i)$ and extending linearly.
We further decompose
$$\Phi_\one=\Phi_\iso\sqcup\Phi_\niso$$
where $\Phi_\iso$ (resp. $\Phi_\niso$) is the set of isotropic (resp. non-isotropic) odd roots. Decompose
$\Pi_\one=\Pi_\iso\sqcup\Pi_\niso$ (resp. $\I_\one=\I_{\iso}\sqcup\I_{\niso}$) accordingly.

In Table~1 below,  we list the Dynkin diagrams which arise from an arbitrary choice of $\Phi^+$ (for type $A$-$D$)
and label the simple roots according to the labels on the nodes of the corresponding diagram.
The diagrams labelled with $(\star)$ in types $F(3|1)$ and $G(3)$ will be referred to as \textbf{distinguished diagrams}
($F(3|1)$ is often referred to as $F(4)$ in literature).
The simple roots may be even, odd isotropic, or odd non-isotropic,
and we will label the corresponding nodes $\fullmoon$, $\otimes$, and $\newmoon$, respectively.
We will use the notation $\odot$ to denote a simple root which may be either
odd isotropic or even, and $\yy$ for a simple root which may be either odd non-isotropic or even.

 \begin{center}
  \vspace{.5cm}
 {Table 1: Dynkin diagrams for general simple systems}
  \vspace{.5cm}
%\begin{table}[t]\label{T:Dynkin}
% \caption{Dynkin diagrams}
%\begin{center}
\begin{tabular}{|c|c|}\hline
$A(m,n)$&
$$ \xy
(-30,0)*{\odot};(-20,0)*{\odot}**\dir{-};(-15,0)*{\cdots};(-10,0)*{\odot};(0,0)*{\odot}**\dir{-};
(0,0)*{\odot};(10,0)*{\odot}**\dir{-};
(15,0)*{\cdots};(20,0)*{\odot};(30,0)*{\odot}**\dir{-};
(-30,-4)*{\scriptstyle 1};(-20,-4)*{\scriptstyle 2};(-10,-4)*{\scriptstyle n};(0,-4)*{\scriptstyle n+1};(10,-4)*{\scriptstyle n+2};
(20,-4)*{\scriptstyle m+n};(30,-4)*{\scriptstyle m+n+1};
(0,-8)*{};(0,8)*{};
\endxy$$
\\\hline
$B(m,n+1)$&
$$ \xy
(-30,0)*{\odot};(-20,0)*{\odot}**\dir{-};(-15,0)*{\cdots};(-10,0)*{\odot};(0,0)*{\odot}**\dir{-};
(0,0)*{\odot};(10,0)*{\odot}**\dir{-};
(15,0)*{\cdots};(20,0)*{\odot};(30,0)*{\yy}**\dir{=};(25,0)*{>};
(-30,-4)*{\scriptstyle 1};(-20,-4)*{\scriptstyle 2};(-10,-4)*{\scriptstyle n};(0,-4)*{\scriptstyle n+1};(10,-4)*{\scriptstyle n+2};
(20,-4)*{\scriptstyle m+n};(31,-4)*{\scriptstyle m+n+1};
(0,-8)*{};(0,8)*{};
\endxy$$
\\\hline
$C(n+1)$&
$$ \xy
(-20,0)*{\odot};(-10,0)*{\odot}**\dir{-};(-5,0)*{\cdots};(0,0)*{\odot};(10,0)*{\odot}**\dir{-};
(10,0)*{\odot};(20,0)*{\fullmoon}**\dir{=};(15,0)*{<};
(-20,-4)*{\scriptstyle 1};(-10,-4)*{\scriptstyle 2};
(10,-4)*{\scriptstyle n};(20,-4)*{\scriptstyle n+1};
(0,-8)*{};(0,8)*{};
\endxy$$
\\
$D(m,n+1)$&
$$\xy
(-30,0)*{\odot};(-20,0)*{\odot}**\dir{-};(-15,0)*{\cdots};
(-10,0)*{\odot};(0,0)*{\odot}**\dir{-};(0,0)*{\odot};(10,0)*{\odot}**\dir{-};
(15,0)*{\cdots};(20,0)*{\odot};(27,5)*{\fullmoon}**\dir{-};(20,0)*{\odot};(27,-5)*{\fullmoon}**\dir{-};
(-30,-4)*{\scriptstyle 1};(-20,-4)*{\scriptstyle 2};(-10,-4)*{\scriptstyle n};(0,-4)*{\scriptstyle n+1};(10,-4)*{\scriptstyle n+2};
(33,5)*{\scriptstyle m+n};(34,-5)*{\scriptstyle m+n+1};
(0,-8)*{};(0,8)*{};
\endxy$$
\\
&
$$\xy
(-30,0)*{\odot};(-20,0)*{\odot}**\dir{-};(-15,0)*{\cdots};
(-10,0)*{\odot};(0,0)*{\odot}**\dir{-};(0,0)*{\odot};(10,0)*{\odot}**\dir{-};
(15,0)*{\cdots};(20,0)*{\odot};(27,5)*{\otimes}**\dir{-};(20,0)*{\odot};(27,-5)*{\otimes}**\dir{-};
(27,5)*{\otimes};(27,-5)*{\otimes}**\dir{=};
(-30,-4)*{\scriptstyle 1};(-20,-4)*{\scriptstyle 2};(-10,-4)*{\scriptstyle n};(0,-4)*{\scriptstyle n+1};(10,-4)*{\scriptstyle n+2};
(33,5)*{\scriptstyle m+n};(34,-5)*{\scriptstyle m+n+1};
(0,-8)*{};(0,8)*{};
\endxy$$
\\\hline
$F(3|1)$&
$$\xy (0,8)*{};(-20,0)*{(\star)};
(-15,0)*{\fullmoon};(-5,0)*{\fullmoon}**\dir{-};
(-5,0)*{\fullmoon};(5,0)*{\fullmoon}**\dir{=};(0,0)*{>};(5,0)*{\fullmoon};(15,0)*{\otimes}**\dir{-};
(-15,-4)*{\scriptstyle 1};(-5,-4)*{\scriptstyle 2};(5,-4)*{\scriptstyle 3};(15,-4)*{\scriptstyle 4};
(0,-8)*{};
\endxy$$
\\&
$$\xy (0,8)*{};
{\ar@3{-}(-15,0)*{\fullmoon};(-5,0)*{\otimes}};(-10,0)*{>};
(-5,0)*{\otimes};(5,0)*{\fullmoon}**\dir{=};(0,0)*{<};(5,0)*{\fullmoon};(15,0)*{\fullmoon}**\dir{-};
(-15,-4)*{\scriptstyle 1};(-5,-4)*{\scriptstyle 2};(5,-4)*{\scriptstyle 3};(15,-4)*{\scriptstyle 4};
(0,-8)*{};
\endxy$$ \hspace{.25in}
$$\xy (0,8)*{};
{\ar@3{-}(-15,0)*{\fullmoon};(-5,0)*{\otimes}};(-10,0)*{>};
(-5,0)*{\otimes};(5,0)*{\fullmoon}**\dir{-};(5,0)*{\fullmoon};(15,0)*{\fullmoon}**\dir{=};(10,0)*{<};
(-15,-4)*{\scriptstyle 1};(-5,-4)*{\scriptstyle 2};(5,-4)*{\scriptstyle 3};(15,-4)*{\scriptstyle 4};
(0,-8)*{};
\endxy$$
\\&
$$\xy (0,8)*{};
{\ar@3{-}(-10,0)*{\otimes};(-5,7)*{\fullmoon}};(-5,7)*{\fullmoon};(0,0)*{\otimes}**\dir{-};
(-10,0)*{\otimes};(0,0)*{\otimes}**\dir{=};(0,0)*{\otimes};(10,0)*{\fullmoon}**\dir{=};(5,0)*{<};
(-10,-4)*{\scriptstyle 1};(-5,9)*{\scriptstyle 2};(0,-4)*{\scriptstyle 3};(10,-4)*{\scriptstyle 4};
(0,-8)*{};
\endxy$$ \hspace{.25in}
$$\xy (0,8)*{};
(-10,0)*{\otimes};(-5,7)*{\fullmoon}**\dir{-};(-5,7)*{\fullmoon};(0,0)*{\otimes}**\dir{-};
(-10,0)*{\otimes};(0,0)*{\otimes}**\dir{=};(0,0)*{\otimes};(10,0)*{\fullmoon}**\dir{=};(5,0)*{<};
(-10,-4)*{\scriptstyle 1};(-5,9)*{\scriptstyle 2};(0,-4)*{\scriptstyle 3};(10,-4)*{\scriptstyle 4};
(0,-8)*{};
\endxy$$
\\\hline
$G(3)$&
$$ \xy(0,5)*{};(-15,0)*{(\star)};
{\ar@3{-}(0,0)*{\fullmoon};(10,0)*{\fullmoon}};(5,0)*{<};(-10,0)*{\otimes};(0,0)*{\fullmoon}**\dir{-};
(-10,-4)*{\scriptstyle 1};(0,-4)*{\scriptstyle 2};(10,-4)*{\scriptstyle 3};
(0,-8)*{};(0,8)*{};
\endxy$$ \hspace{.25in}
$$ \xy(0,5)*{};
{\ar@3{-}(0,0)*{\otimes};(10,0)*{\fullmoon}};(5,0)*{<};(-10,0)*{\otimes};(0,0)*{\otimes}**\dir{-};
(-10,-4)*{\scriptstyle 1};(0,-4)*{\scriptstyle 2};(10,-4)*{\scriptstyle 3};
(0,-8)*{};(0,8)*{};
\endxy$$
\\&
$$ \xy(0,5)*{};
{\ar@3{-}(0,0)*{\otimes};(10,0)*{\fullmoon}};(5,0)*{<};(-10,0)*{\newmoon};(0,0)*{\otimes}**\dir{-};
(-10,-4)*{\scriptstyle 1};(0,-4)*{\scriptstyle 2};(10,-4)*{\scriptstyle 3};
(0,-8)*{};(0,8)*{};
\endxy$$ \hspace{.25in}
$$\xy (0,5)*{};
{\ar@2{-}(-5,0)*{\otimes};(0,7)*{\fullmoon}};(0,7)*{\fullmoon};(5,0)*{\otimes}**\dir{-};
{\ar@3{-}(-5,0)*{\otimes};(5,0)*{\otimes}};
(-5,-4)*{\scriptstyle 1};(0,9)*{\scriptstyle 2};(5,-4)*{\scriptstyle 3};
(0,-8)*{};(0,8)*{};
\endxy$$
\\\hline
$D(2|1;\af)$&
$$\xy
(0,0)*{\otimes};(7,5)*{\fullmoon}**\dir{-};(0,0)*{\otimes};(7,-5)*{\fullmoon}**\dir{-};(2,4)*{\scriptstyle -1};(2,-4)*{\scriptstyle 1+\af};
(-3,0)*{\scriptstyle 1};(10,5)*{\scriptstyle 2};(10,-5)*{\scriptstyle 3};
(0,8)*{};(0,-8)*{};
\endxy$$ \hspace{.25in}
$$\xy
(0,0)*{\otimes};(7,5)*{\fullmoon}**\dir{-};(0,0)*{\otimes};(7,-5)*{\fullmoon}**\dir{-};(2,4)*{\scriptstyle -\af};(2,-4)*{\scriptstyle 1+\af};
(-3,0)*{\scriptstyle 1};(10,5)*{\scriptstyle 2};(10,-5)*{\scriptstyle 3};
(0,8)*{};(0,-8)*{};
\endxy$$
\\$(\af\in\Z_{>0})$&
$$\xy
(-5,0)*{\otimes};(0,7)*{\otimes}**\dir{-};
(-5,0)*{\otimes};(5,0)*{\otimes}**\dir{-};
(0,7)*{\otimes};(5,0)*{\otimes}**\dir{-};
(-6,-3)*{\scriptstyle 1};(0,10)*{\scriptstyle 2};(6,-3)*{\scriptstyle 3};
(4,4)*{\scriptstyle \af};(0,-2)*{\scriptstyle -1-\af};
(0,8)*{};(0,-8)*{};
\endxy$$
\\\hline
\end{tabular}
\end{center}
%\end{table}

The basic Lie superalgebras are examples of symmetrizable
contragredient Lie superalgebras associated to (super generalized) Cartan matrices \cite{kac}, which
are endowed with a non-degenerate even supersymmetric bilinear form.
Let $A=(a_{ij})_{i,j\in \I}$ be a symmetrizable Cartan matrix for $\g$.
Let $d_i$, $i\in \I$, be positive integers satisfying $d_ia_{ij}=d_ja_{ji}$, and $\gcd(d_i\mid i\in \I)=1$.
Define a symmetric bilinear form $(\cdot,\cdot):Q\times Q\longrightarrow \Z$ by letting
$$ (\af_i,\af_j) =d_i a_{ij},\;\;\;i,j\in \I.$$
In particular, we have the following basic property.

\begin{lem}
The following are equivalent for $i\in \I$:\

\[(1) \; a_{ii}=0;\qquad (2) \; i\in I_{\iso};\qquad (3) \; (\af_i,\af_i)=0.\]

\end{lem}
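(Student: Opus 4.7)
The plan is to handle $(1) \Leftrightarrow (3)$ first as an immediate consequence of the definition of the bilinear form, and then to establish $(2) \Leftrightarrow (3)$ by unraveling the definition of $\I_{\iso}$ together with one structural fact about basic Lie superalgebras.

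For $(1) \Leftrightarrow (3)$, I would simply invoke the defining formula $(\af_i, \af_i) = d_i a_{ii}$. Since $d_i$ is a positive integer by assumption, the right-hand side vanishes if and only if $a_{ii} = 0$, giving the equivalence instantly.

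For $(2) \Rightarrow (3)$, I would appeal directly to the definition: $\Pi_{\iso}$ is by construction the set of \emph{isotropic} odd simple roots, so $i \in \I_{\iso}$ forces $(\af_i, \af_i) = 0$. Conversely, for $(3) \Rightarrow (2)$, the task is to show that vanishing of $(\af_i, \af_i)$ already forces $\af_i$ to be odd; once this is known, $i \in \I_{\one}$ combined with $(\af_i, \af_i) = 0$ places $\af_i$ in $\Pi_{\iso}$ by definition. I would justify the oddness claim by observing that every even simple root of a basic Lie superalgebra satisfies $(\af_i, \af_i) \neq 0$: the restriction of the invariant form to $\g_{\zero}$ is non-degenerate, and $\af_i$ with $i \in \I_{\zero}$ arises as a root of the reductive Lie algebra $\g_{\zero}$, whose roots have non-zero square under its Killing-type form. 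As a completely explicit fallback, one may simply consult Table~1, where every node labelled $\fullmoon$ corresponds to a diagonal Cartan entry $a_{ii} = 2 \neq 0$.

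The main (and rather mild) obstacle is the last sub-claim, namely establishing non-vanishing of $(\af_i, \af_i)$ for even $i$ without circular reference to the present lemma. Since this is a standard feature of the classification of basic Lie superalgebras, it can be taken as a background fact from \cite{kac, CW12} or verified case-by-case from Table~1; either route keeps the argument elementary and short.
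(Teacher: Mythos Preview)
Your argument is correct. The paper itself states this lemma as a ``basic property'' without supplying any proof, so there is no argument to compare against; your proposal simply fills in the details the authors regarded as routine. The reasoning you give --- using $(\af_i,\af_i)=d_ia_{ii}$ with $d_i>0$ for $(1)\Leftrightarrow(3)$, invoking the definition of $\I_\iso$ for $(2)\Rightarrow(3)$, and noting that even simple roots of a basic Lie superalgebra are never isotropic (via the reductivity of $\g_\zero$ or direct inspection of Table~1) for $(3)\Rightarrow(2)$ --- is exactly the natural way to unpack the statement.
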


We set the notation
\begin{align}\label{E:pi}
\pi=-1,
\end{align}
which will be used to keep track of super-signs. Set
\begin{align}\label{E:signs}
s_{ij}=\begin{cases} 1&\mbox{if }(\af_i,\af_j)\geq0\\\pi&\mbox{if }(\af_i,\af_j)<0.\end{cases}
\end{align}
We call the triple $(\I,\Pi, (\cdot,\cdot)\ )$ a {\bf Cartan datum of basic type}.

%%%%%%%%%%%%%%%%
\subsection{Quantum superalgebra $U_q$}

 Let $\g=\n^-\oplus \h \oplus \n^+$ be the triangular decomposition of $\g$.
The quantized enveloping algebra $U_q(\g)$ with Chevalley generators $e_i,f_i,k_i^{\pm1}$ ($i\in \I$)
has been systematically defined and studied in \cite{ya} (here we choose to adopt a more standard version
without an extra parity operator denoted by $\sigma$ in {\em loc. cit.}).
Let $\Uq=\Uq(\n^+)$ be the subalgebra of $\Uq(\g)$ generated by the elements $e_i$ ($i\in \I$).
By definition, $\Uq$ is a quotient of a free superalgebra on the generators $e_i$
by the radical of the bilinear form,
just as  defined by \cite[Part~ I]{lu} in the non-super setting. We will use a rescaling of this bilinear form;
see Proposition \ref{T:BilinearForm} below.

The algebra $\Uq$ is $Q^+$-graded by declaring that the degree of $e_i$ is $\af_i$:
$$\Uq=\bigoplus_{\nu\in Q^+}U_{q,\nu}.$$
For homogeneous $u\in \Uq$, we write $|u|$ for the degree of $u$ in this grading.
There is also a $\Z_2$-grading on $\Uq$ by setting $p(u)=p(\nu)$ if $|u|=\nu$.

The next proposition is standard (see e.g. \cite{ya}); in the case of $B(0,n+1)$ the novel bar involution was introduced
in \cite{hw}.

\begin{prp}
 \label{P:autoUq}
The algebra $\Uq$ admits the following symmetries:
\begin{enumerate}
\item A $\Q(q)$-linear anti-automorphism $\tau:\Uq\longrightarrow\Uq$ defined by
\begin{align}\label{E:tau}
\tau(e_i)=e_i
\text{ for all }i\in I
\andeqn \tau(uv)=\tau(v)\tau(u).
\end{align}
\item A $\Q$-linear automorphism $\overline{}:\Uq\longrightarrow\Uq$ (called a bar involution) defined by
\begin{align}\label{E:bar}
\overline{q}=\begin{cases}\pi q^{-1}&\mbox{if $\Uq$ is of type }B(0,n+1),\\
        q^{-1}&\mbox{otherwise,}\end{cases}
         \quad \overline{e_i} =e_i
         \text{ for all }i\in I,
\andeqn \overline{uv}=\bar{u}\;\bar{v}.
\end{align}
\item A $\Q$-linear anti-automorphism $\sm:\Uq\longrightarrow\Uq$ defined by
\begin{align}\label{E:sigma}
\sigma(u)=\overline{\tau(u)}.
\end{align}
\end{enumerate}
\end{prp}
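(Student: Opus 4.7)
The cleanest route is to lift each map to the free $\Q(q)$-superalgebra $\F = \Q(q)\langle e_i : i\in \I\rangle$ (with the $Q^+$- and $\Z_2$-gradings induced by declaring $|e_i|=\af_i$ and $p(e_i)=p(i)$), verify each symmetry at that level, and then check compatibility with the radical $\Rad(\cdot,\cdot)\subset\F$ so that it descends to $U_q=\F/\Rad(\cdot,\cdot)$. At the free-algebra level, existence is immediate from universal properties: the prescribed assignments $e_i\mapsto e_i$ extend uniquely to a $\Q(q)$-linear anti-automorphism for $\tau$, a $\Q$-linear algebra map (with the specified action on $q$) for the bar involution, and hence a $\Q$-linear anti-automorphism for $\sigma=\overline{\tau}$. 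The only real work is to check that each descends to the quotient.

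For (1), it suffices to show $(\tau(x),\tau(y))=(x,y)$ for $x,y\in \F$. This is proved by induction on weight: the form on $\F$ is recursively characterized by the twisted Hopf pairing $(xy,z)=(x\otimes y,\Delta(z))$ with respect to the super coproduct (using the symmetric braiding $u\otimes v\mapsto \pi^{p(u)p(v)}v\otimes u$), so symmetry of $\tau$ on generators and the fact that $\tau$ intertwines the opposite multiplication with the coproduct (again using the symmetric braiding) propagate the invariance to all of $\F$. Since $\tau$ preserves $(\cdot,\cdot)$, it preserves $\Rad(\cdot,\cdot)$ and descends.

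For (2), one checks $\overline{(x,y)}=(\bar x,\bar y)$ where the bar on the left acts on the scalar field. Away from type $B(0,n+1)$, the usual $\bar q=q^{-1}$ works because the bilinear form has the form $(e_i,e_i)\in \Q(q)$ involving only $(q-q^{-1})^{-1}$-style expressions, which are $\bar{\;}$-stable. In type $B(0,n+1)$ the presence of non-isotropic odd simple roots introduces $\pi$-factors into the recursive formula for the pairing: a direct computation shows the identity $\overline{(x,y)}=(\bar x,\bar y)$ holds precisely when $\bar q=\pi q^{-1}$, which accounts for the modified convention in \eqref{E:bar} (this is the reason the bar involution of \cite{hw} was introduced). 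Once this is verified on a generating set and propagated inductively through the coproduct pairing, the radical is bar-stable and the involution descends.

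Part (3) is then formal: as $\tau$ is a $\Q(q)$-linear anti-automorphism of $U_q$ and $\bar{\;}$ is a $\Q$-linear automorphism, $\sigma=\overline{\tau}$ is automatically a $\Q$-linear anti-automorphism, and one checks $\sigma(e_i)=e_i$ directly from the definitions. The main obstacle in the whole argument is isolating the correct conventions in the super setting: tracking how the $\pi$-signs from the symmetric braiding enter the inductive pairing formula, particularly to single out type $B(0,n+1)$ as the case requiring the modified bar, and verifying that $\tau$ is a genuine (non-super) anti-automorphism rather than a super anti-automorphism in the sense that no parity sign appears in $\tau(uv)=\tau(v)\tau(u)$; both are consequences of choosing the symmetric (as opposed to the super) braiding in the definition of the Hopf pairing that cuts out $\Rad(\cdot,\cdot)$.
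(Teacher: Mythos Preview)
Your overall strategy---lift to the free superalgebra $\F$, check that each map preserves the radical of the form, and descend to $\Uq=\F/\Rad(\cdot,\cdot)$---is exactly the mechanism behind the references the paper cites, and your treatment of (1) and (3) is fine. The problem is in (2): the identity $\overline{(x,y)}=(\bar x,\bar y)$ that you assert and then ``propagate inductively'' is simply false. Take $i\neq j$ with $(\af_i,\af_j)\neq 0$; a direct computation from the axioms (B1)--(B3) gives
\[
(e_ie_j,\,e_je_i)=\pi^{p(i)p(j)}q^{-(\af_i,\af_j)},
\]
while $\overline{e_ie_j}=e_ie_j$ and $\overline{e_je_i}=e_je_i$, so $(\overline{e_ie_j},\overline{e_je_i})=\pi^{p(i)p(j)}q^{-(\af_i,\af_j)}$, whereas $\overline{(e_ie_j,e_je_i)}=\pi^{p(i)p(j)}q^{+(\af_i,\af_j)}$ (outside type $B(0,n+1)$). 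Your side remark that $(e_i,e_i)$ ``involves only $(q-q^{-1})^{-1}$-style expressions'' is also off: in this paper's normalization $(e_i,e_i)=1$, and the $q$-dependence of the form enters through the twist $q^{-(|b|,|c|)}$ in the bialgebra multiplication, not through the generator norms.

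What actually works (and is what lies behind the Lusztig-style argument the paper cites) is this: the new pairing $\{x,y\}:=\overline{(\bar x,\bar y)}$ does satisfy axioms of the same shape as (B1)--(B3), but with the $q$-twist in the $\F\otimes\F$ multiplication replaced by the $q^{-1}$-twist. One then shows that the radical of this $q^{-1}$-form coincides with the radical of the original $q$-form---for instance by a rescaling as in Lemma~\ref{L:symmetric twist}, or equivalently by observing that the $\Q$-linear map $\sigma$ (which you already have from (1) composed with bar on $\F$) interchanges the two bialgebra structures and hence the two forms up to a weight-dependent scalar. The modified bar $\bar q=\pi q^{-1}$ in type $B(0,n+1)$ is needed not because of $(e_i,e_i)$ but because the non-isotropic odd simple root makes the rescaling between the two forms involve $\pi$; this is precisely the content of \cite{hw} and \cite[Cor.~1.4.4]{CHW1}. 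Once you replace your false identity with this comparison-of-radicals argument, the rest of your outline goes through.
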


\begin{proof}
The existence of the anti-automorphism $\tau$ is proved in \cite[Lemma 6.3.1]{ya2}. The existence of the bar involution
can be proved using similar arguments to those in \cite[\S 1.2.12]{lu} (see also \cite[Cor 1.4.4]{CHW1}).
\end{proof}

The algebra $\Uq$ has the structure of a twisted bi-superalgebra with coproduct defined on the generators by
$$\Dt(e_i)=e_i\otimes 1+1\otimes e_i.$$
The coproduct is an algebra homomorphism $\Dt:\Uq\longrightarrow\Uq\otimes\Uq$ with respect to the twisted multiplication on $\Uq\otimes\Uq$:
$$(a\otimes b)(c\otimes d)=\pi^{p(b)p(c)}q^{-(|b|,|c|)}ac\otimes bd,$$
for $a,b,c,d\in \Uq$ homogeneous in the $(Q^+\times\Z_2)$-grading.

\subsection{Bilinear Forms on $U_q$}
The goal of this section is to establish the existence of the bilinear form described in Proposition~\ref{T:BilinearForm},
a variant of which first appeared in \cite{ya}.
Indeed, let $(\cdot,\cdot)_{\mathrm{sgn}}$ be the form appearing in \emph{loc.cit.}.
This form satisfies Conditions (B1)-(B3) in the statement of
Proposition \ref{T:BilinearForm}
below, but with the $(q,\pi)$-bialgebra structure on $\Uq\otimes\Uq$ replaced
by a $(q^{-1},\pi)$-bialgebra structure and with the bilinear form satisfying
\begin{equation}
\label{eq:sgn}
(x'\otimes x'',y'\otimes y'')_{\mathrm{sgn}}=\pi^{p(x'')p(y')}(x',y')_{\mathrm{sgn}}(x'',y'')_{\mathrm{sgn}}.
\end{equation}

In order to deduce the proposition, we begin with some general comments about rescaling of bilinear forms.
To this end, let $t: Q^+\times Q^+\rightarrow \Q(q)^\times$ be a function such that
\[t(\lambda,\nu)=t(\nu,\lambda),\quad t(\lambda+\nu,\eta)=t(\lambda,\eta)t(\nu,\eta),\quad
t(\lambda,\nu+\eta)=t(\lambda,\nu)t(\lambda,\eta).\]

\begin{lem}\label{L:symmetric twist} Assume we have a bilinear form $\{\cdot,\cdot\}$ on $U_q$ such that
\begin{enumerate}
\item For $\mu\neq \nu$ in $Q^+$, $\{U_{q,\mu},U_{q,\nu}\}=0$;
\item $\{1,1\}=1$ and $\{e_i,e_i\}\neq 0$, for all $i\in \I$;
\item $\{xy,z\}=\{x\otimes y,\Dt(z)\}$, for $x,y,z \in U_q$, where $\{x\otimes y,x'\otimes y'\}=t(|y|,|x'|)\{x,x'\}\{y,y'\}$.
\end{enumerate}
 Then there is a symmetric bilinear form $(\cdot,\cdot)$ on $U_q$ such that
\begin{enumerate}
\item[(a)] For $\mu\neq \nu$, $(U_{q,\mu},U_{q,\nu})=0$;
\item[(b)] $(1,1)=1$ and $(e_i,e_i)\neq 0$, for all $i\in \I$;
\item[(c)] $(xy,z)=(x\otimes y,\Dt(z))$, for $x,y,z \in U_q$, where $(x\otimes y,x'\otimes y')=(x,x')(y,y')$.
\end{enumerate}
Specifically, the bilinear form is given by
$(x,y)=t(|x|)^{-1}\{x,y\}$, where \[t(\af_{i_1}+\ldots +\af_{i_n})=\prod_{r<s} t(\af_{i_r},\af_{i_s}).\]
\end{lem}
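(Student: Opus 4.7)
The plan is to define $(x,y) := t(|x|)^{-1}\{x,y\}$ exactly as in the statement and verify (a)-(c) and symmetry by direct computation, the heart of which is a careful bookkeeping of the twist $t(\nu,\mu)$.

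First I would check that $t(\nu)$ is well defined. The expression $\prod_{r<s} t(\alpha_{i_r},\alpha_{i_s})$ is invariant under swapping adjacent summands by the symmetry $t(\lambda,\mu)=t(\mu,\lambda)$, so it depends only on the multiset of indices, i.e.\ on $\nu$. In particular $t(0)=t(\alpha_i)=1$ (empty products), and concretely $t(\nu)=\prod_i t(\alpha_i,\alpha_i)^{\binom{n_i}{2}}\prod_{i<j}t(\alpha_i,\alpha_j)^{n_in_j}$ when $\nu=\sum n_i\alpha_i$. Concatenating a sequence for $\nu$ followed by one for $\mu$ and using bimultiplicativity on the cross pairs yields the key identity
\[t(\nu+\mu)=t(\nu)\,t(\mu)\,t(\nu,\mu).\]

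Conditions (a) and (b) are then immediate: (a) holds because $(\cdot,\cdot)$ is a nonzero scalar multiple of $\{\cdot,\cdot\}$ on each graded piece, and (b) holds because $t(0)=t(\alpha_i)=1$ so $(1,1)=\{1,1\}=1$ and $(e_i,e_i)=\{e_i,e_i\}\neq 0$. For (c), take homogeneous $x,y,z$ with $|x|+|y|=|z|$ and apply hypothesis (3) together with (1), keeping only the components of $\Delta(z)$ lying in $U_{q,|x|}\otimes U_{q,|y|}$:
\[(xy,z)=t(|x|+|y|)^{-1}\sum t(|y|,|x|)\,\{x,z_{(1)}\}\{y,z_{(2)}\}.\]
Substituting $t(|x|+|y|)^{-1}=t(|x|)^{-1}t(|y|)^{-1}t(|x|,|y|)^{-1}$ and using symmetry of $t(\cdot,\cdot)$, the factors $t(|x|,|y|)^{-1}$ and $t(|y|,|x|)$ cancel, producing $(xy,z)=\sum(x,z_{(1)})(y,z_{(2)})=(x\otimes y,\Delta(z))$ with the untwisted pairing on tensors, as required.

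Finally, symmetry of $(\cdot,\cdot)$ reduces to symmetry of $\{\cdot,\cdot\}$, because $t(|x|)=t(|y|)$ whenever $|x|=|y|$ (both sides vanish otherwise by (a)). In the intended application to Yamane's form $(\cdot,\cdot)_{\mathrm{sgn}}$, symmetry is already part of its construction in \emph{loc.\ cit.}; more generally it follows by induction on weight from the observation that a form satisfying (1)-(3) with prescribed values $\{e_i,e_i\}$ is uniquely determined, and the opposite pairing $(x,y)\mapsto\{y,x\}$ satisfies the analogous recursion. The main obstacle is simply the cancellation bookkeeping in (c): the twist factor $t(|y|,|x|)$ introduced by the pairing on $U_q\otimes U_q$ must combine precisely with the three factors arising from the key identity expansion of $t(|x|+|y|)^{-1}$, and this cancellation rests essentially on the symmetry of $t(\cdot,\cdot)$.
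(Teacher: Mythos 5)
Your proof is correct and takes essentially the same route as the paper's: define $(x,y)=t(|x|)^{-1}\{x,y\}$, note that $t(\nu)$ is well defined by symmetry of $t(\cdot,\cdot)$, observe (a) and (b) are immediate, and verify (c) via the key multiplicative identity $t(\nu+\mu)=t(\nu)t(\mu)t(\nu,\mu)$, exactly the cancellation the paper records as $t(|x|,|y|)t(|x|)t(|y|)=t(|x|+|y|)$. One small caveat on your closing aside: the claimed uniqueness argument for symmetry of $\{\cdot,\cdot\}$ does not quite go through, because hypothesis (3) constrains only $\{xy,z\}$ and says nothing about $\{z,xy\}$, so the opposite pairing $(x,y)\mapsto\{y,x\}$ does not obviously satisfy the same recursion; but this is harmless here since, as you note and as the paper implicitly assumes, the source form is already symmetric in the intended application.
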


\begin{proof}
Note that $t(\af_{i_1}+\ldots + \af_{i_n})$ defined above does not depend on the order
because $t$ is symmetric. Since this rescaling is well defined on each
weight space, it suffices to show that the given bilinear form satisfies the
required properties. (a) and (b) are trivially true, and the form
$(\cdot,\cdot)$ is clearly symmetric. For (c), let $x,y,z$ be homogeneous
and $\Dt(z)=\sum z_1\otimes z_2$. Then

\begin{align*}
(xy,z)&=t(|x|+|y|)^{-1}\set{xy,z}=t(|x|+|y|)^{-1}\set{x\otimes y,\Dt(z)}\\
&=t(|x|+|y|)^{-1}\sum t(|y|,|z_1|)\set{x,z_1}\otimes \set{y,z_2}\\
&=t(|x|+|y|)^{-1}\sum t(|y|,|x|)t(|x|)t(|y|)(x,z_1)\otimes (y,z_2).
\end{align*}

Observing that $t(|x|,|y|)t(|x|)t(|y|)=t(|x|+|y|)$ finishes the proof.
\end{proof}

The following  is a variant of a theorem due to Yamane \cite[Section 2]{ya}.

\begin{prp}  \label{T:BilinearForm}
There exists a unique nondegenerate symmetric bilinear form
$(\cdot,\cdot):\Uq\times\Uq\longrightarrow\Q(q)$ satisfying
\begin{enumerate}
\item[(B1)] $(1,1)=1$;
\item[(B2)] $(e_i,e_j)=\dt_{ij}$, for all $i,j \in \I$;
\item[(B3)] $(x,yz)=(\Dt(x),y\otimes z)$, for all $x,y,z \in U_q$.
\end{enumerate}
Here we have used $(x'\otimes x'',y'\otimes y''):=(x',y')(x'',y'').$
\end{prp}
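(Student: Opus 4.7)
The plan is to derive the existence from Yamane's form $(\cdot,\cdot)_{\mathrm{sgn}}$ by applying Lemma~\ref{L:symmetric twist}. Yamane's form already satisfies (B1) and (B2), but its multiplicativity involves both the sign factor $\pi^{p(x'')p(y')}$ of \eqref{eq:sgn} and a $(q^{-1},\pi)$-bialgebra structure on $\Uq\otimes\Uq$ in place of ours. I would set $\{x,y\}:=(x,y)_{\mathrm{sgn}}$ and seek a single symmetric, bimultiplicative twist
\[
t(\lambda,\nu) \;=\; \pi^{p(\lambda)p(\nu)}\, q^{c(\lambda,\nu)},
\]
which simultaneously absorbs both discrepancies. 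Here $c(\lambda,\nu)$ is a symmetric bilinear function on $Q^+\times Q^+$ extracted from the comparison of the two bialgebra conventions; symmetry of $p(\lambda)p(\nu)$ and of $c(\lambda,\nu)$ ensures that $t$ meets the symmetry and bimultiplicativity requirements of Lemma~\ref{L:symmetric twist}.

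With this $t$, hypotheses (1) and (2) of Lemma~\ref{L:symmetric twist} are immediate from (B1) and (B2) for $(\cdot,\cdot)_{\mathrm{sgn}}$. Hypothesis (3), namely $\{xy,z\}=\{x\otimes y,\Dt(z)\}$ with $\{x\otimes y,x'\otimes y'\}=t(|y|,|x'|)\{x,x'\}\{y,y'\}$, is what I would verify next: the corresponding Yamane identity is rewritten so that the super-sign from \eqref{eq:sgn} and the conversion between the $(q^{-1},\pi)$- and $(q,\pi)$-bialgebra structures combine into precisely the factor $t(|y|,|x'|)$. The lemma then yields a symmetric form $(x,y):=t(|x|)^{-1}\{x,y\}$ satisfying (B1) and (B3). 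Property (B2) follows because $t(\af_i)$ is an empty product equal to $1$, so $(e_i,e_j)=\{e_i,e_j\}=\delta_{ij}$.

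Nondegeneracy is inherited from $(\cdot,\cdot)_{\mathrm{sgn}}$, since the rescaling acts by a nonzero scalar on each weight space. Uniqueness is standard: by (B3) and induction on the height of $\nu\in Q^+$, an inner product $(x,y)$ of homogeneous elements of weight $\nu\neq 0$ is reduced via a nontrivial factorisation $x=x'x''$ on the left entry to a sum of products of pairings of strictly smaller weight, with base cases pinned down by (B1) and (B2). The main technical obstacle in this plan is the careful bookkeeping required to identify the correct twist $t$ from the differing conventions in \cite{ya}; once $t$ is in hand, the invocation of Lemma~\ref{L:symmetric twist} is entirely formal.
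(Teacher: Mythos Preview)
Your plan has a genuine gap: no symmetric bimultiplicative twist $t(\lambda,\nu)=\pi^{p(\lambda)p(\nu)}q^{c(\lambda,\nu)}$ can simultaneously absorb the sign in \eqref{eq:sgn} \emph{and} the passage from the $(q^{-1},\pi)$-coproduct $\Delta'$ to the $(q,\pi)$-coproduct $\Delta$. The point is that $\Delta$ and $\Delta'$ do not differ by a scalar on each bidegree. Concretely, take $z=e_1e_2$ and compare the two sides of hypothesis~(3) of Lemma~\ref{L:symmetric twist} with $\{\cdot,\cdot\}=(\cdot,\cdot)_{\mathrm{sgn}}$. Pairing with $e_1\otimes e_2$ forces $t(\af_2,\af_1)=\pi^{p(1)p(2)}$, while pairing with $e_2\otimes e_1$ forces $t(\af_1,\af_2)=\pi^{p(1)p(2)}q^{2(\af_1,\af_2)}$; since $t$ is symmetric these are incompatible whenever $(\af_1,\af_2)\neq 0$. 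So the ``careful bookkeeping'' you anticipate cannot succeed.

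The paper handles this by first conjugating Yamane's form with the bar involution, setting $\{x,y\}:=\overline{(\bar x,\bar y)_{\mathrm{sgn}}}$. Because bar is a $\Q$-algebra automorphism sending $q\mapsto q^{-1}$, one has $\overline{\Delta(x)}=\Delta'(\bar x)$, so this conjugation converts Yamane's $(q^{-1},\pi)$-coproduct identity into the desired $(q,\pi)$-coproduct identity for $\{\cdot,\cdot\}$, with only the super-sign $\pi^{p(x'')p(y')}$ remaining. Lemma~\ref{L:symmetric twist} then applies with the purely parity-based twist $t(\mu,\nu)=\pi^{p(\mu)p(\nu)}$. Your arguments for nondegeneracy and uniqueness are fine once this correction is made.
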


\begin{proof}
%This follows  almost immediately from \cite[Section 2]{ya}.
Let $(\cdot,\cdot)_{\mathrm{sgn}}$ be the bilinear form appearing in   \cite[Section 2]{ya}. This bilinear form
was shown to satisfy the 3 properties in the proposition with respect to \eqref{eq:sgn}.
Take $t(\mu,\nu)=\pi^{p(\mu)p(\nu)}$
and $\{x,y\}=\overline{(\overline{x},\overline{y})}_{\mathrm{sgn}}$, $x,y\in\Uq$.
Then the bilinear form $(\cdot,\cdot)$ obtained from $\{\cdot,\cdot\}$ satisfies the same properties, by Lemma~\ref{L:symmetric twist}.
\end{proof}

%\begin{rmk}\label{R:Signed Bilinear Form}
In \cite[Proposition 3.3]{hw}, the authors showed directly that the unsigned version of the bilinear form for $U_q$
of type $B(0,n)$ (and other anisotropic Kac-Moody types)  is well-defined.
Our preference for this form is due to the fact that it agrees with a bilinear form arising from categorification.
%\end{rmk}

\begin{prp}\label{P:BilinearForm}
Let $e_i':\Uq\longrightarrow\Uq$ denote the adjoint of left multiplication by $e_i$ with respect to the binear form:
$$(e_iu,v)=(u,e_i'(v)).$$
Then, $e_i'$ satisfies
\begin{enumerate}
\item $e_i'(e_j)=\dt_{ij}$;
\item $e_i'(uv)=e_i'(u)v+\pi^{p(u)p(i)}q^{-(\af_i,|u|)}ue_i'(v)$ for homogenous $u,v\in \Uq$;
\item for homogeneous $u\in \Uq$,  $e_i'(u)=0$ for all $i\in \I$ if and only if $|u|=0$.
\end{enumerate}
\end{prp}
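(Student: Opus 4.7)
The plan is to deduce all three statements directly from properties $(\mathrm{B1})$--$(\mathrm{B3})$ of the bilinear form, together with the fact that $\Uq$ is generated by the $e_i$'s and that weight spaces are orthogonal under $(\cdot,\cdot)$.

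For $(1)$, I would use a weight-space argument. Since $e_i'$ lowers the $Q^+$-weight by $\af_i$, the element $e_i'(e_j)$ lies in $U_{q,\af_j-\af_i}$. When $i\neq j$, linear independence of the simple roots shows $\af_j-\af_i\notin Q^+$, so $e_i'(e_j)=0$. When $i=j$, $e_i'(e_i)\in U_{q,0}=\Q(q)\cdot 1$, and pairing against $1$ using $(\mathrm{B1})$--$(\mathrm{B2})$ gives $(1,e_i'(e_i))=(e_i,e_i)=1$, whence $e_i'(e_i)=1$.

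For $(2)$, the key computation tests against an arbitrary homogeneous $w\in\Uq$. Applying $(\mathrm{B3})$ gives $(w,e_i'(uv))=(e_iw,uv)=(\Dt(e_iw),u\otimes v)$. Since $\Dt$ is a homomorphism for the twisted product, $\Dt(e_iw)=(e_i\otimes 1+1\otimes e_i)\Dt(w)$, and writing $\Dt(w)=\sum w_{(1)}\otimes w_{(2)}$ the twisted multiplication produces
\[
\Dt(e_iw)=\sum e_iw_{(1)}\otimes w_{(2)}+\sum \pi^{p(i)p(w_{(1)})}q^{-(\af_i,|w_{(1)}|)}w_{(1)}\otimes e_iw_{(2)}.
\]
Pairing with $u\otimes v$ and using the defining property of $e_i'$ gives two sums; in the first, nonzero terms require $|w_{(1)}|=|u|-\af_i$, while in the second they require $|w_{(1)}|=|u|$, so the sign-and-$q$ factor collapses to $\pi^{p(i)p(u)}q^{-(\af_i,|u|)}$. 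Reassembling via $(\mathrm{B3})$ in reverse yields $(w,e_i'(uv))=(w,e_i'(u)v+\pi^{p(u)p(i)}q^{-(\af_i,|u|)}ue_i'(v))$ for every $w$, and nondegeneracy delivers the claim.

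For $(3)$, the ``if'' direction is immediate: if $|u|=0$ then $u\in\Q(q)\cdot 1$ and $e_i'(u)$ would live in the (nonexistent) weight $-\af_i$ part of $\Uq$. For the ``only if'' direction, assume $u$ is homogeneous, $|u|\neq 0$, and $u\neq 0$; it suffices to exhibit some $i$ with $e_i'(u)\neq 0$. Since $\Uq$ is generated by the $e_i$'s, the weight space $U_{q,|u|}\subseteq \Uq^{>0}$ satisfies $U_{q,|u|}=\sum_i e_iU_{q,|u|-\af_i}$. Nondegeneracy of the form restricted to $U_{q,|u|}$ furnishes some $v=\sum_i e_iw_i$ with $(u,v)\neq 0$, and then $0\neq (u,v)=\sum_i(e_iw_i,u)=\sum_i(w_i,e_i'(u))$ forces $e_i'(u)\neq 0$ for some $i$.

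The only genuine subtlety is in $(2)$: bookkeeping the sign $\pi^{p(i)p(w_{(1)})}$ and the power $q^{-(\af_i,|w_{(1)}|)}$ coming from the twisted product on $\Uq\otimes\Uq$, and then observing that weight matching in the pairing forces $|w_{(1)}|$ to take the appropriate value so these twist factors pull out as constants $\pi^{p(u)p(i)}q^{-(\af_i,|u|)}$. Everything else reduces to the bialgebra pairing formalism already established in Proposition~\ref{T:BilinearForm}.
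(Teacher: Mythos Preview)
Your proof is correct and follows essentially the same approach as the paper. For (2), both you and the paper expand $\Dt(e_iw)$ via the twisted coproduct, pair against $u\otimes v$, and use weight matching to replace the twist factor $\pi^{p(i)p(w_{(1)})}q^{-(\af_i,|w_{(1)}|)}$ by the constant $\pi^{p(i)p(u)}q^{-(\af_i,|u|)}$; for (3), the paper phrases the converse as ``$(e_{i_1}\cdots e_{i_d},u)=0$ for all monomials forces $|u|=0$ by nondegeneracy,'' which is your argument unwound.
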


\begin{proof}
Property (1) is obvious from the definition. To prove Property (2), let $x\in\Uq$ and write $\Dt(x)=\sum x_1\otimes x_2$. Then,
\begin{align*}
(x,e_i'(uv))&=(e_ix,uv)\\
    &=((e_i\otimes 1+1\otimes e_i)\Dt(x),u\otimes v)\\
    &=\sum(e_ix_1\otimes x_2,u\otimes v)+\sum \pi^{p(x_1)p(i)}q^{-(\af_i,|x_1|)}(x_1\otimes e_ix_2,u\otimes v)\\
    &=\sum(e_ix_1,u)(x_2,v)+\sum\pi^{p(x_1)p(i)}q^{-(\af_i,|x_1|)}(x_1,u)(e_ix_2,v).
\end{align*}
Note that if a summand of the second sum in the last line above is nonzero, then $|x_1|=|u|$ and $p(x_1)=p(u)$. Therefore,
\begin{align*}
(x,e_i'(uv))&=\sum(e_ix_1,u)(x_2,v)+\sum\pi^{p(u)p(i)}q^{-(\af_i,|u|)}(x_1,u)(e_ix_2,v)\\
&=\sum(x_1,e_i'(u))(x_2,v)+\sum\pi^{p(u)p(i)}q^{-(\af_i,|u|)}(x_1,u)(x_2,e_i'(v))\\
&=\sum(x_1\otimes x_2, e_i'(u)\otimes v+\pi^{p(u)p(i)}q^{-(\af_i,|u|)}u\otimes e_i'(v))\\
&=(x,e_i'(u)v+\pi^{p(u)p(i)}q^{-(\af_i,|u|)}ue_i'(v)).
\end{align*}
Since the form is nondegenerate, (2) follows.

Finally, to prove (3), note that if $|u|=\nu$, then $|e_i'(u)|=\nu-\af_i$. In particular, if $|u|=0$, then $e_i'(u)=0$ for all $i\in \I$.
Conversely, if $e_i'(u)=0$ for all $i$, then we have $(e_{i_1}\cdots e_{i_d}, u)=0$ for all $i_1,\ldots,i_d\in \I$ and $d\geq 1$.
As these monomials span $\bigoplus_{\nu\neq 0}U_{q,\nu}$, and the form is nondegenerate, we must have $|u|=0$.
\end{proof}

\begin{cor}\label{cor:derivationiso}
The subalgebra $\mathcal E$ of $\End_{\Qq}(U_q)$ generated by the $e_i'$ for $i\in I$ is isomorphic to $U_q$ under the identification $e_i\mapsto e_i'$.
\end{cor}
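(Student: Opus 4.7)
The plan is to construct the desired isomorphism as the composition of three canonical maps on $U_q$: the anti-automorphism $\tau$ from Proposition~\ref{P:autoUq}, the left regular representation $L\colon U_q\to\End_{\Qq}(U_q)$ defined by $L_u(v)=uv$, and the adjoint operation with respect to the non-degenerate bilinear form $(\cdot,\cdot)$ on $U_q$.

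I would first observe that $L$ is an injective algebra homomorphism, since $L_u(1)=u$ forces $L_u=0 \Rightarrow u=0$. Non-degeneracy of $(\cdot,\cdot)$ guarantees that each linear endomorphism $A$ of $U_q$ admits at most one adjoint $A^*$ characterized by $(Au,v)=(u,A^*v)$ for all $u,v \in U_q$. By the very definition of $e_i'$, the adjoint $L_{e_i}^*$ exists and equals $e_i'$; the standard manipulation $(ABu,v)=(Bu,A^*v)=(u,B^*A^*v)$ shows that taking adjoints is anti-multiplicative, so the adjoint of any polynomial in the $L_{e_i}$'s exists and lies in $\mathcal{E}$.

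I would then define $\phi\colon U_q\to\mathcal{E}$ by $\phi(u)=L_{\tau(u)}^*$. Combining the anti-multiplicativity of $\tau$ and of the adjoint operation with the homomorphism property of $L$ yields
\[
\phi(uv)=L_{\tau(v)\tau(u)}^*=(L_{\tau(v)}L_{\tau(u)})^*=L_{\tau(u)}^*L_{\tau(v)}^*=\phi(u)\phi(v),
\]
so $\phi$ is an algebra homomorphism. On generators one has $\phi(e_i)=L_{e_i}^*=e_i'$, so the image of $\phi$ contains, and is contained in, the subalgebra $\mathcal{E}$ generated by the $e_i'$; thus $\phi$ surjects $U_q$ onto $\mathcal{E}$ and realizes the identification $e_i\mapsto e_i'$.

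The remaining task is injectivity. If $\phi(u)=0$ then $L_{\tau(u)}^*=0$, so $(L_{\tau(u)}w,v)=(w,0)=0$ for all $w,v\in U_q$; non-degeneracy of $(\cdot,\cdot)$ then forces $L_{\tau(u)}=0$, evaluating at $1$ gives $\tau(u)=0$, and finally $u=0$ since $\tau$ is a bijection. The only potentially subtle point is the super setting, but since the adjoint relation in Proposition~\ref{P:BilinearForm} and the anti-automorphism property of $\tau$ in \eqref{E:tau} are both stated without super-signs, the standard arguments carry through verbatim; this bookkeeping is where I would be most careful.
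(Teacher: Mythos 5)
Your proposal is correct and takes essentially the same route as the paper: the paper observes that nondegeneracy of $(\cdot,\cdot)$ makes $e_i\mapsto e_i'$ into an anti-isomorphism from $U_q$ onto $\mathcal E$ (your map $u\mapsto L_u^*$), and then composes with $\tau$ to obtain the isomorphism; you have simply unpacked the words ``anti-isomorphism'' and ``composing with $\tau$'' into explicit steps (existence and uniqueness of adjoints, anti-multiplicativity of the adjoint, injectivity and surjectivity of $\phi$). Your remark at the end is also well placed: since both the adjoint relation defining $e_i'$ and the relation $\tau(uv)=\tau(v)\tau(u)$ are stated without super-signs, no super bookkeeping is needed.
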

\begin{proof}
Since the bilinear form is nondegenerate, the map $e_i\mapsto e_i'$ defines an anti-isomorphism between $U_q$ and $\mathcal E$;
Composing with the map $\tau$ defined in Proposition \ref{P:autoUq} yields the desired isomorphism.
\end{proof}

%%%%%%%%%%%%%%%%%%%%%%
\subsection{Defining Relations for $U_q$}

Define the $q$-commutator on homogeneous $u,v\in\Uq$ by
$$\ad_q u(v)=[u,v]_q=uv-\pi^{p(u)p(v)}q^{(|u|,|v|)}vu.$$
Define the usual quantum integer and its super analogue for $n\in \Z_{\ge 0}$:
$$[n]=\frac{q^n-q^{-n}}{q-q^{-1}}\andeqn \{n\}=\frac{\pi^nq^n-q^{-n}}{\pi q-q^{-1}}.$$
More generally, for $i\in I$, set $q_i=q^{d_i}$, $\pi_i=\pi^{p(i)}$, and define
\begin{align*}
[n]_i &=\begin{cases}\frac{\pi_i^nq_i^n-q_i^{-n}}{\pi_iq_i-q_i^{-1}}&\mbox{if } i\in\I_\niso,\\
    \frac{q_i^n-q_i^{-n}}{q_i-q_i^{-1}}&\mbox{otherwise,}\end{cases}
\\
 \begin{bmatrix}n\\k\end{bmatrix}_i
 &=\frac{[n]_i[n-1]_i\cdots[n-k+1]_i}{[k]_i!},
\end{align*}
where $n\in\Z$ and $k\in\Z_{\geq 0}$.

\begin{prp}  \cite{ya, ya2}
\label{P:SerreRelations}
The algebra $\Uq$ satisfies the following relations whenever the given Dynkin subdiagram appears:

\begin{enumerate}
\item[(Iso)] $e_ie_j=-e_je_i$ for $i,j\in \I_\one$ with $a_{ij}=0$.
\item[(N-Iso)] For $i\in \I_\zero\cup \I_\niso$ and $i\neq j$,
$$\sum_{r+s=1+|a_{ij}|}(-1)^r  \pi_i^{p(i,j;r)}
\begin{bmatrix}1+|a_{ij}|\\r\end{bmatrix}_i e_i^r e_j e_i^s =0,
$$
where $p(i,j;r)=\binom{r}{2}p(i)+rp(i)p(j)$.
\item[(AB)] For
$$\hspace{.75in}\xy
(-10,0)*{\odot};(0,0)*{\otimes}**\dir{-};(0,0)*{\otimes};(10,0)*{\odot}**\dir{-};
(-10,-4)*{\scriptstyle i};(0,-4)*{\scriptstyle j};(10,-4)*{\scriptstyle k};
\endxy\quad(s_{ij}\neq s_{jk})$$
or
$$\xy
(-10,0)*{\yy};(0,0)*{\otimes}**\dir{=};(0,0)*{\otimes};(10,0)*{\odot}**\dir{-};(-5,0)*{<};
(-10,-4)*{\scriptstyle i};(0,-4)*{\scriptstyle j};(10,-4)*{\scriptstyle k};
\endxy$$
$$
\ad_q e_j\circ\ad_{q}e_k\circ \ad_{q} e_j (e_i)=0.
$$

\item[(CD1)] For
$$\xy
(-10,0)*{\fullmoon};(0,0)*{\otimes}**\dir{=};(0,0)*{\otimes};(10,0)*{\otimes}**\dir{-};(-5,0)*{>};
(-10,-4)*{\scriptstyle i};(0,-4)*{\scriptstyle j};(10,-4)*{\scriptstyle k};
\endxy$$
$$\ad_q e_j\circ \ad_q (\ad_q e_j(e_k))\circ \ad_q e_i\circ \ad_q e_j (e_k)=0.$$
\item[(CD2)] For
$$\xy
(-10,0)*{\odot};(0,0)*{\fullmoon}**\dir{-};(0,0)*{\fullmoon};(10,0)*{\otimes}**\dir{-};(15,0)*{<};(10,0)*{\fullmoon};(20,0)*{\fullmoon}**\dir{=};
(-10,-4)*{\scriptstyle i};(0,-4)*{\scriptstyle j};(10,-4)*{\scriptstyle k};(20,-4)*{\scriptstyle l};
\endxy$$
$$\ad_q e_k\circ \ad_q e_j \circ \ad_q e_k\circ \ad_q e_l\circ \ad_q e_k\circ \ad_q e_j (e_i)=0.$$
\item[(D)] For
$$\xy
(0,0)*{\odot};(7,5)*{\otimes}**\dir{-};(0,0)*{\odot};(7,-5)*{\otimes}**\dir{-};(7,5)*{\otimes};(7,-5)*{\otimes}**\dir{=};
(-4,0)*{\scriptstyle i};(10,5)*{\scriptstyle j};(10,-5)*{\scriptstyle k};
\endxy$$
$$\ad_q e_k\circ \ad_q e_j (e_i)=\ad_q e_j \circ \ad_q e_k(e_i).$$
\item[(F1)] For
$$\xy
{\ar@3{-}(-15,0)*{\fullmoon};(-5,0)*{\otimes}};(-10,0)*{>};
(-5,0)*{\otimes};(5,0)*{\fullmoon}**\dir{=};(0,0)*{<};(5,0)*{\fullmoon};(15,0)*{\fullmoon}**\dir{-};
(-15,-4)*{\scriptstyle 1};(-5,-4)*{\scriptstyle 2};(5,-4)*{\scriptstyle 3};(15,-4)*{\scriptstyle 4};
\endxy$$
$$\ad_q E\circ \ad_q E \circ \ad_q e_4 \circ \ad_q e_3 \circ \ad_q e_2=0,$$
where $E=\ad_q(\ad_q e_1(e_2))\circ \ad_q e_3(e_2)$.
\item[(F2)] For
$$\xy
{\ar@3{-}(-15,0)*{\fullmoon};(-5,0)*{\otimes}};(-10,0)*{>};
(-5,0)*{\otimes};(5,0)*{\fullmoon}**\dir{=};(0,0)*{<};(5,0)*{\fullmoon};(15,0)*{\fullmoon}**\dir{-};
(-15,-4)*{\scriptstyle 1};(-5,-4)*{\scriptstyle 2};(5,-4)*{\scriptstyle 3};(15,-4)*{\scriptstyle 4};
\endxy$$
\begin{align*}\ad_q(\ad_q e_1(e_2)) & \circ \ad_q (\ad_q e_3(e_2)) \circ \ad_q e_3(e_4)
\\
&= \ad_q(\ad_q e_3(e_2))\circ \ad_q (\ad_q e_1(e_2)) \circ \ad_q e_3(e_4).
\end{align*}

\item[(F3)] For
$$\xy
(-10,0)*{\otimes};(0,0)*{\otimes}**\dir{=};(0,0)*{\otimes};(10,0)*{\fullmoon}**\dir{=};(5,0)*{<};
(-10,-4)*{\scriptstyle 1};(0,-4)*{\scriptstyle 3};(10,-4)*{\scriptstyle 4};
\endxy$$
$$\ad_q e_3\circ\ad_{q}e_1\circ \ad_{q} e_3 (e_4)=0.$$
\item[(F4)]  For
$$\xy
{\ar@3{-}(-5,0)*{\otimes};(0,7)*{\otimes}};(-5,0)*{\otimes};(5,0)*{\otimes}**\dir{=};(0,7)*{\otimes};(5,0)*{\otimes}**\dir{-};
(-8,0)*{\scriptstyle 1};(0,10)*{\scriptstyle 2};(8,0)*{\scriptstyle 3};
\endxy$$
$$[3]\ad_q e_i\circ \ad_q e_j (e_k)+[2]\ad_q e_j \circ \ad_q e_i(e_k)=0.$$
\item[(G1)] For
$$ \xy
{\ar@3{-}(0,0)*{\otimes};(10,0)*{\fullmoon}};(5,0)*{<};(-10,0)*{\otimes};(0,0)*{\otimes}**\dir{=};
(-10,-4)*{\scriptstyle 1};(0,-4)*{\scriptstyle 2};(10,-4)*{\scriptstyle 3};
\endxy$$
$$\ad_q E\circ \ad_q E \circ \ad_q E \circ \ad_q e_2(e_1)=0,$$  	
where $E=\ad_q e_2(e_3)$.

\item[(G2)] For
$$ \xy
{\ar@3{-}(0,0)*{\otimes};(10,0)*{\fullmoon}};(5,0)*{<};(-10,0)*{\newmoon};(0,0)*{\otimes}**\dir{=};
(-10,-4)*{\scriptstyle 1};(0,-4)*{\scriptstyle 2};(10,-4)*{\scriptstyle 3};
\endxy$$
$$\ad_q e_2\circ \ad_q e_3 \circ \ad_q e_3\circ \ad_q e_2(e_1)=
  	\ad_q e_3\circ \ad_q e_2 \circ \ad_q e_3\circ \ad_q e_2(e_1).$$

\item[(G3)] For
$$\xy
{\ar@3{-}(-5,0)*{\otimes};(5,0)*{\otimes}};(-5,0)*{\otimes};(0,7)*{\fullmoon}**\dir{=};(0,7)*{\fullmoon};(5,0)*{\otimes}**\dir{-};
(-8,0)*{\scriptstyle 1};(0,10)*{\scriptstyle 2};(8,0)*{\scriptstyle 3};
\endxy$$
$$\ad_q e_1\circ \ad_q e_2 (e_3)-[2]\ad_q e_2 \circ \ad_q e_1(e_3)=0.$$

\item[($D\af$)]  For
$$\xy
(-5,0)*{\otimes};(0,7)*{\otimes}**\dir{-};
(-5,0)*{\otimes};(5,0)*{\otimes}**\dir{-};
(0,7)*{\otimes};(5,0)*{\otimes}**\dir{-};
(-8,0)*{\scriptstyle 1};(0,10)*{\scriptstyle 2};(8,0)*{\scriptstyle 3};
(4,4)*{\scriptstyle \af};(0,-2)*{\scriptstyle -1-\af};
\endxy$$  				
$$[\alpha+1]\ad_q e_1\circ \ad_q e_3 (e_2)+[\alpha]\ad_q e_3 \circ \ad_q e_1(e_2)=0.$$
\end{enumerate}
\end{prp}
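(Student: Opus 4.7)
The plan is to exploit the universal characterization of $U_q$ as the quotient of the free superalgebra on $\{e_i : i \in I\}$ by the radical of the bilinear form $(\cdot,\cdot)$ from Proposition~\ref{T:BilinearForm}. By Proposition~\ref{P:BilinearForm}(3), a homogeneous element of $U_q$ of positive weight is zero if and only if it is annihilated by every $e_i'$. Consequently, to verify that a putative defining relation $R = 0$ holds, it suffices, by induction on the weight $|R|$, to show that $e_i'(R) = 0$ for all $i \in I$ and then to invoke Proposition~\ref{P:BilinearForm}(3) at weight zero.

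The engine of the computation is the twisted Leibniz rule from Proposition~\ref{P:BilinearForm}(2),
$$e_i'(uv) = e_i'(u)\, v + \pi^{p(u)p(i)} q^{-(\alpha_i, |u|)}\, u\, e_i'(v),$$
combined with $e_i'(e_j) = \delta_{ij}$. First I would dispatch the quantum Serre-type relations (Iso) and (N-Iso): relation (Iso) is immediate from the Leibniz rule since $a_{ij}=0$ forces $(\alpha_i,\alpha_j) = 0$ and $p(i)p(j) = 1$, so the two terms in $e_k'(e_ie_j + e_je_i)$ cancel with the correct sign. The relation (N-Iso) follows by the standard inductive identity for $q$-binomial coefficients, adapted to the super setting by using $\{n\}$ in place of $[n]$ when the distinguished simple root is odd non-isotropic; this is the argument of \cite[\S1.2.12]{lu} with super signs inserted at each Leibniz step.

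For the higher-order relations (AB), (CD1), (CD2), (D), (F1)--(F4), (G1)--(G3), and (D$\alpha$), I would work locally. Each relation lives on a sub-diagram of at most four nodes, and iterated application of $e_k'$ produces only finitely many terms, each a scalar multiple of a nested $\ad_q$ expression on that sub-diagram. To streamline the check, I would first prove an auxiliary identity expressing $e_k'(\ad_q e_i(u))$ as a combination of $\ad_q e_i(e_k'(u))$ and $[e_i,u]$-type terms weighted by the scalars $\pi^{p(i)p(k)} q^{(\alpha_i,\alpha_k)}$; applying this repeatedly reduces the verification of each relation to a finite list of $q$-integer identities which depend only on the inner products and parities read off Table~1.

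The main obstacle will be the sign bookkeeping. Each application of $e_k'$ to a nested $\ad_q$ expression generates super signs $\pi^{p(\cdot)p(\cdot)}$ from the Leibniz rule \emph{and} from the twisted multiplication on $U_q \otimes U_q$, and these must conspire to cancel against the carefully chosen exponents $p(i,j;r) = \binom{r}{2} p(i) + r\, p(i) p(j)$ and the asymmetric super $q$-binomial coefficients. Isotropic simple roots exacerbate the issue, since $(\alpha_i,\alpha_i) = 0$ collapses many of the usual $q$-simplifications; for this reason the mixed-type relations do not admit a uniform treatment and must be handled one sub-diagram at a time, as in Yamane's original case-by-case analysis \cite{ya, ya2}.
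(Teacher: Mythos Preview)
The paper does not supply its own proof of this proposition: it is stated with the citation \cite{ya, ya2} and no argument is given. So there is no in-paper proof to compare against; the result is imported wholesale from Yamane.

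Your strategy is nonetheless sound and is precisely the one suggested by the paper's setup. Since $U_q$ is \emph{defined} as the quotient of the free superalgebra by the radical of $(\cdot,\cdot)$, Proposition~\ref{P:BilinearForm}(3) gives exactly the criterion you invoke: a homogeneous element of nonzero weight vanishes in $U_q$ iff every $e_i'$ kills it. Your plan to verify $e_i'(R)=0$ for each relation $R$ via the twisted Leibniz rule, reducing inductively to lower-weight relations already established, is the Lusztig-style argument (cf.\ \cite[\S1.4.3]{lu}) transported to the super setting. For (Iso) and (N-Iso) this is routine; for the exotic relations (AB)--(D$\alpha$) it is, as you anticipate, a finite but tedious case analysis on small subdiagrams, which is essentially what Yamane carries out in \cite{ya,ya2} (though his presentation is organized differently, via explicit PBW-type computations rather than purely through the $e_i'$).

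One caution: your inductive scheme requires that $e_i'(R)$ land in the span of \emph{previously verified} relations, not merely that it vanish after imposing the relation $R$ itself. For the rank-two Serre relations this is automatic since $e_i'$ strictly lowers height, but for the compound relations (F1), (G1) involving the auxiliary element $E$, you should check that applying $e_i'$ does not produce terms requiring the very relation you are proving. In practice this works because the relevant weight spaces are small enough that the only available relations at lower height are the already-handled (Iso)/(N-Iso)/(AB) types, but it is worth flagging explicitly in a write-up.
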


\begin{thm}\cite[Proposition 10.4.1]{ya}\label{T:Distinguished Diagrams} If the Dynkin diagram for $\Uq$ is of type $A$-$D$,
or the distinguished diagram in types $F$ and $G$, then the relations given in Proposition \ref{P:SerreRelations} are defining relations for $U_q$.
\end{thm}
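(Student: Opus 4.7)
The plan is to show that the two-sided ideal generated by the listed relations (Iso)--($D\alpha$) coincides with the radical $I_{\mathrm{rad}}$ of the bilinear form on the free superalgebra $\mathfrak{F}$ on generators $e_i$, $i \in I$, so that $U_q = \mathfrak{F}/I_{\mathrm{rad}}$ is presented by them. Let $\tilde{U}_q$ denote the quotient of $\mathfrak{F}$ by the ideal generated by the listed relations; there is a natural surjection $\phi: \tilde{U}_q \twoheadrightarrow U_q$, and the goal is to show $\phi$ is an isomorphism.

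First, I would verify that each listed relation $R$ vanishes in $U_q$. By Proposition \ref{P:BilinearForm}(3) it suffices to show $e_i'(R) = 0$ in $U_q$ for every $i \in I$, proceeding by induction on the height of $|R| \in Q^+$ using the twisted Leibniz rule of Proposition \ref{P:BilinearForm}(2) together with $e_i'(e_j) = \delta_{ij}$. The (Iso) and (N-Iso) relations are the standard super Serre relations, handled by a direct calculation with $q$-integers. The $\ad_q$-style relations (AB), (CD), (D), (F1)--(F4), (G1)--(G3), and ($D\alpha$) follow from the induced derivation rule for $\ad_q$, namely
\[
e_i'([x,y]_q) = [e_i'(x),y]_q + \pi^{p(i)p(x)}q^{-(\alpha_i,|x|)}[x,e_i'(y)]_q,
\]
which reduces each verification to the simple roots in the corresponding subdiagram.

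For injectivity of $\phi$, I would exhibit a spanning set of each weight space $\tilde{U}_{q,\nu}$ whose size is at most $\dim_{\Q(q)} U_{q,\nu}$. After fixing a total order on $I$, construct root vectors $E_\beta \in \tilde{U}_q$ for each $\beta \in \Phi^+$ by iterated $q$-brackets (as in Section~\ref{S:Computations}), and argue that every word in the $e_i$ can, modulo the listed relations, be straightened into a $\Q(q)$-linear combination of PBW monomials $E_{\beta_1}^{c_1}\cdots E_{\beta_N}^{c_N}$ with $c_k \in \{0,1\}$ for odd isotropic $\beta_k$ and $c_k \in \Z_{\geq 0}$ otherwise. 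Matching this count in weight $\nu$ against $\dim U_{q,\nu}$, computable from the root space decomposition of $\g$, yields the desired equality of dimensions.

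The main obstacle is this straightening: one must derive, from the listed relations alone, a Levendorskii--Soibelman-type rule placing every $q$-bracket $[E_\beta,E_\gamma]_q$ with $\beta \succ \gamma$ into the $\Q(q)$-span of lower PBW monomials. In the ordinary Kac--Moody setting Lusztig's braid group operators provide a uniform device for this; in the super case only odd reflections, generating a groupoid between non-conjugate simple systems, are available. The restriction in the theorem to types $A$--$D$ and the distinguished diagrams in types $F$ and $G$ is essential: these are precisely the diagrams for which a direct inductive straightening, using a finite list of base commutations together with induction on height in $Q^+$, is manageable, while for other diagrams additional defining relations appear after applying odd reflections. This type-by-type analysis is carried out in \cite[Proposition 10.4.1]{ya}, which we invoke.
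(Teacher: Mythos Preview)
The paper does not supply its own proof of this theorem; it is stated purely as a citation of \cite[Proposition 10.4.1]{ya}. Your proposal therefore goes further than the paper by sketching the standard two-step strategy---first verifying that each listed relation lies in the radical of the bilinear form (via the derivations $e_i'$ and Proposition~\ref{P:BilinearForm}(3)), then bounding the graded dimensions of the quotient by a PBW straightening argument---which is essentially Yamane's approach. Since you correctly identify the straightening (the Levendorskii--Soibelman-type commutation relations among the root vectors) as the nontrivial step and ultimately invoke \cite[Proposition 10.4.1]{ya} for it, your treatment is consistent with, and somewhat more informative than, the paper's.
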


%%%%%%%%%%%%%%%%%%%%
%%%%%%%%%%%%%%%%%%%%
\section{Quantum Shuffle Superalgebras}\label{S:Quantum Shuffle Algebra}

In this section, we formulate a quantum shuffle superalgebra associated to a Cartan datum of basic type,
and construct an embedding of the half-quantum superalgebra $U_q$ into a quantum shuffle superalgebra.
These form super generalizations of constructions of Green \cite{grn} and Rosso \cite{ro2}.

%%%%%%%%%%
\subsection{The Homomorphism $\Psi$, I}

Let $(I, \Pi, (\cdot,\cdot))$ be a Cartan datum of basic type.
Let $\F=\F(\I)$ be the free associative superalgebra over $\Q(q)$ generated by $\I$, with parity prescribed by $p(\cdot)$ on $\I$.
Let $\W=\sqcup_{d\geq0} \I^d$ be the set of words in $\F$, i.e., the monoid generated by $\I$.
The identity element is the empty word $\emptyset$, and a general word will be denoted by
$$\mathbf i =(i_1,i_2,\ldots,i_d)=i_1  i_2\cdots i_d.
$$
For $i\in I$ and $k\in \N$, we will use the notation $i^k=\underbrace{ii\ldots i}_{k}$.
Note that $\F$ has a weight space decomposition
$\F=\bigoplus_{\nu\in Q^+}\F_\nu$
by setting $|(i_1, \ldots,i_d)|=\af_{i_1}+\ldots +\af_{i_d}$ and extending linearly.
We define
\begin{align}\label{E:Wnu}
\W_\nu=\W\cap\F_\nu.
\end{align}
Finally, define the length function $\ell:\W\longrightarrow\Z_{\geq0}$ as
\begin{align}\label{E:length}
\ell(i_1,\ldots,i_d)=d.
\end{align}

Let $v\in \Q(q)$. We define the $v$-quantum shuffle product $\shuffle_v:\F\times \F\longrightarrow \F$ inductively by the formula
\begin{align}\label{E:Shuffle}
(xi)\shuffle_v (y  j)=(x\shuffle_v (y j))i +\pi^{(p(x)+p(i))p(j)}v^{-(|x|+\af_i,\af_j)}((xi)\shuffle_v y)j,
\end{align}
and $x\shuffle_v\emptyset=\emptyset\shuffle_v x=x,$ for homogenous $x,y\in\F$ and $i,j\in \I$.
The quantum shuffle products of interest will be those for $v=q$ or $v=q^{-1}$,
so when there is no chance of confusion we will write $\shq=\shuffle_q$ and $\shqi=\shuffle_{q^{-1}}$.

Iterating \eqref{E:Shuffle} above, we obtain
\begin{align}\label{E:Shuffle2}
(i_1,\ldots,i_a)\shq(i_{a+1},\ldots,i_{a+b})=\sum_\sm \pi^{\ep(\sm)}q^{-e(\sm)}(i_{\sm(1)},\ldots,i_{\sm(a+b)}),
\end{align}
where the sum is over minimal coset representatives in $S_{a+b}/S_a\times S_b$,
\begin{align}\label{E:ShuffleExponents}
\ep(\sm)=\sum_{\substack{r\leq a<s\\ \sm(r)<\sm(s)}}p(i_{\sm(r)})p(i_{\sm(s)}),
\andeqn e(\sm)
=\sum_{\substack{r\leq a<s\\ \sm(r)<\sm(s)}}(\af_{i_{\sm(r)}},\af_{i_{\sm(s)}}).
\end{align}
We call each $(i_{\sm(1)},\ldots,i_{\sm(a+b)})$ in \eqref{E:Shuffle2} a shuffle of $(i_1,\ldots,i_a)$ and $(i_{a+1},\ldots,i_{a+b})$.
More generally, given $x,y\in \F$ such that $x=\sum c_w w$ and $y=\sum d_w w$, we say that a word $z\in \W$ occurs as a shuffle
in $x\shq y$ if $z$ is a shuffle of words $w_1,w_2\in \W$ such that $c_{w_1}d_{w_2}\neq 0$.
\begin{prp}\label{P:ShuffleProductProperty} The shuffle product is associative and satisfies
$$x\shq y=\pi^{p(x)p(y)}q^{-(|x|,|y|)} y\shqi x,$$
where we have used the notation $\shqi=\shuffle_{q^{-1}}$.
\end{prp}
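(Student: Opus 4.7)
The plan is to prove both assertions by induction, with the symmetry identity coming from induction on $\ell(x)+\ell(y)$ and associativity from induction on $\ell(x)+\ell(y)+\ell(z)$. The base cases, where at least one word is empty, are immediate: the shuffle product degenerates to concatenation, and in the symmetry identity the scalar $\pi^{p(x)p(y)}q^{-(|x|,|y|)}$ reduces to $1$ since $|\emptyset|=0$ and $p(\emptyset)=\zero$.

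For the symmetry identity, write $x=x'i$ and $y=y'j$ with $i,j\in\I$, and expand both sides via \eqref{E:Shuffle} (noting that $\shqi$ is the same formula with $v=q^{-1}$). Each side splits into a term ending in $i$ and a term ending in $j$. Apply the induction hypothesis to each inner shuffle on the left to convert $\shq$ into $\shqi$, then match term-by-term. The verification reduces to two scalar identities. For the term ending in $i$, one needs $(|x'|,|y|)+(\af_i,|y|) = (|x|,|y|)$, which is immediate from bilinearity and symmetry of $(\cdot,\cdot)$. For the term ending in $j$, one needs $(|x'|+\af_i,\af_j)+(|x|,|y'|) = (|x|,|y|)$, which again follows by expanding bilinearly. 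The parity exponents are handled in parallel, using $p(x)=p(x')+p(i)$, $p(y)=p(y')+p(j)$, and the fact that $p$ is a monoid homomorphism, so that $p(x)p(y)$ decomposes in exactly the way required to match the recursive factors.

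For associativity, I would expand $(x\shq y)\shq z$ and $x\shq(y\shq z)$ for $x=x'i$, $y=y'j$, $z=z'k$, obtaining at most six summands on each side after two applications of \eqref{E:Shuffle}. Matching terms ending in $k$ reduces directly by induction; terms ending in $i$ or $j$ combine two summands from the outer expansion, and equality follows from the inductive hypothesis together with bilinearity of $(\cdot,\cdot)$ to combine $q$-exponents. A cleaner alternative is to iterate \eqref{E:Shuffle2}: both $(x\shq y)\shq z$ and $x\shq(y\shq z)$ expand to a sum over minimal coset representatives for $S_{a+b+c}/(S_a\times S_b\times S_c)$, weighted by $\pi^{\ep(\sm)}q^{-e(\sm)}$ with sign and $q$-exponent determined by the block-inversions, and the transitivity of the chains $S_{a+b+c}\supset S_{a+b}\times S_c\supset S_a\times S_b\times S_c$ and $S_{a+b+c}\supset S_a\times S_{b+c}\supset S_a\times S_b\times S_c$ produces the same collection of coset representatives with the same weight.

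The main obstacle throughout is the bookkeeping of super-signs and $q$-exponents introduced at each recursive step. The key structural facts that make the induction telescope cleanly are: $p$ and $|\cdot|$ are monoid homomorphisms on $\W$; $(\cdot,\cdot)$ is symmetric and bilinear; and the scalar $\pi^{(p(x)+p(i))p(j)}v^{-(|x|+\af_i,\af_j)}$ in \eqref{E:Shuffle} depends only on the total parity and weight of the first factor $xi$ together with the letter $j$ being shuffled in, so nested applications of the recursion produce exponents that recombine into the expected global expressions.
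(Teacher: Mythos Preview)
Your proposal is correct. The paper's one-line proof invokes the closed formula \eqref{E:Shuffle2} for both assertions: associativity follows because both $(x\shq y)\shq z$ and $x\shq(y\shq z)$ expand as the same weighted sum over minimal coset representatives in $S_{a+b+c}/(S_a\times S_b\times S_c)$ (this is exactly your ``cleaner alternative''), and the symmetry identity follows by observing that the bijection $\sigma\leftrightarrow \sigma w_0$ between minimal coset representatives for $S_{a+b}/(S_a\times S_b)$ and $S_{a+b}/(S_b\times S_a)$ (where $w_0$ swaps the two blocks) exchanges the set of block-inversions with its complement, so the weights differ by the global factor $\pi^{p(x)p(y)}q^{-(|x|,|y|)}$. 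Your inductive argument for the symmetry identity via the recursion \eqref{E:Shuffle} is an equally valid route; it trades the single combinatorial bijection for a two-term recursion with scalar bookkeeping, which you have carried out correctly.
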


\begin{proof}
The proof is straightforward using \eqref{E:Shuffle2}.
\end{proof}

We call $(\mathsf F, \shq)$ the quantum shuffle (super)algebra associated to $\I$.

We now describe the bialgebra structure on $\F$ with respect to the concatenation product, and explain the relationship with the shuffle product. Equip $\mathsf F\otimes \mathsf F$ with the associative product
$$(w\otimes x)(y\otimes z) = \pi^{p(x)p(y)}q^{-(|x|,|y|)}(wy)\otimes(xz),$$
where we use the concatenation product on each tensor factor.
Then, $\delta:\mathsf F\rightarrow
\mathsf F\otimes \mathsf F$ given by
$\delta(i)=i\otimes 1+ 1\otimes i$ is an algebra homomorphism with respect to the concatenation product on both sides.

\begin{lem}
The algebra $\mathsf F$ admits a symmetric bilinear form $(\cdot,\cdot)$
such that $(1,1)=1$,
\begin{align*}
(i,j) =\delta_{i,j}, & \qquad \text{ for  } i,j \in \I,
 \\
(\mathbf i \mathbf j,
\mathbf k) =(\mathbf i\otimes \mathbf j, \delta(\mathbf k) ),
& \qquad \text{ for  }\mathbf i, \mathbf j \in \W
\end{align*}
where $(\mathbf i_1\otimes \mathbf i_2,\mathbf j_1\otimes \mathbf j_2)=(\mathbf i_1,\mathbf j_1)(\mathbf i_2, \mathbf j_2)$.
\end{lem}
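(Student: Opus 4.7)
My plan is to construct the form through a length-reducing recursion derived from the defining coproduct identity, and then establish symmetry via an explicit combinatorial formula. To begin, I would prove uniqueness by observing that weight considerations force $(\ui,\uk)=0$ unless $|\ui|=|\uk|$, and for $\ell(\ui)\geq 1$, writing $\ui = i_1 \ui'$ and applying the defining identity yields $(\ui,\uk) = (i_1 \otimes \ui',\, \delta(\uk))$. Expanding $\delta(\uk) = \delta(k_1)\cdots \delta(k_b)$ in the twisted product on $\F\otimes\F$ and retaining only those summands whose first tensor factor is a single letter gives the recursion
$$(\ui,\uk) = \sum_{\substack{1\leq r \leq b \\ k_r = i_1}} \pi^{p(i_1)\sum_{s<r} p(k_s)}\, q^{-(\af_{i_1},\, \sum_{s<r} \af_{k_s})}\, (\ui',\uk_{\hat r}),$$
where $\uk_{\hat r}$ is $\uk$ with its $r$-th letter removed. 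Together with $(1,1)=1$ and $(i,j)=\dt_{ij}$, this determines $(\cdot,\cdot)$ on all pairs of basis words.

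For existence I would take this formula as the definition of the form and verify by induction on $\ell(\ui)$ that the broader coproduct identity $(\ui\uj,\uk)=(\ui\otimes\uj,\,\delta(\uk))$ holds for arbitrary splittings of the first argument. The inductive step peels off one more initial letter from $\ui$ and combines the coassociativity of $\delta$ (automatic since $\delta$ is an algebra homomorphism for concatenation) with the associativity of the twisted product on $\F\otimes\F$. The main bookkeeping task is tracking the $\pi$-signs and $q$-powers that arise when commuting tensor factors past one another; these align precisely because the twisted multiplication on $\F\otimes\F$ was designed so that $\delta$ becomes a bialgebra map.

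Finally, for symmetry I would iterate the recursion to arrive at the closed formula
$$(\ui,\uk) = \sum_{\sigma} \pi^{\epsilon(\sigma)}\, q^{-e(\sigma)},$$
where $\sigma$ ranges over bijections in $S_d$ satisfying $k_{\sigma(r)} = i_r$ for every $r$, and $\epsilon(\sigma), e(\sigma)$ are inversion-style sums of $p(i_r)p(i_s)$ and $(\af_{i_r},\af_{i_s})$, respectively. The involution $\sigma \mapsto \sigma^{-1}$ gives a bijection between the summands indexing $(\ui,\uk)$ and those indexing $(\uk,\ui)$, and each exponent is preserved because $p(\cdot)p(\cdot)$ and $(\cdot,\cdot)$ are symmetric on $Q$. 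I expect the most delicate step to be correctly extracting the exponents $\epsilon, e$ from the iterated recursion and confirming their invariance under $\sigma\leftrightarrow\sigma^{-1}$.
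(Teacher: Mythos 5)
Your plan is correct and fleshes out the argument that the paper disposes of with a citation to the standard references. The recursion you derive from the defining identity with $\ui = i_1\ui'$ is exactly what \eqref{eq:deltai} gives when one restricts to $P=\{r\}$: the coefficient of $k_r\otimes\uk_{\hat r}$ in $\delta(\uk)$ is $\pi^{p(k_r)\sum_{s<r}p(k_s)}q^{-(\af_{k_r},\sum_{s<r}\af_{k_s})}$, which agrees with your expression since $k_r=i_1$. Uniqueness from this recursion is immediate, and the verification that the recursively defined form satisfies $(\ui\uj,\uk)=(\ui\otimes\uj,\delta(\uk))$ for all splittings is a routine but genuine exponent check; the coassociativity of $\delta$ (which holds on generators and propagates because $\delta$ and $\delta\otimes1$, $1\otimes\delta$ are all algebra homomorphisms for the twisted products) is what guarantees the double sums over $(r,P)$ and $(Q,l)$ match.

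One small point of contrast worth noting: the references you would be following (e.g. Lusztig's Proposition 1.2.3) typically derive symmetry indirectly, by first showing that the form built from $(x,yz)=(\delta(x),y\otimes z)$ automatically satisfies the adjoint identity $(xy,z)=(x\otimes y,\delta(z))$ as well, and then invoking uniqueness to conclude $(x,y)=(y,x)$. You instead propose to iterate the recursion to a closed formula $(\ui,\uk)=\sum_{\sigma}\pi^{\epsilon(\sigma)}q^{-e(\sigma)}$ indexed by bijections $\sigma$ with $k_{\sigma(r)}=i_r$, where $\epsilon(\sigma)=\sum_{t<t',\,\sigma(t)>\sigma(t')}p(i_t)p(i_{t'})$ and similarly for $e$, and then apply the involution $\sigma\mapsto\sigma^{-1}$. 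This works: the change of variables $a=\sigma^{-1}(t')$, $b=\sigma^{-1}(t)$ shows $\epsilon(\sigma^{-1})$ (computed for the pair $(\uk,\ui)$) equals $\epsilon(\sigma)$, using only the symmetry of $p(\cdot)p(\cdot)$ and $(\cdot,\cdot)$. Your route is more computational but also more transparent, and as a bonus it produces the explicit formula for $(\ui,\uk)$. Either way the result is the same, so this is a correct, if slightly more hands-on, realization of the ``standard argument.''
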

\begin{proof}
This can be proved by a standard argument; cf. \cite{lu,CHW1}.
\end{proof}

Note that there is an obvious surjective algebra homomorphism $\psi:\F\rightarrow U_q$
given by $i\mapsto e_i$; moreover, $\Delta\circ \psi=(\psi\otimes \psi) \circ \delta$,
and hence by Proposition~\ref{T:BilinearForm},
$(\mathbf i,\mathbf j)=(\psi(\mathbf i),\psi(\mathbf j))$.

Suppose that $\mathbf i=i_1\cdots i_n$. For any $a<b\in \N$, set $[a.b]=\set{a,a+1,\ldots, b-1, b}$. Then for any subset
$P=\{ k _1<\ldots <  k _m\}$ of
$[1.n]$, define
$\mathbf i_P=i_{ k _1}\cdots i_{ k _m}$
so that $\mathbf i_P$ is a word of length $m\leq n$.
We have
\[\delta(\mathbf i)=\prod_{ k \in [1.n]} \delta(i_ k )
=\prod_{ k \in [1.n]} (i_ k \otimes 1 + 1\otimes i_ k ),\]
where this non-commuting product is taken in the order
$ k =1,\ldots, n$. The last product can be expanded
as a sum $\sum_{P\subseteq [1.n]} z(P)$,
where $z(P)=z_1\ldots z_n$ with $z_ k =i_ k \otimes 1$
if $ k  \in P$ and $z_ k =1\otimes i_ k $
if $ k \in P^c=[1.n]\setminus P$.
Now expanding $z(P)$ using the tensor multiplication rule gives us
\[z(P)=\pi^{\ep(\sm_P)}q^{-e(\sm_P)}
\mathbf i_{P}\otimes \mathbf i_{P^c},\]
where $\sm_P$ is the minimal coset representative in $S_n/S_{n-m}\times S_{m}$ satisfying
$\sm_P([n-m+1.n])=P$ and $\ep(\sm_P)$ and $e(\sm_P)$ are defined in \eqref{E:ShuffleExponents}.
Hence,
%\begin{lem}
for a word $\mathbf i\in \W$ of length $n$, we have
\begin{equation}  \label{eq:deltai}
\delta(\mathbf i)=\sum_{P\subseteq [1.n]}
\pi^{\ep(\sm_P)}q^{-e(\sm_P)}
\mathbf i_P\otimes \mathbf i_{{P^{c}}}.
\end{equation}
%\end{lem}

Let $\mathsf F^*$ be the graded dual of $\mathsf F$.
Then for any word $\mathbf i$ in $\mathsf F$, we
set $f_{\mathbf i}$ to be the dual basis element:
\[f_{\mathbf i}(\mathbf j)=\delta_{\mathbf i \mathbf j},
\qquad \text{ for all } \mathbf i, \mathbf j \in \W.
\]
We endow $\mathsf F^*$ with an associative algebra structure
with multiplication defined by
$$
(fg)(x)=(g\otimes f)(\delta(x)),
\qquad \text{ for }f,g \in \mathsf F^*, x\in \mathsf F.
$$

\begin{lem}
The map $\phi:\mathsf F^*\rightarrow (\mathsf F,\diamond)$
$f_{\mathbf i}\mapsto \mathbf i$ is an isomorphism of algebras.
\end{lem}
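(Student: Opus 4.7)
The plan is as follows. Bijectivity and $\Q(q)$-linearity of $\phi$ are immediate: the set $\{f_{\mathbf i}\}_{\mathbf i \in \W}$ is, by definition, the basis of $\mathsf F^*$ dual to the basis $\{\mathbf i\}_{\mathbf i \in \W}$ of $\mathsf F$. The content of the lemma is that $\phi$ is multiplicative, i.e.\ that $\phi(f_{\mathbf i} f_{\mathbf j}) = \mathbf i \shq \mathbf j$ for all $\mathbf i, \mathbf j \in \W$. I would prove this by evaluating both sides on an arbitrary word $\mathbf k \in \W$ and matching the resulting coefficients term by term.

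For the left-hand side, writing $n = \ell(\mathbf k)$ and using the definition of the product on $\mathsf F^*$ together with the explicit formula \eqref{eq:deltai}, one computes
\[
(f_{\mathbf i} f_{\mathbf j})(\mathbf k) = (f_{\mathbf j}\otimes f_{\mathbf i})(\delta(\mathbf k))
= \sum_{\substack{P\subseteq [1.n] \\ \mathbf k_P = \mathbf j,\ \mathbf k_{P^c} = \mathbf i}}
\pi^{\ep(\sigma_P)} q^{-e(\sigma_P)}.
\]
For the right-hand side, let $a = \ell(\mathbf i)$, $b = \ell(\mathbf j)$, and write the concatenation $\mathbf i\mathbf j = l_1\cdots l_{a+b}$. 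Then by \eqref{E:Shuffle2} the coefficient of $\mathbf k$ in $\mathbf i \shq \mathbf j$ is
\[
\sum_{\substack{\sigma \in S_{a+b}/(S_a\times S_b) \\ (l_{\sigma(1)},\ldots,l_{\sigma(a+b)}) = \mathbf k}}
\pi^{\ep(\sigma)} q^{-e(\sigma)}.
\]

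The two sums are matched by the obvious bijection $P \leftrightarrow \sigma$: send a subset $P$ (the set of positions in $\mathbf k$ occupied by the letters of $\mathbf j$) to the unique minimal coset representative $\sigma_P$ whose image $\sigma_P([a+1.a+b])$ equals $P$. Under this correspondence, the condition $\mathbf k_P = \mathbf j,\ \mathbf k_{P^c} = \mathbf i$ is exactly the condition that the shuffled word equals $\mathbf k$, so the index sets agree. What remains is to see that the weight $\pi^{\ep(\sigma_P)} q^{-e(\sigma_P)}$ arising from the twisted-bialgebra multiplication on $\mathsf F \otimes \mathsf F$ in \eqref{eq:deltai} coincides with the shuffle weight $\pi^{\ep(\sigma)} q^{-e(\sigma)}$ coming from \eqref{E:ShuffleExponents}. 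Both expressions are, by inspection, the sum over inversions between the $\mathbf i$- and $\mathbf j$-positions in $\mathbf k$ of the parity (resp.\ of $(\cdot,\cdot)$) of the relevant pair of simple roots; the signs produced by the twisted multiplication when pushing ``slot $1$'' past ``slot $2$'' in $(1\otimes k_t)(k_{t'}\otimes 1)$ for $t < t'$, $t \in P^c$, $t' \in P$, record precisely the same pairs as those counted by $\ep$ and $e$ in \eqref{E:ShuffleExponents}.

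The main (and essentially only) obstacle in executing this plan is the bookkeeping needed to pass between the description of $\sigma_P$ as a minimal coset representative and the description of $P$ as the set of output positions occupied by $\mathbf j$-letters, and then to confirm that the two inversion counts match exactly. Conceptually, the lemma records the super-analogue of Rosso's observation \cite{ro2} that the quantum shuffle algebra is the graded dual of the free algebra equipped with the twisted deconcatenation coproduct; once the twisted multiplication on $\mathsf F\otimes\mathsf F$ is set up consistently, the identification $\phi$ is forced.
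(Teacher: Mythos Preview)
Your proposal is correct and follows essentially the same approach as the paper's proof: both compute the structure constants of $\mathsf F^*$ via the formula for $\delta(\mathbf k)$, compute the structure constants of $(\mathsf F,\diamond)$ via the shuffle formula \eqref{E:Shuffle2}, and match them through the bijection $P\leftrightarrow\sigma_P$ between subsets of $[1.a+b]$ and minimal coset representatives in $S_{a+b}/S_a\times S_b$. Your discussion of why the weights $\pi^{\ep(\sigma_P)}q^{-e(\sigma_P)}$ agree is slightly more explicit than the paper's, but the argument is the same.
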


\begin{proof}
It is clear that the given map is a vector space isomorphism;
it remains to show the products match.
Let $\mathbf{i}=(i_1,\ldots, i_n)$ and $\mathbf{j}=(j_1,\ldots, j_m)$,
and suppose that $\mathbf{k}$ has weight $|\mathbf i|+ |\mathbf j|$. Then by \eqref{eq:deltai} we have
\[\delta(\mathbf k)=\sum_{P\subseteq [1.n+m]}
\pi^{\ep(\sm_P)}q^{-e(\sm_P)}
\mathbf k_P\otimes \mathbf k_{{P^{c}}}.\]
Then we see that $\lambda_{\mathbf i,\mathbf j}^{\mathbf k}:=(f_{\mathbf{j}}\otimes f_{\mathbf{i}})(\delta(\mathbf k))
=\sum\pi^{\ep(\sm_P)}q^{-e(\sm_P)}$,
where the sum is over $P\subset [1.n+m]$ such that
$\mathbf k_P=\mathbf j$ and $\mathbf k_{{P^{c}}}=\mathbf i$.
Therefore,
\begin{align}\label{E:Tag}
f_{\mathbf i}f_{\mathbf j}=\sum \lambda_{\mathbf i, \mathbf j}^{\mathbf k} f_{\mathbf k}.
\end{align}

On the other hand, by \eqref{E:Shuffle2}
that $\mathbf i \diamond \mathbf j=\sum_{\sigma} \pi^{\ep(\sm)}q^{-e(\sm)} (l_{\sm(1)},\ldots,l_{\sm(m+n)})$,
where $\mathbf i \cdot \mathbf j=(l_1,\ldots,l_{m+n})$,
$\sigma\in S_{n+m}/S_n\times S_m$ is a minimal coset representative, and $P=\set{\sigma(n+1),\ldots, \sigma(n+m)}$.

Let $\mathbf k\in \W_{|\mathbf i|+|\mathbf j|}$. Then $\mathbf k$ appears as a summand
of $\ui\shq \uj$ if and only if $\mathbf k=(l_{\sm(1)},\ldots,l_{\sm(m+n)})$ for some $\sm$
such that $\mathbf k_{\sm([n+1.n+m]}=\mathbf j$ and $\mathbf k_{\sm([1.n])}=\mathbf i$.
In particular, $\sm$ satisfies $\sm=\sm_P$ for $P=\sm([n+1.n+m])$.
Therefore,
\[
\mathbf i \diamond \mathbf j
 =\sum_{\mathbf k} \sum_{\substack{P\subset [1.n+m]\\ \mathbf k_P=\mathbf j,\ \mathbf k_{P^c}=\mathbf i}} \pi^{\ep(\sm_P)}q^{-e(\sm_P)}  \mathbf k
 =\sum_{\mathbf k} \lambda_{\mathbf i, \mathbf j}^{\mathbf k} \mathbf k.
\]
Comparing this to \eqref{E:Tag} shows that $\phi$ is an algebra isomorphism.
\end{proof}

\begin{cor}
 \label{cor:psiUF}
There exists an algebra embedding
$\Psi:U_q\rightarrow (\mathsf F, \diamond_q)$
such that $\Psi(e_i)=i$.
\end{cor}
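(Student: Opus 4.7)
The plan is to construct $\Psi$ directly from the nondegenerate form on $U_q$ from Proposition~\ref{T:BilinearForm}, the anti-automorphism $\tau$ from Proposition~\ref{P:autoUq}, and the surjection $\psi\colon \F \to U_q$: declare
\[
\Psi(u) := \sum_{\uj \in \W} (\tau(u),\psi(\uj))\,\uj.
\]
Weight-homogeneity of the form makes the sum finite on each weight space. For the generators, $\tau(e_i)=e_i$ and property (B2) give $(e_i,\psi(\uj)) = \dt_{\uj,(i)}$, hence $\Psi(e_i) = i$. Injectivity is immediate: $\Psi(u)=0$ forces $(\tau(u),\psi(\uj))=0$ for every $\uj\in\W$; surjectivity of $\psi$ upgrades this to $(\tau(u),v)=0$ for all $v\in U_q$, and nondegeneracy forces $u=0$.

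The substantive step is the algebra homomorphism property $\Psi(uv)=\Psi(u)\shq \Psi(v)$, which I plan to verify by comparing the coefficient of each $\uk\in\W$ on the two sides. Using $\tau(uv)=\tau(v)\tau(u)$, property (B3), and the compatibility $\Dt\circ\psi=(\psi\otimes\psi)\circ\dt$, the left-hand coefficient becomes
\[
(\tau(v)\otimes\tau(u),\,(\psi\otimes\psi)\dt(\uk))
=\sum_{P\subseteq [1.\ell(\uk)]}\pi^{\ep(\sm_P)}q^{-e(\sm_P)}(\tau(v),\psi(\uk_P))(\tau(u),\psi(\uk_{P^c})),
\]
after expanding $\dt(\uk)$ via \eqref{eq:deltai}. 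The right-hand coefficient is computed from the shuffle formula $\ui\shq\uj=\sum_{\uk}\lambda^{\uk}_{\ui,\uj}\uk$ of the preceding lemma; under the identifications $\ui=\uk_{P^c}$ and $\uj=\uk_P$, the double sum over $(\ui,\uj)$ collapses to the same sum indexed by $P$, matching term by term.

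The conceptual reason a twist by $\tau$ is needed is a convention mismatch: with the paper's multiplication $(fg)(x)=(g\otimes f)(\dt(x))$ on $\F^*$, the natural dualization $u\mapsto(u,\cdot)$ sending $U_q$ into its graded dual is an algebra \emph{anti}-homomorphism rather than a homomorphism, so composing with the iso $\phi\colon \F^*\to(\F,\shq)$ of the preceding lemma and the dual map $\psi^*$ a priori only yields an anti-homomorphism; precomposition with $\tau$ flips this into a genuine homomorphism. I expect the main obstacle to be accurately matching $\pi$- and $q$-exponents under the passage $P\leftrightarrow P^c$, but once one observes that on both sides the relevant sums are literally indexed by the same subsets $P$ (read off from $\dt(\uk)$ on one side, and from counting shuffles with $\uk_P=\uj$ on the other), the agreement is immediate.
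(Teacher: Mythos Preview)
Your proof is correct and follows essentially the same dualization strategy as the paper: compose the isomorphism $U_q\cong U_q^*$ coming from the nondegenerate form, the injective dual map $\psi^*\colon U_q^*\to \F^*$, and the isomorphism $\phi\colon \F^*\to(\F,\shq)$ of the preceding lemma. The paper records this composition in one line without unwinding it; you instead write down the resulting formula $\Psi(u)=\sum_{\uj}(\tau(u),\psi(\uj))\,\uj$ and verify the homomorphism property by a direct coefficient comparison against the shuffle expansion and the formula \eqref{eq:deltai} for $\delta(\uk)$.

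The one genuine addition in your argument is the explicit insertion of $\tau$ and the explanation of why it is forced: with the paper's convention $(fg)(x)=(g\otimes f)(\delta(x))$ on $\F^*$, the naive identification $u\mapsto(u,\cdot)$ is an anti-homomorphism, so one must twist by $\tau$ to get an honest homomorphism. The paper's proof of the corollary suppresses this point (it simply asserts $U_q^*\cong U_q$), though the same twist is implicitly present in the alternative description $\Psi(u)=\sum_{\ui}e'_{\ui}(u)\,\ui$ of Proposition~\ref{T:UintoF}, since $e'_{\ui}(u)=(e_{i_d}\cdots e_{i_1},u)=(\psi(\tau(\ui)),u)$. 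So your version is the paper's argument made precise at exactly the spot where the paper is terse.
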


\begin{proof}
The epimorphism $\psi:\mathsf F\rightarrow U_q$
induces an injective homomorphism of graded duals
$\psi^*:U_q^*\rightarrow \mathsf F^*$.
But since $(\cdot,\cdot)$ on $U_q$ is nondegenerate, $U_q^*\cong U_q$;
on the other hand, we just proved that
$\mathsf F^*\cong (\mathsf F,\diamond)$, and so the composition
$\Psi:U_q \stackrel{\cong}{\longrightarrow} U_q^*\stackrel{\psi^*}{\longrightarrow}
 \mathsf F^*\stackrel{\cong}{\longrightarrow}  (\F,\shq)$ is the desired map.
\end{proof}

Define $\UU=\Psi(U_q)$ to be the subalgebra of $(\F,\shq)$ generated by $\I$.

%%%%%%%%%%%%%%%%%%%%%
\subsection{The Homomorphism $\Psi$, II}

In the case where the diagram for $\Uq$ in
Table~1 is of type $A$-$D$ or the distinguished diagram in types $F$ and $G$,
we give an alternate description of the homomorphism $\Psi$ above.
This new description  of $\Psi$ and then $\UU$ is suitable for computations later on.

For $x,y\in\F$, introduce the notation
\begin{align}\label{E:qtShuffle}
x \shuffle_{q,t}y=x\shuffle_q y - x\shuffle_{t} y.
\end{align}
Then Proposition \ref{P:ShuffleProductProperty} can be rephrased as
%\begin{prp}\label{P:qqi shuffle}
%
\begin{equation}  \label{P:qqi shuffle}
x\shq y - \pi^{p(x)p(y)}q^{(|x|,|y|)}y\shq x=x\shqqi y,
\end{equation}
for $x,y\in \F$ homogeneous.
%\end{prp}
We denote  $\displaystyle i^{\shq r}=\underbrace{ i\shq\cdots\shq i}_{r\mbox{ times}}$ below, and recall $s_{ij}$ from \eqref{E:signs}.

\begin{lem}\label{L:Rk2Shuffle}
The following identities hold in $\F$ whenever the indicated Dynkin subdiagram associated to $\Uq$ appears:
\begin{enumerate}
\item[(Iso)] $i\shq j+j\shq i=0$ for $i,j\in \I_\one$ with $a_{ij}=0$;
\item[(N-Iso)] If $i\neq j$ and $i\in \I_\zero \cup \I_\niso$,
$$\sum_{r+s=1+|a_{ij}|}(-1)^r\pi_i^{p(i,j;r)}\begin{bmatrix}1+|a_{ij}|\\r\end{bmatrix}_i i^{\shq r}\shq j\shq i^{\shq s} =0.
$$

\item[(A/B)] For
$$\hspace{.75in}\xy
(-10,0)*{\odot};(0,0)*{\otimes}**\dir{-};(0,0)*{\otimes};(10,0)*{\odot}**\dir{-};
(-10,-4)*{\scriptstyle i};(0,-4)*{\scriptstyle j};(10,-4)*{\scriptstyle k};
\endxy\quad (s_{ij}\neq s_{jk})$$
or
$$\xy
(-10,0)*{\yy};(0,0)*{\otimes}**\dir{=};(0,0)*{\otimes};(10,0)*{\odot}**\dir{-};(-5,0)*{<};
(-10,-4)*{\scriptstyle i};(0,-4)*{\scriptstyle j};(10,-4)*{\scriptstyle k};
\endxy$$
$$
j\shqqi(k\shqqi(j\shqqi i))=0.
$$
\item[(CD1)] For
$$\xy
(-10,0)*{\fullmoon};(0,0)*{\otimes}**\dir{=};(0,0)*{\otimes};(10,0)*{\otimes}**\dir{-};(-5,0)*{>};
(-10,-4)*{\scriptstyle i};(0,-4)*{\scriptstyle j};(10,-4)*{\scriptstyle k};
\endxy$$
$$j\shqqi((j\shqqi k)\shqqi (i\shqqi (j\shqqi k)))=0.$$
\item[(CD2)] For
$$\xy
(-10,0)*{\odot};(0,0)*{\fullmoon}**\dir{-};(0,0)*{\fullmoon};(10,0)*{\otimes}**\dir{-};(15,0)*{<};(10,0)*{\fullmoon};(20,0)*{\fullmoon}**\dir{=};
(-10,-4)*{\scriptstyle i};(0,-4)*{\scriptstyle j};(10,-4)*{\scriptstyle k};(20,-4)*{\scriptstyle l};
\endxy$$
$$k\shqqi (j \shqqi (k\shqqi (l\shqqi (k\shqqi (j\shqqi i)))))=0.$$
\item[(D)] For
$$\xy
(0,0)*{\odot};(7,5)*{\otimes}**\dir{-};(0,0)*{\odot};(7,-5)*{\otimes}**\dir{-};(7,5)*{\otimes};(7,-5)*{\otimes}**\dir{=};
(-4,0)*{\scriptstyle i};(10,5)*{\scriptstyle j};(10,-5)*{\scriptstyle k};
\endxy$$
$$k\shqqi (j\shqqi i)=j \shqqi (k\shqqi i).$$
\item[(F1)] For
$$\xy
{\ar@3{-}(-15,0)*{\fullmoon};(-5,0)*{\otimes}};(-10,0)*{>};
(-5,0)*{\otimes};(5,0)*{\fullmoon}**\dir{=};(0,0)*{<};(5,0)*{\fullmoon};(15,0)*{\fullmoon}**\dir{-};
(-15,-4)*{\scriptstyle 1};(-5,-4)*{\scriptstyle 2};(5,-4)*{\scriptstyle 3};(15,-4)*{\scriptstyle 4};
\endxy$$
$$\mathcal{E}\shqqi (\mathcal{E} \shqqi (4 \shqqi (3 \shqqi 2)))=0,$$
where
$$\mathcal{E}=(1\shqqi 2)\shqqi (3\shqqi 2)=(q^5+q^2-q^{-2}-q^{-5})(3122+1322)+(q^2-q^{-2})(1232).$$
\item[(F2)] For
$$\xy
{\ar@3{-}(-15,0)*{\fullmoon};(-5,0)*{\otimes}};(-10,0)*{>};
(-5,0)*{\otimes};(5,0)*{\fullmoon}**\dir{=};(0,0)*{<};(5,0)*{\fullmoon};(15,0)*{\fullmoon}**\dir{-};
(-15,-4)*{\scriptstyle 1};(-5,-4)*{\scriptstyle 2};(5,-4)*{\scriptstyle 3};(15,-4)*{\scriptstyle 4};
\endxy$$
\begin{align*}(1\shqqi 2)\shqqi ( (3\shqqi 2)& \shqqi (3\shqqi 4))\\&=
  	(3\shqqi 2)\shqqi ( (1\shqqi 2) \shqqi (3\shqqi 4)).
\end{align*}
\item[(F3)] For
$$\xy
(-10,0)*{\otimes};(0,0)*{\otimes}**\dir{=};(0,0)*{\otimes};(10,0)*{\fullmoon}**\dir{=};(5,0)*{<};
(-10,-4)*{\scriptstyle 1};(0,-4)*{\scriptstyle 3};(10,-4)*{\scriptstyle 4};
\endxy$$
$$3\shqqi(1\shqqi (3\shqqi 4))=0.$$
\item[(F4)] For
$$\xy
{\ar@3{-}(-5,0)*{\otimes};(0,7)*{\otimes}};(-5,0)*{\otimes};(5,0)*{\otimes}**\dir{=};(0,7)*{\otimes};(5,0)*{\otimes}**\dir{-};
(-8,0)*{\scriptstyle 1};(0,10)*{\scriptstyle 2};(8,0)*{\scriptstyle 3};
\endxy$$
$$[3](i\shqqi (j\shqqi k))+[2](j \shqqi (i\shqqi k))=0.$$
\item[(G1)] For
$$ \xy
{\ar@3{-}(0,0)*{\otimes};(10,0)*{\fullmoon}};(5,0)*{<};(-10,0)*{\otimes};(0,0)*{\otimes}**\dir{=};
(-10,-4)*{\scriptstyle 1};(0,-4)*{\scriptstyle 2};(10,-4)*{\scriptstyle 3};
\endxy$$
$$\mathcal{E}\shqqi ( \mathcal{E} \shqqi ( \mathcal{E} \shqqi (2\shqqi 1)))=0,$$  	
where $\mathcal{E}=(2\shqqi 3)=-(q^3-q^{-3})(23)$.

\item[(G2)] For
$$ \xy
{\ar@3{-}(0,0)*{\otimes};(10,0)*{\fullmoon}};(5,0)*{<};(-10,0)*{\newmoon};(0,0)*{\otimes}**\dir{=};
(-10,-4)*{\scriptstyle 1};(0,-4)*{\scriptstyle 2};(10,-4)*{\scriptstyle 3};
\endxy$$
$$2\shqqi (3 \shqqi (3\shqqi (2\shqqi 1)))=
  	3\shqqi (2 \shqqi (3\shqqi (2\shqqi 1))).$$

\item[(G3)] For
$$\xy
{\ar@3{-}(-5,0)*{\otimes};(5,0)*{\otimes}};(-5,0)*{\otimes};(0,7)*{\fullmoon}**\dir{=};(0,7)*{\fullmoon};(5,0)*{\otimes}**\dir{-};
(-8,0)*{\scriptstyle 1};(0,10)*{\scriptstyle 2};(8,0)*{\scriptstyle 3};
\endxy$$
$$1\shqqi (2\shqqi 3)-[2](2 \shqqi (1\shqqi 3))=0.$$
\item[($D\af$)] For
$$\xy
(-5,0)*{\otimes};(0,7)*{\otimes}**\dir{-};
(-5,0)*{\otimes};(5,0)*{\otimes}**\dir{-};
(0,7)*{\otimes};(5,0)*{\otimes}**\dir{-};
(-8,0)*{\scriptstyle 1};(0,10)*{\scriptstyle 2};(8,0)*{\scriptstyle 3};
(4,4)*{\scriptstyle \af};(0,-2)*{\scriptstyle -1-\af};
\endxy$$  				
$$[\alpha+1](1\shqqi (3\shqqi 2))+[\alpha](3 \shqqi (1\shqqi 2))=0.$$
\end{enumerate}
\end{lem}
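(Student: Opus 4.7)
The approach is to apply the algebra embedding $\Psi \colon U_q \hookrightarrow (\mathsf F, \shq)$ from Corollary \ref{cor:psiUF} to each defining relation of $U_q$ recorded in Proposition \ref{P:SerreRelations}. Since $\Psi$ is a $Q^+$-graded algebra homomorphism sending $e_i \mapsto i$, ordinary products in $U_q$ become shuffle products in $\mathsf F$, so every identity in $U_q$ yields a corresponding identity in $(\mathsf F, \shq)$.

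The key translation principle is that the $q$-commutator in $U_q$ corresponds under $\Psi$ to $\shqqi$ in $\mathsf F$. Indeed, for homogeneous $u,v \in U_q$, equation \eqref{P:qqi shuffle} gives
\[
\Psi([u,v]_q) \;=\; \Psi(u) \shq \Psi(v) - \pi^{p(u)p(v)} q^{(|u|,|v|)} \Psi(v) \shq \Psi(u) \;=\; \Psi(u) \shqqi \Psi(v).
\]
Iterating, any nested $\ad_q$-expression on the generators $e_i$ in $U_q$ maps under $\Psi$ to the corresponding nested $\shqqi$-expression in $\mathsf F$. With this dictionary in hand, the relations (Iso), (N-Iso), (A/B), (CD1), (CD2), (D), (F2), (F3), (F4), (G2), (G3), and ($D\af$) are each obtained immediately as the $\Psi$-image of the corresponding relation in Proposition \ref{P:SerreRelations}: products turn into $\shq$-products and $\ad_q$ turns into $\shqqi$.

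The only additional work lies in (F1) and (G1), where the lemma also asserts closed-form expansions for $\mathcal E = \Psi(E)$. These are verified by direct expansion from the recursive definition \eqref{E:Shuffle} of the shuffle product. For (G1), where $p(2)=1$ and $(\af_2,\af_3)\neq 0$, the computation $2 \shqqi 3 = -(q^3 - q^{-3})(23)$ is a single-step application of \eqref{E:Shuffle}. For (F1), one first expands $1 \shqqi 2$ and $3 \shqqi 2$ as explicit sums of two-letter words, then shuffles the results to produce the asserted four-letter expression of weight $\af_1 + 2\af_2 + \af_3$.

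The main obstacle will be the bookkeeping in (F1): the intermediate shuffling of two two-letter expressions produces many terms, with super-signs from the two copies of the isotropic root $\af_2$ and $q$-exponents from the Dynkin pairings. One must verify the anticipated cancellations leaving only the three residual monomials $(3122)$, $(1322)$, $(1232)$ with the stated coefficients; this is tedious but routine given the explicit recursion \eqref{E:Shuffle}, and is simplified by the fact that $p(2)=1$ so that the two copies of $\af_2$ contribute predictable super-signs.
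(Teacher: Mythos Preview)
Your proposal is correct and follows essentially the same approach as the paper: the paper's proof simply states that the lemma follows from Corollary~\ref{cor:psiUF} together with the relations in Proposition~\ref{P:SerreRelations}, and mentions that direct computation is an alternative. Your write-up makes the translation principle $\Psi(\ad_q u(v)) = \Psi(u)\shqqi\Psi(v)$ explicit and correctly identifies the extra bookkeeping needed for the closed-form expansions of $\mathcal{E}$ in (F1) and (G1), which the paper leaves implicit.
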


\begin{proof}
This follows from Corollary~\ref{cor:psiUF} and the corresponding relations for $U_q$ given in Proposition~\ref{P:SerreRelations}.
These can also be deduced directly by tedious (but straightforward) computer calculation, which we omit.
\end{proof}

\begin{lem}\label{L:epsilonqder}
For each $i\in \I$, define the $\Qq$-linear operator $\ep_i':\F\longrightarrow\F$ by
$$\ep_i'(i_1,\ldots,i_d)=\dt_{i,i_d}(i_1,\ldots,i_{d-1}) \andeqn \ep_i'(\emptyset)=0.$$
Then, the endomorphisms $\ep_i'$ satisfy
$$\ep_i'(j)=\dt_{ij}
\quad
\text{ and } \quad
\ep_i'(x\shq y)=\ep_i'(x)\shq y +\pi^{p(x)p(y)}q^{-(\af_i,|x|)}x\shq\ep_i'(y).$$
\end{lem}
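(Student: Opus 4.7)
The length-one case $\ep_i'(j) = \delta_{ij}$ is immediate from the definition applied to the word $(j)$. For the derivation-type identity, I reduce by $\Qq$-bilinearity to the case that $x$ and $y$ are words, and dispose of the case that one of them is the empty word (both sides agree trivially, using $\ep_i'(\emptyset) = 0$ and the fact that $\emptyset$ is a two-sided unit for $\shq$). For nonempty words $x = x'a$ and $y = y'b$ with $a, b \in \I$, the plan is to apply the recursive definition~\eqref{E:Shuffle} of $\shq$ exactly once, which splits $x \shq y$ into the terms ending in $a$ and the terms ending in $b$; then $\ep_i'$, being the ``strip the last letter if it equals $i$" operator, acts transparently on each piece.

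Concretely, using $p(x) = p(x') + p(a)$ and $|x| = |x'| + \af_a$, the recursion yields
$$x \shq y \;=\; (x' \shq y)\,a \;+\; \pi^{p(x)p(b)}\, q^{-(|x|,\af_b)}\, (x \shq y')\,b.$$
Applying $\ep_i'$ term by term gives
$$\ep_i'(x \shq y) \;=\; \delta_{i,a}\,(x' \shq y) \;+\; \delta_{i,b}\,\pi^{p(x)p(b)}\, q^{-(|x|,\af_b)}\, (x \shq y').$$
By definition, $\ep_i'(x) \shq y = \delta_{i,a}\,(x' \shq y)$ and $x \shq \ep_i'(y) = \delta_{i,b}\,(x \shq y')$. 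In the second summand the Kronecker $\delta_{i,b}$ forces $b = i$, so the super-sign $\pi^{p(x)p(b)}$ equals $\pi^{p(x)p(i)}$, and symmetry of the bilinear form converts $(|x|,\af_b)$ into $(\af_i,|x|)$. The result matches the asserted derivation identity.

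There is really no substantive obstacle here: the proof is a direct one-step expansion of the shuffle recursion, and no induction is needed. The only bookkeeping is carrying the shuffle super-sign $\pi^{p(x)p(b)}$ and the $q$-exponent $(|x|,\af_b)$ produced by~\eqref{E:Shuffle}, then rewriting them via the Kronecker delta into the precise form appearing on the right-hand side, in parallel with the analogous derivation property of $e_i'$ on $U_q$ established in Proposition~\ref{P:BilinearForm}(2).
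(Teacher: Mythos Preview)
Your proof is correct and is precisely the one-step expansion of the shuffle recursion~\eqref{E:Shuffle} that the paper's one-line proof (``immediate from the definition and \eqref{E:Shuffle}'') is gesturing at. One remark: your computation correctly yields the super-sign $\pi^{p(x)p(i)}$, in agreement with Proposition~\ref{P:BilinearForm}(2), whereas the lemma as printed has $\pi^{p(x)p(y)}$; the latter is evidently a typo, so your final claim that ``the result matches'' should be read as matching the intended formula rather than the displayed one.
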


\begin{proof}
This is immediate from the definition and \eqref{E:Shuffle}.
\end{proof}

Given $\ui=(i_1,\ldots, i_d)\in \I^d$, define
\begin{align}\label{E:eui and epui}
e_\ui'=e_{i_1}'e_{i_2}'\cdots e_{i_d}'\andeqn \ep_\ui'=\ep_{i_1}'\ep_{i_2}'\cdots\ep_{i_d}'.
\end{align}
Define a $\Q(q)$-linear map
$$\Psi:\Uq\longrightarrow\F$$
by letting
\begin{equation}
\label{eq:psiu}
\Psi(u)=\sum_{\ui\in \W_\nu}e_\ui'(u)\ui,
\qquad \text{ for } u\in U_{q,\nu}.
\end{equation}
 (Here we have abused the same notation $\Psi$ as before, as it follows immediately by Proposition~\ref{T:UintoF}
below that they coincide.)
Since $e_\ui'(u)\in U_{q,0}=\Q(q)$, this map is well defined.
By Proposition~\ref{P:BilinearForm}, $\Psi$ is injective and $\Psi(e_i)=i$ for $i\in \I$.

\begin{prp}  \label{T:UintoF}
When the diagram for $\Uq$ is of type $A$-$D$ or the distinguished diagram in types $F$ and $G$,
the map $\Psi:\Uq\longrightarrow(\F,\shq)$ given by \eqref{eq:psiu} is an injective algebra
homomorphism (and hence coincides with the $\Psi$ given in Corollary~\ref{cor:psiUF}).
\end{prp}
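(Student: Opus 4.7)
The plan is to first produce an algebra homomorphism $\Psi_0 \colon U_q \to (\F,\shq)$ sending $e_i \mapsto i$ by means of the presentation of $U_q$ by generators and relations, and then to show that this $\Psi_0$ is given by the explicit formula \eqref{eq:psiu}. For the diagram types in the hypothesis, Theorem~\ref{T:Distinguished Diagrams} asserts that the relations listed in Proposition~\ref{P:SerreRelations} are defining relations of $U_q$; on the other hand, Lemma~\ref{L:Rk2Shuffle} verifies precisely those same relations for the letters $i \in \I$ viewed as elements of $(\F,\shq)$. Consequently the assignment $e_i \mapsto i$ descends to a well-defined algebra homomorphism $\Psi_0 \colon U_q \to (\F,\shq)$.

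The key step is then the intertwining
\[
\ep_i' \circ \Psi_0 \;=\; \Psi_0 \circ e_i', \qquad i \in \I,
\]
which I will establish by induction on the height of the weight of $u$. The base cases $u = 1$ and $u = e_j$ are immediate from $e_i'(e_j) = \delta_{ij}$ and $\ep_i'(j) = \delta_{ij}\emptyset$. For the inductive step applied to a product $uv$, expand $\Psi_0(uv) = \Psi_0(u) \shq \Psi_0(v)$, apply the skew-derivation rule of $\ep_i'$ with respect to $\shq$ (Lemma~\ref{L:epsilonqder}), invoke the induction hypothesis on each factor, and then use that $\Psi_0$ is an algebra homomorphism to recombine the two pieces into $\Psi_0$ applied to a sum in $U_q$. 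Comparing with the expansion of $\Psi_0(e_i'(uv))$ obtained from the skew-derivation rule for $e_i'$ on $U_q$ in Proposition~\ref{P:BilinearForm}(2), the two expressions coincide precisely because the super-signs and $q$-shifts in the two derivation rules are engineered to match.

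With the intertwining in hand, observe that for any word $\ui = (i_1,\ldots,i_n)$ the iterated operator $\ep_{i_1}' \ep_{i_2}' \cdots \ep_{i_n}'$ extracts the scalar coefficient of $\ui$ from any element of $\F$. Applying this to $\Psi_0(u)$ and repeatedly using the intertwining, that coefficient equals $e_{i_1}' e_{i_2}' \cdots e_{i_n}'(u) = e_\ui'(u)$. Therefore $\Psi_0(u) = \sum_\ui e_\ui'(u)\,\ui = \Psi(u)$, so $\Psi$ as defined by \eqref{eq:psiu} is an algebra homomorphism that agrees with the $\Psi$ from Corollary~\ref{cor:psiUF}; its injectivity is immediate from Proposition~\ref{P:BilinearForm}(3) applied iteratively, since $\Psi(u) = 0$ forces all $e_\ui'(u)$ to vanish. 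The main technical obstacle along the way is the bookkeeping of super-signs $\pi^{p(\cdot)p(\cdot)}$ and $q$-shifts $q^{-(\af_i,|\cdot|)}$ when passing between the derivation on $\Uq$ and that on $(\F,\shq)$, but this is already packaged into Lemma~\ref{L:epsilonqder} and Proposition~\ref{P:BilinearForm}(2) so that the match is essentially automatic.
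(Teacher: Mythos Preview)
Your proof is correct and follows essentially the same approach as the paper. The paper constructs the same auxiliary homomorphism (denoted $\Upsilon$ there, your $\Psi_0$) via Theorem~\ref{T:Distinguished Diagrams} and Lemma~\ref{L:Rk2Shuffle}, establishes the same intertwining $\ep_i'\circ\Upsilon=\Upsilon\circ e_i'$ (stated in one line, where you spell out the induction), and then reads off the coefficient of $\ui$ in $\Upsilon(u)$ as $e_\ui'(u)$ to conclude $\Upsilon=\Psi$.
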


\begin{proof}
We have just seen the injectivity of $\Psi$ above.
In the cases we are considering, we have by Lemma~\ref{L:Rk2Shuffle} and Theorem~\ref{T:Distinguished Diagrams}
that there exists an algebra homomorphism $\Upsilon:\Uq\longrightarrow(\F,\shq)$
such that $\Upsilon(e_i)=i$ for all $i\in \I$. Using Lemma \ref{L:epsilonqder}, this map satisfies
$\Upsilon\circ e_i'(u)=\ep_i'\circ\Upsilon(u)$. Let $u\in U_{q,\nu}$, and $\ui\in \W_\nu$.
Set $\gm_\ui(u)$ to be the coefficient of $\ui$ in $\Upsilon(u)$. Then,
$$\gm_\ui(u)=\ep_\ui'\circ\Upsilon(u)=\Upsilon\circ e_\ui'(u)=e_\ui'(u)\Upsilon(1)=e_\ui'(u),$$
where $\ep_\ui'=\ep_{i_1}'\cdots\ep_{i_d}'$. Hence $\Psi(u)=\Upsilon(u)$ and so $\Psi$ is an algebra homomorphism.

The $\Psi$ here and  the $\Psi$ given in Corollary~\ref{cor:psiUF} coincide
since both are algebra homomorphisms satisfying $\Psi(e_i)=i$ for $i\in \I$.
\end{proof}

Let $\Gm$ be the Dynkin diagram associated to $\UU$ and
let $\la \Gm\ra$ be the set of subdiagrams inducing relations associated to {\rm (AB)-(D)} in Lemma~\ref{L:Rk2Shuffle}.
Then using \eqref{P:qqi shuffle}, we may rewrite the relation corresponding to $\Gm'\in \la \Gm\ra$ in the form
\begin{equation}\label{eq:subdiagram coeffs}
\sum_{\ui=(i_1,\ldots,i_d)\in\W}\vartheta_{\Gm'}(\ui)(i_1\shq i_2\shq\cdots\shq i_d)=0,\quad \text{ for } \vartheta_{\Gm'}(\ui)\in \Q(q).
\end{equation}

\begin{exa}
Let $\UU$ be associated to the diagram
$$\hspace{.75in}\xy
(-10,0)*{\fullmoon};(0,0)*{\otimes}**\dir{-};(0,0)*{\otimes};(10,0)*{\fullmoon}**\dir{-};
(-10,-4)*{\scriptstyle i};(0,-4)*{\scriptstyle j};(10,-4)*{\scriptstyle k};
\endxy\quad(s_{ij}=-1\neq s_{jk}=1).$$
The only subdiagram causing a relation of the form {\rm (AB)-(D)} is the whole diagram (which corresponds to {\rm (AB)})
so $\la \Gm\ra=\set{\set{i,j,k}}$ (where we identify the subdiagram with its set of labels).
We have
\[\vartheta_{\set{i,j,k}}(\ui)=\begin{cases}
1&\text{ if } \ui\in\set{jkji, jijk, kjij, ijkj};\\
-q&\text{ if } \ui\in\set{jjik, jikj};\\
-q^{-1}&\text{ if } \ui\in\set{kijj;jkij};\\
0&\text{ otherwise.}
\end{cases}
\]
\end{exa}

\begin{prp}\label{T:In UU}
Let $\UU$ be associated to a diagram of type $A$-$D$, or to the distinguished diagram of type $F$ or $G$.
The element $x=\sum_{\uk\in\W}\gm_\uk(x)\uk\in\F$ belongs to $\UU$ if and only if
the following statements hold for all $\uh,\uh'\in\W$.
\begin{enumerate}
\item For all $i,j\in \I_\iso$ with $a_{ij}=0$,
\[\gm_{\uh\cdot ij\cdot\uh'}(x)+\gm_{\uh\cdot ji\cdot\uh'}(x)=0;\]
\item For all $i\in \I_\zero\cup \I_\niso$ and $j\in \I$ with $i\neq j$,
\[\sum_{r+s=1+|a_{ij}|}(-1)^r  \pi_i^{p(i,j;k)}
\begin{bmatrix}1+|a_{ij}|\\r\end{bmatrix}_i \gm_{\uh\cdot i^r\cdot j\cdot i^{s} \cdot \uh'}(x) =0;\]
\item For all $\Gm'\in \la \Gm\ra$, and with $\vartheta_{\Gm'}$ defined as in \eqref{eq:subdiagram coeffs},
\begin{equation*}
\sum_{\ui\in\W}\vartheta_{\Gm'}(\ui)\gm_{\uh\cdot\ui\cdot\uh'}(x)=0.
\end{equation*}
\end{enumerate}
\end{prp}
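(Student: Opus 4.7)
The plan is to exploit the duality inherent in the construction of $\Psi$. By Corollary~\ref{cor:psiUF}, the map $\Psi:U_q\to(\F,\shq)$ factors as the composition $U_q\stackrel{\sim}{\longrightarrow} U_q^*\stackrel{\psi^*}{\longrightarrow}\F^*\stackrel{\phi}{\longrightarrow}(\F,\shq)$, where the final isomorphism sends $f_{\ui}\mapsto\ui$. For $x=\sum_{\uj}\gm_{\uj}(x)\uj\in\F$, define $\omega_x\in\F^*$ by $\omega_x(\uj)=\gm_{\uj}(x)$. Then $x\in\UU$ if and only if $\omega_x\in\psi^*(U_q^*)$, that is, if and only if $\omega_x$ vanishes on the subspace $\ker\psi\subseteq\F$. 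The entire proof reduces to showing that conditions (1), (2), (3) express precisely this vanishing.

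By Theorem~\ref{T:Distinguished Diagrams}, in the types under consideration $\ker\psi$ is the two-sided ideal of the concatenation algebra $(\F,\cdot)$ generated by the following three families:
\begin{itemize}
\item[(i)] $ij+ji$ for $i,j\in\I_{\iso}$ with $a_{ij}=0$;
\item[(ii)] the Serre elements $\sum_{r+s=1+|a_{ij}|}(-1)^r\pi_i^{p(i,j;r)}\bbinom{1+|a_{ij}|}{r}_i i^r j i^s$ for $i\in\I_\zero\cup\I_\niso$ and $i\ne j$;
\item[(iii)] $\tilde R_{\Gm'}:=\sum_{\ui}\vartheta_{\Gm'}(\ui)\,\ui$, one for each $\Gm'\in\la\Gm\ra$.
\end{itemize}
Items (i) and (ii) are literal pullbacks of the relations (Iso) and (N-Iso) of Proposition~\ref{P:SerreRelations}. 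For item (iii), the key observation is that $\Psi$ is an injective algebra homomorphism sending the concatenation monomial $e_{i_1}\cdots e_{i_d}$ to the shuffle $i_1\shq\cdots\shq i_d$. Hence the shuffle identity $\sum_{\ui}\vartheta_{\Gm'}(\ui)(i_1\shq\cdots\shq i_d)=0$ recorded by \eqref{eq:subdiagram coeffs} is equivalent, under $\Psi$, to the concatenation identity $\sum_{\ui}\vartheta_{\Gm'}(\ui)\,e_{i_1}\cdots e_{i_d}=0$ in $U_q$, which is precisely the assertion $\tilde R_{\Gm'}\in\ker\psi$.

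Since $\ker\psi$ is a two-sided ideal of $(\F,\cdot)$, the condition $\omega_x(\ker\psi)=0$ is equivalent to requiring $\omega_x(\uh\cdot\tilde R\cdot\uh')=0$ for every $\uh,\uh'\in\W$ and every generator $\tilde R$ listed in (i)--(iii). Writing $\tilde R=\sum_{\ui}c_{\ui}\ui$ in the word basis, one has
\[
\omega_x(\uh\cdot\tilde R\cdot\uh')=\sum_{\ui}c_{\ui}\,\gm_{\uh\cdot\ui\cdot\uh'}(x),
\]
and substituting the explicit expressions of $\tilde R$ from (i), (ii), (iii) yields exactly the three displayed equations (1), (2), (3) of the proposition. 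This gives the desired equivalence.

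The step requiring the most care is item (iii): because the shuffles $i_1\shq\cdots\shq i_d$ indexed by different orderings of a fixed multiset of letters are linearly dependent in $\F$ (they are $q$-proportional via Proposition~\ref{P:ShuffleProductProperty}), it is not a priori obvious that the scalars $\vartheta_{\Gm'}(\ui)$ coming from the shuffle expansion \eqref{eq:subdiagram coeffs} are the correct word-basis coefficients of a representative in $\ker\psi$. The algebra-homomorphism property of $\Psi$ together with its injectivity is precisely what transports this information from the shuffle picture to the concatenation picture.
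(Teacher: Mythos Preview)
Your approach is essentially the same as the paper's: both identify $\UU$ with the annihilator of $\ker\psi$ under the natural pairing between $\F$ and $\F^*$, and both invoke Theorem~\ref{T:Distinguished Diagrams} to describe $\ker\psi$. The paper phrases the forward direction via the derivations $e_i'$ and closes the converse with a dimension count, whereas you run both directions through the single identification $\omega_x\in\psi^*(U_q^*)\Longleftrightarrow\omega_x|_{\ker\psi}=0$; these are cosmetic differences.

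There is one point that deserves tightening. To pass from ``$\omega_x(\ker\psi)=0$'' to ``conditions (1)--(3) hold'' you need that the elements in (i),(ii),(iii) \emph{generate} $\ker\psi$, not merely lie in it. Your argument via injectivity of $\Psi$ establishes only $\tilde R_{\Gm'}\in\ker\psi$; it does not by itself show that $\tilde R_{\Gm'}$ coincides with (or generates the same ideal as) the corresponding relator from Proposition~\ref{P:SerreRelations}, which is what Theorem~\ref{T:Distinguished Diagrams} actually describes. The clean fix is to observe that the rewriting in \eqref{eq:subdiagram coeffs} is purely formal: the identity \eqref{P:qqi shuffle}, namely $x\shqqi y=x\shq y-\pi^{p(x)p(y)}q^{(|x|,|y|)}y\shq x$, has exactly the same shape as $\ad_q x(y)=xy-\pi^{p(x)p(y)}q^{(|x|,|y|)}yx$. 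Hence recursively expanding the $\shqqi$-expression of Lemma~\ref{L:Rk2Shuffle} into $\shq$-monomials produces the same coefficients as recursively expanding the $\ad_q$-expression of Proposition~\ref{P:SerreRelations} into concatenation monomials. This shows $\tilde R_{\Gm'}$ is literally the lift to $(\F,\cdot)$ of the higher Serre relator, so (i),(ii),(iii) do generate $\ker\psi$ and your equivalence goes through. The worry you raise in your final paragraph (nonuniqueness due to linear dependence of shuffle monomials) is thus resolved not by injectivity of $\Psi$ but by the formal nature of the rewriting that defines $\vartheta_{\Gm'}$.
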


\begin{proof}
Let $\mathsf{V}$ be the subspace of $\F$ spanned by those elements that satisfy the statements (1)-(3). Let
$$x=\Psi(u)=\sum_{\uk\in\W,|\uk|=\nu}\gm(\uk)\uk\in\UU_\nu$$
be the image of some $u\in U_q$. Then, for $\uk=(k_1,\ldots,k_d)$, $\gm(\uk)=e'_\uk(u)=(e_{k_1}\cdots e_{k_d},u)$
by definition. Then by Corollary \ref{cor:derivationiso}, $x\in\mathsf{V}$.

Conversely,
note that by Lemma \ref{L:Rk2Shuffle} $x\in F$ satisfies (1)-(3) exactly when $x$ is orthogonal to a subspace of $F^*$
isomorphic to the kernel of the algebra surjection $F\rightarrow (F,\shq)$. Therefore,
we see that $\mathsf{V}_\nu=\F_\nu\cap\mathsf{V}$
has the same dimension as $U_{q,\nu}$. As $\Psi$ is injective, $\dim\UU_\nu=\dim\mathsf{V}_\nu$, and therefore $\UU=\mathsf{V}$.
\end{proof}

%%%%%%%%%%%%%%%%%%%%%
\subsection{Automorphisms of $\UU$}

For $\nu=\sum_{i\in \I}c_i\af_i\in Q^+$, we set
    \begin{align}\label{E:Nnu}
    N(\nu)=\frac12\Big((\nu,\nu)-\sum_{i\in \I}c_i(\af_i,\af_i)\Big),
    \qquad  P(\nu)=\frac12\Big(p(\nu)^2-\sum_{i\in\I}c_ip(\af_i)\Big),
    \end{align}
where here we interpret $p(\af_i)\in\{0,1\}$ and $p(\nu) =\sum_{i\in\I} c_ip(\af_i)$ as integers.
Below we realize certain automorphisms of $\UU$, whose counterparts for $U_q$ were given in Proposition~\ref{P:autoUq},
as restrictions of simple linear maps on $\F$ (compare \cite[Proposition 6]{lec}).

\begin{prp}\label{P:automorphisms}
\begin{enumerate}
\item Let $\tau:\F\longrightarrow\F$ be the $\Q(q)$-linear map defined by
    $$\tau(i_1,\ldots,i_d)=(i_d,\ldots,i_1).$$
    Then, $\tau(x\shq y)=\tau(y)\shq \tau(x)$ for all $x,y\in\F$. In particular, $\tau\Psi(u)=\Psi\tau(u)$ for all $u\in\Uq$, see \eqref{E:tau}.

\item Let $x\mapsto \overline{x}$ be the $\Q$-linear map $\F\longrightarrow\F$ such that
    $$\overline{q}=\begin{cases}\pi q^{-1}&\mbox{if $\Uq$ is of type }B(0,n+1),\\
        q^{-1}&\mbox{otherwise,}\end{cases}$$
    and
    $$\overline{(i_1,\ldots,i_d)}=\pi^{\sum_{s<t}p(i_s)p(i_t)}q^{-\sum_{s<t}(\af_{i_s},\af_{i_t})}(i_d,\ldots,i_1).$$
    Then, $\overline{x\shq y}=\overline{x}\shq\overline{y}$ and $\overline{\Psi(u)}=\Psi(\overline{u})$ for all $u\in\Uq$.

\item Let $\sm:\F\longrightarrow\F$ be the $\Q$-linear map defined by $\sm(x)=\overline{\tau(x)}$.
Then, $\sm\Psi(u)=\Psi\sm(u)$ for all $u\in\Uq$ and for $\nu=\sum_{i\in \I}c_i\af_i\in Q^+$ and $\ui\in \W_\nu$,
    $$\sm(\ui)=\pi^{P(\nu)}q^{-N(\nu)}\ui.$$
\end{enumerate}
\end{prp}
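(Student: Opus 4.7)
I would prove the three parts in sequence, each by a short computation in $\F$ followed by a soft generator argument for the intertwining with $\Psi$.

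Part (1) is pure combinatorics. Using the expansion \eqref{E:Shuffle2}, one sees that the involution $\sigma\mapsto \sigma^{*}$ on $S_{a+b}$ given by $\sigma^*(r)=a+b+1-\sigma(a+b+1-r)$ bijects minimal coset representatives for $S_{a+b}/(S_a\times S_b)$ with those for $S_{a+b}/(S_b\times S_a)$ and matches the exponents $\ep(\sigma), e(\sigma)$ of \eqref{E:ShuffleExponents} between the expansions of $\tau(x\shq y)$ and $\tau(y)\shq \tau(x)$. (Alternatively, one can induct on $\ell(x)+\ell(y)$ using \eqref{E:Shuffle}.) Once this is in hand, both $\tau\circ\Psi$ and $\Psi\circ \tau$ are $\Qq$-linear algebra anti-homomorphisms $U_q\to(\F,\shq)$ that send $e_i\mapsto i$, hence they coincide since the $e_i$ generate $U_q$.

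The key observation for part (2) is that for a word $\ui$ of weight $\nu=\sum_i c_i\af_i$ the sums in the definition of $\bar\ui$ satisfy
\[\sum_{s<t}p(i_s)p(i_t) = P(\nu),\qquad \sum_{s<t}(\af_{i_s},\af_{i_t})=N(\nu),\]
by the formulas \eqref{E:Nnu}, so that $\bar\ui=\pi^{P(\nu)}q^{-N(\nu)}\tau(\ui)$. Fix homogeneous $x,y$ of weights $\mu,\nu$ and expand $x\shq y$ via \eqref{E:Shuffle2}; applying $\bar\cdot$ termwise and using the formula above gives
\[\overline{x\shq y}=\sum_\sigma \pi^{\ep(\sigma)+P(\mu+\nu)}q^{e(\sigma)-N(\mu+\nu)}\tau(\uk(\sigma)).\]
On the other hand, part (1) gives $\tau(x)\shq\tau(y)=\tau(y\shq x)$, and Proposition \ref{P:ShuffleProductProperty} lets us rewrite $y\shq x=\pi^{p(\mu)p(\nu)}q^{-(\mu,\nu)}x\shqi y$. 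Combining these and expanding $x\shqi y$,
\[\bar x\shq\bar y = \pi^{P(\mu)+P(\nu)+p(\mu)p(\nu)}q^{-N(\mu)-N(\nu)-(\mu,\nu)}\tau(x\shqi y).\]
The two expressions agree after invoking the bilinear identities
\[N(\mu+\nu)-N(\mu)-N(\nu)=(\mu,\nu),\qquad P(\mu+\nu)-P(\mu)-P(\nu)=p(\mu)p(\nu),\]
which follow immediately from \eqref{E:Nnu}. The intertwining $\overline{\Psi(u)}=\Psi(\bar u)$ then follows by the same generator argument as in (1), since $\bar e_i=e_i$ and $\bar i=i$.

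Part (3) is now a direct consequence: $\sigma(\ui)=\overline{\tau(\ui)}=\overline{(i_d,\ldots,i_1)}=\pi^{P(\nu)}q^{-N(\nu)}\ui$ by the boxed formula for $\bar\cdot$ on words, and $\sigma\Psi=\bar\cdot\circ\tau\circ\Psi=\Psi\circ\bar\cdot\circ\tau=\Psi\sigma$ by (1) and (2). The main obstacle is the sign and $q$-exponent bookkeeping in part (2), particularly in type $B(0,n+1)$ where $\bar q=\pi q^{-1}$ contributes an additional factor $\pi^{e(\sigma)}$ when applying bar to $q^{-e(\sigma)}$; this must be absorbed using the parity properties of $(\cdot,\cdot)$ in that type (as exploited in \cite{hw}), after which the proof strategy runs uniformly across all basic types.
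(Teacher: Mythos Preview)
Your proof is correct and your treatment of parts~(1) and~(3) is essentially the same as the paper's. For part~(2), however, you take a different route than the paper. The paper proves $\overline{x\shq y}=\bar x\shq\bar y$ by a direct induction on $\ell(x)+\ell(y)$, expanding via the recursive formula~\eqref{E:Shuffle} and computing termwise. You instead observe that $\bar{\ui}=\pi^{P(\nu)}q^{-N(\nu)}\tau(\ui)$ on words, reduce to $\tau(y\shq x)$ via part~(1), convert to $x\shqi y$ via Proposition~\ref{P:ShuffleProductProperty}, and close with the bilinear identities for $N$ and $P$. Your approach is somewhat more structural---it makes transparent that bar on words is just $\tau$ up to a weight-depending scalar, and then everything follows from the anti-automorphism property of $\tau$---whereas the paper's induction is more self-contained and avoids invoking the swap identity.

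One minor point: you correctly flag the $B(0,n+1)$ issue (the extra $\pi^{e(\sigma)}$ from $\bar q=\pi q^{-1}$) and gesture at its resolution. It is worth noting that the paper's inductive computation, as written, also implicitly relies on the same parity fact---namely that all $(\af_i,\af_j)$ are even in type $B(0,n+1)$---to kill the corresponding $\pi$-factors arising from $\overline{q^{-(\af_i,\cdot)}}$. So neither argument is truly uniform without this observation, and you were right to single it out.
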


\begin{proof}
First note that
$$P(\alpha_{i_1}+\ldots+\alpha_{i_n})=\sum_{s<t}p(i_s)p(i_t)\andeqn N(\alpha_{i_1}+\ldots+\alpha_{i_n})=\sum_{s<t}(\af_{i_s},\af_{i_t}),$$
so (3) follows from (1) and (2).
We need only check (1) and (2) when $x,y\in \W$.
Note that (1) is clear from \eqref{E:Shuffle2}. To prove (2), proceed by induction.
Suppose (2) holds provided $\ell(x)+\ell(y)\leq n$ (the case $n=1$ being trivial).
Applying $\tau$ to the expression for $(\tau(y)j)\shq(\tau(x)i)$ given by \eqref{E:Shuffle}, we have
$$(ix)\shq(jy)=\pi^{p(i)(p(y)+p(j))}q^{-(\af_i,|y|+\af_j)}i(x\shq(jy))+j((ix)\shq y).$$
Therefore, assuming $\ell(xi)+\ell(yj)=n+1$, we have
\begin{align*}
\overline{(ix)\shq(jy)}&=\overline{\pi^{p(i)(p(y)+p(j))}q^{-(\af_i,|y|+\af_j)}i(x\shq jy) + j(ix\shq y)}\\
    &=\pi^{p(i)p(x)}q^{-(\af_i,|x|)}\overline{(x\shq jy)}i+\pi^{p(j)(p(x)+\af_i+p(y))}q^{-(\af_j,\af_i+|x|+|y|)}\overline{(ix\shq y)}j\\
    &=\pi^{p(i)p(x)+p(j)p(y)}q^{-(\af_i,|x|)-(\af_j,|y|)}(\overline{x}\shq \overline{y}j)i\\
    &+\pi^{p(i)p(x)+p(j)p(y)+p(j)(p(x)+p(i))}q^{-(\af_i,|x|) -(\af_j,\af_i+|x|+|y|)}(\overline{x}i\shq \overline{y})j\\
    &=\pi^{p(i)p(x)+p(j)p(y)}q^{-(\af_i,|x|)-(\af_j,|y|)}(\overline{x}i\shq \overline{y}j)\\
    &=(\overline{ix}\shq\overline{jy}).
\end{align*}
This proves (2).
\end{proof}

%%%%%%%%%%%%%%%%%%%%%%%%
\subsection{The Bialgebra Structure of $\UU$}
We now transport the bilinear form from $\Uq$ to $\UU$ via $\Psi$.

\begin{prp}\label{P:ShuffleCoproduct}
Let $\Dt:\F\longrightarrow\F\otimes\F$ be the map
$$\Dt(i_1,\ldots,i_d)=\sum_{0\leq k\leq d}(i_{k+1},\ldots,i_d)\otimes(i_1,\ldots,i_{k}).$$
Then, ${\Dt}(x\shq y)={\Dt}(x)\shq{\Dt}(y),$ where we define the shuffle product on $\F\otimes\F$ by
$$(w\otimes x)\shq (y\otimes z) = \pi^{p(x)p(y)}q^{-(|x|,|y|)}(w\shq y)\otimes(x\shq z).$$
In particular, we have ${\Dt}\Psi=(\Psi\otimes\Psi)\Dt$.
\end{prp}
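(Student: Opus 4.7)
The plan is to prove $\Dt(x \shq y) = \Dt(x) \shq \Dt(y)$ combinatorially for basis words using the explicit expansion \eqref{E:Shuffle2}, and then deduce the second assertion by checking that both $\Dt \circ \Psi$ and $(\Psi \otimes \Psi) \circ \Dt$ are algebra homomorphisms agreeing on generators.

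By bilinearity, it suffices to take $x = \ui = (i_1, \ldots, i_a)$ and $y = \uj = (j_1, \ldots, j_b)$, and for a word $\uk$ and cut position $k$ I write $\uk^{\leq k}$ and $\uk^{>k}$ for the initial segment and the complementary tail. Expanding $\ui \shq \uj$ using \eqref{E:Shuffle2} and then applying $\Dt$ indexes the left-hand side by pairs $(\sigma, k)$, where $\sigma \in S_{a+b}/(S_a \times S_b)$ is a minimal coset representative and $k \in \{0, 1, \ldots, a+b\}$ is a cut position. Expanding in the opposite order, the right-hand side is indexed by quadruples $(c, c', \sigma_1, \sigma_2)$ with $0 \leq c \leq a$, $0 \leq c' \leq b$, $\sigma_2$ a shuffle of the initial segments $\ui^{\leq c}$ and $\uj^{\leq c'}$, and $\sigma_1$ a shuffle of the tails $\ui^{>c}$ and $\uj^{>c'}$. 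Because $\sigma$ is a minimal coset representative, the collection of $\ui$-letters landing in the first $k$ positions of the shuffled word is automatically an initial segment $\ui^{\leq c}$, and similarly for $\uj$; this determines $c, c'$ (with $c + c' = k$) and identifies the head with the $\sigma_2$-shuffle and the tail with the $\sigma_1$-shuffle, giving a bijection between the two index sets.

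The heart of the argument is the pair of exponent identities
\[
\ep(\sigma) = \ep(\sigma_1) + \ep(\sigma_2) + p(\ui^{\leq c})\,p(\uj^{>c'}), \qquad e(\sigma) = e(\sigma_1) + e(\sigma_2) + (|\ui^{\leq c}|,\, |\uj^{>c'}|),
\]
which ensure that the coefficient $\pi^{\ep(\sigma)} q^{-e(\sigma)}$ from the left-hand side matches the coefficient $\pi^{\ep(\sigma_1) + \ep(\sigma_2)} q^{-e(\sigma_1) - e(\sigma_2)}$ from the two sub-shuffles multiplied by the extra twist $\pi^{p(\ui^{\leq c}) p(\uj^{>c'})} q^{-(|\ui^{\leq c}|, |\uj^{>c'}|)}$ coming from the twisted multiplication on $\F \otimes \F$. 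To verify these, I partition the pairs $(r,s)$ appearing in the definition \eqref{E:ShuffleExponents} of $\ep(\sigma)$ according to whether $\sigma(r)$ and $\sigma(s)$ each lie in the head $\{1,\ldots,k\}$ or the tail $\{k+1,\ldots,a+b\}$. Pairs entirely in the head contribute $\ep(\sigma_2)$; pairs entirely in the tail contribute $\ep(\sigma_1)$; the mixed pairs with $\sigma(r) \leq k < \sigma(s)$ automatically satisfy $\sigma(r) < \sigma(s)$ and together contribute $\sum_{r \leq c,\, s - a > c'} p(i_r) p(j_{s-a}) = p(\ui^{\leq c}) p(\uj^{>c'})$; the remaining possibility $\sigma(s) \leq k < \sigma(r)$ violates $\sigma(r) < \sigma(s)$ and is excluded. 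The identical three-case partition works verbatim for $e(\sigma)$.

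For the consequence, observe that both $\Dt \circ \Psi$ and $(\Psi \otimes \Psi) \circ \Dt$ are algebra homomorphisms from $U_q$ to $(\F \otimes \F, \shq)$: the first via the identity just established together with Corollary~\ref{cor:psiUF}, the second because the twisted multiplication on $U_q \otimes U_q$ has exactly the same form as that on $\F \otimes \F$ under $\Psi \otimes \Psi$. Since $\Dt(i) = i \otimes 1 + 1 \otimes i$ mirrors $\Dt(e_i) = e_i \otimes 1 + 1 \otimes e_i$, the two homomorphisms agree on the generators $e_i$, hence on all of $U_q$. The main obstacle throughout is the bookkeeping required to establish the exponent identity above, and this is where the careful three-case partition is essential; everything else is formal.
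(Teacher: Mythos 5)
Your proof is correct, and it takes a genuinely different route from the paper's.

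The paper's proof proceeds by induction on $\ell(x)+\ell(y)$ and manipulates both sides directly via the recursive definition \eqref{E:Shuffle}, peeling off the last letters $i$ and $j$ of $xi$ and $yj$. This is mechanical and leads to a long display, but it requires no combinatorial setup beyond the recursion. Your argument instead works with the closed formula \eqref{E:Shuffle2}: you match summands by exhibiting a bijection between the index set $\{(\sigma,k)\}$ of ``shuffle-then-cut'' on the left and the index set $\{(c,c',\sigma_1,\sigma_2)\}$ of ``cut-then-shuffle'' on the right (using that a cut of a shuffle of $\ui$ and $\uj$ sends initial segments of $\ui$ and $\uj$ into the head), and then verify that the coefficients match via the exponent identities
$\ep(\sigma)=\ep(\sigma_1)+\ep(\sigma_2)+p(\ui^{\leq c})p(\uj^{>c'})$ and likewise for $e(\sigma)$, obtained from the three-case partition of pairs $(r,s)$ by head/tail membership of $\sigma(r)$ and $\sigma(s)$. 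This is a cleaner, more conceptual argument, and it isolates exactly where the twist factor $\pi^{p(\ui^{\leq c})p(\uj^{>c'})}q^{-(|\ui^{\leq c}|,|\uj^{>c'}|)}$ on $\F\otimes\F$ comes from, namely the mixed pairs $\sigma(r)\leq k<\sigma(s)$. Your treatment of the consequence ${\Dt}\Psi=(\Psi\otimes\Psi)\Dt$ (two algebra homomorphisms into $(\F\otimes\F,\shq)$ agreeing on generators, where $\Psi\otimes\Psi$ is an algebra homomorphism because $\Psi$ preserves degree and parity and the two twisted multiplications have identical form) is also correct and actually more explicit than the ``in particular'' the paper leaves tacit. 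One small caveat: the exponents as printed in \eqref{E:ShuffleExponents} use $p(i_{\sm(r)})p(i_{\sm(s)})$, but the formula consistent with \eqref{E:Shuffle} and with your reading is $p(i_r)p(i_s)$ summed over $r\leq a<s$ with $i_r$ preceding $i_s$ in the shuffled word; you have implicitly adopted this corrected reading, which is the right one.
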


\begin{proof}
For $x\in \W$, we write ${\Dt}(x)=\sum x_2\otimes x_1$. Then, for any $i\in \I$,
$${\Dt}(xi)={\Dt}(x)\cdot(i\otimes 1)+1\otimes xi=\sum x_2i\otimes x_1 + 1\otimes xi,$$
where we have used the associative multiplication $(w\otimes x)\cdot(y\otimes z)=wy\otimes xz$.

Let $x,y\in \W$ and $i,j\in \I$. Assume the proposition is proved provided $\ell(x)+\ell(y)\leq n$ (the case $n=1$ being trivial). Suppose that $\ell(xi)+\ell(yj)=n+1$. Write $\Delta(x)=\sum x_2\otimes x_1$ and $\Delta(y)=\sum y_2\otimes y_1$. We compute
\begin{align*}
\Dt(xi&\shq yj)\\
    =&\Dt((x\shq yj)i + \pi^{(p(x)+p(i))p(j)}q^{-(|x|+\af_i,\af_j)}(xi\shq y)j)\\
    =&\Dt(x\shq yj)\cdot(i\otimes 1)+1\otimes (x\shq yj)i+\pi^{(p(x)+p(i))p(j)}q^{-(|x|+\af_i,\af_j)}(\Dt(xi\shq y)\cdot(j\otimes 1)\\
    &+1\otimes(xi\shq y)j)
\end{align*}
By induction, this equals
\begin{align*}
(\Dt(x)&\shq\Dt(yj))\cdot(i\otimes 1)+\pi^{(p(x)+p(i))p(j)}q^{-(|x|+\af_i,\af_j)}(\Dt(xi)\shq\Dt(y))\cdot(j\otimes 1)\\
    &+1\otimes(xi\shq yj)\\
    =&\left[\left(\sum x_2\otimes x_1\right)\shq\left(\sum y_2j\otimes y_1+1\otimes yj\right)\right]\cdot(i\otimes 1)\\
    &+ \pi^{(p(x)+p(i))p(j)}q^{-(|x|+\af_i,\af_j)}\left[\left(\sum x_2i\otimes x_1+1\otimes xi\right)\shq\left(\sum y_2\otimes y_1\right)\right]\cdot(j\otimes 1)\\ &+1\otimes(xi\shq yj)\\
    =&\sum \pi^{p(x_1)(p(y_2)+p(j))}q^{-(|x_1|,|y_2|+\af_j)}(x_2\shq y_2j)i\otimes(x_1\shq y_1)+\sum x_2i\otimes(x_1\shq yj)\\
    &+\pi^{(p(x)+p(i))p(j)}q^{-(|x|+\af_i,\af_j)}\sum \pi^{p(x_1)p(y_2)}q^{-(|x_1|,|y_2|)}(x_2i\shq y_2)j\otimes(x_1\shq y_1)\\
    &+\pi^{(p(x)+p(i))p(j)}q^{-(|x|+\af_i,\af_j)}\sum \pi^{(p(x)+p(i))p(y_2))}q^{-(|x|+\af_i,|y_2|)}y_2j\otimes(xi\shq y_1)\\
    &+1\otimes(xi\shq yj)\\
    =&\sum \pi^{p(x_1)(p(y_2)+p(j))}q^{-(|x_1|,|y_2|+\af_j)}((x_2\shq y_2j)i\\
    &+\pi^{(p(x_2)+p(i))p(j)}q^{-(|x_2|+\af_i,\af_j)}(x_2i\shq y_2)j)\otimes(x_1\shq y_1)+\sum x_2i\otimes(x_1\shq y_j)\\
    &+\sum \pi^{(p(x)+p(i))(p(y_2)+p(j))}q^{-(|x|+\af_i,y_2+\af_j)}y_2j\otimes(xi\shq y_1)+1\otimes(xi\shq yj)\\
    =&\Dt(xi)\shq\Dt(yj).
\end{align*}
This completes the proof.
\end{proof}

\begin{rmk}
The formulas in this paper differ slightly from those appearing in \cite{klram2}, where multiplication and comultiplication correspond to induction and restriction at the categorified level. If we regard the shuffle product in this paper as a map
$$m_\shq:\UU\otimes\UU\longrightarrow\UU,$$
then the precise relationship with induction and restriction in a categorification of $\UU$ will be
$$[\ind]=\tau\circ m_\shq\circ(\tau\otimes\tau)\andeqn[\res]=(\tau\otimes\tau)\circ\Dt\circ\tau.$$
\end{rmk}

As a consequence of Proposition \ref{P:ShuffleCoproduct}, we obtain the following counterpart of
Proposition~\ref{P:ShuffleCoproduct} via
the algebra isomorphism $\Psi: U_q\rightarrow \UU$.

\begin{prp}\label{P:bilinearformonF} There exists a symmetric nondegenerate bilinear form
$$(\cdot,\cdot):\UU\otimes\UU\longrightarrow\Q(q)$$
satisfying %$(1,1)=1$, and
\begin{enumerate}
\item $(1,1)=1$;
\item $(i,j)=\dt_{ij}$, for $i,j \in \I$;
\item $(x,y\shq z)=(\Dt(x), y\otimes z)$, for $x,y, z \in \UU$.
\end{enumerate}
\end{prp}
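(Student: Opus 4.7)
The plan is to transport the bilinear form from $U_q$ to $\UU$ along the algebra isomorphism $\Psi$, and then verify that the three required properties follow mechanically from the matching of multiplications (Corollary~\ref{cor:psiUF}) and coproducts (Proposition~\ref{P:ShuffleCoproduct}). Concretely, since $\Psi:U_q\to \UU$ is an algebra isomorphism (by Corollary~\ref{cor:psiUF} and the definition $\UU=\Psi(U_q)$), I define
\[
(x,y)_{\UU}:=\bigl(\Psi^{-1}(x),\,\Psi^{-1}(y)\bigr)_{U_q}, \qquad x,y\in \UU,
\]
using the form from Proposition~\ref{T:BilinearForm}. Symmetry and nondegeneracy of $(\cdot,\cdot)_{\UU}$ are immediate from the corresponding properties of $(\cdot,\cdot)_{U_q}$ together with the bijectivity of $\Psi$.

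Properties (1) and (2) are direct: $\Psi^{-1}(1)=1$ gives $(1,1)_\UU=(1,1)_{U_q}=1$, and $\Psi^{-1}(i)=e_i$ for $i\in \I$ gives $(i,j)_\UU=(e_i,e_j)_{U_q}=\delta_{ij}$.

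For property (3), I use Proposition~\ref{P:ShuffleCoproduct}, which asserts $\Delta\circ \Psi=(\Psi\otimes \Psi)\circ \Delta$; equivalently, on $\UU$ one has $\Delta\circ \Psi^{-1}=(\Psi^{-1}\otimes \Psi^{-1})\circ \Delta$. Writing $\Delta(x)=\sum x_{(1)}\otimes x_{(2)}$ in $\UU\otimes \UU$ and applying property (B3) of Proposition~\ref{T:BilinearForm} inside $U_q$, I compute
\[
(x, y\shq z)_{\UU}
=\bigl(\Psi^{-1}(x),\,\Psi^{-1}(y)\,\Psi^{-1}(z)\bigr)_{U_q}
=\sum\bigl(\Psi^{-1}(x_{(1)}),\Psi^{-1}(y)\bigr)_{U_q}\bigl(\Psi^{-1}(x_{(2)}),\Psi^{-1}(z)\bigr)_{U_q},
\]
which equals $\sum (x_{(1)},y)_\UU\,(x_{(2)},z)_\UU=(\Delta(x),y\otimes z)_{\UU}$, as required.

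There is no real obstacle here: the entire content has already been packaged into the algebra isomorphism $\Psi$ and the coalgebra compatibility in Proposition~\ref{P:ShuffleCoproduct}. The only point to keep in mind is that on both sides the pairing on the tensor product is the untwisted one, $(x'\otimes x'',y'\otimes y'')=(x',y')(x'',y'')$, so no super-signs or $q$-twists enter the transport. Uniqueness (not required by the statement) would likewise follow from the fact that $\UU$ is generated by $\I$ together with (2)–(3).
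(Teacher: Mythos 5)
Your proof is correct and is exactly the route the paper takes: the paper simply remarks that Proposition~\ref{P:bilinearformonF} follows ``via the algebra isomorphism $\Psi\colon U_q\to\UU$'' and the coproduct compatibility $\Delta\circ\Psi=(\Psi\otimes\Psi)\circ\Delta$ of Proposition~\ref{P:ShuffleCoproduct}, which is precisely the transport argument you spell out. You have supplied the implicit details (the definition $(\cdot,\cdot)_\UU:=(\Psi^{-1}\cdot,\Psi^{-1}\cdot)_{U_q}$, the check of (1)--(2), and the chain of equalities for (3) using (B3)) accurately.
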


%%%%%%%%%%%%%%%%%
%%%%%%%%%%%%%%%%%
\section{Combinatorics of Words}\label{S: Combinatorics of Words}

In this section, we will develop word combinatorics for the $q$-shuffle superalgebra following closely \cite[Section 3]{lec}
(which was in turn built on \cite{lo, lr}).

%%%%%%%%%%%%%%%%%
\subsection{Dominant Words and Monomial Bases}

We now fix a total ordering, $\leq$, on $\I$. Let $\Wr=(\W,\leq)$ be the ordered set with respect to the corresponding lexicographic order:
$$\ui=(i_1,\ldots,i_d)<(j_1,\ldots,j_k)=\uj$$
if there exists an $r$ such that $i_r<j_r$ and $i_s=j_s$ for $s<r$, or if $d<k$ and $i_s=j_s$ for $s=1,\ldots d$ (i.e., $\ui$ is a proper left factor of $\uj$).

%Let $\Wl=(\W,\leq)$ be the ordered set with respect to the co-lexicographic ordering on $\W$:
%$$\ui=(i_1,\ldots,i_d)<(j_1,\ldots,j_k)=\uj$$
%if there exists an $r$ such that $i_{d-r}<j_{k-r}$ and $i_{d-s}=j_{k-s}$ for $s<r$, or if $d>k$ and $i_{d-s}=j_{k-s}$ for $s=1,\ldots k$ (that is, $\ui$ is a right factor of $\uj$).

%The relation between $\Wr$ and $\Wl$ is given by the opposite ordering, $\preq$, on $\I$. That is, $i\preq j$ if and only if $j\leq i$. Of course, there are versions of $\Wr$ and $\Wl$ for this ordering, and we will use the notation $(\Wr,\preq)$, $(\Wl,\preq)$, $(\Wr,\leq)$, and $(\Wl,\leq)$ when we need to emphasize which ordering we are using.

%\begin{lem}\label{L:reverse order}  The map $\tau$ defines an order reversing bijection
%$$(\Wr,\leq)\longrightarrow(\Wl,\preceq).$$
%That is, for $\ui,\uj\in\W$, $\ui\leq\uj$  in $(\Wr,\leq)$ if and only if $\tau(\uj)\preq\tau(\ui)$ in $(\Wl,\leq)$.
%\end{lem}

%For the remainder of the section, we will work entirely with the ordering $\leq$, and will simply write $\Wr$ and $\Wl$ for the corresponding ordered sets (the ordering on $\I$ is arbitrary at this point, so we could just as well redefine $\leq$ in the opposite way). Using Lemma \ref{L:reverse order}, we can translate all results for $\Wr$ into results for $\Wl$.

For $x\in\F$, we set $\max(x)=\ui$ if  $\kp_\ui\neq 0$ in the expansion $x=\sum_{\uj \in \Wr}\kappa_\uj \uj$ (where $\kappa_\uj\in\Q(q)$)
and $\kp_\uj =0$ unless $\ui\geq\uj$.
A word $\ui\in \Wr$ is called \textbf{dominant} (also called {\em good} in \cite{lec})
if $\ui=\max(u)$ for some $u\in\UU$, and let $\Wr^+$ denote the subset of dominant words of $\Wr$.

The following proposition proves that the set $\Wr^+$ labels bases of $\Uq$ and $\UU$. The proof proceeds
exactly as in \cite[Proposition 12]{lec}.

\begin{prp}\label{P:monomial basis}\begin{enumerate}
\item There exists a unique basis of  homogeneous vectors $\{m_\uj \mid \uj \in \Wr^+\}$ in $\UU$ such that
$$
\ep_\ui'(m_\uj)=\dt_{\ui\uj} \qquad \text{ if } \; |\ui| =|\uj|,
$$
where $\ep_\ui'$ is defined in Lemma \ref{L:epsilonqder} and \eqref{E:eui and epui}.
\item The set $\{ e_\ui=e_{i_1}\cdots e_{i_d}\mid \ui=(i_1,\ldots,i_d)\in \Wr^+\}$ is a basis (called monomial basis) of $\Uq$.
\end{enumerate}
\end{prp}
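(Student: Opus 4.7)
My plan is to follow the argument of Leclerc \cite[Proposition~12]{lec}, adapted to the super setting.

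For part (1), I first reduce the stated condition to one about word coefficients: iterating Lemma~\ref{L:epsilonqder} shows that for $x\in\F$ with $|x|=|\ui|$, the scalar $\ep_\ui'(x)\in\Q(q)$ is precisely the coefficient $\kappa_\ui(x)$ of the word $\ui$ in $x$ (only words ending in $\ui$ survive $\ep_\ui'$, and the weight constraint forces such a word to be $\ui$ itself). Thus the required basis $\{m_\uj\}$ must be dual, under the word-coefficient pairing, to the family of coordinate functionals $\{\kappa_\ui\}_{\ui\in\Wr^+}$ on $\UU$. To establish existence and uniqueness of this dual basis I will show, weight-space by weight-space, that the ``dominant coordinate'' map
\[
\UU_\nu\longrightarrow \Q(q)^{\Wr^+\cap\W_\nu},\qquad u\mapsto(\kappa_\ui(u))_{\ui\in\Wr^+\cap\W_\nu},
\]
is a linear isomorphism. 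Injectivity is immediate: for $0\neq u\in\UU_\nu$, by definition $\max(u)\in\Wr^+\cap\W_\nu$ and $\kappa_{\max(u)}(u)\neq 0$. For equality of dimensions I select, for each $\uj\in\Wr^+\cap\W_\nu$, some $u_\uj\in\UU_\nu$ with $\max(u_\uj)=\uj$; ordering the dominant words of weight $\nu$ lexicographically, the matrix of their dominant-word coefficients is upper triangular with nonzero diagonal, whence $\dim\UU_\nu\geq|\Wr^+\cap\W_\nu|$. The $m_\uj$'s then arise from Gauss--Jordan elimination on the $u_\uj$'s.

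For part (2), via the isomorphism $\Psi:U_q\xrightarrow\sim\UU$ of Corollary~\ref{cor:psiUF}, the claim is equivalent to showing that $\{\Psi(e_\ui)=i_1\shq\cdots\shq i_d:\ui\in\Wr^+\}$ is a basis of $\UU$. Since the cardinality matches $\dim\UU_\nu$ in each weight space by (1), it suffices to prove linear independence. My plan is to expand $\Psi(e_\ui)$ in the basis $\{m_\uj\}$ and show that the resulting transition matrix is upper triangular with nonzero diagonal in the lex order on $\Wr^+\cap\W_\nu$. Writing $\Psi(e_\ui)=\sum_\uj a_{\ui\uj}m_\uj$, part (1) gives $a_{\ui\uj}=\ep_\uj'(\Psi(e_\ui))$, and then \eqref{E:Shuffle2} yields
\[
a_{\ui\uj}=\sum_{\sigma\in S_d:\,(i_{\sigma(1)},\ldots,i_{\sigma(d)})=\uj}\pi^{\ep(\sigma)}q^{-e(\sigma)},
\]
where the sum is over permutations of $(i_1,\ldots,i_d)$ producing $\uj$. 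For $\uj=\ui$ the identity permutation contributes $1$, so $a_{\ui\ui}\neq 0$ for generic $q$.

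The main obstacle is establishing the vanishing $a_{\ui\uj}=0$ for dominant $\uj>\ui$ of the same weight; this reduces to a combinatorial cancellation among those permutations of $\ui$ that produce $\uj$. In the non-super case this cancellation is controlled by the Lyndon-word/Shirshov factorization of dominant words developed in Section~\ref{S: Combinatorics of Words}, so in effect part (2) is bootstrapped from the subsequent word combinatorics. The super-sign factor $\pi^{\ep(\sigma)}$ and the possible presence of isotropic odd simple roots introduce additional sign bookkeeping that must be tracked carefully, particularly when $\ui$ has repeated letters (where one must rule out spurious cancellation in $a_{\ui\ui}$ itself); confirming that none of these super phenomena disrupt the required triangularity will be the crux of the argument.
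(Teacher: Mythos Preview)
Your argument for part~(1) is correct and matches Leclerc's: the map $u\mapsto(\kappa_\ui(u))_{\ui\in\Wr^+_\nu}$ is injective by the definition of $\max$, the $u_\uj$'s witness that $\dim\UU_\nu\ge|\Wr^+_\nu|$, and the $m_\uj$'s arise by inverting.

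Your plan for part~(2), however, does not work. The triangularity you seek --- $a_{\ui\uj}=\kappa_\uj(\ep_\ui)=0$ for dominant $\uj>\ui$ --- is simply false, already in the non-super setting. Take type $A_2$ with $1<2$ and $\nu=\af_1+\af_2$: both $(1,2)$ and $(2,1)$ are dominant (since $\dim U_{q,\nu}=2$), yet
\[
\ep_{(1,2)}=1\shq 2=(1,2)+q^{-(\af_1,\af_2)}(2,1),
\]
so $a_{(1,2),(2,1)}=q\neq 0$. The transition matrix here is $\begin{pmatrix}1&q\\q&1\end{pmatrix}$, invertible but not triangular. No amount of Lyndon combinatorics will produce the vanishing you want, and part~(2) is not ``bootstrapped from the subsequent word combinatorics'' in Leclerc either.

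The correct argument is a one-line duality. From part~(1) and the intertwining $\ep_i'\circ\Psi=\Psi\circ e_i'$ (Lemma~\ref{L:epsilonqder} and Proposition~\ref{P:BilinearForm}), the condition $\ep_\ui'(m_\uj)=\dt_{\ui\uj}$ becomes $e_\ui'(\Psi^{-1}(m_\uj))=\dt_{\ui\uj}$. Unwinding the adjunction $(e_iw,v)=(w,e_i'(v))$ gives $e_\ui'(v)=(e_{\tau(\ui)},v)$, so
\[
(e_{\tau(\ui)},\Psi^{-1}(m_\uj))=\dt_{\ui\uj}.
\]
Since $(\cdot,\cdot)$ is nondegenerate and $\{\Psi^{-1}(m_\uj)\}$ is a basis of $U_{q,\nu}$, the set $\{e_{\tau(\ui)}:\ui\in\Wr^+_\nu\}$ is the dual basis. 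Applying the linear automorphism $\tau$ of $U_{q,\nu}$ (Proposition~\ref{P:autoUq}), $\{e_\ui:\ui\in\Wr^+_\nu\}$ is a basis as well. This is Leclerc's proof; no super phenomena interfere.
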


For $\ui=(i_1,\ldots,i_d)\in\Wr$, define $\ep_\ui=i_1\shq\cdots\shq i_d=\Psi(e_\ui)$. Define the \textbf{monomial basis} for $\UU$ to be
\begin{align}\label{E:monomialbasis}
\{\ep_\ui\mid\ui\in\Wr^+\}.
\end{align}

The next lemma generalizes \cite{lr} (cf.  \cite{lec}).

\begin{lem}\label{L:factor} Every factor of a dominant word is dominant.
\end{lem}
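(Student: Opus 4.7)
The plan is to isolate a given factor $\uj$ of the dominant word $\ui = \ui_1 \uj \ui_2$ by successively stripping the suffix $\ui_2$ and the prefix $\ui_1$, using two families of endomorphisms of $\F$ that both preserve $\UU$ and interact well with the $\max$ operator.

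First I would build a suffix-stripping operator from the derivations $\ep'_i$ of Lemma~\ref{L:epsilonqder}: for a word $\uh = (h_1, \dots, h_d)$, set $\ep'_\uh = \ep'_{h_1} \circ \cdots \circ \ep'_{h_d}$, as in \eqref{E:eui and epui}. A short inductive check shows that $\ep'_\uh$ sends $\uk \cdot \uh \mapsto \uk$ and annihilates every word not ending in $\uh$. That each $\ep'_i$ preserves $\UU$ is implicit in the proof of Proposition~\ref{T:UintoF}: $\ep'_i$ is intertwined by $\Psi$ with the endomorphism $e'_i$ of $U_q$ (cf.\ Proposition~\ref{P:BilinearForm} and Corollary~\ref{cor:derivationiso}). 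For the prefix version, I would define $\eta'_\uh := \tau \circ \ep'_{\tau(\uh)} \circ \tau$; unwinding the definitions shows that $\eta'_\uh$ removes the prefix $\uh$ (and vanishes on words not starting with $\uh$), and it preserves $\UU$ because $\tau$ does, by Proposition~\ref{P:automorphisms}(1).

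Next I would verify that these operators are monotone with respect to $\max$. Take $u \in \UU$ with $\max(u) = \ui$. Since $u$ is homogeneous of weight $|\ui|$, every word in its support has length $\ell(\ui)$, so the lexicographic comparison with $\ui$ proceeds position by position. Any word in the support of $\ep'_{\ui_2}(u)$ has the form $\uk$ with $\uk \cdot \ui_2$ in the support of $u$; because $\uk \cdot \ui_2 \leq \ui_1 \uj \ui_2$ and both share the common suffix $\ui_2$, the first point of disagreement must lie in the prefix, whence $\uk \leq \ui_1 \uj$. Equality is attained because the coefficient of $\ui_1 \uj$ in $\ep'_{\ui_2}(u)$ equals the nonzero coefficient of $\ui$ in $u$. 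Applying the analogous argument to $\eta'_{\ui_1}\bigl(\ep'_{\ui_2}(u)\bigr)$ yields $\max\bigl(\eta'_{\ui_1}(\ep'_{\ui_2}(u))\bigr) = \uj$, so $\uj$ is dominant.

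I do not foresee a substantial technical obstacle. The only bookkeeping point is that the natural derivation-style operators $\ep'_i$ strip only from the right, so one must either conjugate by $\tau$ to obtain a left-stripping operator, as above, or treat the prefix case by a symmetric argument. Everything else reduces to a short lexicographic check made possible by the homogeneity of $u$.
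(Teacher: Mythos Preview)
Your argument is correct and follows essentially the same approach as the paper's own proof, which simply notes that $\UU$ is stable under $\ep_i'$ and $\ep_i'':=\tau\ep_i'\tau$ (your $\eta'_i$) and refers to \cite[Lemma~13]{lec} for the details. You have supplied precisely those details: the intertwining $\ep_i'\circ\Psi=\Psi\circ e_i'$ gives stability of $\UU$, and the equal-length lexicographic comparison handles the $\max$ bookkeeping.
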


\begin{proof} This follows from the fact that $\UU$ is stable under the action of $\ep_i'$ and
$\ep_i''=\tau\ep_i'\tau$, $i\in \I$. See \cite[Lemma 13]{lec}.
\end{proof}

%\subsubsection{} Given Given $x\in\F$, write $x=\sum_{\ui\in \Wl}\kappa_\ui \ui$ for some $\kappa_\ui\in\Q(q)$ and set
%$\min(x)=\ui$ if $\kp_\ui\neq 0$ and $\ui\preq\uj$ for all $\uj\in\Q$ such that $\kp_\uj\neq 0$.
%Call a word \textbf{co-standard} if $\ui=\min(u)$ for some $u\in\UU$, and let $\Wl^-$ denote the set of co-standard words.

\subsection{Lyndon Words}

\subsubsection{} A word $\ui=(i_1,\ldots,i_d)\in \Wr$ is called \textbf{Lyndon} if it is smaller than any of its proper right factors:
\begin{align}\label{E:Lyndon dfn}
\ui<(i_r,\ldots,i_d),\quad \text{ for } 1<r\leq d.
\end{align}
Let $\Lr$ denote the set of Lyndon words in $\Wr$.

Let $\ui\in\L$. Call the decomposition $\ui=\ui_1\ui_2$ the \textbf{co-standard factorization} of $\ui$
if $\ui_1,\ui_2\neq\emptyset$, $\ui_1\in\L$, and the length of $\ui_1$ is maximal among all such decompositions.
In this case, it is known that $\ui_2\in\L$ as well, see \cite[Chapter 5]{lo}.
Call the decomposition $\ui=\ui_1\ui_2$ the \textbf{standard factorization} if $\ui_1,\ui_2\neq\emptyset$, $\ui_2\in\L$,
and the length of $\ui_2$ is maximal among all such decompositions. As above, we have $\ui_1\in\L$ as well.

We will frequently use the following lemma.

\begin{lem}\label{L:costd factorization}\cite[Lemma 14]{lec} Let $\ui\in\L$, and let $\ui=\ui_1\ui_2$ be its co-standard factorization. Then, $\ui_2=\ui_1^r\ui_1'i$ where $r\geq 0$, $\ui_1'$ is a (possibly empty) proper left factor of $\ui_1$, and $\ui_1'i>\ui_1$.
\end{lem}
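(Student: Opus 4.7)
My plan is to argue by direct combinatorics on Lyndon words, following the style of standard arguments in Lothaire's book (Chapter 5). Since the paper has already noted that in the co-standard factorization $\ui = \ui_1\ui_2$ the right factor $\ui_2$ is itself Lyndon, and since $\ui$ is Lyndon one has $\ui < \ui_2$ lexicographically, I can take these as given.

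First, I would peel off the maximal initial power of $\ui_1$ inside $\ui_2$: let $r \geq 0$ be the largest integer with $\ui_1^r$ a prefix of $\ui_2$, and write $\ui_2 = \ui_1^r \ui_3$, so that $\ui_3$ does not begin with $\ui_1$. The case $\ui_3 = \emptyset$ can be eliminated immediately, since it would force $\ui_2 = \ui_1^r$ with $r \geq 2$ (as $\ui_2 \neq \ui_1$ for $\ui = \ui_1 \ui_2$ to be Lyndon), contradicting the fact that a proper power of a Lyndon word is never Lyndon. Hence $\ui_3$ is nonempty.

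Next, I apply the Lyndon inequality to the proper right factor $\ui_1^{r-1}\ui_3$ of $\ui_2$ when $r \geq 1$ (or use $\ui < \ui_2$ directly when $r = 0$) to get $\ui_1\ui_3 < \ui_3$ lex, which rules out $\ui_3$ being a proper prefix of $\ui_1$ and forces $\ui_3 > \ui_1$. Letting $\ui_1'$ be the longest common prefix of $\ui_3$ and $\ui_1$, it must be a proper prefix of both; writing $\ui_3 = \ui_1' a\ui_4$ and $\ui_1 = \ui_1' b \ui_5$ with distinct letters $a$ and $b$, the comparison $\ui_3 > \ui_1$ gives $a > b$, and therefore $\ui_1' a > \ui_1$ as claimed.

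The remaining task, which I expect to be the main obstacle, is to show $\ui_4 = \emptyset$. I plan to do this by contradiction: if $\ui_4$ were nonempty, the prefix $\ui_1^{r+1}\ui_1' a$ of $\ui$ would be strictly longer than $\ui_1$, so showing it is Lyndon would contradict the maximality built into the co-standard factorization. To establish its Lyndon-ness, I will invoke the standard criterion that a word $w$ of length $\geq 2$ is Lyndon iff $w = uv$ for some Lyndon words $u < v$. Setting $u = \ui_1$ and $v = \ui_1^r \ui_1' a$, the inequality $u < v$ follows from $\ui_1 < \ui_1' a$ established above, while the Lyndon-ness of $v$ can be obtained by a direct analysis of its proper right factors, combining the Lyndon property of $\ui_1$ with $\ui_1' a > \ui_1$ to handle right factors that straddle the boundary between the $\ui_1^r$ block and the $\ui_1' a$ tail. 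This straddling analysis is the delicate technical step of the argument.
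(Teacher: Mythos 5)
The paper does not prove this lemma --- it simply cites \cite[Lemma~14]{lec} --- so there is no in-paper argument to compare against; that said, your direct combinatorial proof is correct in outline. Peeling off the maximal power $\ui_1^r$, deriving $\ui_1\ui_3 < \ui_3$ from the Lyndon property of $\ui_2$ (or of $\ui$ itself when $r=0$) by cancelling the common prefix $\ui_1^{r-1}$, concluding $\ui_3 > \ui_1$, and extracting $\ui_1'$ with $\ui_1'a > \ui_1$ are all sound, and the reduction of the final claim to the Lyndon-ness of $\ui_1^{r+1}\ui_1'a$ is the right move.

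The step you flag as delicate --- that $\ui_1^{r+1}\ui_1'a$ is Lyndon --- is precisely the content of Lemma~\ref{L:costd converse} with exponent $r+1\geq 1$; the paper's proof of that lemma makes no use of the present statement, so you may cite it and be done. If you insist on a self-contained argument, note that the factorization $u=\ui_1$, $v=\ui_1^r\ui_1'a$ together with the Chen--Fox--Lyndon product criterion does not genuinely reduce the difficulty: the ``straddling'' right-factor analysis you would have to carry out for $v$ is formally identical to the one for $\ui_1^{r+1}\ui_1'a$ itself, and you would still need the separate base case $v=\ui_1'a$ when $r=0$ (which is Lyndon because $\ui_1'$ is a proper prefix of the Lyndon word $\ui_1$ and $\ui_1'a>\ui_1$). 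A cleaner self-contained route is induction on the exponent of $\ui_1$: the base case is $\ui_1'a$, and the inductive step prepends one copy of $\ui_1$ and applies the product criterion using $\ui_1<\ui_1^s\ui_1'a$ (a proper-prefix inequality for $s\geq 1$, and the already established $\ui_1<\ui_1'a$ for $s=0$). As written, your final step is a correct plan rather than a completed proof, but every idea needed is present.
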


We also have the following converse to this lemma.

\begin{lem}\label{L:costd converse} If $\ui\in\L$ and $\uj=\ui^r\ui'i$ where $r\geq 1$, $\ui'$ is a (possibly empty) proper left factor of $\ui$, and $\ui<\ui'i$, then $\uj\in\L$.
\end{lem}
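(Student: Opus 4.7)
The plan is to verify Lyndonness of $\uj = \ui^r \ui' i$ directly from the definition \eqref{E:Lyndon dfn}, showing $\uj$ is lexicographically strictly smaller than each of its proper nonempty right factors. Writing $\ui = (i_1, \ldots, i_m)$ and $m' = |\ui'|$ (so that $\ui' = (i_1, \ldots, i_{m'})$ with $m' < m$), I will first extract three combinatorial consequences of the hypotheses that will be reused throughout: (i) the inequality $\ui < \ui' i$ unwinds, by comparing letters at position $m'+1$, to the single-letter inequality $i_{m'+1} < i$; (ii) because $\ui$ is Lyndon, its first letter satisfies $i_1 \le i_k$ for every $k$, so in particular $i_1 \le i_{m'+1} < i$; and (iii) for any proper nonempty right factor $\uh$ of $\ui$, the inequality $\ui < \uh$ forces the first differing position of $\ui$ and $\uh$ to lie strictly within the first $|\uh|$ letters, with $\ui$'s letter smaller there (otherwise $\uh$ would be a prefix of $\ui$, forcing $\uh < \ui$).

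Next I will enumerate the proper nonempty right factors of $\uj$ by writing the number of letters removed from the left as $k = sm + t$, and split into cases according to whether the cut falls inside one of the $\ui$-blocks ($0 \le s < r$) or inside the final $\ui' i$ tail ($s = r$). In the first regime, when $t > 0$ the suffix is $(i_{t+1}, \ldots, i_m)\,\ui^{r-s-1}\ui' i$ and comparison with $\uj$ reduces immediately to (iii); when $t = 0$ the suffix is $\ui^{r-s}\ui' i$, and after stripping the common prefix $\ui^{r-s}$ the comparison reduces to $\ui^s\ui' i < \ui' i$, which is decided at position $m'+1$ by (i).

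The remaining cases are $s = r$ with $0 \le t \le m'$. For $t = 0$ the suffix is $\ui' i$ and position $m'+1$ again settles things via (i); for $t = m'$ the suffix is $(i)$ and (ii) gives $i_1 < i$ immediately. The delicate subcase is $1 \le t < m'$, where the suffix is $(i_{t+1}, \ldots, i_{m'}, i)$: here I will compare $\uj$ with this suffix letter by letter using (iii) applied to the right factor $(i_{t+1}, \ldots, i_m)$ of $\ui$. Letting $p$ be the first position at which $\ui$ and $(i_{t+1}, \ldots, i_m)$ differ, the case $p \le m' - t$ is immediate, while if $p > m' - t$ then $\ui$ and $(i_{t+1}, \ldots, i_m)$ agree on the first $m'-t$ letters, forcing $i_{m'-t+1} \le i_{m'+1} < i$, and the comparison is settled at position $m'-t+1$.

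The main obstacle will be precisely this subcase $1 \le t < m'$, as it is the only one where all three ingredients (i), (ii), (iii) must be combined, and one must carefully track which letter of $\ui$ sits at the position where $\uj$ and the suffix first diverge. Once this is handled, every other case is a short direct check using only one of (i)--(iii), and the lemma follows.
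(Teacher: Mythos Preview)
Your proposal is correct and follows essentially the same approach as the paper: a direct verification of the Lyndon condition by case analysis on where the proper right factor begins. The only cosmetic difference is that the paper first reduces to the case $r=1$ (saying the general case is similar) and then splits into two cases, whereas you carry out the general $r$ case explicitly; your ingredients (i), (iii) and the letter inequality $i_{m'+1}<i$ are exactly what the paper uses, and your delicate subcase $1\le t<m'$ unpacks the paper's terse chain $\ui<(i_{t+1},\ldots,i_m)<(i_{t+1},\ldots,i_{m'},i)$.
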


\begin{proof} It is enough to prove the statement when $r=1$, the general case being similar. To this end, assume $\ui=(i_1,\ldots,i_d)\in\L$ and $\uj=\ui\ui'i$ satisfies the conditions of the lemma. Then $\uj=(i_1,\ldots,i_d,i_1,\ldots,i_k,i)$. If $\uj''$ is a right factor of $\uj$ then either
\begin{enumerate}
\item $\uj''=(i_r,\ldots,i_d,i_1,\ldots,i_k,i)$, or
\item $\uj''=(i_r,\ldots,i_k,i)$.
\end{enumerate}
In case (1), we have $\ui=(i_1,\ldots,i_d)<(i_r,\ldots,i_d)$ since $\ui\in\L$. As $\ell(i_r,\ldots,i_d)<\ell(\ui)$ we my conclude that $\uj<\uj''$. For case (2), we have $\ui<(i_r,\ldots,i_d)<(i_r,\ldots,i_k,i)$, so $\uj<\uj''$ as well. This completes the proof.
\end{proof}

Let $\Lr^+$ be the set of dominant Lyndon words in $\Wr$. Note that
$$
\Lr^+ =\Lr \cap \Wr^+ \subset \Wr^+ \subset \Wr.
$$
It is well known \cite{lo} that every word $\ui\in \W$ has a \textbf{canonical factorization} as a product of non-increasing Lyndon words:
\begin{align}\label{E:word factorization}
\ui=\ui_1\cdots\ui_d,\;\;\;\ui_1,\ldots\ui_d\in\L,\;\ui_1\geq\cdots\geq\ui_d.
\end{align}

\begin{lem}\label{L:max shuffle} Let $\ui\in\Lr$ and $\uj\in \Wr$.
Assume that $\ui\geq \uj$, and further assume $\ui\neq \uj$ if $|\ui|\in Q^+$ is  isotropic odd.
Then $\max(\ui\shq \uj)=\ui\uj$.
\end{lem}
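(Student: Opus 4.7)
The plan is to use the explicit shuffle expansion $\ui\shq\uj=\sum_\sigma \pi^{\epsilon(\sigma)}q^{-e(\sigma)}w_\sigma$ from \eqref{E:Shuffle2}, where $\sigma$ ranges over minimal coset representatives of $S_{a+b}/(S_a\times S_b)$, and to establish separately that (a) every shuffled word satisfies $w_\sigma\leq \ui\uj$ in the lex order, and (b) the coefficient of $\ui\uj$ in this expansion is nonzero.

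For (a), I would argue by contradiction. Assume $w_\sigma>\ui\uj$ and let $k$ be the first position of disagreement, so $w_\sigma$ agrees with $\ui\uj$ on positions $1,\ldots,k-1$ and its $k$th letter exceeds $(\ui\uj)_k$. Let $s$ and $t$ be the numbers of letters of $\ui$ and $\uj$ that $\sigma$ places in positions $1,\ldots,k-1$, and write $q_1<\cdots<q_t$ for the $\uj$-positions; since the interleaved prefix must equal $i_1\cdots i_{k-1}$, one obtains $j_r=i_{q_r}$ for $1\leq r\leq t$. The $k$th letter of $w_\sigma$ is either $i_{s+1}$ or $j_{t+1}$. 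When $t=0$ one has $s=k-1$ and the Lyndon inequality $i_1\leq i_k$ combined with $j_1\leq i_1$ (from $\ui\geq\uj$) forces the next letter to be $\leq i_k$, a contradiction. When $t\geq 1$, one first deduces $j_1=i_{q_1}$; combining $j_1\leq i_1$ with $i_1\leq i_{q_1}$ (Lyndon) yields $i_1=j_1=i_{q_1}$. Iterating using the refined Lyndon inequality $(i_1,\ldots,i_a)<(i_m,\ldots,i_a)$ for every $m>1$, one propagates such equalities forward and extracts a strict decrease before position $k$, again a contradiction. The remaining case $k>a$ is handled similarly, by observing that if the first $a$ letters of $w_\sigma$ equal $i_1,\ldots,i_a$, analysis of the interleaving forces $w_\sigma=\ui\uj$, so no strict inequality can arise.

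For (b), the identity coset representative places $\ui$ at positions $1,\ldots,a$, satisfies $\epsilon=e=0$, and contributes $+1$ to the coefficient of $\ui\uj$. Other coset representatives $\sigma$ with $w_\sigma=\ui\uj$ exist only through ``coincidences'' $i_r=j_s$ among letters, and a structural constraint is that any Lyndon $\ui$ of length $\geq 2$ satisfies the strict inequality $i_1<i_a$ (otherwise $\ui>(i_a)$ violates Lyndon), sharply restricting the admissible coincidences. A case-by-case tabulation then shows that when $\ui\neq\uj$ the extra contributions combine with $+1$ to produce a nonzero element of $\Qq$. The delicate case is $\ui=\uj$, where the ``reverse'' coset representative (putting $\uj$ before $\ui$) also yields $\ui\uj=\uj\ui$; its contribution equals $\pi^{p(\ui)^2}q^{-(|\ui|,|\ui|)}$ by Proposition~\ref{P:ShuffleProductProperty}, which reduces to $-1$ exactly when $|\ui|$ is odd and isotropic, cancelling the identity's $+1$. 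This is precisely the case excluded by the hypothesis.

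The main obstacle is step (a), specifically the iteration when $t\geq 1$: one must chain the forced equalities $j_r=i_{q_r}$ together with the Lyndon inequality to obtain a strict contradiction, and the bookkeeping of $\uj$-positions among an interleaved prefix of $\ui$ is nontrivial. Once this combinatorial core is in place, step (b) becomes a careful accounting of which coset representatives contribute to $\ui\uj$ and of the super signs attached to them; the argument parallels that of \cite[Lemma 15]{lec} in the non-super setting but must accommodate the new signs.
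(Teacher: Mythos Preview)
Your strategy departs from the paper's. The paper proves, by a double induction on $\ell(\ui)$ and $\ell(\uj)$ that feeds the co-standard factorization $\ui=\ui_1\ui_2$ of Lemma~\ref{L:costd factorization} into the recursion \eqref{E:Shuffle}, the sharper statement that $\max(\ui\shq\uj)\le\ui\uj$ together with an exact formula for the coefficient of $\ui\uj$: it is $\pi^{p(\ui)p(\uj)}q^{-(|\ui|,|\uj|)}$ when $\ui>\uj$ and $1+\pi^{p(\ui)}q^{-(|\ui|,|\ui|)}$ when $\ui=\uj$. This dispatches your (a) and (b) simultaneously.

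Your part~(b) has a genuine gap. First a correction: from \eqref{E:Shuffle} one computes $(i)\shq(j)=ji+\pi^{p(i)p(j)}q^{-(\af_i,\af_j)}ij$, so in \eqref{E:Shuffle2} the identity representative contributes $\pi^{p(\ui)p(\uj)}q^{-(|\ui|,|\uj|)}$ to the coefficient of $\ui\uj$, not $+1$; it is the ``reverse'' representative that has $\ep=e=0$, and that one produces the word $\uj\ui$. More seriously, the phrase ``case-by-case tabulation'' is not a proof. What is actually required is that the identity is the \emph{only} $\sigma$ with $w_\sigma=\ui\uj$ when $\ui>\uj$, and that there are exactly two such $\sigma$ when $\ui=\uj$; otherwise nothing prevents additional contributions with opposing super-signs from cancelling. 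This uniqueness is precisely what the paper's induction delivers (a single-monomial coefficient forces it), and obtaining it by direct enumeration would require an argument at least as delicate as the one you sketch for~(a). Part~(a) is a plausible purely combinatorial statement (independent of signs), but the $t\ge1$ iteration is not carried out, as you acknowledge; the co-standard-factorization induction is the standard device that closes it, and it is what the paper uses.
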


\begin{proof}
We will prove a slightly stronger statement. Namely, we will prove that $\max(\ui\shq\uj)\leq\ui\uj$ and
\begin{enumerate}
\item if $\ui>\uj$, then the coefficient of $\ui\uj$ in $\ui\shq\uj$ is $\pi^{p(\ui)p(\uj)}q^{-(|\ui|,|\uj|)}$ and,
\item if $\ui=\uj$, then the coefficient of $\ui\ui$ in $\ui\shq\ui$ is $1+\pi^{p(\ui)}q^{-(|\ui|,|\ui|)}$.
\end{enumerate}

Let $\ui=(i_1,\ldots,i_d)$ and $\uj=(j_1,\ldots,j_k)$. We prove this statement by a double induction on $\ell(\ui)=d$ and $\ell(\uj)=k$. To this end, suppose $\ell(\ui)=1$, i.e. $\ui=i_1=i\in\I$. If $i>\uj$, then $i>j_1$, so clearly $\max(i\shq\uj)=i\uj$ and $i\uj$ occurs with the coefficient given in (1). If $\uj=\ui$, then $\uj=j_1=i$ and
$$i\shq i=(1+\pi^{p(i)}q^{-(\af_{i},\af_{i})})(ii).$$
Hence (2) follows.

Now, suppose that $\ell(\uj)=1$, so $\uj=j_1=j\in\I$. The case $\ui=\uj$ is treated above, so assume that $\ui>j$. Then, $j<i_1$. Assume $$\uk=(k_1,\ldots,k_{d+1})=(i_1,\ldots,i_{r-1},j,i_r,\ldots,i_{d})$$
is any word occurring as a nontrivial shuffle in $\ui\shq j$. Then, $k_r=j<i_1\leq i_r$, so $\uk<\ui j$ and (1) holds.

We now proceed to the inductive step.

\bigskip

\noindent\underline{\textbf{Case 1:}} $\ui>\uj$.

Let $\ui=\ui_1\ui_2$ be the co-standard factorization of $\ui$ and recall that $\ui_2$ is of the form $\ui_2=\ui_1^r\ui_1'i$, see Lemma \ref{L:costd factorization}. Then, if $\uk$ occurs as a nontrivial shuffle in $\ui\shq\uj$, there exists a factorization $\uj=\uj_1\uj_2$ such that $\uk$ occurs in $(\ui_1\shq\uj_1)(\ui_2\shq\uj_2)$.

If $\ui_1\geq\uj_1$, then by induction on $\ell(\ui)$, $\max(\ui_1\shq\uj_1)\leq \ui_1\uj_1$. It now follows that
$$\uk\leq \ui_1\uj_1\max(\ui_2\shq\uj_2).$$
Since $\uj<\ui<\ui_2$, induction on $\ell(\ui)$ implies that $\max(\ui_2\shq\uj)=\ui_2\uj$ and any nontrivial shuffle is strictly smaller.
Now, since any word occurring in $\uj_1(\ui_2\shq\uj_2)$ is a proper shuffle in $\ui_2\shq(\uj_1\uj_2)=\ui_2\shq\uj$, we have
$$\uk\leq\ui_1\uj_1\max(\ui_2\shq\uj_2)<\ui_1\max(\ui_2\shq\uj)=\ui\uj.$$

Assume $\ui_1<\uj_1$. Since $\ui>\uj$, we must have $\uj_1=\ui_1\uj_1'$ with $\uj_1'\uj_2<\ui_2$. Note that any shuffle occurring in $\ui_1\shq\uj_1$ must occur in $(\ui_{11}\shq\ui_1)(\ui_{12}\shq\uj_1')$ for some factorization $\ui_1=\ui_{11}\ui_{12}$. By induction, $\max(\ui_{11}\shq\ui_1)\leq\ui_1\ui_{11}$, so
$$\uk\leq \ui_1\ui_{11}\max(\ui_{12}\shq\uj_1')\max(\ui_2\shq\uj_2).$$
Any word occurring in $\ui_{11}(\ui_{12}\shq\uj_1')$ must also occur in $\ui_1\shq\uj_1'$, and any word occurring in
$\ui_1(\ui_1\shq\uj_1')(\ui_2\shq\uj_2)$ also occurs in $\ui_1(\ui\shq (\uj_1'\uj_2)).$

Set $\uh=\uj_1'\uj_2$. If $\uh<\ui$, then induction on $\ell(\uj)$ implies that $\max(\ui\shq\uh)=\ui\uh$ and any proper shuffle is strictly smaller. Hence,
$$\uk\leq \ui_1\ui_{11}\max(\ui_{12}\shq\uj_1')\max(\ui_2\shq\uj_2)<\ui_1\max(\ui\shq\uh)= \ui_1\ui_1^{r+1}\ui_1'i\uh<\ui<\ui\uj.$$
We may, therefore, assume that $\uh\geq\ui$.

Recall that $\uh<\ui_2=\ui_1^r\ui_1'i$. If $\uh\leq \ui_1^r\ui_1'$, then $\uh<\ui$ since $\ui_1^r\ui_1'$ is a left factor of $\ui$. This contradicts our assumption, leaving us to consider the case where $\uh>\ui_1^r\ui_1'$.

Since $\uh<\ui_2$, it follows that $\uh=\ui_1^r\ui_1'\uh'$, where $\uh'<i$. Suppose for the moment that $\uh'=j\in\I$, i.e. $\uh=\ui_1^r\ui_1'j$, $j<i$. Since $\uh>\ui$, $\ui_1'j>\ui_1$ and, therefore, $\uh\in\L$ by Lemma~\ref{L:costd converse}. Since $\ell(\uh)<\ell(\ui)$ we may apply induction to conclude that $\max(\ui\uh)\leq\uh\ui$. Hence,
$$\uk\leq\ui_1\ui_1^r\ui_1'j\ui<\ui=\ui_1^{r+1}\ui_1'i<\ui\uj.$$
More generally, when $\uh'=j\uh''$ is not a letter, any word in $\ui_1(\ui\shq\uh)$ can be obtained by first shuffling $\ui_1^r\ui_1'j$ into $\ui$ to obtain a word $\ui_1\ul=\ui_1(\ul_1j\ul_2)$, and then shuffling $\uh''$ into $\ul_2$. Since we already have proved that the maximum of the $\ui_1\ul_1j\ul_2$ appearing this way is $\ui_1^{r+1}\ui_1'j\ui$, $\ui_1^{r+1}\ui_1'j<\ui$ and $\ell(\ui_1^{r+1}\ui_1'j)=\ell(\ui)$, the same holds in general. This finishes Case 1 and proves (1).

\bigskip

\noindent\underline{\textbf{Case 2:}} $\ui=\uj$.

This case is almost identical to Case 1 except in the last step where now $\uh=\ui_1^r\ui_1'i$. From this we see that there are exactly two ways in which $\ui\ui$ occurs in $\ui\shq\ui$ and (2) follows.
\end{proof}

The next statement follows immediately from the proof above.

\begin{cor}\label{C:not max} Assume that $\ui\in\L$ and $|\ui|=\nu$ is isotropic odd, then $\max(\ui\shq\ui)<\ui\ui$.
\end{cor}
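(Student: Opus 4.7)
The plan is to bootstrap directly from the stronger statement established inside the proof of Case 2 of Lemma~\ref{L:max shuffle}. There it is shown not merely that $\max(\ui\shq\ui)\le \ui\ui$, but also that the coefficient of $\ui\ui$ in the expansion of $\ui\shq\ui$ equals
\[
1+\pi^{p(\ui)}q^{-(|\ui|,|\ui|)},
\]
and that every other word appearing in $\ui\shq\ui$ is strictly smaller than $\ui\ui$ in the lexicographic order on $\Wr$. So the whole task reduces to showing that this coefficient is zero under the hypothesis on $\ui$.

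Now suppose $|\ui|=\nu$ is isotropic odd. By definition this means $\nu\in\Phi_{\iso}\subseteq\Phi_{\one}$, so the parity $p(\ui)=p(\nu)=1$, and the bilinear form satisfies $(\nu,\nu)=0$ (this is just the isotropy condition; at the level of simple isotropic roots it is the equivalence $a_{ii}=0\Leftrightarrow (\af_i,\af_i)=0$ stated at the end of \S\ref{SS:Root Data}, extended to isotropic roots of $\Phi$). Substituting into the formula above, the coefficient of $\ui\ui$ is
\[
1+\pi^{1}\,q^{0}=1+(-1)=0,
\]
recalling the convention $\pi=-1$ from \eqref{E:pi}.

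Hence $\ui\ui$ does not actually occur in $\ui\shq\ui$; combined with the fact that every other shuffle is strictly below $\ui\ui$, this forces $\max(\ui\shq\ui)<\ui\ui$, which is precisely the claim. There is no substantive obstacle: the only thing to be careful about is making sure the two contributions to the coefficient of $\ui\ui$ (the trivial interleaving, contributing $1$, and the opposite interleaving, contributing $\pi^{p(\ui)}q^{-(|\ui|,|\ui|)}$ via \eqref{E:ShuffleExponents}) are correctly identified from the proof of Lemma~\ref{L:max shuffle}, so that no other shuffles could produce $\ui\ui$. This was already verified in Case 2 of that proof, so the present corollary follows immediately.
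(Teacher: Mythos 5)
Your proposal is correct and is exactly what the paper intends: the paper proves this corollary by the remark that it ``follows immediately from the proof above,'' referring to Case~2 of the proof of Lemma~\ref{L:max shuffle}, where the coefficient of $\ui\ui$ in $\ui\shq\ui$ is shown to be $1+\pi^{p(\ui)}q^{-(|\ui|,|\ui|)}$ and all other shuffles are strictly smaller. You have simply made explicit the substitution $p(\ui)=1$, $(|\ui|,|\ui|)=0$ that kills this coefficient.
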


The next proposition  now follows as in \cite[Proposition 16]{lec}.

\begin{prp}  \label{P:L16}
Let $\ui\in\Lr^+$ and $\uj\in\Wr^+$ with $\ui\geq\uj$, and further assume $\ui\neq \uj$ if $|\ui|\in Q^+$ is  isotropic odd. Then, $\ui\uj\in\Wr^+$.
\end{prp}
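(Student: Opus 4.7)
The plan is to exhibit an element of $\UU$ whose maximal word is $\ui\uj$; this will establish $\ui\uj \in \Wr^+$. I would take $m_\ui$ and $m_\uj$ from the basis of Proposition~\ref{P:monomial basis}(1) and form $x := m_\ui \shq m_\uj \in \UU$ (closure of $\UU$ under $\shq$ being immediate). A short preliminary check shows $\max(m_\uh) = \uh$ with leading coefficient $1$ for each $\uh \in \Wr^+$: if $\max(m_\uh) = \uh' > \uh$, then $\uh' \in \Wr^+$ (since $m_\uh \in \UU$) and $\ep'_{\uh'}(m_\uh)$ equals the nonzero leading coefficient, contradicting $\ep'_{\uh'}(m_\uh) = \delta_{\uh\uh'}$.

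Expanding
\begin{equation*}
x = \ui \shq \uj + \sum_{(\uk,\ul) \ne (\ui,\uj)} c_\uk d_\ul \, (\uk \shq \ul),
\end{equation*}
with $\uk \le \ui$, $\ul \le \uj$, $|\uk| = |\ui|$, $|\ul| = |\uj|$, the leading term $\ui \shq \uj$ contributes $\max = \ui\uj$ with an explicit nonzero coefficient by Lemma~\ref{L:max shuffle}; the isotropic-odd hypothesis $\ui \ne \uj$ is exactly what is needed to keep the coefficient $1 + \pi^{p(\ui)}q^{-(|\ui|,|\ui|)}$ of the $\ui = \uj$ case from vanishing. To conclude $\max(x) = \ui\uj$, it suffices to show the remaining cross-terms contribute only words strictly less than $\ui\uj$. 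If $\uk = \ui$ and $\ul < \uj$, Lemma~\ref{L:max shuffle} applies to $(\ui, \ul)$: the hypothesis $\ui > \ul$ follows from $\ui \ge \uj > \ul$, and the isotropic condition holds automatically since $\ul < \ui$, giving $\max(\ui \shq \ul) = \ui\ul < \ui\uj$.

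The subtle case is $\uk < \ui$, treated by direct shuffle combinatorics. Any shuffle $w$ of $\uk$ and $\ul$ has first letter $w_1 \le i_1$, because $k_1 \le i_1$ (from $\uk < \ui$) and $l_1 \le j_1 \le i_1$ (from $\ul \le \uj \le \ui$). If $w_1 < i_1$ then $w < \ui\uj$; otherwise $w_1 = i_1$, and I peel it off and iterate, comparing the residual shuffle against $\ui'\uj$ with $\ui' = (i_2, \ldots, i_a)$. The Lyndon property of $\ui$ is what makes this iteration work: it forces $i_s \ge i_1$ for every $s$ and $\ui^{(s)} = (i_s, \ldots, i_a) > \ui \ge \uj$ for every $s > 1$, which bounds the next available first letter of each residual by the corresponding letter of $\ui\uj$. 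Equality $w = \ui\uj$ can only occur when $\uk = \ui$ and $\ul = \uj$, which is excluded, so $w < \ui\uj$ strictly. The main obstacle is precisely this iterated comparison: the Lyndon hypothesis on $\ui$ is indispensable, as simple examples show the analogous bound fails once $\ui$ is replaced by a non-Lyndon word. The structure of the argument closely parallels that of Lemma~\ref{L:max shuffle}, and one could equivalently repackage it as a repeated application of that lemma after decomposing $\uk$ along its canonical Lyndon factorization.
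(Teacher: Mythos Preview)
Your approach is the same as the paper's (which simply cites \cite[Proposition~16]{lec}): pick $u,v\in\UU$ with $\max(u)=\ui$ and $\max(v)=\uj$, then show $\max(u\shq v)=\ui\uj$. The preliminary check that $\max(m_\uh)=\uh$ is correct, and your treatment of the cross-terms with $\uk=\ui$ via Lemma~\ref{L:max shuffle} is fine.

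The one soft spot is the ``subtle case'' $\uk<\ui$. Your peel-and-iterate sketch is not fully justified as written: after peeling off $w_1=i_1$, the residual target $(i_2,\ldots,i_a)\uj$ is no longer headed by a Lyndon word, and you do not explain why the inequality $w_{t+1}\le i_{t+1}$ persists at each subsequent step (the Lyndon facts you cite, $i_s\ge i_1$ and $\ui^{(s)}>\ui$, do not immediately give this). A cleaner one-line fix avoids iteration entirely: given a shuffle $w$ of $\uk$ and $\ul$ determined by a set $S\subseteq[1,a+b]$ of positions receiving letters of $\uk$, form the shuffle $w'$ of $\ui$ and $\ul$ using the \emph{same} set $S$. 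Since $\uk\le\ui$ have equal length, $w$ and $w'$ agree up to the position in $S$ carrying the first letter where $\uk$ and $\ui$ differ, and there $w<w'$; hence $w\le w'$. Now $w'\le\max(\ui\shq\ul)=\ui\ul\le\ui\uj$ by Lemma~\ref{L:max shuffle} (the isotropic-odd hypothesis $\ui\neq\uj$ guarantees $\ui\neq\ul$ when $|\ui|$ is isotropic odd, since then either $|\ul|=|\uj|\neq|\ui|$ or $\ul\le\uj<\ui$). This monotonicity-by-substitution is both more direct than your iteration and than the Lyndon-factorization repackaging you suggest at the end.
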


\begin{thm} \label{thm:canonical factorization of dominant words}
The map $\ui\mapsto|\ui|$ defines a bijection from $\L^+$ to $\Phi^+$. Moreover, $\ui\in\W^+$ if and only if its canonical factorization is of the form
$\ui=\ui_1\cdots \ui_r$,
where $\ui_1,\ldots,\ui_r\in\Lr^+$ satisfy $\ui_1\geq \ldots \geq \ui_r$
and $\ui_s$ appears only once whenever $|\ui_s|$ is isotropic odd.
\end{thm}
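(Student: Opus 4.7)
The plan is to deduce both statements from a Hilbert series identity: match the count $\#\W^+_\nu$ (equal to $\dim U_{q,\nu}$ by Proposition~\ref{P:monomial basis}) against the PBW product
\[\prod_{\alpha\in\Phi^+\setminus\Phi^+_{\iso}}(1-e^\alpha)^{-1}\;\prod_{\beta\in\Phi^+_{\iso}}(1+e^\beta)\]
for quantum supergroups of basic type (cf.~\cite{ya}), and then read off the bijection $\L^+ \leftrightarrow \Phi^+$ from unique factorization of these formal products.

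First I would establish the ``only if'' half of the canonical factorization statement. Given $\ui\in\W^+$ with canonical Lyndon factorization $\ui=\ui_1\cdots\ui_r$, each factor is itself dominant by Lemma~\ref{L:factor}, so $\ui_s\in\L^+$. To rule out $\ui_s=\ui_{s+1}=\bt$ with $|\bt|\in\Phi^+\cap\Phi_{\iso}$, Lemma~\ref{L:factor} reduces me to showing $\bt\bt\notin\W^+$: the monomial basis element $\ep_{\bt\bt}=\ep_\bt\shq\ep_\bt$ satisfies $\ep_\bt=\bt+(\text{lower})$, and expanding the shuffle via Lemma~\ref{L:max shuffle} together with Corollary~\ref{C:not max} (which gives $\max(\bt\shq\bt)<\bt\bt$ in the isotropic odd case) forces $\max(\ep_{\bt\bt})<\bt\bt$, contradicting the existence of a basis element with leading word $\bt\bt$. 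For the converse, I would induct on $r$, applying Proposition~\ref{P:L16} blockwise to glue together the runs of equal Lyndon factors. Within a run $\bt=\ui_s=\cdots=\ui_{s+k}$ (necessarily with $|\bt|\notin\Phi_{\iso}$ by the repetition hypothesis), Proposition~\ref{P:L16} cannot be invoked directly because the lexicographic hypothesis $\bt\geq\bt^k$ fails; instead, one first proves $\bt^{k+1}\in\W^+$ by computing directly that the leading monomial of $\ep_\bt^{\shq(k+1)}$ is $\bt^{k+1}$ with non-zero coefficient---an iteration of the leading-word analysis underlying Lemma~\ref{L:max shuffle}, with the coefficient computed as a product of the non-vanishing factors $1+\pi^{p(\bt)}q^{-(|\bt|,|\bt|)}$-type expressions. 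Once each pure-power block is dominant, Proposition~\ref{P:L16} attaches distinct blocks because the lexicographic inequality then holds strictly.

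Combining the two directions yields
\[\sum_\nu \#\W^+_\nu\,e^\nu \;=\; \prod_{\ui\in\L^+,\ |\ui|\notin\Phi_{\iso}}(1-e^{|\ui|})^{-1}\;\prod_{\ui\in\L^+,\ |\ui|\in\Phi_{\iso}}(1+e^{|\ui|}).\]
Equating with the PBW product and inducting on the height of $\nu$ produces a weight-preserving bijection $\L^+\leftrightarrow\Phi^+$ matching isotropic with isotropic. The main obstacle is the equal-factor case in the ``if'' direction: Proposition~\ref{P:L16} does not apply once two consecutive Lyndon factors coincide, so the argument relies on directly showing $\max(\ep_\bt^{\shq m})=\bt^m$ with non-zero coefficient for every $m\geq1$ when $|\bt|\notin\Phi_{\iso}$, via an iterated combinatorial shuffle analysis of copies of a fixed Lyndon word that extends what Lemma~\ref{L:max shuffle} provides for two-factor shuffles.
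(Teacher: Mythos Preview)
There is a genuine gap in your ``only if'' argument, and it cannot be repaired without essentially reverting to the paper's counting strategy.

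First, the claim ``$\ep_\bt=\bt+(\text{lower})$'' is false: by definition $\ep_\bt=i_1\shq\cdots\shq i_d$ is a sum over \emph{all} shuffles of the letters of $\bt$, and its maximal word is the decreasing rearrangement of those letters, not $\bt$ itself (e.g.\ for $\bt=(1,2)$ one has $\max(\ep_\bt)=(2,1)>\bt$). More seriously, even granting $\max(\ep_{\bt\bt})<\bt\bt$, this would not show $\bt\bt\notin\W^+$: dominance of $\bt\bt$ means \emph{some} element of $\UU$ has leading word $\bt\bt$, and ruling out one candidate $\ep_{\bt\bt}$ says nothing about the others. Corollary~\ref{C:not max} only controls the shuffle $\bt\shq\bt$ of the bare word $\bt$, which lies in $\F$, not in $\UU$. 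Your proposed fix for the equal-factor case in the ``if'' direction (showing $\max(\ep_\bt^{\shq m})=\bt^m$) rests on the same mistaken description of $\ep_\bt$.

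The paper sidesteps this entirely: it never attempts to prove $\bt\bt\notin\W^+$ directly. Instead it runs a simultaneous induction on height, using the PBW dimension formula $d(\nu)=d'(\nu)+[\nu\in\Phi^+]$ to squeeze $|\W^\oplus_\nu|\le|\W^+_\nu|=d(\nu)$ against the lower bound $|\W^\oplus_\nu|\ge d'(\nu)$ coming from the inductive bijection at smaller heights. Both the bijection $\L^+\leftrightarrow\Phi^+$ and the equality $\W^\oplus=\W^+$ then fall out of the same count, with the exclusion of repeated isotropic factors forced by surjectivity (a putative $\uj\in\W^+_\bt\setminus\W^\oplus_\bt$ would produce $\ui\ui\in\W^+$ for some shorter isotropic $\ui$, contradicting the inductive hypothesis). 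Your Hilbert-series comparison is morally the same counting, but it requires both inclusions $\W^\oplus\subseteq\W^+\subseteq\W^\oplus$ as input, and the second of these is exactly what you cannot establish without the count.
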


\begin{proof}
We prove both statements simultaneously by induction. Let $\L^+_n=\{\ui\in\L^+\mid\ell(\ui)=n\}$, $\Phi^+_n=\{\bt\in\Phi^+\mid\mathrm{ht}(\bt)=n\}$ and let $\W^\oplus$ be the set of words in $\W$ satisfying the conditions of the theorem. By Proposition~\ref{P:L16}, $\W^\oplus\subset\W^+$.

Assume that for $r<n$ there is a bijection $\L^+_r\longrightarrow\Phi^+_r$, and $\W^\oplus_\nu=\W^+_\nu$ whenever $\mathrm{ht}(\nu)<n$. The base case is the bijection $$\L^+_1=\I\leftrightarrow\Pi=\Phi^+_1.$$

We now proceed to the inductive step. Let $\preq$ be an arbitrary total ordering on $\Phi^+$. For $\nu\in Q^+$, let $d(\nu)=\dim U_{q,\nu}$, and define
$$d'(\nu)=|\{(\bt_1,\ldots,\bt_d)\in(\Phi^+)^d\mid d\geq 2,\,\bt_1\preq\cdots\preq\bt_d,\, \bt_1+\ldots+\bt_d=\nu\}|.$$
Then, by the PBW theorem for $U_q$ (cf. \cite{ya}), $d(\nu)=1+d'(\nu)$ if $\nu\in\Phi^+$, and $d(\nu)=d'(\nu)$ otherwise.

Assume that $\ui\in\Lr^+_n$, $|\ui|=\nu\in Q^+$. By induction, $|\W^\oplus_\nu\backslash\{\ui\}|\geq d'(\nu)$. Since $\W^\oplus_\nu\subset\W^+_\nu$, and $|\W^+_\nu|=d(\nu)$, we have
$$d(\nu)=|\W^+_\nu|\geq|\W^\oplus_\nu|\geq 1+d'(\nu)\geq d(\nu).$$
This forces $d(\nu)=1+d'(\nu)$ and, therefore, $\nu\in\Phi^+_n$. Moreover, it follows that $\ui\in\W^+_\nu$ is the unique Lyndon word of its degree. Hence, the map $\Lr^+_n\longrightarrow\Phi^+_n$ is injective and $\W^\oplus_\nu=\W^+_\nu$ whenever $\mathrm{ht}(\nu)=n$ and $\L^+_\nu\neq\emptyset$.

We now prove this map is surjective. To this end, let $\bt\in\Phi^+_n$. By induction $|\W^\oplus_\bt|\geq d'(\bt)$ \
and $|\W^\oplus_\bt|>d'(\bt)$ if and only if $\L^+_\bt\neq\emptyset$ (in which case there is a unique $\ui(\bt)\in\L^+_\bt$).
Suppose that the map is not surjective; that is,
$|\W^\oplus_\bt|=d'(\bt)$. Then, there exists $\uj\in\W^+_\bt\backslash\W^\oplus_\bt$
with $\uj=\uj_1\cdots\uj_r$ with $\ui=\uj_s=\uj_{s+1}$ odd isotropic for some $s$.
If $\uj\neq\ui\ui$, then $\ui\ui\in\W^+$ by Lemma~\ref{L:factor}. Since $\ell(\ui\ui)<\ell(\uj)$, $\W^\oplus_{2|\ui|}=\W^+_{2|\ui|}$
and so $\ui\ui\in W^\oplus$, contradicting the definition of $W^\oplus$.
But, the only alternative is $\uj=\ui\ui$, which implies both $2|\ui|=\bt$ and $|\ui|$ are in $\Phi^+$,
contradicting the fact that $\Phi^+$ is reduced. It now follows that
$$|\W^\oplus_\nu|=d(\nu)=|\W^+_\nu|$$
for all $\nu\in Q^+$, which completes the proof of both statements of the theorem.
\end{proof}

\subsection{Bracketing and Triangularity}\label{SS:BrackT}
%For the purposes of the following definitions, let $\L$ denote either $\Lr$ or $\Ll$. Let $\ui\in\L\backslash \I$.

For homogeneous $x,y\in\F$, define
\begin{align}\label{E:qbracket}
[x,y]_q=xy-\pi^{p(x)p(y)}q^{(|x|,|y|)}yx.
\end{align}
When $\ui\in\Lr^+$, we define $[\ui]^+\in\F$ inductively by $[\ui]^+=i$ if $\ui=i\in \I$ and, otherwise, $[\ui]^+=[\ui_1,\ui_2]_q$,
where $\ui=\ui_1\ui_2$ is the co-standard factorization of $\ui$.

The next two propositions are proved exactly as in \cite[Propositions 19 and 20]{lec}.

\begin{prp} For $\ui\in\Lr^+$, $[\ui]^+=\ui+x$ where $x$ is a linear combination of words $\uj\in\Wr^+$ satisfying $\uj>\ui$.
\end{prp}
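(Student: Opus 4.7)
The plan is to proceed by induction on $\ell(\ui)$. The base case $\ell(\ui)=1$ is immediate: $\ui = i \in \I$ and $[\ui]^+ = i = \ui$, so one may take $x = 0$.

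For the inductive step, let $\ui=\ui_1\ui_2$ be the co-standard factorization. By Lemma \ref{L:costd factorization} (together with standard Lyndon-word theory) both $\ui_1$ and $\ui_2$ are Lyndon with $\ell(\ui_j)<\ell(\ui)$, and by Lemma \ref{L:factor} they are dominant, so $\ui_1,\ui_2\in\Lr^+$. Applying the inductive hypothesis gives $[\ui_j]^+=\ui_j+X_j$ for $j=1,2$, where $X_j\in\F$ is a $\Q(q)$-linear combination of dominant words $\uk>\ui_j$. Since $[\ui_j]^+$ is homogeneous of weight $|\ui_j|$, every such $\uk$ satisfies $|\uk|=|\ui_j|$ and hence $\ell(\uk)=\ell(\ui_j)$. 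Substituting into \eqref{E:qbracket} and expanding with respect to concatenation,
\begin{align*}
[\ui]^+ = \ui + \bigl(\ui_1 X_2 + X_1\ui_2 + X_1 X_2\bigr) - \pi^{p(\ui_1)p(\ui_2)}q^{(|\ui_1|,|\ui_2|)}\bigl(\ui_2\ui_1 + \ui_2 X_1 + X_2\ui_1 + X_2 X_1\bigr).
\end{align*}

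The coefficient of $\ui=\ui_1\ui_2$ on the right is exactly $1$, contributed by the leading $\ui_1\cdot\ui_2$ and by nothing else (every other summand is a concatenation of words of total length $\ell(\ui)$ that differs from $\ui_1\ui_2$ in either its left or right factor). Every remaining summand is a concatenation $\uk\ul$ of two dominant words. A direct lexicographic comparison -- using $\ui_1<\ui_2$ from Lemma \ref{L:costd factorization}, the equal-length property for words appearing in $X_j$, and the Lyndon inequality $\ui<\ui_2$ (which combined with $\ui_2<\ui_2\ui_1$ forces $\ui_2\ui_1>\ui$) -- shows that each such $\uk\ul$ is strictly greater than $\ui$ in lex order.

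The main obstacle is to verify that $[\ui]^+$ lies in the $\Q(q)$-span of dominant words, since an individual cross term $\uk\ul$ need not itself be dominant. Following the argument of \cite[Prop.~19]{lec}, each non-dominant concatenation is rewritten via the identity $\uk\ul = [\uk,\ul]_q + \pi^{p(\uk)p(\ul)}q^{(|\uk|,|\ul|)}\ul\uk$ and then straightened inductively, with every rewrite producing only lex-strictly greater words. The key combinatorial inputs are Proposition \ref{P:L16} (to assemble dominant words from dominant Lyndon pieces in the correct order) together with Theorem \ref{thm:canonical factorization of dominant words} (characterizing dominant words via their canonical Lyndon factorizations). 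The super-signs $\pi^{p(\cdot)p(\cdot)}$ are multiplicative and propagate in parallel with the $q$-exponents, so the adaptation from Leclerc's non-super argument requires only careful sign bookkeeping.
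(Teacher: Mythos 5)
Your inductive framework --- co-standard factorization, expansion of the $q$-bracket, identification of the coefficient of $\ui$ as $1$, and the lexicographic comparisons of the cross terms --- is exactly the argument of \cite[Proposition 19]{lec}, which is all the paper invokes here. Up to and including those comparisons, your proof is correct.

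The problem is your final paragraph, which you added because you read the statement as asserting that the words $\uj$ are dominant. That is not what you can prove, because it is in fact false as written. Take non-super type $A_3$ with the natural ordering and $\ui=(123)\in\Lr^+$, whose co-standard factorization is $(12)\cdot(3)$; then
\[
[\ui]^+ = [[1,2]_q,3]_q = (123) - q^{-1}(213) - q^{-1}(312) + q^{-2}(321),
\]
and $(213)$ appears with a nonzero coefficient yet is not dominant (its canonical Lyndon factorization $(2)(13)$ involves the non-dominant Lyndon word $(13)$). Moreover, the ``straightening'' you invoke is vacuous: the identity $\uk\ul=[\uk,\ul]_q+\pi^{p(\uk)p(\ul)}q^{(|\uk|,|\ul|)}\ul\uk$ becomes $\uk\ul=\uk\ul$ once the bracket is expanded, so it is not a rewriting rule in $\F$ and reduces nothing. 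A straightening of that flavor makes sense only modulo $\ker\Xi$, which is exactly the content of Lemma~\ref{L:Standard Triangularity}, not of the present proposition. The intended statement, as in Leclerc's Proposition~19, is that $x$ is a linear combination of words $\uj\in\Wr$ of the same weight with $\uj>\ui$ (the $\,{}^+$ is a typo); with that reading, your argument through the lexicographic comparison already constitutes a complete proof and your last paragraph should be deleted. The dominance that is actually used later is the one modulo $\ker\Xi$, obtained in the proof of Proposition~\ref{P:Lyndon triangularity} by combining the present proposition with Lemma~\ref{L:Standard Triangularity}.
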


Now, for $\ui\in\Wr$, let $\ui=\ui_1\cdots\ui_r$, where $\ui_1,\ldots,\ui_r\in\Lr^+$ and $\ui_1\geq \ldots \geq \ui_r$,
be its canonical factorization. Define
$$[\ui]^+=[\ui_1]^+\cdots[\ui_r]^+.$$

\begin{prp} The set $\{[\ui]^+\mid\ui\in\Wr\}$ is a basis for $\F$.
\end{prp}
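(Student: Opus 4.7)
The plan is to prove the stronger unitriangularity statement
\[ [\ui]^+ = \ui + \sum_{\substack{\uj \in \Wr,\ \uj > \ui \\ |\uj| = |\ui|}} c_\uj \uj \qquad (\ui \in \Wr), \]
which exhibits $\{[\ui]^+\}_{\ui\in\Wr\cap\F_\nu}$ in the word basis of $\F_\nu$ via a unitriangular matrix in lex order, and hence identifies $\{[\ui]^+\mid\ui\in\Wr\}$ as a basis of $\F=\bigoplus_\nu\F_\nu$. The preceding proposition supplies this triangularity only for $\ui\in\Lr^+$; the same recursive formula $[\ui]^+=[[\ui_1]^+,[\ui_2]^+]_q$ extends $[\ui]^+$ to all Lyndon $\ui$, and the canonical factorization then extends it to all words. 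I would propagate the triangularity through these two extensions.

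The first step is an induction on $\ell(\ui)$ for $\ui\in\Lr$. Let $\ui=\ui_1\ui_2$ be the co-standard factorization, so $\ui_1,\ui_2\in\Lr$ with $\ui_1<\ui_2$; by induction, $[\ui_k]^+=\ui_k+(\text{strictly larger words of weight }|\ui_k|)$ for $k=1,2$. Because all words in a fixed $\F_\nu$ share a common length, lex order is preserved under both left and right concatenation, which gives
\[ [\ui_1]^+[\ui_2]^+ = \ui_1\ui_2 + (\text{terms} > \ui_1\ui_2), \qquad [\ui_2]^+[\ui_1]^+ = \ui_2\ui_1 + (\text{terms} > \ui_2\ui_1). \]
The Lyndon property of $\ui$ forces $\ui_1\ui_2 = \ui < \ui_2$, while $\ui_2 < \ui_2\ui_1$ since $\ui_2$ is a proper left factor; by transitivity $\ui_1\ui_2 < \ui_2\ui_1$. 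Consequently, in
\[ [\ui]^+ = [\ui_1]^+[\ui_2]^+ - \pi^{p(\ui_1)p(\ui_2)}q^{(|\ui_1|,|\ui_2|)}[\ui_2]^+[\ui_1]^+, \]
the word $\ui$ itself appears only from the first summand with coefficient $1$, while every other word appearing is strictly greater than $\ui$ in lex order.

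For general $\ui\in\Wr$, let $\ui=\ui_1\cdots\ui_r$ with $\ui_1\geq\cdots\geq\ui_r$ be its canonical Lyndon factorization and expand $[\ui]^+=(\ui_1+x_1)\cdots(\ui_r+x_r)$, where by the previous step each $x_s$ is supported on words of weight $|\ui_s|$ strictly greater than $\ui_s$. Any non-leading term has the shape $y_1\cdots y_r$ with $y_s\in\{\ui_s\}\cup\mathrm{supp}(x_s)$ and at least one strict; taking $s^*$ to be the smallest strict index, the first $s^*-1$ blocks coincide with those of $\ui$, and the $s^*$-th block (using equal length of $y_{s^*}$ and $\ui_{s^*}$) produces a strictly larger character at some common position, so $y_1\cdots y_r > \ui$. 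This establishes the unitriangularity on each $\F_\nu$, and hence the basis property of $\{[\ui]^+\mid\ui\in\Wr\}$ on all of $\F$.

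The main combinatorial obstacle is securing the strict inequality $\ui_1\ui_2<\ui_2\ui_1$ in the Lyndon step, which is handled uniformly by the transitivity route through $\ui_2$ above. The super-signs and $q$-factors in $[\cdot,\cdot]_q$ do not interfere, since they merely rescale the $yx$ summand whose leading words are already strictly greater than $\ui$ under the inductive hypothesis, and therefore cannot disturb the unit-coefficient appearance of $\ui$ itself.
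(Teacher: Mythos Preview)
Your argument is correct and follows the same route the paper intends by its reference to \cite[Propositions 19 and 20]{lec}: establish unitriangularity $[\ui]^+=\ui+\sum_{\uj>\ui}c_\uj\uj$ first for Lyndon words by induction via the co-standard factorization, then for arbitrary words via the canonical factorization, and conclude by reading off a unitriangular change of basis on each weight space. The only cosmetic difference is that the preceding proposition in the paper is stated for $\ui\in\Lr^+$ with the higher terms also constrained to lie in $\Wr^+$, whereas your induction (like Leclerc's Proposition~19) runs over all of $\Lr$ and keeps the higher terms merely in $\Wr$; this is exactly what is needed here, since the claim concerns a basis of all of $\F$.
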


Now, let $\Xi:(\F,\cdot)\longrightarrow(\F,\shq)$ be the algebra homomorphism defined by $\Xi(i_1,\ldots,i_d)=i_1\shq\cdots\shq i_d$.
Obviously, we have $\Xi(\F)=\UU$. The next lemma generalizes \cite[Lemma 21]{lec} with an identical proof.

\begin{lem}\label{L:Standard Triangularity}
A word $\ui\in\Wr$ is dominant if and only if it cannot be expressed modulo $\ker\Xi$ as a linear combination of words $\uj>\ui$.
\end{lem}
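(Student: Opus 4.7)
The plan is to reformulate the statement using the Lyndon-bracket basis $\{[\uh]^+ : \uh \in \Wr\}$ of $\F$ from the preceding proposition, together with the triangular expansion $[\uh]^+ = \uh + \sum_{\uj > \uh} c_\uj \uj$.  The expansion is stated there for $\uh \in \Lr^+$, and I would first extend it to all $\uh \in \Wr$ by induction on the number of Lyndon factors in the canonical factorization $\uh = \uh_1 \cdots \uh_r$, using compatibility of the lex order with concatenation on the left.  Inverting the triangularity gives $\uh = [\uh]^+ + \sum_{\uj > \uh} d_\uj [\uj]^+$; applying $\Xi$ and noting that the induced change of basis between $\{\Xi(\uk)\}$ and $\{\Xi([\uk]^+)\}$ is itself triangular in the lex order, this shows that $\ui$ can be expressed modulo $\ker\Xi$ as a linear combination of $\uj > \ui$ if and only if $\Xi([\ui]^+) \in \mathrm{span}\{\Xi([\uj]^+) : \uj > \ui\}$.

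The next step is to establish that $\{\Xi([\uh]^+) : \uh \in \Wr^+\}$ is a basis of $\UU_\nu$, which I propose to do via the stronger vanishing $\Xi([\uh]^+) = 0$ for $\uh \notin \Wr^+$.  By Theorem~\ref{thm:canonical factorization of dominant words}, such a $\uh = \uh_1 \cdots \uh_r$ has either some factor $\uh_s \in \Lr \setminus \Lr^+$ or some adjacent pair $\uh_s = \uh_{s+1}$ with $|\uh_s|$ isotropic odd.  Since $\Xi$ is an algebra homomorphism from $(\F,\cdot)$ to $(\F,\shq)$, one has $\Xi([\uh]^+) = \Xi([\uh_1]^+) \shq \cdots \shq \Xi([\uh_r]^+)$, so it suffices to check $\Xi([\uh_s]^+) = 0$ in the first case and $\Xi([\uh_s]^+) \shq \Xi([\uh_s]^+) = 0$ in the second.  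The latter generalizes Corollary~\ref{C:not max}: setting $x = \Xi([\uh_s]^+)$ with $|x|$ isotropic odd, Proposition~\ref{P:ShuffleProductProperty} yields $x \shq x = -x \shqi x$, from which $x \shq x = 0$ follows by tracking the top-word coefficient exactly as in the single-letter computation $i \shq i = 0$.  Combined with the dimension count $|\Wr^+_\nu| = \dim \UU_\nu$ from Proposition~\ref{P:monomial basis} and Theorem~\ref{thm:canonical factorization of dominant words}, the vanishings force $\{\Xi([\uh]^+) : \uh \in \Wr^+\}$ to be a basis of $\UU_\nu$.

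The main obstacle is the first vanishing: $\Xi([\uh_s]^+) = 0$ for $\uh_s \in \Lr \setminus \Lr^+$, equivalently $|\uh_s| \notin \Phi^+$.  One proves this by induction on $\ell(\uh_s)$ using the quantum Serre-type relations in Lemma~\ref{L:Rk2Shuffle}, which force the Lyndon bracket in any weight outside $\Phi^+$ into $\ker \Xi$.  With both vanishings in hand, the equivalence follows: for $\ui \in \Wr^+$, $\Xi([\ui]^+)$ is a nonzero basis vector and so cannot lie in the span of the remaining basis vectors; in particular it is not in $\mathrm{span}\{\Xi([\uj]^+) : \uj > \ui\}$, which after the vanishing coincides with $\mathrm{span}\{\Xi([\uj]^+) : \uj > \ui,\ \uj \in \Wr^+\}$.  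For $\ui \notin \Wr^+$, the vanishing places $\Xi([\ui]^+) = 0$ trivially in that span.  Translating back via the triangular expansion yields the lemma.
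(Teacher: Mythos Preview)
Your approach has a fatal gap: the vanishing $\Xi([\uh_s]^+) = 0$ for $\uh_s \in \Lr \setminus \Lr^+$ is simply false. First, the parenthetical ``equivalently $|\uh_s| \notin \Phi^+$'' is wrong---a Lyndon word can fail to be dominant while its weight is a root. Take type $G_2$ with $\alpha_1$ the short root and the order $1<2$ (so $a_{12}=-3$). Both $11122$ and $11212$ are Lyndon of weight $3\alpha_1+2\alpha_2 \in \Phi^+$, and Proposition~\ref{P:ComputingLyndonWords} gives $\ui^+(3\alpha_1+2\alpha_2)=11212$, so $11122 \in \Lr \setminus \Lr^+$. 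Its costandard factorization is $1112 \cdot 2$, and $[1112]^+=[1,[1,[1,2]_q]_q]_q$ maps under $\Xi$ to a nonzero root vector for $3\alpha_1+\alpha_2$ (the Serre relation is $(\ad_q e_1)^4 e_2 = 0$, one step further). Specializing $q\to 1$ shows $\Xi([11122]^+)$ is a nonzero multiple of $[e_{3\alpha_1+\alpha_2}, e_{\alpha_2}]\neq 0$, hence $\Xi([11122]^+)\neq 0$. Your proposed induction via the relations in Lemma~\ref{L:Rk2Shuffle} cannot work: those relations kill $q$-brackets whose weight leaves $\Phi^+$, not brackets coming from a non-dominant Lyndon bracketing inside a root weight space. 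The isotropic-square argument is also incomplete: from $x\shq x = -\, x\shqi x$ alone one cannot conclude $x\shq x=0$ without knowing $\max(x)$ and invoking results (essentially Lemma~\ref{L:Lyndon max word}) that come later.

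The actual proof, as in \cite[Lemma~21]{lec}, is a two-line duality argument with no bracket basis at all. Since $\Psi$ is injective one has $\ker\Xi=\ker\psi$, so the condition in the lemma reads $e_\ui\notin\mathrm{span}\{e_\uj:\uj>\ui\}$ in $U_{q,\nu}$. On the other hand, by \eqref{eq:psiu} and the adjointness defining $e_i'$, the coefficient of a word $\uk$ in any $u\in\UU_\nu$ equals $(e_{\tau(\uk)},\Psi^{-1}(u))$; so $\ui=\max(u)$ for some $u$ iff there exists $v\in U_{q,\nu}$ with $(e_{\tau(\ui)},v)\neq 0$ and $(e_{\tau(\uj)},v)=0$ for all $\uj>\ui$, which by nondegeneracy of the form is $e_{\tau(\ui)}\notin\mathrm{span}\{e_{\tau(\uj)}:\uj>\ui\}$. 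Applying the anti-automorphism $\tau$ of $U_q$ converts this to $e_\ui\notin\mathrm{span}\{e_\uj:\uj>\ui\}$.
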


%\subsubsection{} When $\ui\in\Ll$, we define $[\ui]^-\in\F$ inductively by $[\ui]^-=i$ if $\ui=i\in \I$ and, otherwise, $[\ui]^-=[\ui_1,\ui_2]_q$, where $\ui=\ui_1\ui_2$ is the standard factorization of $\ui$. Note that $\tau([\ui]^-)=[\tau(\ui)]^+$.

%Let $\ui\in\Wl$, and $\ui=\ui_1'\cdots\ui_s'$, $\ui_1',\ldots,\ui_s'\in{}^\tau\Ll$, $\ui_t\succeq\ui_{t+1}$ be its canonical factorization as a %nonincreasing product of co-Lyndon words. Define
%$$[\ui]^-=[\ui_1']^-\cdots[\ui_s']^-.$$

%%%%%%%%%%%%%%%%%%
\subsection{Lyndon Bases}
For $\ui\in\Wr^+$ we define $\r_\ui=\Xi([\ui]^+)$.

\begin{prp}\label{P:rofL} Let $\ui\in\Lr^+$ and $\ui=\ui_1\ui_2$ be the co-standard factorization of $\ui$. Then,
$$\r_\ui = \r_{\ui_1}\shqqi \r_{\ui_2}.$$
\end{prp}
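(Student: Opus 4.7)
The plan is to unwind the definition $\r_\ui=\Xi([\ui]^+)$ using the fact that $\Xi:(\F,\cdot)\to(\F,\shq)$ is an algebra homomorphism, and then apply Proposition~\ref{P:ShuffleProductProperty} to identify the resulting expression with $\r_{\ui_1}\shqqi \r_{\ui_2}$.

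First I would interpret the recursive definition of $[\ui]^+$ as $[\ui]^+=[[\ui_1]^+,[\ui_2]^+]_q$ inside the concatenation algebra $(\F,\cdot)$; applying $\Xi$ then intertwines the concatenation and shuffle products, yielding
\begin{align*}
\r_\ui \;=\; \r_{\ui_1}\shq \r_{\ui_2}\;-\;\pi^{p(\ui_1)p(\ui_2)}q^{(|\ui_1|,|\ui_2|)}\,\r_{\ui_2}\shq \r_{\ui_1}.
\end{align*}
Next, I would invoke Proposition~\ref{P:ShuffleProductProperty} with $x=\r_{\ui_2}$ and $y=\r_{\ui_1}$ (both $(Q^+\times\Z_2)$-homogeneous, with the same weight and parity as the respective words) to obtain
\begin{align*}
\r_{\ui_2}\shq \r_{\ui_1}\;=\;\pi^{p(\ui_1)p(\ui_2)}q^{-(|\ui_1|,|\ui_2|)}\,\r_{\ui_1}\shqi \r_{\ui_2}.
\end{align*}
Substituting this back into the previous display and using $\pi^2=1$, the sign and $q$-exponent factors cancel, giving
\begin{align*}
\r_\ui \;=\; \r_{\ui_1}\shq \r_{\ui_2}\;-\;\r_{\ui_1}\shqi \r_{\ui_2}\;=\;\r_{\ui_1}\shqqi \r_{\ui_2},
\end{align*}
as desired.

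Since every step reduces to formal bookkeeping of $\Z_2$- and $Q^+$-gradings, I do not foresee any serious obstacle. The only point meriting care is verifying that the weight and parity of $\r_{\ui_j}$ match those of the word $\ui_j$ itself, so that Proposition~\ref{P:ShuffleProductProperty} applies cleanly with the claimed exponents; this follows by a straightforward induction from the bracket construction, since $[\cdot,\cdot]_q$ manifestly preserves $(Q^+\times\Z_2)$-degree and $\Xi$ does as well.
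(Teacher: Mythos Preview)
Your proof is correct and essentially identical to the paper's: both unwind $\r_\ui=\Xi([[\ui_1]^+,[\ui_2]^+]_q)$ via the homomorphism $\Xi$ and then invoke Proposition~\ref{P:ShuffleProductProperty} (equivalently \eqref{P:qqi shuffle}) to rewrite the result as $\r_{\ui_1}\shqqi\r_{\ui_2}$. The only detail the paper makes explicit that you leave implicit is that $\ui_1,\ui_2\in\Lr^+$ (via Lemma~\ref{L:factor}), which is needed for $\r_{\ui_1},\r_{\ui_2}$ to be defined at all.
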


\begin{proof}
Observe that $\ui_1,\ui_2\in\Lr^+$ by Lemma \ref{L:factor} and $\S$\ref{SS:BrackT}. Therefore, we compute that
\begin{align*}
\r_\ui&=\Xi([[\ui_1]^+,[\ui_2]^+]_q)\\
    &=\Xi([\ui_1]^+)\shq\Xi([\ui_2]^+)-\pi^{p(\ui_1)p(\ui_2)}q^{-(|\ui_1|,|\ui_2|)}\Xi([\ui_2]^+)\shq\Xi([\ui_1]^+)\\
    &=\r_{\ui_1}\shq \r_{\ui_2}-\pi^{p(\ui_1)p(\ui_2)}q^{-(|\ui_1|,|\ui_2|)}\r_{\ui_2}\shq \r_{\ui_1}.
\end{align*}
The proposition now follows by applying Proposition \ref{P:ShuffleProductProperty}.
\end{proof}

Recall the monomial basis from \eqref{E:monomialbasis}. The next proposition generalizes \cite[Proposition~ 22]{lec}.

\begin{prp}
   \label{P:Lyndon triangularity}
For $\ui\in\Wr^+$, we have
$$\r_\ui=\ep_\ui+\sum_{\uj\in\Wr^+,\,\uj>\ui}\chi_{\ui\uj}\,\ep_\uj,$$
for some $\chi_{\ui\uj}\in\Q(q)$. In particular, the set
$\{\r_\ui\mid\ui\in\Wr^+\}$
is a basis for $\UU$.
\end{prp}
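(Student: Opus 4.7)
The plan is to dispatch the Lyndon case as an immediate corollary of the preceding proposition and then bootstrap to general dominant words using multiplicativity of $\Xi$ together with a short auxiliary triangularity lemma for $\ep_\uk$ when $\uk$ is not assumed dominant.

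For $\ui\in\Lr^+$, the preceding proposition reads $[\ui]^+ = \ui + \sum_{\uj\in\Wr^+,\,\uj>\ui} c_\uj\,\uj$ in $(\F,\cdot)$. Since $\Xi:(\F,\cdot)\to(\F,\shq)$ is an algebra homomorphism with $\Xi(\uw)=\ep_\uw$ for every word $\uw$, applying $\Xi$ immediately yields the desired expansion $\r_\ui = \ep_\ui + \sum_{\uj>\ui,\,\uj\in\Wr^+} c_\uj\,\ep_\uj$. For general $\ui\in\Wr^+$ with canonical factorization $\ui=\ui_1\cdots\ui_r$ ($\ui_k\in\Lr^+$), I would use $[\ui]^+=[\ui_1]^+\cdots[\ui_r]^+$ and multiplicativity of $\Xi$ to write $\r_\ui = \r_{\ui_1}\shq\cdots\shq\r_{\ui_r}$. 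Substituting the Lyndon expansion in each factor, expanding the shuffle, and using $\ep_{\uj_1}\shq\cdots\shq\ep_{\uj_r}=\Xi(\uj_1\cdots\uj_r)=\ep_{\uj_1\cdots\uj_r}$ produces
$$\r_\ui = \ep_\ui + \sum (\text{coeff})\cdot\ep_{\uj_1\cdots\uj_r},$$
the sum being over tuples $(\uj_1,\ldots,\uj_r)$ with each $\uj_k\in\Wr^+$, $\uj_k\geq\ui_k$, $|\uj_k|=|\ui_k|$, and at least one strict inequality.

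For each such nontrivial tuple, let $k_0$ be the least index with $\uj_{k_0}>\ui_{k_0}$. Since $|\uj_{k_0}|=|\ui_{k_0}|$ determines the multiset of letters (the $\af_i$ are linearly independent in $Q$), the two words have equal length, so $\uj_{k_0}>\ui_{k_0}$ must come from a genuine first-letter disagreement rather than a left-factor extension. This inequality is inherited by the concatenation, giving $\uj_1\cdots\uj_r>\ui$ in lex order. To re-expand each $\ep_{\uj_1\cdots\uj_r}$ in the monomial basis I will use an auxiliary lemma: iterating Lemma~\ref{L:Standard Triangularity} within the finite set $\Wr_\nu$ (with $\nu=|\ui|$) expresses any $\ep_\uk$ as $\sum_{\ul\in\Wr^+_\nu,\,\ul\geq\uk} d_\ul\,\ep_\ul$, strictly higher when $\uk\notin\Wr^+$. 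Applied here, this produces only $\ep_\ul$ with $\ul>\ui$, completing the triangular expansion. The basis statement then follows from unit-diagonal upper triangularity in each weight space.

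The main obstacle, and the point where naive induction fails, is the lex-propagation step: the implication ``$\uj_k>\ui_k$ componentwise $\Rightarrow$ $\uj_1\cdots\uj_r>\ui_1\cdots\ui_r$'' is false in general, because a left-factor extension $\uj_{k_0}=\ui_{k_0}x$ can easily drop the concatenation below $\ui$ depending on the tail letters. The saving observation is weight preservation $|\uj_k|=|\ui_k|$, which forces equal lengths and eliminates the problematic left-factor case; once this is isolated, the rest of the argument is bookkeeping.
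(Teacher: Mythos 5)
Your proof is correct and follows the same route as the paper (and Leclerc): combine the triangularity of the bracketing $[\ui]^+$ over the canonical Lyndon factorization with Lemma~\ref{L:Standard Triangularity}, then apply $\Xi$, and read off the basis statement from unitriangularity against the monomial basis. The one step you argue in detail --- that the strict inequality $\uj_{k_0}>\ui_{k_0}$ survives concatenation because equal weight forces equal length, ruling out the proper-left-factor case of lex comparison --- is exactly the triangularity content that the paper leaves implicit by citing the preceding proposition (that $\{[\ui]^+\mid\ui\in\Wr\}$ is a unitriangular basis of $\F$), so you are filling in a cited step rather than taking a different road.
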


\begin{proof} By Lemma \ref{L:Standard Triangularity} we have
$$[\ui]^+\in\ui+\sum_{\uj\in\Wr^+,\,\uj>\ui}\chi_{\ui\uj}\,\uj\,+\ker\Xi,$$
for some $\chi_{\ui\uj}\in\Z[q,q^{-1}]$. Therefore, the first statement follows by applying $\Xi$.
The second statement follows since the transition matrix from the monomial basis is triangular.
\end{proof}

Call the basis $\{\r_\ui\mid \ui\in\Wr^+\}$ the \textbf{Lyndon basis} for $\UU$.
The following theorem is an analogue of \cite[Theorem 23]{lec} and is immediate from
Theorem \ref{thm:canonical factorization of dominant words} and the definitions.

\begin{prp}
The Lyndon basis has the form
$$\left\{\r_{\ui_1}\shq\cdots\shq \r_{\ui_k}\;\bigg|\; \begin{matrix}\ui_1,\ldots,\ui_k\in\Lr^+,
                \ui_1\geq\cdots\geq \ui_k\mbox{ and}\\
                \ui_{s-1}>\ui_s>\ui_{s+1}\mbox{ if }|\ui_s|\in\Phi^+_\one\mbox{ is isotropic}
                \end{matrix}\right\}.
$$
\end{prp}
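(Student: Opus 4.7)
The plan is to directly combine three ingredients already established in the excerpt: the definition $\r_\ui := \Xi([\ui]^+)$, the fact that $\Xi$ is an algebra homomorphism from $(\F,\cdot)$ to $(\F,\shq)$, and the combinatorial description of dominant words given by Theorem~\ref{thm:canonical factorization of dominant words}. The proof is almost entirely a translation between these.

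First I would unpack $\r_\ui$ for $\ui\in\Wr^+$. By the definition immediately preceding Proposition~\ref{P:Lyndon triangularity}, if $\ui = \ui_1\cdots\ui_k$ is the canonical factorization of $\ui$ into non-increasing Lyndon dominant words, then in the concatenation algebra $[\ui]^+ = [\ui_1]^+\cdots[\ui_k]^+$. Since $\Xi:(\F,\cdot)\to(\F,\shq)$ is an algebra homomorphism, applying $\Xi$ gives
\[
\r_\ui \;=\; \Xi([\ui_1]^+)\shq\cdots\shq\Xi([\ui_k]^+) \;=\; \r_{\ui_1}\shq\cdots\shq\r_{\ui_k}.
\]
So every element of the Lyndon basis has the form displayed on the right-hand side of the proposition.

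Next I would invoke Theorem~\ref{thm:canonical factorization of dominant words} to identify precisely which tuples $(\ui_1,\ldots,\ui_k)$ of Lyndon dominant words arise from canonical factorizations of dominant words. That theorem states that $\ui\in\Wr^+$ if and only if its canonical factorization $\ui=\ui_1\cdots\ui_k$ satisfies $\ui_1,\ldots,\ui_k\in\Lr^+$ with $\ui_1\geq\cdots\geq\ui_k$, subject to the constraint that each $\ui_s$ with $|\ui_s|$ isotropic odd may occur at most once in the sequence. Since the sequence is already non-increasing, ``occurs at most once'' is equivalent to the two strict inequalities $\ui_{s-1}>\ui_s>\ui_{s+1}$ (with the obvious convention at the endpoints $s=1$ or $s=k$), which is exactly the side condition in the proposition.

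Finally, I would combine these two observations with Proposition~\ref{P:Lyndon triangularity}, which asserts that $\{\r_\ui\mid\ui\in\Wr^+\}$ is a basis of $\UU$. Since the correspondence $\ui\leftrightarrow(\ui_1,\ldots,\ui_k)$ between $\Wr^+$ and admissible tuples is bijective (uniqueness of canonical factorizations), the description in the proposition lists each basis element exactly once. There is no serious obstacle here; the only point deserving a line of explanation is the translation of the ``appears only once'' clause of Theorem~\ref{thm:canonical factorization of dominant words} into the strict-inequality form stated in the proposition, which as noted above is immediate from non-increasingness.
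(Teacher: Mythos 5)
Your proof is correct and takes essentially the same approach as the paper, which simply asserts that the result is immediate from Theorem~\ref{thm:canonical factorization of dominant words} and the definitions; you have merely supplied the details (unpacking $\r_\ui=\Xi([\ui]^+)$ via the homomorphism $\Xi$, translating the ``appears only once'' clause into strict inequalities, and citing Proposition~\ref{P:Lyndon triangularity} for the basis claim) that the paper leaves implicit.
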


%\subsubsection{} For $\ui\in\Wl^-$, define $\r_\ui^-=\Xi([\ui]^-)$. We have
%$$\{\r_\ui^-\mid \ui\in\Wl^-\}$$
%is another basis for $\UU$, the \textbf{co-Lyndon Basis}.

\subsection{Computing Dominant Lyndon Words}
Given $\ui\in\Lr^+$, write $\ui=\ui^+(\bt)$ if $\bt\in\Phi^+$ is the image of $\ui$
under the bijection $\Lr^+\longrightarrow\Phi^+$ (i.e. $|\ui|=\bt$).

\begin{prp} Let $\bt_1,\bt_2\in\Phi^+$ be such that $\bt_1+\bt_2=\bt\in\Phi^+$.
If $\ui^+(\bt_1)<\ui^+(\bt_2)$, then $\ui^+(\bt_1)\ui^+(\bt_2)\leq\ui^+(\bt)$.
\end{prp}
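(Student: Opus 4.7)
The plan has two stages. First, I invoke the classical combinatorial fact (see \cite[Ch.~5]{lo}) that the concatenation of two Lyndon words $u < v$ is itself Lyndon. Applied to $u = \ui^+(\bt_1)$, $v = \ui^+(\bt_2)$, the hypothesis $\ui^+(\bt_1) < \ui^+(\bt_2)$ yields that $\ui := \ui^+(\bt_1)\ui^+(\bt_2) \in \Lr$ with $|\ui| = \bt \in \Phi^+$.

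Second, I compare $\ui$ with $\ui^+(\bt)$. By Theorem~\ref{thm:canonical factorization of dominant words}, $\ui^+(\bt)$ is the unique dominant Lyndon word of weight $\bt$, so either $\ui = \ui^+(\bt)$ (yielding the inequality with equality) or $\ui$ is a Lyndon word which is not dominant, in which case I must show $\ui < \ui^+(\bt)$ strictly. For this I would argue by contradiction: assume $\ui > \ui^+(\bt)$. Non-dominance of $\ui$ combined with Lemma~\ref{L:Standard Triangularity} yields a rewriting $\ui \equiv \sum_{\uk > \ui} c_\uk \uk \pmod{\ker\Xi}$, and iterating this rewriting within $\Wr^+$ (via Proposition~\ref{P:monomial basis}) allows all $\uk$ to be taken dominant with $\uk > \ui^+(\bt)$. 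On the other hand, Proposition~\ref{P:L16} applied to the pair $\ui^+(\bt_2) > \ui^+(\bt_1)$ places $\ui^+(\bt_2)\ui^+(\bt_1)$ in $\Wr^+_\bt$, and Lemma~\ref{L:max shuffle} (whose isotropic-odd exception is avoided thanks to the strict inequality $\ui^+(\bt_1) \neq \ui^+(\bt_2)$) computes $\max(\ui^+(\bt_1) \shq \ui^+(\bt_2)) = \ui^+(\bt_2)\ui^+(\bt_1)$. These, together with the Lyndon-basis triangularity of Proposition~\ref{P:Lyndon triangularity}, should force an inconsistency in the basis of $\UU_\bt$, as the assumed rewriting would demand $\ui^+(\bt)$ be expressed through strictly larger dominant words, contradicting its own role as a distinct element of the monomial/Lyndon basis.

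The principal obstacle is making the final contradiction airtight: one must show that the hypothetical larger Lyndon word $\ui$ of weight $\bt$ cannot coexist with the PBW-type count of $\Wr^+_\bt$ used to prove Theorem~\ref{thm:canonical factorization of dominant words}. A cleaner alternative I would also consider is induction on $\operatorname{ht}(\bt)$ via the co-standard factorization $\ui^+(\bt) = \ui^+(\gamma_1)\ui^+(\gamma_2)$, whose factors satisfy $\gamma_1, \gamma_2 \in \Phi^+$ and $\ui^+(\gamma_1) < \ui^+(\gamma_2)$ by Lemma~\ref{L:factor} together with classical Lyndon theory. Such a decomposition captures the canonical splitting of $\ui^+(\bt)$; a case analysis on the relative lex position of $\ui^+(\bt_1)$ vs $\ui^+(\gamma_1)$ (both sharing the same first letter, namely the minimal index in the support of $\bt$) reduces the comparison to shorter-height instances handled inductively. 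In the super setting, super-sign bookkeeping around odd isotropic roots is the main additional nuisance, but the hypothesis $\ui^+(\bt_1) < \ui^+(\bt_2)$ neatly sidesteps the most delicate equal-isotropic degeneracy.
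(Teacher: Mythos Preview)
Your proposal has a genuine gap at exactly the point you identify: the contradiction is not airtight, and in fact the tools you list cannot close it. Suppose $\ui := \ui^+(\bt_1)\ui^+(\bt_2)$ is Lyndon, not dominant, and (for contradiction) strictly larger than $\ui^+(\bt)$. Lemma~\ref{L:Standard Triangularity} lets you rewrite $\ui$ (equivalently $\ep_\ui$) in terms of dominant words $\uk > \ui$. But this says nothing about $\ui^+(\bt)$ being forced into such a rewriting: $\ui^+(\bt)$ is strictly below $\ui$, not above it, so it simply does not enter. Your appeal to $\max(\ui^+(\bt_1)\shq\ui^+(\bt_2)) = \ui^+(\bt_2)\ui^+(\bt_1)$ and to Proposition~\ref{P:Lyndon triangularity} likewise gives information only about elements $\geq \ui_1\ui_2$; none of this prevents $\r_{\ui_1}\shq\r_{\ui_2}$ from lying entirely in the span of $\{\r_\uj : \uj \in \Wr^+_\bt,\ \uj \text{ not Lyndon}\}$. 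That is precisely the possibility you must exclude, and it is not a combinatorial triviality.

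The paper's proof supplies the missing ingredient via specialization. One first writes, using Proposition~\ref{P:Lyndon triangularity} exactly as you suggest, $\r_{\ui_1}\shq\r_{\ui_2}=\sum_{\uj\in\Wr^+,\ \uj\geq\ui_1\ui_2} z_\uj\r_\uj$. The whole question then reduces to showing $z_{\ui^+(\bt)}\neq 0$. For this the paper passes to $q=1$ via Yamane's specialization $\Uq\to U(\n)$: the images $\underline{s_{\ui_1}},\underline{s_{\ui_2}}$ are root vectors in $\n$, and their bracket is a nonzero multiple of the root vector $\underline{s_{\ui^+(\bt)}}$ spanning the one-dimensional weight space $\n_\bt$. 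This forces the coefficient $z_{\ui^+(\bt)}$ to be nonzero, hence $\ui^+(\bt)\geq\ui_1\ui_2$. Your purely combinatorial outline has no substitute for this step; the inductive alternative you sketch via the co-standard factorization of $\ui^+(\bt)$ runs into the same difficulty, since nothing in the induction hypothesis tells you how $\ui^+(\bt_1)\ui^+(\bt_2)$ compares to $\ui^+(\gamma_1)\ui^+(\gamma_2)$ when $(\bt_1,\bt_2)\neq(\gamma_1,\gamma_2)$.
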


\begin{proof}
This proof essentially proceeds as in \cite[Proposition 24]{lec}.
Indeed, write $\ui_1=\ui^+(\bt_1)$, $\ui_2=\ui^+(\bt_2)$ and $\ui=\ui^+(\bt)$. We have that
$\r_{\ui_1}\shq \r_{\ui_2}=\sum_{\uj\in\Wr^+,\,\uj\geq\ui_1\ui_2}z_\uj \; \r_\uj,$
where $z_\uj\in\Z[q,q^{-1}]$. It is therefore necessary to show that $z_{\ui}\neq 0$.

For this, we appeal to \cite[Theorem 10.5.8]{ya} which provides a specialization
$x\mapsto \underline{x}$ from $\Uq$ to $U(\n)$. Write $s_{\uj}=\Psi^{-1}(\r_\uj)$ for $\uj\in\Wr^+$.
Then $\underline{s_\uj}\in\n$ being an iterated bracket of Chevalley generators. We have that
$\underline{s_{\ui}}=[\underline{s_{\ui_1}},\underline{s_{\ui_2}}]$ belongs to the $\bt$-weight space of $\n$,
which is 1-dimensional and spanned by $\underline{s_{\ui}}$. Therefore,
$$\underline{s_{\ui_1}}\, \underline{s_{\ui_2}}=\pi^{p(\ui_1)p(\ui_2)}\underline{s_{\ui_2}}\, \underline{s_{\ui_1}}+\ld \underline{s_{\ui}}\in U(\n)$$
for some nonzero $\ld\in\Z$. It now follows that $z_{\ui}\neq 0$ and hence $\ui\geq\ui_1\ui_2$.
\end{proof}

This yields an inductive method for computing dominant Lyndon words as described in \cite[$\S4.3$]{lec}. We recall it here. Let
$$C(\bt)=\{(\bt_1,\bt_2)\in\Phi^+\times\Phi^+\mid\bt_1+\bt_2=\bt\mbox{ and }\ui^+(\bt_1)<\ui^+(\bt_2)\}.$$
Then, the next proposition is a super-analogue of \cite[Proposition 25]{lec}.

\begin{prp}\label{P:ComputingLyndonWords} For $\bt\in\Phi^+$,
$$\ui^+(\bt)=\max\{\ui^+(\bt_1)\ui^+(\bt_2)\mid(\bt_1,\bt_2)\in C(\bt)\}.$$
Moreover, if $(\bt_1,\bt_2)\in C(\bt)$ achieves the maximum,
then $\ui^+(\bt)=\ui^+(\bt_1)\ui^+(\bt_2)$ is the co-standard factorization of $\ui^+(\bt)$.
\end{prp}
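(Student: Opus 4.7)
The preceding proposition establishes the inequality $\ui^+(\bt_1)\ui^+(\bt_2) \leq \ui^+(\bt)$ for every $(\bt_1,\bt_2)\in C(\bt)$, so the asserted equality reduces to exhibiting a pair that attains this bound. The plan is to show that the co-standard factorization of $\ui^+(\bt)$ itself produces such a pair, and then to identify any max-achieving pair with this co-standard factorization.

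For the existence of a max-achieving pair, set $\ui=\ui^+(\bt)$; we may assume $\ell(\ui)\geq 2$, as otherwise $\bt$ is simple and $C(\bt)=\emptyset$. Write $\ui=\ui_1\ui_2$ for the co-standard factorization, so $\ui_1,\ui_2\in\Lr$. A classical property of Lyndon words (consistent with Lemma~\ref{L:costd factorization}) guarantees $\ui_1<\ui_2$. By Lemma~\ref{L:factor}, both factors of the dominant word $\ui$ are themselves dominant, hence Lyndon and dominant. Invoking the bijection $\Lr^+\leftrightarrow \Phi^+$ of Theorem~\ref{thm:canonical factorization of dominant words}, we obtain $\bt_1,\bt_2\in\Phi^+$ with $\ui_i=\ui^+(\bt_i)$ and $\bt_1+\bt_2=\bt$. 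Since $\ui^+(\bt_1)<\ui^+(\bt_2)$, we have $(\bt_1,\bt_2)\in C(\bt)$, and this pair achieves $\ui^+(\bt_1)\ui^+(\bt_2)=\ui_1\ui_2=\ui^+(\bt)$. Combined with the upper bound from the previous proposition, this yields the first assertion.

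For the identification part, suppose $(\bt_1',\bt_2')\in C(\bt)$ also attains the maximum, and set $v_i=\ui^+(\bt_i')$. Then $v_1,v_2\in\Lr$, $v_1<v_2$, and $\ui=v_1v_2$. The goal is $(v_1,v_2)=(\ui_1,\ui_2)$, which I would attack by comparing $\ell(v_1)$ with $\ell(\ui_1)$. The case $\ell(v_1)>\ell(\ui_1)$ contradicts the maximality of $\ell(\ui_1)$ in the defining property of the co-standard factorization. The case $\ell(v_1)<\ell(\ui_1)$ forces $v_1$ to be a proper left factor of $\ui_1$ and $v_2=w\ui_2$ for a nonempty right factor $w$ of $\ui_1$; substituting the explicit form $\ui_2=\ui_1^{r}\ui_1'i$ from Lemma~\ref{L:costd factorization} and exploiting that $v_2$ must be Lyndon with $v_1<v_2$ should produce a contradiction (either by comparing $v_2$ to its own right factor in $\ui_1^{r}\ui_1'i$, or by showing $v_1$ is no longer strictly smaller than $v_2$).

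The main obstacle will be cleanly closing off the $\ell(v_1)<\ell(\ui_1)$ subcase in the identification step: one must combine the combinatorial structure of $\ui_2$ given by Lemma~\ref{L:costd factorization} with the Lyndon condition on $v_2$ in just the right way. The existence half, by contrast, is a fairly direct assembly of Lemma~\ref{L:factor}, the classical Lyndon-word fact that the co-standard factorization of a Lyndon word satisfies $\ui_1<\ui_2$, and the bijection of Theorem~\ref{thm:canonical factorization of dominant words}.
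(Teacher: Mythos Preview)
Your argument for the first assertion is correct and is precisely the standard one: the co-standard factorization $\ui^+(\bt)=\ui_1\ui_2$ produces, via Lemma~\ref{L:factor} and Theorem~\ref{thm:canonical factorization of dominant words}, a pair $(\bt_1,\bt_2)\in C(\bt)$ with $\ui^+(\bt_1)\ui^+(\bt_2)=\ui^+(\bt)$, which together with the preceding proposition gives the equality. This is exactly how the result is obtained in \cite{lec}, and the paper simply defers to that argument.

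Your difficulty with the ``Moreover'' clause is not a gap in your reasoning; it reflects the fact that this clause, read as a uniqueness statement, is false. Already in type $A_3$ with the natural ordering and $\bt=\af_1+\af_2+\af_3$, both $(\af_1,\af_2+\af_3)$ and $(\af_1+\af_2,\af_3)$ lie in $C(\bt)$ and both give the product $1\cdot 23=12\cdot 3=123=\ui^+(\bt)$; yet only the second is the co-standard factorization. So the subcase $\ell(v_1)<\ell(\ui_1)$ that you could not close is genuinely unclosable. The operative content of the ``Moreover'' is simply that the co-standard factorization is among the pairs achieving the maximum --- which is exactly what you established in the first half and is all that is needed for the inductive computation of dominant Lyndon words. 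You should therefore drop the attempted uniqueness argument and read the second sentence as the existence statement you have already proved.
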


\begin{cor}\label{C:min std word}\cite[Corollary 27]{lec} For $\bt\in\Phi^+$, $\ui^+(\bt)$ is the smallest dominant word of its degree.
\end{cor}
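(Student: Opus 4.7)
The plan is to proceed by induction on $\mathrm{ht}(\bt)$. The base case $\mathrm{ht}(\bt)=1$ is immediate: then $\bt=\af_i$ and $i=\ui^+(\bt)$ is the unique word of that degree. For the inductive step, I would assume the statement holds for all $\gm\in\Phi^+$ with $\mathrm{ht}(\gm)<\mathrm{ht}(\bt)$, and let $\uk\in\Wr^+$ be a dominant word with $|\uk|=\bt$ and $\uk\neq\ui^+(\bt)$. By Theorem~\ref{thm:canonical factorization of dominant words} combined with the uniqueness of dominant Lyndon words per positive root degree, $\uk$ cannot be Lyndon, so its canonical factorization $\uk=\ui_1\cdots\ui_r$ has $r\geq 2$ non-increasing dominant Lyndon factors $\ui_s=\ui^+(\gm_s)$ with $\gm_s\in\Phi^+$. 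Since $\ui_1$ is a proper prefix of $\uk$, we automatically have $\uk>\ui_1$ in the lex order, so it suffices to establish $\uk\geq\ui^+(\bt)$.

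The main tool I would invoke is a standard \emph{Lyndon swap lemma}: for Lyndon words $\uh\geq\uj$, the concatenation satisfies $\uh\uj\geq\uj\uh$, with strict inequality unless $\uh=\uj$ (this follows from the characterization of Lyndon words as lex-smallest among their conjugates, or equivalently via comparison of the infinite powers $\uh^\infty$ and $\uj^\infty$). Applying the inductive hypothesis to the tail $\uk':=\ui_2\cdots\ui_r$, a dominant word of degree $\bt-\gm_1$, yields (when $\bt-\gm_1\in\Phi^+$) that $\uk'\geq\ui^+(\bt-\gm_1)$, with strict inequality for $r\geq 3$. Combining this with the swap lemma applied to $(\ui_1,\ui^+(\bt-\gm_1))$ in the case $\ui_1\geq\ui^+(\bt-\gm_1)$, together with Proposition~\ref{P:ComputingLyndonWords} applied to $(\bt-\gm_1,\gm_1)\in C(\bt)$ (which yields $\ui^+(\bt-\gm_1)\cdot\ui_1\leq\ui^+(\bt)$), one chains inequalities to obtain $\uk=\ui_1\cdot\uk'\geq\ui^+(\bt)$ in the generic case where the pair $(\bt-\gm_1,\gm_1)$ realizes the maximum in Proposition~\ref{P:ComputingLyndonWords}.

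The hard part will be handling the remaining cases, namely when $\bt-\gm_1\notin\Phi^+$ or when the pair $(\bt-\gm_1,\gm_1)$ fails to realize that maximum. Here I would first perform a first-letter check: let $i_0$ denote the smallest simple index with $\af_{i_0}$ occurring in $\bt$, which is also the first letter of $\ui^+(\bt)$. If the first letter of $\ui_1$ strictly exceeds $i_0$, then $\uk>\ui^+(\bt)$ immediately at position~$1$. Otherwise, the non-increasing constraint $\ui_s\leq\ui_1$ forces every $\ui_s$ to start with $i_0$ as well, yielding arithmetic constraints on the multiplicity of $\af_{i_0}$ in $\bt$. To close the remaining delicate cases I would exploit the explicit repeated-prefix structure $\ui^+(\bt)=(\ui^+(\bt_1^*))^{k+1}\ui'j$ coming from the co-standard factorization via Lemma~\ref{L:costd factorization}, performing a position-by-position comparison between $\uk$ and $\ui^+(\bt)$ and reducing via the inductive hypothesis on smaller sub-words.
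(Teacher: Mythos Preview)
Your proposal has a genuine gap. The ``generic case'' you handle is far narrower than you suggest: it requires simultaneously that $\bt-\gm_1\in\Phi^+$, that $\ui_1>\ui^+(\bt-\gm_1)$, \emph{and} that the pair $(\bt-\gm_1,\gm_1)$ realizes the maximum in Proposition~\ref{P:ComputingLyndonWords}. This last condition means $\ui^+(\bt-\gm_1)\cdot\ui_1$ must equal $\ui^+(\bt)$ exactly---i.e., $\gm_1$ must be the second co-standard factor of $\ui^+(\bt)$. But $\gm_1=|\uk_1|$ is determined by $\uk$, not by $\bt$, so there is no reason to expect this. Without that equality your chain of inequalities collapses: you obtain $\uk\geq\ui^+(\bt-\gm_1)\cdot\ui_1$ and separately $\ui^+(\bt-\gm_1)\cdot\ui_1\leq\ui^+(\bt)$, which point in opposite directions and do not combine to give $\uk\geq\ui^+(\bt)$.

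The remaining ``hard cases''---which in fact constitute the bulk of the problem---are not proved. The first-letter check disposes of one easy sub-case, but after that you only offer a plan (``position-by-position comparison,'' ``reducing via the inductive hypothesis on smaller sub-words'') without carrying it out. The difficulty is real: when every $\ui_s$ starts with the minimal letter $i_0$, one must control how the repeated-prefix structure of $\ui^+(\bt)$ from Lemma~\ref{L:costd factorization} interacts with the Lyndon factorization of $\uk$, and nothing you have written does this. Note also that your inductive hypothesis on $\uk'=\ui_2\cdots\ui_r$ requires $\bt-\gm_1\in\Phi^+$, which already fails in small examples (e.g.\ in type $B_2$ take $\bt=\af_1+2\af_2$ and $\uk=\ui^+(\af_2)\cdot\ui^+(\af_2)\cdot\ui^+(\af_1)$; then $\bt-\gm_1=\af_1+\af_2\in\Phi^+$ here, but analogous failures occur for longer roots). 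The paper itself gives no proof and simply cites \cite[Corollary~27]{lec}; a cleaner route is to argue directly that for any Lyndon word $m$ and any word $w$ with Lyndon factorization $w=\ell_1\cdots\ell_r$, one has $m\leq w$ if and only if $m\leq\ell_1$, and then compare $\ui^+(\bt)$ with $\uk_1=\ui^+(\gm_1)$ using the inductive hypothesis together with the convexity $\ui^+(\gm)<\ui^+(\gm+\dt)$ whenever $(\gm,\dt)\in C(\gm+\dt)$.
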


%\subsubsection{}
%Given $\ui\in\Ll^-$, write $\ui=\ui^-(\bt)$ if $\bt\in\Phi^+$ is the image of $\ui$ under the bijection $\Ll^-\longrightarrow\Phi^+$ (i.e. $|\ui|=\bt$). %Then,
%$$\ui^-(\bt)=\min\{\ui^-(\bt_1)\ui^-(\bt_2)\mid(\bt_1,\bt_2)\in C(\bt)\},$$
%and if $(\bt_1,\bt_2)\in C(\bt)$ achieves the minimum, then $\ui^-(\bt)=\ui^-(\bt_1)\ui^-(\bt_2)$ is the standard factorization of $\ui^-(\bt)$.

\subsection{Further Properties of Lyndon Bases}

\begin{lem}\label{L:Lyndon left factor} Let $\ui=(i_1,\ldots,i_d)\in\Lr^+$.
Then, $i_1$ is a left factor of every word appearing in the expansion of $\r_{\ui}$.
\end{lem}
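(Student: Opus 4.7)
The plan is to induct on $d = \ell(\ui)$. The base case $d=1$ is immediate since $\r_{(i)} = \Xi(i) = i$. For $d \geq 2$, take the co-standard factorization $\ui = \ui_1 \ui_2$; by Lemma~\ref{L:factor} both $\ui_1,\ui_2 \in \Lr^+$, and Proposition~\ref{P:rofL} gives $\r_\ui = \r_{\ui_1} \shqqi \r_{\ui_2} = \r_{\ui_1}\shq\r_{\ui_2} - \r_{\ui_1}\shqi\r_{\ui_2}$. Since $\ui_1$ is a left factor of $\ui$ it begins with $i_1$, so the inductive hypothesis yields that every word in $\r_{\ui_1}$ begins with $i_1$, and every word in $\r_{\ui_2}$ begins with $j_1$, the first letter of $\ui_2$.

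The key tool is a first-letter decomposition of the shuffle product: for nonempty words $\uh = h_1\,\uh'$ and $\uk = k_1\,\uk'$ in $\W$,
\[
\uh \shuffle_v \uk \;=\; \bigl(\text{words beginning with } h_1\bigr) \;+\; k_1\,(\uh \shuffle_v \uk'),
\]
and crucially the $k_1$-leading summand carries no extra weight. This can be extracted either from the recursion \eqref{E:Shuffle} or from \eqref{E:Shuffle2} by splitting the sum according to which factor fills position one of the resulting shuffle: placing $k_1$ at position one contributes no first-second crossing to the exponents $\ep(\sm)$ and $e(\sm)$ of \eqref{E:ShuffleExponents}.

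Lemma~\ref{L:costd factorization} describes $\ui_2 = \ui_1^r \ui_1'\, l$ with $r \geq 0$, a (possibly empty) proper left factor $\ui_1'$ of $\ui_1$, and a letter $l$ with $\ui_1' l > \ui_1$. If $r \geq 1$ or $\ui_1' \neq \emptyset$, then $\ui_2$ begins with $i_1$, so $j_1 = i_1$ and every word in each shuffle $\r_{\ui_1}\shuffle_v\r_{\ui_2}$ begins with $i_1$. The only remaining case is $r = 0$ and $\ui_1' = \emptyset$; then $\ui_2 = l$ is a single letter satisfying $l > i_1$ (a short comparison shows a single letter cannot exceed $\ui_1$ unless it is strictly greater than $i_1$), and $\r_{\ui_2} = l$. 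Applying the first-letter decomposition to $\uh \shuffle_v l$ for each word $\uh$ appearing in $\r_{\ui_1}$ gives $(\text{$i_1$-leading terms}) + l\,(\uh \shuffle_v \emptyset) = (\text{$i_1$-leading terms}) + l\,\uh$. Since the $l$-leading piece $l\,\uh$ is independent of $v$, the $l$-leading contributions of $\r_{\ui_1}\shq l$ and $\r_{\ui_1}\shqi l$ coincide and cancel in the difference $\r_\ui = \r_{\ui_1}\shqqi l$, leaving only $i_1$-leading words.

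The main obstacle is recognizing, through Lemma~\ref{L:costd factorization}, that the only way $\ui_2$ can start with a letter other than $i_1$ is for $\ui_2$ to collapse to a single letter $l$; once this observation is in hand, the offending $l$-leading contributions cancel essentially for free in the $\shq - \shqi$ difference. Establishing the first-letter decomposition of $\shuffle_v$ with a trivial coefficient on the $k_1$-leading piece is the only non-cosmetic preliminary computation required.
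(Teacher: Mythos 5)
Your proof is correct and follows essentially the same route as the paper's: both proceed by induction via the co-standard factorization and $\r_\ui = \r_{\ui_1} \shqqi \r_{\ui_2}$, reduce (using Lemma~\ref{L:costd factorization}) to the only problematic case $\ui_2 = l \in \I$, and observe that the unique $l$-leading shuffle $l\uh$ appears with a coefficient independent of the deformation parameter, hence cancels in the difference $\shq - \shqi$. Your explicit ``first-letter decomposition'' of $\shuffle_v$ is exactly the computation the paper carries out inline for $\uk \shq i$ at that terminal step.
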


\begin{proof} Proceed by induction on the length of $\ui$, the case $\ui=i_1\in\I$ being trivial.

For the inductive step, let $\ui=\ui_1\ui_2$ be the costandard factorization of $\ui$. By \cite[Lemma~ 14]{lec},
$\ui_2=\ui_1^r\ui_1'i$ where $r\geq 0$, $\ui_1'$ is a (possibly empty) left factor of $\ui_1$
and $i\in\I$ is such that $\ui_1'i>\ui_1$. By Proposition \ref{P:rofL},
$$\r_\ui=\r_{\ui_1}\shqqi\r_{\ui_2}.$$
By induction, $i_1$ is a left factor of every word in the expansion of $\r_{\ui_1}$. If $\ui_2=\ui_1^r\ui_1'i$ with either $r>0$ or
$\ui_1'\neq\emptyset$, then $i_1$ is a left factor of every word in the expansion of $\r_{\ui_2}$ and therefore
the same holds for $\r_\ui$. Otherwise, $\ui_2=i$, and, if $\uk=(i_1,k_2,\ldots,k_{d-1})$ is a word appearing in the expansion of $\r_{\ui_1}$ then
$$\uk\shq i=\pi^{p(i_1)p(i)}q^{-(\af_{i_1},\af_i)}i_1((k_2,\ldots,k_{d-1})\shq i)+i\uk.$$
In particular, $i_1$ is a left factor of every word appearing in $\uk\shqqi i$. This proves the lemma.
\end{proof}

%\begin{dfn}\label{D:STAR}
%The ordering $(\I,\leq)$ is called {\bf regular} if either
%\begin{enumerate}
%\item $\UU$ is of type $A$-$D$,
%\item $\UU$ corresponds to the distinguished diagram of type $F(3|1)$ in Table~1 and $3\in\I$ is not minimal, or
%\item $\UU$ corresponds to the distinguished diagram of type $G(3)$ in Table~1 and $2\in\I$ is not minimal.
%\end{enumerate}
%Above, $3\in\I$ (resp. $2\in\I$) refer to the labels appearing in Table~1 %\ref{T:Dynkin}
%for the distinguished diagrams, $(\star)$.
%\end{dfn}

\begin{lem}
  \label{L:Lyndon max word}
For $\ui\in\Lr^+$, we have $\max(\r_\ui)=\ui$.
\end{lem}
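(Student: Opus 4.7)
The proof proceeds by induction on $\ell(\ui)$.  The base case $\ell(\ui)=1$ is immediate since $\r_i=i$.  For the inductive step, let $\ui=\ui_1\ui_2$ be the co-standard factorization; by Lemma~\ref{L:factor} both $\ui_1,\ui_2\in\Lr^+$ are strictly shorter than $\ui$, so by induction $\max(\r_{\ui_k})=\ui_k$.  Write $\r_{\ui_k}=c_k\ui_k + (\text{lex-smaller terms})$ with $c_k\neq 0$.  Proposition~\ref{P:rofL} expresses
\[
\r_\ui \;=\; \r_{\ui_1}\shq\r_{\ui_2}\;-\;\r_{\ui_1}\shqi\r_{\ui_2},
\]
and the plan is to analyze, for each weight-$|\ui|$ word $\uw$, the coefficient it contributes to this difference.

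The first key observation is that every minimal coset representative $\sigma\in S_{a+b}/S_a\times S_b$ contributes to $\shq - \shqi$ the scalar factor $\pi^{\ep(\sigma)}\bigl(q^{-e(\sigma)}-q^{e(\sigma)}\bigr)$ by \eqref{E:Shuffle2}, which vanishes precisely when $e(\sigma)=0$.  The ``swap'' interleaving producing $\ui_2\ui_1$ has $e(\sigma)=0$ and cancels identically, while the ``identity'' interleaving producing $\ui_1\ui_2=\ui$ survives with coefficient
\[
c_1c_2\,\pi^{p(\ui_1)p(\ui_2)}\bigl(q^{-(|\ui_1|,|\ui_2|)}-q^{(|\ui_1|,|\ui_2|)}\bigr).
\]
This is nonzero: since $|\ui|, |\ui_1|, |\ui_2|\in\Phi^+$ by Theorem~\ref{thm:canonical factorization of dominant words}, a case analysis of Table~1 rules out $(|\ui_1|,|\ui_2|)=0$ whenever three positive roots in a basic root system satisfy $\beta_1+\beta_2=\beta$.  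For cross contributions $a_{\uh_1}b_{\uh_2}(\uh_1\shqqi\uh_2)$ with $\uh_1<\ui_1$ or $\uh_2<\ui_2$, the ``identity'' shuffle produces $\uh_1\uh_2 < \ui_1\ui_2$ lex-wise, so cross terms cannot match the weight-$|\ui|$ coefficient of $\ui$.

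The delicate remaining step is to show that every word $\uw>\ui$ of weight $|\ui|$ appears in $\r_\ui$ with vanishing coefficient.  By Lemma~\ref{L:Lyndon left factor} every word appearing in $\r_{\ui_k}$ begins with the first letter of $\ui_k$, so such $\uw$ either starts with the first letter $i_1$ of $\ui_1$ or with the first letter $j_1$ of $\ui_2$.  The latter case encompasses the ``swap-like'' interleavings where the initial part of $\ui_2$ (or some $\uh_2\leq\ui_2$) precedes part of $\ui_1$.  Using the description $\ui_2=\ui_1^r\ui_1'i$ from Lemma~\ref{L:costd factorization}, the exponent $e(\sigma)$ corresponding to any such $\uw > \ui$ unfolds as a partial sum of inner products $(\af_{i_r},\af_{i_s})$ which vanishes by the Dynkin-diagram orthogonality forced by $\ui$ being dominant Lyndon.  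Combined with Lemma~\ref{L:max shuffle} applied to the Lyndon factors of $\uh_1,\uh_2$ to bound the cross-term maxima, this produces the required vanishing.  The main obstacle is this final combinatorial verification: showing that $e(\sigma)=0$ for all intermediate $\sigma$ producing $\uw>\ui$ amounts to a careful case analysis along the lines of Leclerc's non-super argument in \cite{lec}, with additional bookkeeping to track the super-signs $\pi^{\ep(\sigma)}$.
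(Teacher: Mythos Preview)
Your proposal contains a concrete error. The claim that $(|\ui_1|,|\ui_2|)\neq 0$ whenever $|\ui_1|,|\ui_2|,|\ui_1|+|\ui_2|\in\Phi^+$ is false already in non-super type $B_2$: for $\ui=(1,2,2)$ with co-standard factorization $\ui_1=(1,2)$, $\ui_2=(2)$, one has $(|\ui_1|,|\ui_2|)=(\af_1+\af_2,\af_2)=0$, so the identity shuffle contributes nothing to $\r_\ui$; the nonzero coefficient of $\ui$ there comes from a \emph{different} shuffle (the one inserting the extra $2$ between the letters of $\ui_1$). The second half of your plan is also not a proof: the assertion that $e(\sigma)=0$ for each individual $\sigma$ producing a word $\uw>\ui$ is unsubstantiated, and ``Dynkin-diagram orthogonality forced by $\ui$ being dominant Lyndon'' is not a usable principle. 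You appeal to ``Leclerc's non-super argument'' for this step, but Leclerc does not prove the analogous statement directly; in \cite{lec} it is deduced from the orthogonality of PBW bases taken as input from Lusztig, and the whole point of Lemma~\ref{L:Lyndon max word} here is to supply a direct combinatorial proof that avoids that input.

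The paper's approach is entirely different. It neither shows term-by-term vanishing for words $\uw>\ui$ nor computes the coefficient of $\ui$. Instead, since $\r_\ui\in\UU\setminus\{0\}$, its maximal word is automatically \emph{dominant}. After observing $\max(\r_\ui)<\ui_2\ui_1$ (the swap shuffle cancels, as you noted), the argument shows---via the canonical Lyndon factorization of a hypothetical dominant $\uk\in(\ui,\ui_2\ui_1)$, Lemma~\ref{L:Lyndon left factor}, and root-system bounds on how often the first letter $i_1$ can appear in $|\ui|$---that no such $\uk$ can occur as a shuffle in $\r_{\ui_1}\shq\r_{\ui_2}$ at all. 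A further technical ingredient is the claim that $\r_\ui$ can equally be computed from the \emph{standard} factorization, needed to handle the case where $\ui_2$ is a single letter. Since $\ui$ is the smallest dominant word of its weight (Corollary~\ref{C:min std word}), this forces $\max(\r_\ui)=\ui$ without any direct coefficient computation.
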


\begin{proof}
We proceed by induction on the length $\ell(\ui)$, the case $\ui=i\in\I$ being clear.
For the inductive step, let $\ui=\ui_1\ui_2$ be the co-standard factorization of $\ui\in\Lr^+$. Induction applies to $\ui_1$ and $\ui_2$, so
$\max(\r_{\ui_1})=\ui_1$ and $\max(\r_{\ui_2})=\ui_2$. In particular, $\max(\r_{\ui_1}\shq \r_{\ui_2})\leq \max(\ui_1\shq \ui_2)$.
Since $\ui_1<\ui_2$ and the words appearing as shuffles in $\ui_1\shq \ui_2$ are the same as the words
appearing as shuffles in $\ui_1\shqi \ui_2$ and $\ui_2\shq \ui_1$, Lemma \ref{L:max shuffle} implies that
\[\max(\r_{\ui})=\max(\r_{\ui_1}\shqqi\r_{\ui_2})\leq\ui_2\ui_1.\]
Now $\ui_2\ui_1$ only appears in $\r_{\ui_1}\shq\r_{\ui_2}$ as a summand of $\ui_1\shq \ui_2$, and using \ref{E:Shuffle2} we see that
it appears with coefficient equal to 1, hence
\[\max(\r_{\ui})<\ui_2\ui_1.\]

We will prove that if $\uk\in\W^+$ occurs as a shuffle in $\r_{\ui_1}\shq\r_{\ui_2}$, and $\ui_1\ui_2\leq\uk<\ui_2\ui_1$,
then $\uk=\ui_1\ui_2$. To this end, we use Lemma~\ref{L:costd factorization}, which says that $\ui_2=\ui_1^r\ui_1'i$
where $r\geq0$, $\ui_1'$ is a (possibly empty) left factor of $\ui_1$ and $i\in\I$ is such that $\ui_1'i>\ui_1$.

Assume $\uk=\uk_1\cdots\uk_n$
is the canonical factorization of $\uk$ into a nonincreasing product of dominant Lyndon words.
Write $\ui_1=(i_1,\ldots,i_d)$ and $\ui_2=(i_1,\ldots,i_r)$. If $\uk$ occurs in $\r_{\ui_1}\shq\r_{\ui_1'i}$,
then by Lemma \ref{L:Lyndon left factor}, $\uk_1=(i_1,\ldots)$.
As $\ui_1$ is Lyndon, we have $i_1\leq i_s$ for any $s\leq d$.
In particular, the inequality $\uk_1\geq\uk_t$ now implies that $\uk_t=(i_1,\ldots)$ for all $t$.

%Assume now that $(I,\leq)$ is regular.
Assume until the last paragraph of this proof that if
$\UU$ of type $F(3|1)$ in Table~1 we consider only its distinguished diagram and $3\in\I$ is not minimal, or
if $\UU$ is of type $G(3)$ in Table~1 we consider only its distinguished diagram and $2\in\I$ is not minimal.
Here, $3\in\I$ (resp. $2\in\I$) refer to the labels appearing in Table~1
for the distinguished diagrams marked by $(\star)$.

An inspection of the root systems of basic Lie superalgebras implies that $n\leq 3$ since
$|\uk|\in\Phi^+$, and $n\af_{i_1}$ appears in its support. It follows that if $i_1$ occurs only once in $\ui$, then $\uk=\uk_1$ is Lyndon.
Since $|\uk|=|\ui|$ we must have $\uk=\ui$ as $\ui$ is the unique dominant Lyndon word of its degree. The $n=3$ case can only occur in type $G(3)$ (see \cite[p.45]{ya}) and corresponds to $|\ui|$
being a root of the Lie algebra of type $G_2$ where the result can be verified by inspection of \cite[$\S5.5.4$]{lec}.

Let us now consider the case where $i_1$ appears twice in $|\ui|$ and suppose $\uk=\uk_1\uk_2$ is the canonical factorization
of a word $\uk\in\W^+$ appearing in $\r_{\ui_1}\shq\r_{\ui_2}$. We want to show that $\uk_2=\emptyset$, so suppose otherwise.
By the assumption in the cases of $F(3|1)$ and $G(3)$,
we have $\ui=\ui_1\ui_2$, where $\ui_2=\ui_1'i$ and $\ui'$ is a left factor of $\ui_1$ (now, possibly empty or equal to $\ui$).

Suppose first that $\ui_1'\neq\emptyset$. Let $\uh$ be any word occurring as a summand
in $\r_{\ui_1}$,
let $\ul$ be any word occurring as a summand in $\r_{\ui_2}$, and assume that $\uk$ occurs as a shuffle in $\uh\shq\ul$.
First observe that $\uh=(i_1,h_2,\ldots, h_d)$ and $\ul=(i_1,l_1,\ldots, l_e)$ with $i_1<h_s$ and $i_1<l_t$ for all $s$, $t$.
Note  $\uk_1\neq\uh$ unless $\uh=\ui_1$ and, since $\uk_2\in \L^+$ is the unique dominant Lyndon word of
weight $|\ui|-|\ui_1|$, $\uk_2=\ui_2=\ul$ . Similarly,
$\uk_1\neq\ul$ unless $\ul=\ui_2$ and $\uk_2=\ui_1=\uh$.
The case $\uk_1=\ul$ contradicts the fact that $\uk<\ui_2\ui_1$, and the case $\uk_1=\ul$ contradicts $\uk_1>\uk_2$. So in either case, we arrive at a contradiction.

Next, observe that $\uk_1$ is not a proper left factor of $\uh$.
If it were, then $\uk_1\uk_2<\uh\leq\ui_1<\ui_1\ui_2$,
since $\uk_1\uk_2=(i_1,h_2,\ldots,h_r,i_1,\ldots)$ for some $r<d$ and $i_1<h_{r+1}$,
which is a contradiction with the choice of $\uk$.
Similarly, $\uk_1$ is not a proper left factor of $\ul$. If it were, then it would be less-than-or-equal-to
the corresponding left factor of $\ui_2$. As $\ui_2=\ui_1'i$, any proper left factor of $\ui_2$ is a left factor of $\ui_1$.
Hence, following the analysis of left factors of $\uh$, we arrive at a contradiction.
But then if $\uk_1$ is not equal to a left factor of $\uh$ or $\ul$, it must contain both $i_1$'s,
contradicting the assumption that $\uk_2\neq \emptyset$.

We are, therefore, left to consider the case where $\ui_1'=\emptyset$, so $\ui=\ui_1i$. Then, $\ui_1=\uj_1\uj_2'$
where $\ui=\uj_1\uj_2$ is the standard factorization of $\ui$ and $\uj_2=\uj_2'i$
(i.e. $\uj_2$ is a Lyndon word of maximal length).
We clearly have $\uj_1$ and $\uj_2$ of the form $\uj_1=(i_1,\ldots)$ and $\uj_2=(i_1,\ldots)$ and, since $\ui$ is Lyndon, $\uj_1<\uj_2$.
In fact, since $\uj_1\uj_2'=\ui_1$ is Lyndon,
\begin{align}\label{E:Bigger that left factor}
\uj_1<\uj_2'.
\end{align}
We make the following.

{\bf Claim ($\star$).}
%\begin{align}
%\label{E:Lyndon vector std factorization}
$\r_{\ui}=\r_{\uj_1}\shqqi\r_{\uj_2}.
$
%\end{align}

Assume the claim ($\star$) for the moment. Then, any $\uk=\uk_1\uk_2\in\Wr^+$ occurring in $\r_{\ui}$ must occur as a shuffle $\uh\shq\ul$
where $\uh\leq \uj_1$ occurs in $\r_{\uj_1}$ and $\ul\leq\uj_2$ occurs in $\r_{\uj_2}$. As before, $\uk_1$ cannot be a left factor of $\uh$
as this would imply $\uk=\uk_1\uk_2\leq\uj_1\uj_2=\ui$. We also cannot have $\uk_1$ as a left factor of $\ul$ unless $\uk_1\leq\uj_1$
(in which case $\uk<\ui$). Otherwise, write $\ul=\uk_1\ul''$. Then, $|\uk_2|=|\uj_1|+|\ul''|$. While it is not necessarily true that $|\ul''|\in\Phi^+$,
there exists $\bt\in\Phi^+\cup\{0\}$ and $\gm\in\Phi^+$ such that $|\uj_1|+\bt\in\Phi^+$ and $|\uj_1|+\bt+\gm=|\uk_2|$
(choose $\af_r\in\Pi$ in the support of $|\ul''|$ such that $|\uj_1|+\af_r\in\Phi^+$ and continue this process one simple root at a time
until arriving at $\bt$ such $|\ul''|-\bt\in\Phi^+$). Let ${\bf s}\in\Lr^+$ be the unique word of degree $|\uj_1|+\bt$. Since $i_1$ is not in
the support of $|\ul''|$, it is not in the support of $\bt$. Consequently, $\uj_1\ui(\bt)>\uj_1\uj_2=\ui$.
Therefore, by Proposition~\ref{P:ComputingLyndonWords}, it follows that ${\bf s}\geq \uj_1\cdot\ui(\bt)>\ui$. Hence,
$$\uk_2\geq{\bf s}\cdot\ui(\gm)>{\bf s}>\ui.$$
Appealing to Proposition \ref{P:ComputingLyndonWords} again, we see that $(|\uk_2|,|\uk_1|)\in C(|\ui|)$ and $\uk_2\uk_1>\ui$,
contradicting the maximality of $\ui$.
But again, if $\uk_1$ is not equal to a left factor of $\uh$ or $\ul$, it must contain both $i_1$'s,
contradicting the assumption that $\uk_2\neq \emptyset$. Then we see that $\uk_2=\emptyset$ and $\uk$ is Lyndon,
in which case the claim was already proven.
Therefore, we see that $\max(\r_{\ui_1}\shq \r_{\ui_2})\leq \ui$.
On the other hand, $\r_{\ui}=\r_{\ui_1}\shq \r_{\ui_2}$ is a nonzero element in $\UU_{|\ui|}$, hence
has a dominant word appearing with nonzero coefficient. Then by Corollary \ref{C:min std word}, this
implies $\ui$ appears with a nonzero coefficient and so the Lemma holds assuming ($\star$).

Finally, we prove the claim ($\star$) by induction on $\ell(\uj_2)$. To begin induction,
we note that $\ui=\ui_1i$, where $\ui_1={\uj_1}{\uj_2}'$, is the co-standard factorization and the computation below
will eventually reduce to the case where the standard and co-standard factorization of ${\uj_1}{\uj_2}'$
coincide (i.e. ${\uj_2}'={\uj_1}'j$ with ${\uj_1}'$ a left factor of ${\uj_1}$).

We now proceed to the inductive step. Observe that, by \eqref{P:qqi shuffle},
\begin{align}\label{E:Useful commutation}
\r_{\uj_1}\shq i = \pi^{p({\uj_1})p(i)}q^{-(|{\uj_1}|,\af_i)}i\shqi\r_{1_\ui},
\end{align}
since every word appearing in $\r_{\uj_1}$ is homogeneous of degree $|{\uj_1}|$.

Now, the co-standard factorization of ${\uj_2}$ is ${\uj_2}=({\uj_2}')i$, so
\begin{align*}
 \r_{\uj_1}\shqqi\r_{\uj_2}&
   =\r_{\uj_1}\shqqi(\r_{{\uj_2}'}\shqqi i)\\
    =&\r_{\uj_1}\shq(\r_{{\uj_2}'}\shq i)-\r_{\uj_1}\shq(\r_{{\uj_2}'}\shqi i) -\r_{\uj_1}\shqi(\r_{{\uj_2}'}\shq i)+\r_{\uj_1}\shqi(\r_{{\uj_2}'}\shqi i)\\
    =&\r_{\uj_1}\shq(\r_{{\uj_2}'}\shq i)-\pi^{p({\uj_2}')p(i)}q^{(|{\uj_2}'|,\af_i)}\r_{\uj_1}\shq(i\shq\r_{{\uj_2}'})\\
    &-\pi^{p({\uj_2})p(i)}q^{-(|{\uj_2}'|,\af_i)}\r_{\uj_1}\shqi(i\shqi\r_{{\uj_2}'})
    +\r_{\uj_1}\shqi(\r_{{\uj_2}'}\shqi i),
\end{align*}
where we have used \eqref{P:qqi shuffle} for the last equality.
On the other hand, the standard factorization of $\ui_1$ is $\ui_1={\uj_1}{\uj_2}'$. As $\ell({\uj_2}')<\ell({\uj_2})$,
induction applies and $\r_{\ui_1}=\r_{\uj_1}\shqqi\r_{\uj_2'}$. Hence,
\begin{align*}
\r_{\ui}
=&\r_{\ui_1}\shqqi i
 =(\r_{\uj_1}\shqqi\r_{{\uj_2}'})\shqqi i\\
    =&(\r_{\uj_1}\shq\r_{{\uj_2}'})\shq i-(\r_{\uj_1}\shq\r_{{\uj_2}'})\shqi i
      -(\r_{\uj_1}\shqi\r_{{\uj_2}'})\shq i+(\r_{\uj_1}\shqi\r_{{\uj_2}'})\shqi i\\
    =&(\r_{\uj_1}\shq\r_{{\uj_2}'})\shq i-\pi^{p({\uj_1})p(i)+p({\uj_2}')p(i)}q^{(|{\uj_1}|+|{\uj_2}'|,\af_i)}i\shq(\r_{\uj_1}\shq\r_{{\uj_2}'})\\
    &-\pi^{p({\uj_1})p(i)+p({\uj_2}')p(i)}q^{-(|{\uj_1}|+|{\uj_2}'|,\af_i)}i\shqi(\r_{\uj_1}\shqi\r_{{\uj_2}'})
    +(\r_{\uj_1}\shqi\r_{{\uj_2}'})\shqi i,
\end{align*}
where we have used \eqref{P:qqi shuffle} to obtain the last equality. Finally, using
Equation~\eqref{E:Useful commutation} and the associativity of $\shq$ and $\shqi$,
The claim ($\star$) follows.

%This completes the proof of the theorem when $(\I,\leq)$ is regular.

Finally, we consider the remaining diagrams and orderings when $\UU$ of type $F(3|1)$ or $G(3)$.
%If $(\I,\leq)$ is not regular, then the Cartan data is of type $F(3|1)$ or $G(3)$.
There are 6 orderings to consider in $F(3|1)$ and 2 orderings to consider in type $G(3)$.
Inspection of the root systems shows that the argument above proves that $\max(\r_{\ui})=\ui$
unless $|\ui|$ is either $\af_1+2\af_2+3\af_3+\af_4$ or $\af_1+2\af_2+3\af_3+2\af_4$ in type $F(3|1)$,
or $|\ui|$ is $\af_1+3\af_2+\af_3$, $\af_1+3\af_2+2\af_3$, or $\af_1+4\af_2+2\af_3$ in type $G(3)$.
A direct computation of $\r_{\ui}$ in these cases  yields the theorem.
\end{proof}

%\begin{rmk}\label{R:extension}
%It should be possible to remove the regular condition for $(\I,\leq)$
%in Lemma \ref{L:Lyndon max word} thereby avoiding the case-by-case considerations.
%\end{rmk}

%%%%%%%%%%%%%%%
%%%%%%%%%%%%%%%
\section{Orthogonal PBW Bases}\label{S:Orthogonal PBW}

In this section we will define a basis of PBW type for $\UU$ and show it is orthogonal with respect to the bilinear form on $\UU$.

%%%%%%%%%%%%%%%
\subsection{PBW Bases}

Let $\ui=\ui(\bt)\in\Lr^+$ for $\bt\in\Phi^+$. Set $d_\bt=\max\{|(\bt,\bt)|/{2},1\}$, and define the quantum numbers
$$
[n]_\bt=
\begin{cases}
[n]_i&\mbox{if }(\bt,\bt)=(\af_i,\af_i)\mbox{ and }\bt \in \Phi^+_\zero \cup \Phi^+_\iso,
  \\
\{n\}_i&\mbox{if }(\bt,\bt)=(\af_i,\af_i)\mbox{ and }\bt\in\Phi^+_\niso.
\end{cases}
$$
Let $\ui=\ui(\bt)=\ui_1\ui_2=\ui(\bt_1)\ui(\bt_2)$ be the co-standard factorization and set
$$
p_\ui=\max\{p\in\Z_{\geq0}\mid \bt_1-p\bt_2\in\Phi^+\}.
$$
Define $\kp_\ui$ inductively by the formula $\kp_\ui=1$ if $\ui=i\in\I$ and $\kp_\ui=[p_\ui+1]_{\bt_r}\kp_{\ui_1}\kp_{\ui_2}$ otherwise, where $(\bt_r,\bt_r)=\min\{(\bt_1,\bt_1),(\bt_2,\bt_2)\}$
(note that there is no ambiguity in this definition since in all cases where $\kp_\ui\neq 1$ and $(\bt_1,\bt_1)=(\bt_2,\bt_2)$ we have $p(\bt_1)=p(\bt_2)$).
Recalling the anti-automorphism $\sigma$ on $\UU$ from Proposition~\ref{P:automorphisms}
and the Lyndon basis $\{\r_\ui\mid \ui\in\Wr^+\}$ for $\UU$ from Proposition~\ref{P:Lyndon triangularity}, we define
\begin{align}\label{E:PBW Definition 1}
\E_\ui&=\kp_\ui^{-1}\sm(\r_\ui),\;\;\;\ui\in\L^+.
\end{align}
We note that in the case of Lie algebras, this renormalization factor is the one computed in \cite[Theorem 4.2]{bkmc}.

More generally, if $\ui=\ui_1^{n_1}\cdots\ui_d^{n_d}$ is the canonical factorization of $\ui$ with $\ui_1>\cdots>\ui_d$, set
\begin{align}\label{E:PBW Definition 2}
E_\ui=\E_{\ui_d}^{(n_d)}\shq\cdots\shq\E_{\ui_1}^{(n_1)}
\end{align}
where, for $\uj\in\Lr^+$, we have denoted
$$
\E_\uj^{(n)}=\E_\uj^{\shq n}/[n]_\uj!.
$$

We first state the following theorem, which is a generalization of \cite[Theorem 36]{lec} and follows from Lemma~\ref{L:Lyndon max word}.

\begin{thm}
\label{T:max word}
We have $\max(\r_\ui)=\max(\E_\ui)=\ui$, for $\ui\in\Wr^+$.
\end{thm}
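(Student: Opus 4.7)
The plan has two parts, corresponding to the two claimed equalities. The first equality $\max(\r_\ui)=\ui$ is precisely the content of Lemma~\ref{L:Lyndon max word} when $\ui\in\Lr^+$. For general $\ui\in\Wr^+$ with canonical factorization $\ui=\ui_1\ui_2\cdots\ui_r$ (in which $\ui_1\geq\cdots\geq\ui_r$ lie in $\Lr^+$, with isotropic odd factors appearing only once, by Theorem~\ref{thm:canonical factorization of dominant words}), the definition gives $\r_\ui=\r_{\ui_1}\shq\cdots\shq\r_{\ui_r}$. The strategy is to argue by iterated application of Lemma~\ref{L:max shuffle}, using the Lyndon word $\ui_k$ (whose shuffle element $\r_{\ui_k}$ has maximum $\ui_k$) as the left argument and the remaining tail as the right argument at each step.

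The main obstacle is handling repeated Lyndon factors: as soon as $\ui_1=\ui_2$, the concatenation $\ui_2\cdots\ui_r$ strictly exceeds $\ui_1$ in lex order (since $\ui_1$ is a proper left factor of it), so the hypothesis $\ui\geq\uj$ of Lemma~\ref{L:max shuffle} fails for the simple-minded iteration. To circumvent this I would first group equal factors as $\ui=\ui_1^{n_1}\cdots\ui_d^{n_d}$ with $\ui_1>\cdots>\ui_d$ strictly, and establish $\max(\r_{\ui_k}^{\shq n_k})=\ui_k^{n_k}$ by iterating the self-shuffle case of Lemma~\ref{L:max shuffle}; the needed non-vanishing of the coefficient $1+\pi^{p(\ui_k)}q^{-(|\ui_k|,|\ui_k|)}$ at each step is exactly the non-isotropic condition that Theorem~\ref{thm:canonical factorization of dominant words} imposes on repetition. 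Combining across blocks then uses the strict inequality $\ui_k>\ui_{k+1}$ (which implies $\ui_k>\ui_{k+1}^{n_{k+1}}\cdots\ui_d^{n_d}$) and a peel-off argument that applies Lemma~\ref{L:max shuffle} one Lyndon factor at a time, modeled on the closing paragraphs of the proof of Lemma~\ref{L:Lyndon max word}, yielding $\max(\r_\ui)=\ui$.

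For the second equality $\max(\E_\ui)=\ui$, I invoke Proposition~\ref{P:automorphisms}(3), which says that $\sm$ rescales each word $\uj\in\W_\nu$ by a nonzero scalar depending only on $\nu$. Consequently, for $\uj\in\Lr^+$, the element $\E_\uj=\kp_\uj^{-1}\sm(\r_\uj)$ has the same set of words in its support as $\r_\uj$ (coefficients are bar-conjugated but remain nonzero), so $\max(\E_\uj)=\uj$. For general $\ui$, I expand $E_\ui=\E_{\ui_d}^{(n_d)}\shq\cdots\shq\E_{\ui_1}^{(n_1)}$ and apply Proposition~\ref{P:ShuffleProductProperty} iteratively to reverse the order of the shuffle factors, picking up a nonzero scalar and trading $\shq$ for $\shqi$ at each swap. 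Since the set of words appearing in $x\shq y$ and $x\shqi y$ coincide, I reduce the analysis of $\max(E_\ui)$ to the analogue of the first part of the theorem with $\shq$ replaced by $\shqi$; the proof sketched above for $\r_\ui$ applies verbatim in the $\shqi$ setting (the relevant non-vanishing coefficients simply pick up the opposite sign in the exponent of $q$), delivering $\max(\E_\ui)=\ui$.
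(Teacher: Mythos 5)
Your overall route is the same as the paper's: establish $\max(\r_\ui)=\ui$ for $\ui\in\Lr^+$ via Lemma~\ref{L:Lyndon max word}, deduce $\max(\E_\ui)=\ui$ for Lyndon $\ui$ because $\sm$ merely rescales each word by a nonzero scalar (Proposition~\ref{P:automorphisms}(3)), and then handle general dominant $\ui$ by iterating Lemma~\ref{L:max shuffle} across the factors of the canonical factorization. To your credit, you notice something the paper's one-line proof does not acknowledge: as soon as $\ui_1=\ui_2$, the word $\ui_1$ is a proper left factor of $\ui_2\cdots\ui_r$, hence $\ui_1<\ui_2\cdots\ui_r$ in lex order, so the hypothesis $\ui\geq\uj$ of Lemma~\ref{L:max shuffle} fails in the naive peel-off.

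The trouble is that your proposed repair runs into the identical obstruction. To ``iterate the self-shuffle case'' of Lemma~\ref{L:max shuffle} and conclude $\max(\r_{\ui_k}^{\shq n_k})=\ui_k^{n_k}$, you would apply the lemma to $\ui_k\shq\ui_k^{n_k-1}$ (or to $\ui_k\shq\uj$ with $\uj$ in the support of $\r_{\ui_k}^{\shq(n_k-1)}$). But once $n_k\geq 3$, $\ui_k$ is a proper left factor of $\ui_k^{n_k-1}$, so again $\ui_k<\ui_k^{n_k-1}$ and the lemma's hypothesis is violated; the non-vanishing of $1+\pi^{p(\ui_k)}q^{-(|\ui_k|,|\ui_k|)}$ governs the $n_k=2$ case only. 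So the grouping does not actually neutralize the problem you identified; one genuinely needs the stronger fact that for $\ui$ Lyndon and $\uj\in\Wr$ with \emph{first Lyndon factor} $\leq\ui$, every shuffle appearing in $\ui\shq\uj$ is $\leq\ui\uj$, and this requires an argument beyond a direct application of Lemma~\ref{L:max shuffle} (for instance by adapting its proof, or by controlling the leftmost letter via Lemma~\ref{L:Lyndon left factor}). Your closing appeal to ``a peel-off argument modeled on the closing paragraphs of Lemma~\ref{L:Lyndon max word}'' is where the missing work lives, but as stated it is too vague to verify. The $\E_\ui$ part, via Proposition~\ref{P:ShuffleProductProperty} and the fact that $\shq$ and $\shqi$ have the same support, is fine once the $\r_\ui$ case is in place.
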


\begin{proof}
It follows by Lemma~\ref{L:Lyndon max word}
that $\max(\E_\ui)=\ui$, for $\ui\in\Lr^+$, since $\E_{\ui}$ is proportional to $\sm(\r_{\ui})$.
Now the theorem follows by applying Lemma~\ref{L:max shuffle}.
\end{proof}

\begin{cor}
If $\ui\in\L^+_\one$, then $\E_\ui\shq \E_\ui=0$.
\end{cor}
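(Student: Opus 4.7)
The plan is to prove the equivalent statement $e_\ui^2=0$ in $U_q$, where $e_\ui:=\Psi^{-1}(\r_\ui)$ and $\Psi\colon U_q\to\UU$ is the isomorphism of Corollary~\ref{cor:psiUF}. Since $\tau$ is an anti-automorphism and $\bar{}$ an automorphism of $(\F,\shq)$ by Proposition~\ref{P:automorphisms}, their composition $\sm$ is an anti-automorphism, so
\[
\E_\ui\shq\E_\ui=\kp_\ui^{-2}\,\sm(\r_\ui)\shq\sm(\r_\ui)=\kp_\ui^{-2}\,\sm(\r_\ui\shq\r_\ui)=\kp_\ui^{-2}\,\sm\Psi(e_\ui^2),
\]
which vanishes iff $e_\ui^2=0$. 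A first observation is that Proposition~\ref{P:ShuffleProductProperty} applied with $x=y=\E_\ui$, together with $p(\ui)=1$ and $(|\ui|,|\ui|)=0$, yields $\E_\ui\shq\E_\ui=-\E_\ui\shqi\E_\ui$, i.e.\ $[e_\ui,e_\ui]_q=0$ in $U_q$; since $[e_\ui,e_\ui]_q=2e_\ui^2$, this is only a reformulation of the target and an additional algebraic argument is required.

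I would then argue by induction on $\ell(\ui)$. The base case $\ell(\ui)=1$ is the defining relation (Iso) of Proposition~\ref{P:SerreRelations}: for $\ui=(i)$ with $i\in\I_\iso$, $e_i^2=0$; equivalently, the shuffle formula immediately gives $i\shq i=(1+\pi)(i,i)=0$. For the inductive step, let $\ui=\ui_1\ui_2$ be the co-standard factorization; then $e_\ui$ is a nonzero scalar multiple of the $q$-bracket $[e_{\ui_1},e_{\ui_2}]_q$ by construction. Because $|\ui|$ is isotropic odd, exactly one of $|\ui_1|,|\ui_2|$ is odd, and in the basic types of concern this odd summand is itself isotropic; the inductive hypothesis therefore supplies $e_{\ui_j}^2=0$ for the appropriate index $j$.

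The main obstacle is the final algebraic cancellation. Expanding $e_\ui^2\propto[e_{\ui_1},e_{\ui_2}]_q^2$ kills one of the four terms by the inductive $e_{\ui_j}^2=0$, and the remaining three must cancel using the Serre relation of Proposition~\ref{P:SerreRelations} applied to the rank-two subdiagram attached to $(|\ui_1|,|\ui_2|)$. The prototypical case $\ui=(i,j)$ with $i\in\I_\zero$, $j\in\I_\iso$, and Cartan entry $a_{ij}=-1$ illustrates the method: from $e_j^2=0$ and the $A_2$ Serre relation $e_je_i^2-[2]_ie_ie_je_i+e_i^2e_j=0$ one first derives the auxiliary commutation $[e_i,e_\ui]_q=0$, and a short computation then gives $[2]_ie_\ui^2=0$, hence $e_\ui^2=0$ since $[2]_i\ne 0$. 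The general inductive step proceeds analogously with the appropriate rank-two Serre relation in place of the $A_2$ one, and the case analysis over the possible rank-two subtypes of Proposition~\ref{P:SerreRelations} constitutes the main technical content of the argument.
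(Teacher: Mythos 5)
Your approach is genuinely different from the paper's. The paper gives a short combinatorial argument: by Theorem~\ref{T:max word} and Corollary~\ref{C:not max} one has $\max(\E_\ui\shq\E_\ui)<\ui\ui$, while by \cite[Lemma~5.9]{klram2} the word $\ui\ui$ is strictly smaller than every dominant word of weight $2|\ui|$; hence $\E_\ui\shq\E_\ui$ has no dominant word in its support and must vanish. No Serre relations and no induction are needed. Your route instead tries to prove $e_\ui^2=0$ in $U_q$ by induction on $\ell(\ui)$, applying the inductive hypothesis to the co-standard factorization $\ui=\ui_1\ui_2$ and then using rank-two Serre relations to force the remaining cancellations.

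There is a genuine gap in your argument. The key assertion that \emph{in the co-standard factorization $\ui=\ui_1\ui_2$ the odd factor has isotropic odd weight} is false. Take $B(1,1)=\mf{osp}(3|2)$ with standard simple system $\alpha_1=\delta_1-\epsilon_1$ (isotropic odd), $\alpha_2=\epsilon_1$ (even). The root $\alpha_1+2\alpha_2=\delta_1+\epsilon_1$ is isotropic odd, and its dominant Lyndon word is $\ui=(122)$ by Proposition~\ref{P:Type C Lyndon Word}'s $B$-type analogue. Its co-standard factorization is $(12)\cdot(2)$, but $|(12)|=\alpha_1+\alpha_2=\delta_1$ is a \emph{non-isotropic} odd root, so the inductive hypothesis $e_{\ui_1}^2=0$ is unavailable — and is in fact false, since non-isotropic odd root vectors are not nilpotent. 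Beyond this counterexample, the final step is not actually carried out: you verify only the prototypical rank-two case $\ui=(ij)$ where $\ui_1,\ui_2$ are generators and the (N-Iso) Serre relation applies directly. For longer Lyndon words, $e_{\ui_1}$ and $e_{\ui_2}$ are not generators, so Proposition~\ref{P:SerreRelations} does not apply to them directly; one would need a Levendorskii--Soibelman type commutation for the root vectors, itself a nontrivial case analysis. The paper's max-word argument sidesteps all of this.
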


\begin{proof}
By Theorem~\ref{T:max word} and Corollary~\ref{C:not max}, $\max(\E_\ui \shq \E_\ui)<\ui\ui$.
However, by \cite[Lemma~ 5.9]{klram2}, $\ui\ui$ is smaller than any dominant word of degree $2|\ui|$. Hence, $\E_\ui \shq \E_\ui$ must be 0.
\end{proof}

\begin{prp}\label{P:Bar Invariant Renormalization}
For each $\ui\in\Wr^+$, there exists $\kappa_\ui\in\A$ such that $\bar{\kappa_\ui}=\kappa_\ui$, and $\E_\ui=\kappa_\ui^{-1}\;\sm(\r_\ui)$.
\end{prp}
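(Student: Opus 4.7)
The plan is to proceed in two stages: handle the Lyndon case by induction using the definition of $\kp_\ui$, then bootstrap to arbitrary dominant words via the canonical Lyndon factorization and the anti-automorphism property of $\sm$.

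For $\ui\in\Lr^+$ I would set $\kappa_\ui:=\kp_\ui$. Membership in $\A$ is automatic from the inductive definition as a product of quantum integers, so the content is bar-invariance, which I would prove by induction on $\ell(\ui)$. The base case $\ui=i\in\I$ is trivial. For the inductive step, the co-standard factorization $\ui=\ui_1\ui_2$ gives $\kp_\ui=[p_\ui+1]_{\bt_r}\kp_{\ui_1}\kp_{\ui_2}$, so by the inductive hypothesis it suffices to verify that the quantum integer $[p_\ui+1]_{\bt_r}$ (either $[n]_i$ or $\{n\}_i$) is bar-invariant. In all non-$B(0,n+1)$ types this is immediate from $\bar{q}=q^{-1}$; in type $B(0,n+1)$ with $\bar{q}=\pi q^{-1}$, a short direct computation confirms $\overline{[n]_i}=[n]_i$ and $\overline{\{n\}_i}=\{n\}_i$.

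For general $\ui\in\Wr^+$ with canonical factorization $\ui=\ui_1^{n_1}\cdots\ui_d^{n_d}$ (where $\ui_1>\cdots>\ui_d$ are Lyndon), I would unpack $\r_\ui=\Xi([\ui]^+)$ using the multiplicativity of $\Xi:(\F,\cdot)\to(\F,\shq)$ to write
\[
\r_\ui=\r_{\ui_1}^{\shq n_1}\shq\cdots\shq\r_{\ui_d}^{\shq n_d}.
\]
By Proposition~\ref{P:automorphisms}, $\sm$ is an anti-automorphism of $(\F,\shq)$; applying it reverses factors, and substituting $\sm(\r_{\ui_j})=\kp_{\ui_j}\E_{\ui_j}$ from the Lyndon case yields
\[
\sm(\r_\ui)=\Big(\prod_{j=1}^d\kp_{\ui_j}^{n_j}\Big)\,\E_{\ui_d}^{\shq n_d}\shq\cdots\shq\E_{\ui_1}^{\shq n_1}=\Big(\prod_{j=1}^d\kp_{\ui_j}^{n_j}[n_j]_{\ui_j}!\Big)\,\E_\ui,
\]
where the last equality uses $\E_\uj^{\shq n}=[n]_\uj!\cdot\E_\uj^{(n)}$ together with the definition \eqref{E:PBW Definition 2}. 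Setting $\kappa_\ui:=\prod_j\kp_{\ui_j}^{n_j}[n_j]_{\ui_j}!$ closes the argument: each factor lies in $\A$ and is bar-invariant (the $\kp_{\ui_j}$ by the Lyndon step, the quantum factorials by the same computation verifying quantum integers).

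The only genuine technical obstacle is the verification of bar-invariance of the (super) quantum integers $[n]_i$ and $\{n\}_i$ under the twisted bar involution in type $B(0,n+1)$; once this has been checked, the remainder of the proof is a bookkeeping exercise combining the anti-automorphism property of $\sm$ with the multiplicativity of $\Xi$.
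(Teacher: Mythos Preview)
Your proof is correct and follows essentially the same approach as the paper, which simply states that the result holds ``by definition'' with $\kappa_\ui=\prod_{s}\kp_{\ui_s}^{n_s}[n_s]_{\ui_s}!$; you have supplied the details the paper omits, namely the explicit verification that the quantum integers $[n]_i$ and $\{n\}_i$ are bar-invariant (including under the twisted bar in type $B(0,n+1)$) and the unwinding of $\sm(\r_\ui)$ via the anti-automorphism property. Note incidentally that your exponent $n_j$ on $\kp_{\ui_j}$ is correct---the paper's displayed formula \eqref{E:kappa i} appears to drop it, but your derivation makes clear it should be there.
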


\begin{proof}
This is by definition, taking
\begin{align}\label{E:kappa i}
\kp_\ui=\prod_{s=1}^d\kp_{\ui_s}\,[n_s]_{\ui_s}!.
\end{align}
See \eqref{E:PBW Definition 1} and \eqref{E:PBW Definition 2} above.
\end{proof}
It follows from Propositions~\ref{P:Lyndon triangularity} and \ref{P:Bar Invariant Renormalization}
that $\{E_\ui\mid \ui\in\Wr^+\}$ forms a basis for $\UU$, which will be called a {\bf PBW basis}.

\begin{prp}\label{P:PBW Triangularity}
For $\ui\in\Wr^+$, we have
$$\E_\ui=\kp_\ui^{-1}\ep_{\tau(\ui)}+\sum_{\uj >\ui} \af_{\ui\uj} \ep_{\tau(\uj)},
\qquad \text{ for } \af_{\ui\uj} \in \Q(q).
$$
\end{prp}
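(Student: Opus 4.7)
The plan is to apply the anti-automorphism $\sigma$ to the Lyndon-basis expansion of $\r_\ui$ supplied by Proposition~\ref{P:Lyndon triangularity} and to invoke the identity $\E_\ui=\kp_\ui^{-1}\sm(\r_\ui)$ of Proposition~\ref{P:Bar Invariant Renormalization}. Since by Proposition~\ref{P:automorphisms} the map $\tau$ is a $\Q(q)$-linear anti-automorphism of $(\F,\shq)$ and the bar involution is a $\Q$-linear automorphism of $(\F,\shq)$, their composition $\sigma=\bar{\tau}$ is a $\Q$-linear anti-automorphism of $(\F,\shq)$ satisfying $\sigma(c\,x)=\bar c\,\sigma(x)$ for $c\in\Q(q)$.

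The key computation is that $\sigma(\ep_\uj)=\ep_{\tau(\uj)}$ for every word $\uj=(j_1,\ldots,j_d)$. A single-letter word contains no pairs, so Proposition~\ref{P:automorphisms}(2) gives $\bar j = j$, and hence $\sigma(j)=\overline{\tau(j)}=j$ for every $j\in\I$. Because $\ep_\uj=j_1\shq\cdots\shq j_d$ and $\sigma$ reverses order with respect to $\shq$, it follows that
\[
\sigma(\ep_\uj)=\sigma(j_d)\shq\cdots\shq\sigma(j_1)=j_d\shq\cdots\shq j_1=\ep_{\tau(\uj)}.
\]

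Starting from Proposition~\ref{P:Lyndon triangularity},
\[
\r_\ui=\ep_\ui+\sum_{\uj\in\Wr^+,\,\uj>\ui}\chi_{\ui\uj}\,\ep_\uj,\qquad\chi_{\ui\uj}\in\Q(q),
\]
applying $\sigma$ and using the previous identity together with $\Q$-linearity yields
\[
\sigma(\r_\ui)=\ep_{\tau(\ui)}+\sum_{\uj>\ui}\overline{\chi_{\ui\uj}}\,\ep_{\tau(\uj)}.
\]
Multiplying through by $\kp_\ui^{-1}$ gives
\[
\E_\ui=\kp_\ui^{-1}\ep_{\tau(\ui)}+\sum_{\uj>\ui}\kp_\ui^{-1}\overline{\chi_{\ui\uj}}\,\ep_{\tau(\uj)},
\]
so setting $\af_{\ui\uj}=\kp_\ui^{-1}\overline{\chi_{\ui\uj}}\in\Q(q)$ completes the argument.

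The only content beyond Proposition~\ref{P:Lyndon triangularity} is the identity $\sigma(\ep_\uj)=\ep_{\tau(\uj)}$, so I do not anticipate a serious obstacle; the renormalization by $\kp_\ui$ rescales the coefficients but does not affect the triangular pattern or the indexing set $\{\uj>\ui\}$.
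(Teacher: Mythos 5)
Your proof is correct and follows exactly the path the paper intends when it says the result is ``immediate from Proposition~\ref{P:Lyndon triangularity} and Proposition~\ref{P:Bar Invariant Renormalization}''; you have simply filled in the two routine details the paper leaves implicit, namely that $\sm$ is a $\Q$-linear anti-automorphism of $(\F,\shq)$ fixing letters and that therefore $\sm(\ep_\uj)=\ep_{\tau(\uj)}$.
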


\begin{proof} This is immediate from Proposition \ref{P:Lyndon triangularity} and Proposition \ref{P:Bar Invariant Renormalization}.\end{proof}

The next theorem is often referred to as the Levendorskii-Soibelman formula \cite{ls}.

\begin{thm}\label{T:LS formula}
Suppose $\ui,\uj\in\L^+$ with $\ui<\uj$. Then,
$$\E_\uj\shq\E_\ui=\sum_{\substack{\uk\in\W^+\\\ui\uj\leq\uk\leq\uj\ui}}c_{\ui,\uj}^\uk\E_\uk.$$
\end{thm}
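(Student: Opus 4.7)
The plan is to expand $\E_\uj\shq\E_\ui$ in the PBW basis $\{\E_\uk\}_{\uk\in\Wr^+}$ by exploiting the triangularity of this basis under the lex order on words. The expansion $\E_\uj\shq\E_\ui=\sum_\uk c_{\ui,\uj}^\uk\E_\uk$ is determined uniquely once we know which $\uk\in\Wr^+$ can occur; the theorem asserts $\ui\uj\le\uk\le\uj\ui$.

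For the upper bound $\uk\le\uj\ui$, I will combine Theorem~\ref{T:max word} (which gives $\max(\E_\uj)=\uj$ and $\max(\E_\ui)=\ui$) with Lemma~\ref{L:max shuffle} applied to the Lyndon word $\uj>\ui$ (yielding $\max(\uj\shq\ui)=\uj\ui$ with coefficient $\pi^{p(\uj)p(\ui)}q^{-(|\uj|,|\ui|)}$), to conclude $\max(\E_\uj\shq\E_\ui)\le\uj\ui$. Since $\uj\ui\in\Wr^+$ by Proposition~\ref{P:L16} with canonical factorization $\uj\cdot\ui$, we have $\E_{\uj\ui}=\E_\ui\shq\E_\uj$ by \eqref{E:PBW Definition 2}; iteratively peeling off the leading $\E_\uk$-term (starting from $\E_{\uj\ui}$ with an explicit scalar and continuing on the remainder, each of which has strictly smaller max) then forces $c_{\ui,\uj}^\uk=0$ for all $\uk>\uj\ui$.

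For the lower bound $\uk\ge\ui\uj$, I will induct on $\ell(\ui)+\ell(\uj)$. The base case $\ui,\uj\in\I$ follows from a direct expansion of $\uj\shq\ui$ in $\F$. For the inductive step, apply the costandard factorization $\uj=\uj_1\uj_2$ when $\ell(\uj)>1$ (or $\ui=\ui_1\ui_2$ otherwise). By Proposition~\ref{P:rofL} we have $\r_\uj=\r_{\uj_1}\shqqi\r_{\uj_2}$; applying the anti-homomorphism $\sm$ from Proposition~\ref{P:automorphisms} together with the $q$-commutation identity $x\shq y=\pi^{p(x)p(y)}q^{-(|x|,|y|)}y\shqi x$ of Proposition~\ref{P:ShuffleProductProperty}, one expresses $\E_\uj$ (up to the scalar $[p_\uj+1]_{\bt_r}^{-1}$ from the normalization \eqref{E:PBW Definition 1}) as a linear combination of $\shq$- and $\shqi$-products of $\E_{\uj_1}$ and $\E_{\uj_2}$. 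Substituting into $\E_\uj\shq\E_\ui$, converting every $\shqi$-product back to a $\shq$-product, and invoking the inductive hypothesis on the strictly shorter pairs $(\uj_s,\ui)$ produces an expansion whose support then collects into the desired range $[\ui\uj,\uj\ui]$.

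The main obstacle is the lower bound. Unlike the upper bound---which enjoys the clean max formula of Lemma~\ref{L:max shuffle}---the lower bound admits no direct ``min'' analog. Moreover, $\shq$ and $\shqi$ do not associate with each other, and the anti-homomorphism $\sm$ is anti-homomorphic only for $\shq$ (its action on $\shqi$ introduces $q^{\pm 2(|\uj_1|,|\uj_2|)}$ correction factors), so the algebraic manipulations in the inductive step yield several cross-terms that must be tracked carefully. The super signs coming from $\pi$, together with the vanishing $\E_\uk\shq\E_\uk=0$ for odd-isotropic $\uk\in\Lr^+_\one$ (from the corollary to Theorem~\ref{T:max word}, which can shorten canonical factorizations mid-induction), further complicate the combinatorial bookkeeping needed to verify that the final expansion remains within the prescribed range.
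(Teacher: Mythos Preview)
Your upper bound argument via Theorem~\ref{T:max word} and Lemma~\ref{L:max shuffle} is correct and matches the paper's approach.

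Your lower bound argument, however, has a genuine gap: the induction does not close. You propose to use the costandard factorization $\uj=\uj_1\uj_2$ to rewrite $\E_\uj$ (up to scalars) as a combination of $\shq$- and $\shqi$-products of $\E_{\uj_1}$ and $\E_{\uj_2}$, and then apply the inductive hypothesis to the pairs $(\uj_s,\ui)$. But after one application of induction, say to $\E_{\uj_1}\shq\E_\ui$, you obtain a sum over $\E_\uk$ with $\uk\in\Wr^+$ in some range --- and these $\uk$ need not be Lyndon. To continue, you must then shuffle $\E_{\uj_2}$ against each such $\E_\uk$, which is outside the scope of your inductive hypothesis (stated only for pairs in $\Lr^+$). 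You would need a substantially stronger statement to make this induction work, and even then the bookkeeping required to show the resulting support lies in $[\ui\uj,\uj\ui]$ is not addressed.

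The paper avoids this entirely. For the lower bound it simply invokes Proposition~\ref{P:PBW Triangularity}, which expresses each $\E_\uk$ as $\kp_\uk^{-1}\ep_{\tau(\uk)}$ plus terms $\ep_{\tau(\uh)}$ with $\uh>\uk$. Multiplying the triangular expansions of $\E_\uj$ and $\E_\ui$ gives $\E_\uj\shq\E_\ui=\sum_{\uk\geq\ui\uj}\bt^\uk_{\ui\uj}\,\ep_{\tau(\uk)}$ (the leading term is $\ep_{\tau(\uj)}\shq\ep_{\tau(\ui)}=\ep_{\tau(\ui\uj)}$), and then Lemma~\ref{L:Standard Triangularity} lets one replace non-dominant $\uk$ by strictly larger dominant ones. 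Converting back to the PBW basis preserves the inequality $\uk\geq\ui\uj$. This is a two-line argument with no induction and no $\shqi$-manipulations; you should use it instead.
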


\begin{proof}
By Proposition~\ref{P:PBW Triangularity},
\begin{align*}
\E_{\uj}\shq\E_{\ui}&=\left(\kp_\uj^{-1}\ep_{\tau(\uj)}+\sum_{\uk>\uj}\af_{\uj,\uk}\ep_{\tau(\uk)}\right) \left(\kp_\ui^{-1}\ep_{\tau(\ui)}+\sum_{\uk>\ui}\af_{\ui,\uk}\ep_{\tau(\uk)}\right)\\
    &=\sum_{\uk\in\W,\uk>\ui\uj}\bt_{\ui\uj}^\uk\ep_{\tau(\uk)}
\end{align*}
By Lemma~\ref{L:Standard Triangularity}, if $\uk\notin\W^+$, then
$$\ep_{\tau(\uk)}=\sum_{\uh\in\W^+,\uh>\uk}\gm_{\uk,\uh}\ep_{\tau(\uh)}.$$
Therefore,
$$\E_\ui\shq\E_\uj=\sum_{\substack{\uk\in\W^+\\\ui\uj\leq\uk}}c_{\ui,\uj}^\uk\E_\uk.$$
On the other hand, by Theorem \ref{T:max word}, it follows that $c_{\ui,\uj}^\uk\neq 0$ only if $\uk<\uj\ui$.
\end{proof}

%%%%%%%%%%%%%%%%%%%
\subsection{Orthogonality of PBW basis}

We will prove that the PBW basis defined in the previous section is orthogonal with respect to the bilinear form on $\UU$.

\begin{lem}\label{Conj:PBW coproduct}
For $\ui\in\Lr^+$, we have
$$
\Dt(\E_\ui)=\sum_{\ui_1,\ui_2\in\Wr^+}\vartheta_{\ui_1\ui_2}^\ui\E_{\ui_2}\otimes \E_{\ui_1},
\qquad \text{ for } \; \vartheta_{\ui_1,\ui_2}^\ui \in \Q(q),
$$
where $\vartheta_{\ui_1,\ui_2}^\ui=0$ unless $|\ui_1|+|\ui_2|=|\ui|$ and
\begin{enumerate}
\item $\ui_1\leq\ui$, and
\item $\ui\leq\ui_2$ whenever $\ui_2\neq\emptyset$.
\end{enumerate}
\end{lem}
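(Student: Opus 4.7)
The plan is to proceed by induction on $\ell(\ui)$ for $\ui \in \Lr^+$. The base case $\ui = i \in \I$ is immediate from $\Dt(i) = i \otimes 1 + 1 \otimes i = \E_i \otimes \E_\emptyset + \E_\emptyset \otimes \E_i$, which satisfies conditions~(1) and~(2) trivially.

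For the inductive step, let $\ui = \ui_{(1)}\ui_{(2)}$ be the co-standard factorization, so $\ui_{(1)}, \ui_{(2)} \in \Lr^+$ with $\ui_{(1)} < \ui_{(2)}$. Since $\ui_{(1)}$ is a proper left factor of the Lyndon word $\ui$, we have $\ui_{(1)} < \ui$; since $\ui_{(2)}$ is a proper right factor, we have $\ui_{(2)} > \ui$. Because $\sm$ acts by the scalar $\pi^{P(\nu)} q^{-N(\nu)}$ on each word of weight $\nu$ (Proposition~\ref{P:automorphisms}(3)), the recursion $\r_\ui = \r_{\ui_{(1)}} \shqqi \r_{\ui_{(2)}}$ of Proposition~\ref{P:rofL} translates into
\[
\E_\ui = C \cdot (\E_{\ui_{(1)}} \shqqi \E_{\ui_{(2)}})
\]
for an explicit scalar $C \in \Q(q)^\times$ determined by the normalizations $\kp_\bullet$ and the weights. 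Expanding $\shqqi = \shq - \shqi$ and using Proposition~\ref{P:ShuffleProductProperty} to rewrite $\E_{\ui_{(1)}} \shqi \E_{\ui_{(2)}}$ as a scalar multiple of $\E_{\ui_{(2)}} \shq \E_{\ui_{(1)}}$, we express $\E_\ui$ as a combination of two pure $\shq$-products.

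Next, $\Dt$ is a $\shq$-algebra homomorphism by Proposition~\ref{P:ShuffleCoproduct}. Substituting the inductive expansions $\Dt(\E_{\ui_{(j)}}) = \sum \vartheta^{(j)}_{\uj_1,\uj_2}\, \E_{\uj_2} \otimes \E_{\uj_1}$ (with $\uj_1 \leq \ui_{(j)}$ and $\uj_2 \geq \ui_{(j)}$ or $\uj_2 = \emptyset$), we obtain $\Dt(\E_\ui)$ as a combination of twisted tensor products $(\E_{\uj_2} \shq \E_{\uk_2}) \otimes (\E_{\uj_1} \shq \E_{\uk_1})$. We re-expand each $\shq$-product in the PBW basis using Theorem~\ref{T:LS formula}, iterated for non-Lyndon dominant factors through their canonical factorizations, producing PBW summands $\E_{\ul_2} \otimes \E_{\ul_1}$ with bounds given by the LS inequalities $\uj_\bullet \uk_\bullet \leq \ul_\bullet \leq \uk_\bullet \uj_\bullet$. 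The inequalities $\ui_{(1)} < \ui < \ui_{(2)}$ force conditions~(1) and~(2) on most surviving summands.

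The main obstacle is verifying that potentially ``rogue'' PBW summands $\E_{\ul_2} \otimes \E_{\ul_1}$ with $\emptyset \neq \ul_2 < \ui$ or $\ul_1 > \ui$, which may arise in either of the two $\shq$-products individually, must cancel in the difference $\Dt(\E_{\ui_{(1)}}) \shq \Dt(\E_{\ui_{(2)}}) - (\text{twist}) \cdot \Dt(\E_{\ui_{(2)}}) \shq \Dt(\E_{\ui_{(1)}})$. These rogue contributions arise principally from the boundary pieces $\E_{\uj_2} \otimes 1$ and $1 \otimes \E_{\uj_1}$ of $\Dt(\E_{\ui_{(j)}})$; their cancellation depends on the interplay between the twist $\pi^{p(\ui_{(1)}) p(\ui_{(2)})} q^{(|\ui_{(1)}|,|\ui_{(2)}|)}$ inherent in $\shqqi$, the LS coefficients, and the PBW triangularity of Proposition~\ref{P:PBW Triangularity}. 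Careful bookkeeping, exploiting $|\ui_{(1)}| + |\ui_{(2)}| = |\ui|$ and the Lyndon structure of $\ui$, ensures that the surviving PBW summands in $\Dt(\E_\ui)$ satisfy conditions~(1) and~(2), completing the induction.
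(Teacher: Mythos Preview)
Your overall architecture matches the paper's: induction on $\ell(\ui)$, reduction via the co-standard factorization to $\E_{\ui_{(1)}}\shqqi\E_{\ui_{(2)}}$, and application of $\Dt$. However, there is a real gap precisely where you write ``careful bookkeeping ensures\dots''. That sentence is where the entire content of Part~(2) lives, and you have not supplied the mechanism that makes the rogue terms disappear. The LS inequalities and PBW triangularity by themselves do not force cancellation between the two $\shq$-expansions; without a further idea you cannot rule out a surviving summand $\E_{\ul_2}\otimes\E_{\ul_1}$ with $\emptyset\neq\ul_2<\ui$.

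The paper's missing ingredient is a bar-conjugation identity. Because $\Dt$ is an algebra homomorphism not only for the $(q,\pi)$-twisted product on $\UU\otimes\UU$ but also for the $(q^{-1},\pi)$-twisted product (Proposition~\ref{P:ShuffleCoproduct} with $q\mapsto q^{-1}$), one has: if $\Dt(\E_{\ui_{(1)}}\shq\E_{\ui_{(2)}})=\sum z_{\uh,\ul}\,\E_\uh\otimes\E_\ul$, then
\[
\Dt(\E_{\ui_{(1)}}\shqqi\E_{\ui_{(2)}})=\sum(z_{\uh,\ul}-\overline{z_{\uh,\ul}})\,\E_\uh\otimes\E_\ul.
\]
Consequently the support of $\Dt(\E_{\ui_{(1)}}\shqqi\E_{\ui_{(2)}})$ is contained in the support of $\Dt(\E_{\ui_{(1)}}\shq\E_{\ui_{(2)}})$. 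But $\E_{\ui_{(1)}}\shqqi\E_{\ui_{(2)}}$ is also equal to $\E_{\ui_{(2)}}\shq\E_{\ui_{(1)}}-(\text{twist})\,\E_{\ui_{(1)}}\shq\E_{\ui_{(2)}}$, so the same support is contained in that of $\Dt(\E_{\ui_{(2)}}\shq\E_{\ui_{(1)}})$. Hence any surviving $\E_\uh\otimes\E_\ul$ must satisfy the inductive bounds coming from \emph{both} orderings simultaneously, namely $\uh\geq\uj_2\uk_2$ and $\uh\geq\uk_2\uj_2$ (with $\uj_2\geq\ui_{(1)}$, $\uk_2\geq\ui_{(2)}$ or empty). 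A short case analysis on whether $\uj_2$ or $\uk_2$ is empty, using $\ui_{(2)}>\ui$ and Lemma~\ref{L:costd factorization}, then yields $\uh\geq\ui$ when $\uh\neq\emptyset$.

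Two further remarks. First, Part~(1) does not need induction at all: it follows immediately from $\max(\E_\ui)=\ui$ (Theorem~\ref{T:max word}) and the explicit formula for $\Dt$ on words in Proposition~\ref{P:ShuffleCoproduct}, since every left factor $\uj_1$ of a word $\uj\leq\ui$ satisfies $\uj_1\leq\uj\leq\ui$. Second, your invocation of Theorem~\ref{T:LS formula} ``iterated for non-Lyndon dominant factors'' is not quite what is needed; the inductive hypothesis already gives $\Dt(\E_{\ui_{(j)}})$ expanded in the PBW basis of $\UU\otimes\UU$, and the products $\E_{\uj_2}\shq\E_{\uk_2}$ are controlled directly by the triangularity in Proposition~\ref{P:PBW Triangularity}, not by an iterated LS formula.
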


\begin{proof}
Observe by Theorem \ref{T:max word}
that $\E_{\ui}=\sum_{\uj\leq\ui}\phi_{\ui\uj}\uj$, for some $\phi_{\ui\uj}\in\Q(q)$, so
$$\Dt(\E_\ui)=\sum_{\substack{\uj_1,\uj_2;\\\uj_1\uj_2=\uj\leq \ui}}\phi_{\ui\uj}(\uj_2\otimes\uj_1).$$
Since $\uj_1\leq\uj\leq\ui$, Part (1) follows.

We now prove (2) by induction on the length of $\ui$, the case $\ui=i\in\I$ being obvious.

To proceed to the inductive step, we need to make a few observations. First, given $\ui\in\Lr^+$,  $\E_\ui$ is proportional to
$$\sm(\r_\ui)=\sm(\r_{\ui_2})\shq\sm(\r_{\ui_1})-\pi^{p(\ui_1)p(\ui_2)}q^{-(|\ui_1|,|\ui_2|)}\sm(\r_{\ui_1})\shq\sm(\r_{\ui_1}),$$
where $\ui=\ui_1\ui_2$ is the costandard factorization of $\ui$. In turn, the right hand side of the equation above is proportional to
$$\E_{\ui_2}\shq\E_{\ui_1}-\pi^{p(\ui_1)p(\ui_2)}q^{-(|\ui_1|,|\ui_2|)}\E_{\ui_1}\shq\E_{\ui_2}
=-\pi^{p(\ui_1)p(\ui_2)}q^{-(|\ui_1|,|\ui_2|)}(\E_{\ui_1}\shqqi\E_{\ui_2}).$$
Therefore, it is sufficient to prove the lemma for $\E_{\ui_1}\shqqi\E_{\ui_2}$.

To this end, write $\ui_1=\uj$ and $\ui_2=\uk$ and note that induction applies to $\E_{\uj}$ and $\E_{\uk}$.
Observe that if $\Dt(\E_{\uj}\shq\E_{\uk})=\sum_{\uh,\ul\in\W^+}z_{\uh,\ul}(\E_\uh\otimes\E_\ul)$, then
\begin{align}\label{E:qqinv coproduct}
\Dt(\E_{\uj}\shqqi\E_{\uk})=\sum_{\uh,\ul\in\W^+}(z_{\uh,\ul}-\bar{z_{\uh,\ul}})(\E_\uh\otimes\E_\ul)
\end{align}
since, replacing $q$ with $q^{-1}$ in Proposition \ref{P:ShuffleCoproduct} shows that $\Dt$ is an algebra homomorphism
with respect to the $(q^{-1},\pi)$-bialgebra structure on $\UU\otimes\UU$:
$$(w\otimes x)\shqi (y\otimes z) = \pi^{p(x)p(y)}q^{(|x|,|y|)}(w\shqi y)\otimes(x\shqi z).$$
On the other hand,
$$\Dt(\E_{\uj}\shqqi\E_{\uk})=\Dt(\E_{\uk}\shq\E_{\uj}-\pi^{p(\uj)p(\uk)}q^{-(|\uj|,|\uk|)}\E_{\uj}\shq\E_{\uk}).$$
By Proposition \ref{P:PBW Triangularity}, the transition matrix from the PBW basis to the basis $\{\ep_{\tau(\uj)}\mid \uj\in \Wr^+\}$ is triangular.
Therefore, applying our inductive hypothesis, we have
\begin{align*}
\Dt(\E_{\uj}\shq \E_{\uk})&=\sum_{\substack{\uj_1\leq\uj\leq\uj_2\\\uk_1\leq\uk\leq\uk_2}}
\vartheta_{\uj_1\uj_2}^\uj\vartheta_{\uk_1\uk_2}^\uk(\E_{\uj_2}\otimes\E_{\uj_1})\shq(\E_{\uk_2}\otimes\E_{\uk_1})
    =\sum_{\substack{\uh\geq\uk_2\uj_2;\\\ul\geq\uk_1\uj_1}}\Theta_{\uh,\ul}\E_\uh\otimes\E_\ul
\end{align*}
and similarly
\begin{align*}
\Dt(\E_{\uk}\shq \E_{\uj})&=\sum_{\substack{\uk_1\leq\uk\leq\uk_2\\\uj_1\leq\uj\leq\uj_2}}
\vartheta_{\uk_1\uk_2}^\uk\vartheta_{\uj_1\uj_2}^\uj(\E_{\uk_2}\otimes\E_{\uk_1})\shq(\E_{\uj_2}\otimes\E_{\uj_1})
    =\sum_{\substack{\uh\geq\uj_2\uk_2;\\\ul\geq\uj_1\uk_1}}\Theta'_{\uh,\ul}\E_\uh\otimes\E_\ul.
\end{align*}
Comparing these equations to \eqref{E:qqinv coproduct} we deduce that $\Theta_{\uh,\ul}\neq0$ if and only if $\Theta'_{\uh,\ul}\neq0$.

Now, assume $z_{\uh\ul}-\bar{z_{\uh\ul}}\neq 0$. The previous paragraph implies that $\uh\geq\max\{\uj_2\uk_2,\uk_2\uj_2\}$.
If $\uj_2\neq\emptyset$, then $\uj\neq \emptyset$ and we obtain the inequality $\uh\geq\uj_2\uk_2\geq\uj\uk=\ui$
since $\uj_2\geq\uj$, $\uk_2\geq\uk$ and these are right factors of $\uj$ and $\uk$ respectively (note that if $\uj_2$ is a proper right factor,
we don't need to consider $\uk$ and $\uk_2$ at all). If $\uj_2=\emptyset$ and $\uk_2\neq\emptyset$,
we have $\uh\geq\uk_2\geq\uk>\uj\uk$ since, by Lemma~\ref{L:costd factorization}, $\uk=\uj^r\uj'j$ where $r\geq 0$,
$\uj'$ is a (possibly empty) left factor of $\uj$ and $j\in\I$ satisfies $\uj'j>\uj$. If both $\uj_2=\uk_2=\emptyset$,
the equality $|\uh|=|\uj_2|+|\uk_2|$ forces $\uh=\emptyset$. This proves part (2) and hence the lemma.
\end{proof}

\begin{thm}\label{T:OrthogonalPBW}
Let $\ui,\uj\in\Wr^+$. Then, $(\E_\ui,\E_\uj)=0$ unless $\ui=\uj$.
Moreover, if $\ui=\ui_1^{n_1}\cdots\ui_d^{n_d}$, $\ui_1>\cdots>\ui_d$ is the canonical factorization of $\ui$ into dominant Lyndon words, then,
$$
(\E_\ui,\E_\ui)=\pi^{\xi_\ui}q^{-c_\ui}  \prod_{l=1}^d \frac{(\E_{\ui_l},\E_{\ui_l})^{n_l}}{[n_l]_{\ui_l}! },
$$
where
\begin{align}\label{E:xi and c}
\xi_\ui=\sum_{l=1}^d{n_l-1\choose 2}p(\ui_l) \andeqn c_\ui=\sum_{l=1}^d{n_l\choose 2}\frac{(|\ui_l|,|\ui_l|)}{2}.
\end{align}
\end{thm}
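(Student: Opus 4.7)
The plan is to prove both the orthogonality and the norm formula by joint induction on the height $\mathrm{ht}(|\ui|) = \mathrm{ht}(|\uj|)$, employing the adjunction
\[
(\E_\ui, x\shq y) = (\Dt(\E_\ui), x\otimes y)
\]
from Proposition \ref{P:bilinearformonF} together with the coproduct formula of Lemma \ref{Conj:PBW coproduct}. The base case $\mathrm{ht}=1$ is immediate, since then $\ui=\uj=(i)$ and $(\E_i,\E_i)=(i,i)=1$.

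For orthogonality, assume $\ui\neq\uj$ in $\Wr^+$ of equal weight. I first handle the case $\ui\in\Lr^+$. If $\uj\in\Lr^+$ as well, the bijection $\Lr^+\leftrightarrow\Phi^+$ of Theorem \ref{thm:canonical factorization of dominant words} forces $\ui=\uj$, contradicting the assumption; moreover, since $\Phi^+$ is reduced, the canonical factorization $\uj=\uj_1^{m_1}\cdots\uj_k^{m_k}$ must have $k\geq 2$. Splitting $\E_\uj=\E_{\uj_k}^{(m_k)}\shq\E_{\uj'}$ with $\uj'=\uj_1^{m_1}\cdots\uj_{k-1}^{m_{k-1}}\neq\emptyset$, the adjunction and Lemma \ref{Conj:PBW coproduct} yield
\[
(\E_\ui,\E_\uj) = \sum\vartheta^\ui_{\ui_1,\ui_2}\,(\E_{\ui_2},\E_{\uj_k}^{(m_k)})\,(\E_{\ui_1},\E_{\uj'}).
\]
The two primitive summands $\E_\ui\otimes 1$ and $1\otimes\E_\ui$ in $\Dt(\E_\ui)$ contribute nothing, because $\uj'\neq\emptyset$ and $\E_{\uj_k}^{(m_k)}$ both have positive weight; the remaining summands have $\ui_1,\ui_2$ of strictly smaller weight than $\ui$, so by the inductive hypothesis only the term with $\ui_1=\uj'$ and $\ui_2=\uj_k^{m_k}$ can contribute. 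But then the constraint $\uj'\leq\ui\leq\uj_k^{m_k}$ demanded by Lemma \ref{Conj:PBW coproduct} is incompatible with the Lyndon inequality $\uj_1>\uj_k$, which implies $\uj'\geq\uj_1>\uj_k^{m_k}$ in lex order. Hence $\vartheta^\ui_{\uj',\uj_k^{m_k}}=0$ and $(\E_\ui,\E_\uj)=0$. When $\ui$ is not Lyndon, we split $\E_\ui=\E_{\ui_p}^{(n_p)}\shq\E_{\ui^\sharp}$ and use the symmetry of the bilinear form to apply the Lyndon case on the opposite factor.

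For the norm formula, writing $\E_\ui=\E_{\ui_p}^{(n_p)}\shq\E_{\ui^\sharp}$ with $\ui^\sharp=\ui_1^{n_1}\cdots\ui_{p-1}^{n_{p-1}}$, the adjunction gives
\[
(\E_\ui,\E_\ui) = (\Dt(\E_\ui),\E_{\ui_p}^{(n_p)}\otimes\E_{\ui^\sharp}).
\]
The bialgebra property expands $\Dt(\E_\ui)=\Dt(\E_{\ui_p}^{(n_p)})\shq\Dt(\E_{\ui^\sharp})$, and the orthogonality just established kills every summand whose first (resp.\ second) tensor factor is not a scalar multiple of $\E_{\ui_p}^{(n_p)}$ (resp.\ $\E_{\ui^\sharp}$). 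A direct enumeration, using Lemma \ref{Conj:PBW coproduct} for $\Dt(\E_{\ui_p})$, shows that the surviving contribution factors as a scalar times $(\E_{\ui_p}^{(n_p)},\E_{\ui_p}^{(n_p)})\cdot(\E_{\ui^\sharp},\E_{\ui^\sharp})$, the scalar being the cumulative twist from the multiplication rule $(w\otimes x)\shq(y\otimes z)=\pi^{p(x)p(y)}q^{-(|x|,|y|)}(w\shq y)\otimes(x\shq z)$. Induction on $p$ then produces the full product formula, while the identity $(\E_{\ui_p}^{(n_p)},\E_{\ui_p}^{(n_p)})=(\E_{\ui_p},\E_{\ui_p})^{n_p}/[n_p]_{\ui_p}!$ is obtained by a secondary induction, invoking the quantum-binomial expansion of $\Dt(\E_{\ui_p})^{\shq n_p}$ and the orthogonality.

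The main obstacle will be the detailed bookkeeping of the factor $\pi^{\xi_\ui}q^{-c_\ui}$, which accumulates contributions from (a) the twisted multiplication on $\UU\otimes\UU$ at each crossing of coproduct expansions, and (b) the divided-power identities for non-isotropic odd roots. The isotropic odd case is self-consistent: since $\E_{\ui_l}\shq\E_{\ui_l}=0$ forces $n_l=1$ there, the exponent $\binom{n_l-1}{2}=0$ and no sign is produced, matching the stated formula.
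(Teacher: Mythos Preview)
Your treatment of the case $\ui\in\Lr^+$ is correct and essentially identical to the paper's: you peel off the smallest block $\E_{\uj_k}^{(m_k)}$ of $\E_\uj$, apply the adjunction together with Lemma~\ref{Conj:PBW coproduct}, and use the inductive hypothesis at smaller height to force $\ui_1=\uj'$, $\ui_2=\uj_k^{m_k}$, which contradicts the ordering constraints. (The paper peels off a single $\E_{\uj_r}$ rather than the full power, but your variant works just as well once one checks, as you implicitly use, that $\uj_1>\uj_k^{m_k}$ for distinct Lyndon words $\uj_1>\uj_k$.)

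The gap is in the sentence ``When $\ui$ is not Lyndon, we split $\E_\ui=\E_{\ui_p}^{(n_p)}\shq\E_{\ui^\sharp}$ and use the symmetry of the bilinear form to apply the Lyndon case on the opposite factor.'' Symmetry only lets you dispose of the case where \emph{one} of $\ui,\uj$ is Lyndon; it says nothing when both are non-Lyndon. In that situation one has
\[
(\E_\ui,\E_\uj)=\big(\Dt(\E_{\ui_s})\shq\cdots\shq\Dt(\E_{\ui_1}),\;(\E_{\uj_r}\shq\cdots\shq\E_{\uj_2})\otimes\E_{\uj_1}\big),
\]
and after expanding each $\Dt(\E_{\ui_t})$ via Lemma~\ref{Conj:PBW coproduct} the second tensor factor becomes a \emph{shuffle product} $\E_{\ui_{s,1}}\shq\cdots\shq\E_{\ui_{1,1}}$ of PBW vectors, not a single PBW vector. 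The inductive orthogonality hypothesis therefore does not apply directly to the pairing $(\E_{\ui_{s,1}}\shq\cdots\shq\E_{\ui_{1,1}},\E_{\uj_1})$. The paper fills this gap with a separate Claim~$(\star\star)$: this pairing vanishes unless exactly one $\ui_{k,1}$ equals $\uj_1$ and all the others are empty. That claim is proved by applying Lemma~\ref{Conj:PBW coproduct} once more, now to the Lyndon word $\uj_1$, and using induction together with the chain of inequalities $\uj_{1,2}=\ui_{k,1}\leq\ui_k\leq\ui_1\leq\uj_1\leq\uj_{1,2}$. Only after Claim~$(\star\star)$ is established does one obtain $\ui_1=\uj_1$ and the recursion that both proves orthogonality and produces the telescoping factor
\[
1+\pi^{p(\ui_1)}q^{-(|\ui_1|,|\ui_1|)}+\cdots+\pi^{(n_1-1)p(\ui_1)}q^{-(n_1-1)(|\ui_1|,|\ui_1|)}
\]
responsible for $[n_1]_{\ui_1}!$ and the exponents in $\xi_\ui,c_\ui$. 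Your norm computation and your ``direct enumeration'' for the surviving term presuppose exactly this claim; without it the argument is incomplete.
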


\begin{proof}
We proceed by induction on the length of $\ui$, the case $\ui=i\in\I$ being trivial.
We first show that the theorem holds when $\ui\in\Lr^+$. Indeed, suppose $\uj\neq\ui$ and let $\uj=\uj_1\cdots\uj_r$, where
$\uj_1\geq\uj_2\cdots\geq\uj_r$, be the canonical factorization of $\uj$. Then, $(\E_\ui,\E_\uj)$ is proportional to
\begin{align}\label{E:Orthogonality of Lyndon}
\sum\vartheta_{\ui_1,\ui_2}^\ui(\E_{\ui_2}\otimes\E_{\ui_1},\E_{\uj_r}&\otimes(\E_{\uj_{r-1}}\shq\cdots\shq\E_{\uj_1}))
    &=\sum\vartheta_{\ui_1,\ui_2}^\ui(\E_{\ui_2},\E_{\uj_r})(\E_{\ui_1},(\E_{\uj_{r-1}}\shq\cdots\shq\E_{\uj_1}))
\end{align}
where the sum is over $\ui_1\leq\ui\leq\ui_2$ by Lemma \ref{Conj:PBW coproduct}. By assumption $|\uj_r|\neq|\ui|$,
so we may take the sum to be over $\ui_1<\ui<\ui_2$. Therefore, since $\uj_r\in\Lr^+$ has shorter length than $\ui$,
we may apply induction to conclude that the nonzero terms in the sum above satisfy $\ui_2=\uj_r\in\Lr^+$
and $\uj_1\cdots\uj_{r-1}=\ui_1$. But, now we have the inequalities
$$\uj_1\leq\uj_1\cdots\uj_{r-1}=\ui_1<\ui_2=\uj_r\leq \uj_1,$$
which is never satisfied. Hence, $(\E_\ui,\E_\uj)=0$.

Now, let $\ui,\uj\in\W^+_\nu$ be arbitrary and assume we have shown that $\{\E_{\uk}\mid\uk\in\W^+_\mu\}$ is an orthogonal basis for $\UU_{\mu}$ whenever $\mu<\nu$ in the dominance ordering on $Q^+$ (the base case $\nu\in\Pi$ being trivial). Let $\ui=\ui_1\cdots\ui_s$ and $\uj=\uj_1\cdots\uj_r$ be the canonical factorizations
of $\ui$ and $\uj$ into a nonincreasing product of dominant Lyndon words, and assume, without loss of generality,
that $\ui_1\leq\uj_1$. If $\ui\in\L^+$ or $\uj\in\L^+$, then we are done, so assume that both $r,s>1$. Then, $(\E_\ui,\E_\uj)$ is
proportional to (up to some suitable product of quantum factorials)
\begin{align}\label{E:Orthogonality Standard}
\nonumber(\E_{\ui_s}\shq\cdots\shq&\E_{\ui_1},\E_{\uj_r}\shq\cdots\shq\E_{\uj_1})
    =\big (\Dt(\E_{\ui_s})\shq\cdots\shq\Dt(\E_{\ui_1}),(\E_{\uj_r}\shq\cdots\shq\E_{\uj_2})\otimes\E_{\uj_1} \big)\\
    &=\sum \vartheta_{\ui_{1,2},\ldots, \ui_{s,2}}(\E_{\ui_{s,2}}\shq\cdots\shq\E_{\ui_{1,2}},\E_{\uj_r}\shq\cdots\shq\E_{\uj_2})
    (\E_{\ui_{s,1}}\shq\cdots\shq\E_{\ui_{1,1}},\E_{\uj_1})
\end{align}
where this sum is as in Lemma \ref{Conj:PBW coproduct}; in particular,
$\ui_{t,1}\leq\ui_t$, $\ui_{t,1}\in\Wr^+$, for all $1\leq t\leq s$ (note that $\ui_{t,1}$ may be $\emptyset$).

\smallskip

\noindent\textbf{Claim $(\star\star)$.}
We have $(\E_{\ui_{s,1}}\shq\cdots\shq\E_{\ui_{1,1}},\E_{\uj_1})=0$
unless there is a unique $k$ such that $\ui_{k,1}=\uj_1$ and ${\ui_{t,1}}=\emptyset$ for $t\neq k$.

\smallskip

It is not necessarily the case that $\E_{\ui_{s,1}}\shq\cdots\shq\E_{\ui_{1,1}}$ belongs to the PBW basis, so we cannot apply earlier arguments. Therefore, suppose that $k$ is maximal such that $\ui_{k,1}\neq\emptyset$. Then,
\begin{align*}
(\E_{\ui_{k,1}}\shq\cdots\shq\E_{\ui_{1,1}},\E_{\uj_1})
    &=\sum\vartheta_{\uj_{1,1},\uj_{1,2}}^{\uj_1}(\E_{\ui_{k,1}},\E_{\uj_{1,2}})(\E_{\ui_{k-1,1}}\shq\cdots\E_{\ui_{1,1}},\E_{\uj_{1,1}})
\end{align*}
where the sum is as in Lemma \ref{Conj:PBW coproduct}. Consider one such term in the sum above:
\begin{align}\label{E:Orthogonality Inductive Step}
(\E_{\ui_{k,1}},\E_{\uj_{1,2}})(\E_{\ui_{k-1,1}}\shq\cdots\E_{\ui_{1,1}},\E_{\uj_{1,1}}).
\end{align}
Assume this term is nonzero. Since $|\ui_{k,1}|\leq|\ui_k|<|\ui|$ and $|\uj_{2,1}|\leq|\uj_1|<|\uj|$ in the dominance ordering on $Q^+$, induction on $Q^+$-grading implies that $(\E_{\ui_{k,1}},\E_{\uj_{1,2}})=0$ unless $\ui_{k,1}=\uj_{1,2}$. Therefore, $\uj_{1,2}\neq\emptyset$ and
$\uj_{1,2}=\ui_{k,1}\leq\ui_k\leq\ui_1\leq\uj_1\leq\uj_{1,2}.$
Hence $\uj_{1,2}=\uj_1$ and $\uj_{1,1}=\emptyset$. Since \eqref{E:Orthogonality Inductive Step} is nonzero,
$$(\E_{\ui_{k-1,1}}\shq\cdots\E_{\ui_{1,1}},\E_{\uj_{1,1}})=(\E_{\ui_{k-1,1}}\shq\cdots\E_{\ui_{1,1}},1)\neq0,$$
so $\ui_{k-1,1}=\cdots=\ui_{1,1}=\emptyset$. Claim $(\star\star)$ follows.

Now, assume that $(\E_{\ui_{s,1}}\shq\cdots\shq\E_{\ui_{1,1}},\E_{\uj_1})\neq0$. Then, there is a unique $k$ such that $\ui_{k,1}=\uj_1$ and ${\ui_{t,1}}=\emptyset$ for $t\neq k$. Since
$\uj_1=\ui_{k,1}\leq\ui_k\leq\ui_1\leq\uj_1$,
it follows that $\ui_{k,1}=\ui_k=\ui_1=\uj_1$.

Let $n_1\geq 1$ be maximal such that $\ui_1=\ui_2=\cdots =\ui_{n_1}$.
Then, it follows from the previous arguments and the algebra structure on $\UU\otimes\UU$ that \eqref{E:Orthogonality Standard} becomes
\begin{align*}
(\E_{\ui_s} & \shq\cdots  \shq\E_{\ui_1},\E_{\uj_r}\shq\cdots\shq\E_{\uj_1})
 \\
    = &(1+\pi^{p(\ui_1)}q^{-(|\ui_1|,|\ui_1|)}+\cdots+\pi^{(n_1-1)p(\ui_1)}q^{-(n_1-1)(|\ui_1|,|\ui_1|)}) \times
    \\
&   \qquad  (\E_{\ui_s}\shq\cdots\shq\E_{\ui_2},\E_{\uj_t}\shq\cdots\shq\E_{\uj_2})(\E_{\ui_1},\E_{\ui_1}).
    \end{align*}
We may now complete by induction the computation of $(\E_{\ui_s}\shq\cdots\shq\E_{\ui_1},\E_{\uj_r}\shq\cdots\shq\E_{\uj_1})$
and then $(\E_\ui,\E_\uj)$, which yields the formula as stated in the theorem.
\end{proof}

Now we define the \textbf{dual PBW basis} for $\UU$
\begin{align}\label{E:Dual PBW}
\E^*_\ui=\E_\ui/(\E_{\ui},\E_{\ui}), \qquad \text{ for } \ui\in\Wr^+.
\end{align}

%%%%%%%%%%%%%%%%%%%%%%
%%%%%%%%%%%%%%%%%%%%%%
\section{Computations of Dominant Lyndon Words and Root Vectors}
\label{S:Computations}

In this section we will compute the dominant Lyndon words, Lyndon and (dual) PBW
root vectors explicitly for general Dynkin diagrams of type $A$-$D$.
Throughout this section, we will set $M=m+n+1$ and continue to order $\I=\{1,\ldots,M\}$
as specified in Table ~1. %\ref{T:Dynkin}}.
We also remind the reader of the notation $s_{ij}$ from \eqref{E:signs}.

%%%%%%%%%%%%%%%%%%%%%
\subsection{Type $A(m,n)$}

A general Dynkin diagram of type $A(m,n)$ is of the form
$$ \xy
(-30,0)*{\odot};(-20,0)*{\odot}**\dir{-};(-15,0)*{\cdots};(-10,0)*{\odot};(0,0)*{\odot}**\dir{-};
(0,0)*{\odot};(10,0)*{\odot}**\dir{-};
(15,0)*{\cdots};(20,0)*{\odot};(30,0)*{\odot}**\dir{-};
(-30,-4)*{\scriptstyle 1};(-20,-4)*{\scriptstyle 2};(-10,-4)*{\scriptstyle n};(0,-4)*{\scriptstyle n+1};(10,-4)*{\scriptstyle n+2};
(20,-4)*{\scriptstyle M-1};(30,-4)*{\scriptstyle M};
(0,-8)*{};(0,8)*{};
\endxy$$

The next proposition computes the set of dominant Lyndon words inductively using Proposition \ref{P:ComputingLyndonWords}.

\begin{prp}
The set of dominant Lyndon words is
$$\Lr^+=\{(i,\ldots,j)\mid 1\leq i\leq j\leq M\}.$$
\end{prp}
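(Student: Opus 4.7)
The plan is to combine the bijection $\ui \mapsto |\ui|$ between $\Lr^+$ and $\Phi^+$ established in Theorem~\ref{thm:canonical factorization of dominant words} with the inductive computation of dominant Lyndon words in Proposition~\ref{P:ComputingLyndonWords}. The positive roots of $A(m,n)$ are precisely $\bt_{i,j} := \af_i + \af_{i+1} + \cdots + \af_j$ for $1 \leq i \leq j \leq M$, and there are $\binom{M+1}{2}$ of them, matching the cardinality of the claimed set. It therefore suffices to verify that $\ui^+(\bt_{i,j}) = (i, i+1, \ldots, j)$ for each such $\bt_{i,j}$.

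I would argue by induction on the height $j-i+1$ of $\bt_{i,j}$. The base case $i=j$ is immediate from $\ui^+(\af_i) = i$. For the inductive step, observe that any decomposition $\bt_{i,j} = \gm_1 + \gm_2$ into positive roots must, up to swapping the summands, take the form $\gm_1 = \bt_{i,k}$ and $\gm_2 = \bt_{k+1,j}$ for some $i \leq k < j$: each positive root corresponds to an interval in the simple roots, and the only way to write the interval $[i,j]$ as a disjoint union of two such intervals is by cutting at some intermediate $k$. By the inductive hypothesis,
\[
\ui^+(\bt_{i,k}) = (i, i+1, \ldots, k), \qquad \ui^+(\bt_{k+1,j}) = (k+1, k+2, \ldots, j).
\]
These begin with $i$ and $k+1>i$ respectively, so $\ui^+(\bt_{i,k}) < \ui^+(\bt_{k+1,j})$ in the lexicographic order, and hence $(\bt_{i,k}, \bt_{k+1,j}) \in C(\bt_{i,j})$ for every such $k$. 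Crucially, each concatenation $\ui^+(\bt_{i,k}) \ui^+(\bt_{k+1,j})$ equals $(i, i+1, \ldots, j)$, independent of $k$.

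Proposition~\ref{P:ComputingLyndonWords} then yields
\[
\ui^+(\bt_{i,j}) = \max\{\ui^+(\bt_{i,k}) \ui^+(\bt_{k+1,j}) \mid i \leq k < j\} = (i, i+1, \ldots, j),
\]
completing the induction. There is no substantive obstacle here: every positive root in type $A$ is an interval of consecutive simple roots, so the set of root decompositions is trivially enumerated, and the fact that all of them produce the same concatenation makes the ``$\max$'' in Proposition~\ref{P:ComputingLyndonWords} automatic. The potential complications that one might fear in other types---nontrivial competition among different concatenations, or duplication caused by odd isotropic roots---simply do not arise in this case.
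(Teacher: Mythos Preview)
Your proof is correct and is precisely the inductive computation via Proposition~\ref{P:ComputingLyndonWords} that the paper indicates (the paper omits the details, but your argument is exactly what is intended). The observation that every decomposition $\bt_{i,j}=\bt_{i,k}+\bt_{k+1,j}$ yields the same concatenation $(i,\ldots,j)$ is the key point, and you have stated it cleanly.
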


Having computed $\Lr^+$, we now compute the Lyndon basis using Proposition \ref{P:rofL}.
For $\ui=(i,\ldots,j)$ with $1\le i \le j \le M$, we set
$$\varpi_A(\ui)=\prod_{k=i}^{j-1}s_{k,k+1}.
$$

\begin{prp}\label{P:Type A Lyndon vectors}
For $\ui=(i,\ldots,j)$  with $1\le i \le j \le M$, the  Lyndon root vector is
$$\r_\ui=\pi^{P(|\ui|)}\pi^{j-i}{\varpi_A(\ui)}(q-q^{-1})^{j-i}(i,\ldots,j).
$$
\end{prp}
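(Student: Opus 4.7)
I would proceed by induction on $j-i$, the base case $\ui=(i)$ being immediate since $\r_{(i)}=i$, $P(\af_i)=0$, and the empty product $\varpi_A((i))$ equals $1$. For the inductive step, I would first identify the co-standard factorization of $\ui=(i,i+1,\ldots,j)$ as $\ui_1\ui_2=(i,\ldots,j-1)\cdot(j)$, since $(i,\ldots,j-1)$ is the longest Lyndon proper left factor. By Proposition~\ref{P:rofL}, this gives
\[
\r_\ui=\r_{(i,\ldots,j-1)}\shqqi j,
\]
so it suffices to compute the right-hand shuffle on the inductive hypothesis expression for $\r_{(i,\ldots,j-1)}$.

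The key computation is the explicit shuffle $(i,\ldots,j-1)\shqqi j$. Iterating the defining formula \eqref{E:Shuffle}, one obtains
\[
(i,\ldots,j-1)\shq j=\sum_{r=i-1}^{j-1}\pi^{p(\mu^{(r)})p(j)}q^{-(\mu^{(r)},\af_j)}w_r,
\]
where $\mu^{(r)}=\af_i+\cdots+\af_r$ (with $\mu^{(i-1)}=0$) and $w_r$ is the word obtained by inserting $j$ between positions $r$ and $r+1$. Subtracting the analogous sum with $q\mapsto q^{-1}$ yields $(i,\ldots,j-1)\shqqi j$. Here the decisive feature of type $A$ enters: since $j$ is only connected to $j-1$ in the Dynkin diagram, $(\mu^{(r)},\af_j)=0$ for $r<j-1$, so \emph{every summand collapses except the one with $r=j-1$}. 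Using the identity $q^{-\ep}-q^{\ep}=\pi\, s_{j-1,j}(q-q^{-1})$ (valid whether $(\af_{j-1},\af_j)=\pm 1$), one obtains
\[
(i,\ldots,j-1)\shqqi j=\pi^{p(\mu^{(j-1)})p(j)+1}\,s_{j-1,j}\,(q-q^{-1})\,\ui.
\]

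Combining this with the inductive formula for $\r_{(i,\ldots,j-1)}$ gathers all the desired factors: the lengths stack to $(q-q^{-1})^{j-i}$ and $\pi^{j-i}$, and the sign $\varpi_A$ picks up the missing factor $s_{j-1,j}$. The only remaining bookkeeping is the parity exponent, which reduces to the identity
\[
P(\mu+\af_j)=P(\mu)+p(\mu)p(\af_j)\pmod{2}.
\]
This follows from a direct computation using $P(\nu)=\tfrac12(p(\nu)^2-\sum c_i p(\af_i))$ and the fact that $p(\af_j)^2=p(\af_j)$.

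The main obstacle in the argument is bookkeeping: one needs to track carefully how the parity exponents combine, since the definition of $P$ is quadratic in the weight. Once the identity $P(\mu+\af_j)-P(\mu)=p(\mu)p(\af_j)$ is established, everything assembles cleanly; without type-$A$ sparsity of the bilinear form, the shuffle would involve nontrivial contributions from every insertion position, so this sparsity is really the structural ingredient that makes the closed formula possible.
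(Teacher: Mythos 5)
Your proposal is correct and follows essentially the same route as the paper: induction on $j-i$, co-standard factorization $\ui=(i,\ldots,j-1)\cdot(j)$, Proposition~\ref{P:rofL} to reduce to $(i,\ldots,j-1)\shqqi j$, and the type-$A$ sparsity $(\mu^{(r)},\af_j)=0$ for $r<j-1$ to collapse the shuffle to a single surviving term, followed by the parity identity $P(\mu+\af_j)=P(\mu)+p(\mu)p(\af_j)$. The only cosmetic difference is that you expand the shuffle as an explicit sum over insertion positions, whereas the paper unrolls the recursion \eqref{E:Shuffle} one step and implicitly notes $(i,\ldots,j-2)\shqqi j=0$; these are the same computation.
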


\begin{proof}
We proceed by induction on $j-i$, the case $j-i=0$ being trivial.
Note that if $\ui=(i,\ldots,j)$, and $\ui=\ui_1\ui_2$ is the co-standard factorization of $\ui$,
then $\ui_1=(i,\ldots,j-1)$ and $\ui_2=j$. By induction, we compute
\begin{align*}
\r_\ui=&\r_{\ui_1}\shqqi\r_{\ui_2}\\
    =&\pi^{P(|\ui_1|)}\pi^{j-i+1}\varpi_A(\ui_1)(q-q^{-1})^{j-i-1}(i,\ldots,j-1)\shqqi j\\
    =&\pi^{P(|\ui_1|)}\pi^{j-i+1}\varpi_A(\ui_1)(q-q^{-1})^{j-i-1}((i,\ldots,j-2)\shqqi j)(j-1)\\
    &+\pi^{P(|\ui_1|)}\pi^{j-i+1}\varpi_A(\ui_1)(q-q^{-1})^{j-i-1}\pi^{p(i,\ldots,j-1)p(j)}(q^{-(\af_{j-1},\af_j)}-q^{(\af_{j-1},\af_j)})(i,\ldots,j)\\
    =&\pi^{P(|\ui_1|)}\pi^{j-i+1}\varpi_A(\ui_1)(q-q^{-1})^{j-i-1}\pi^{p(i,\ldots,j-1)p(j)}(q^{-(\af_{j-1},\af_j)}-q^{(\af_{j-1},\af_j)})(i,\ldots,j).
\end{align*}
The proof now follows by the observations $q^{-(\af_{j-1},\af_j)}-q^{(\af_{j-1},\af_j)}=-s_{j-1,j}(q-q^{-1})$ and $P(|\ui_1|)+p(i,\ldots,j-1)p(j)=P(|\ui|)$.
\end{proof}

\begin{cor}
Let $\ui=(i,\ldots,j)$ with $1\le i \le j \le M$. Then,
\begin{enumerate}
\item
the  PBW root vector is
$\E_\ui=\varpi_A(\ui)(q-q^{-1})^{j-i}q^{-N(|\ui|)}(i,\ldots,j)$;

\item   $(\E_\ui,\E_\ui)=\varpi_A(\ui)(q-q^{-1})^{j-i}q^{-N(|\ui|)}$;

\item  $\E_\ui^*=(i,\ldots,j)$.
\end{enumerate}
\end{cor}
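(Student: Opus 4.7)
The three parts will be proved in sequence, with (3) an immediate consequence of (1) and (2). For Part~(1), the costandard factorization $\ui = \ui_1 \cdot (j)$ with $\ui_1 = (i,\ldots,j-1)$ has $\bt_1 - p\bt_2 = (\af_i + \cdots + \af_{j-1}) - p\af_j \notin \Phi^+$ for every $p \ge 1$ in type $A$, so $p_\ui = 0$, and the recursion in \eqref{E:PBW Definition 1} forces $\kp_\ui = \kp_{\ui_1}\kp_{(j)} = 1$ inductively. Hence $\E_\ui = \sm(\r_\ui)$. A direct calculation from Proposition~\ref{P:automorphisms}(3) gives that for any $x = \sum_{\uj} c_\uj(q)\,\uj \in \F_\nu$ one has $\sm(x) = \pi^{P(\nu)}q^{-N(\nu)}\sum_{\uj}\overline{c_\uj(q)}\,\uj$. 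Applying this to Proposition~\ref{P:Type A Lyndon vectors}, together with $\overline{(q-q^{-1})^{j-i}} = \pi^{j-i}(q-q^{-1})^{j-i}$, the sign factors collapse (using $\pi^{2P(|\ui|)+2(j-i)} = 1$), leaving the formula in~(1).

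Part~(2) is proved by induction on $d := j - i$. Write $c(\ui) := \varpi_A(\ui)(q-q^{-1})^{j-i}q^{-N(|\ui|)}$, so Part~(1) reads $\E_\ui = c(\ui)\cdot(i,\ldots,j)$ and the goal becomes $M(\ui) := \bigl((i,\ldots,j),(i,\ldots,j)\bigr) = c(\ui)^{-1}$. The base $d=0$ is immediate. For the inductive step, expand
\[
\r_\ui = \r_{\ui_1}\shqqi(j) = \r_{\ui_1}\shq(j) - \pi^{p(|\ui_1|)p(j)}q^{(|\ui_1|,\af_j)}\,(j)\shq\r_{\ui_1}
\]
via Proposition~\ref{P:rofL} and \eqref{P:qqi shuffle}, pair with $\r_\ui$, and apply the Hopf property $(x,y\shq z)=(\Dt(x),y\otimes z)$ of Proposition~\ref{P:bilinearformonF}. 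Writing $\r_\ui = A\,(i,\ldots,j)$ and $\r_{\ui_1} = A'\,(i,\ldots,j-1)$, the coproduct
\[
\Dt(\r_\ui) = A\sum_{k=0}^{j-i+1}(i+k,\ldots,j)\otimes(i,\ldots,i+k-1)
\]
pairs trivially with $\r_{\ui_1}\otimes(j)$ by weight considerations (no summand $(i,\ldots,i+k-1)$ has weight $\af_j$), while the pairing against $(j)\otimes\r_{\ui_1}$ receives the unique nonzero contribution $AA'M(\ui_1)$ at $k = d$. Therefore
\[
M(\ui) = -\pi^{p(|\ui_1|)p(j)}q^{(|\ui_1|,\af_j)}(A'/A)\,M(\ui_1).
\]
Substituting the inductive hypothesis $M(\ui_1) = c(\ui_1)^{-1}$ and the explicit ratio $A'/A = \pi^{p(|\ui_1|)p(j)+1}s_{j-1,j}(q-q^{-1})^{-1}$, together with the key identity
\[
N(|\ui|) - N(|\ui_1|) = (|\ui_1|,\af_j) = (\af_{j-1},\af_j)
\]
(valid in type $A$ since $(\af_r,\af_j) = 0$ for $r < j-1$), all signs collapse via $\pi^2 = 1$ and $-\pi = 1$ to yield $M(\ui) = c(\ui)^{-1}$, as required.

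Part~(3) is immediate: $\E_\ui^* = \E_\ui/(\E_\ui,\E_\ui) = c(\ui)\cdot(i,\ldots,j)/c(\ui) = (i,\ldots,j)$. The main obstacle is the careful sign bookkeeping in the inductive step of Part~(2), specifically assembling the explicit ratio $A'/A$ and verifying the cancellations built around $N(|\ui|) - N(|\ui_1|) = (\af_{j-1},\af_j)$; the structural simplification that makes everything tractable is that $\r_\ui$ is a pure scalar multiple of a single word in type $A$, so $\Dt(\r_\ui)$ is an easily enumerated sum in which only one term survives the weight test.
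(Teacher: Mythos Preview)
Your proof is correct and follows essentially the same strategy as the paper's own proof: for Part~(1) both you and the paper reduce to $\kp_\ui=1$ and apply $\sm$ to the explicit formula for $\r_\ui$; for Part~(2) both arguments induct on $j-i$, express the root vector as a twisted commutator, apply the Hopf pairing $(x,y\shq z)=(\Dt(x),y\otimes z)$, and observe that exactly one term of $\Dt(i,\ldots,j)$ survives the weight test. The only cosmetic difference is that the paper works directly with $\E_\ui=\E_j\shq\E_{\ui_1}-\pi^{p(j)p(\ui_1)}q^{-(\af_{j-1},\af_j)}\E_{\ui_1}\shq\E_j$ and pairs $(\ui,\E_\ui)$, whereas you carry out the parallel computation with $\r_\ui$ and the auxiliary quantity $M(\ui)=((i,\ldots,j),(i,\ldots,j))$; the sign bookkeeping and the key identity $N(|\ui|)-N(|\ui_1|)=(\af_{j-1},\af_j)$ are the same in both.
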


\begin{proof}
The formula (1) for $\E_\ui$ is clear from the definitions, and  Part (3) follows immediately from (1) and (2).
So it remains to prove (2).
To this end, let $\ui=\ui_1\ui_2$ be the co-standard factorization of $\ui$, $\ui_1=(i,\ldots,j-1)$, $\ui_2=j$. Note that
$$\E_\ui=\E_j\shq\E_{\ui_1}-\pi^{p(j)p(\ui_1)}q^{-(\af_{j-1},\af_j)}\E_{\ui_1}\shq\E_j.$$
Therefore, using Proposition \ref{P:bilinearformonF}, we have
\begin{align*}
(\E_\ui,\E_\ui)=&\varpi_A(\ui)(q-q^{-1})^{j-i}q^{-N(|\ui|)}(\ui,\E_\ui)\\
    =&\varpi_A(\ui)(q-q^{-1})^{j-i}q^{-N(|\ui|)}(j\otimes(i,\ldots,j-1),\E_j\otimes\E_{\ui_1})\\
    =&s_{j-1,j}(q-q^{-1})q^{\frac12(2(|\ui_1|,\af_j)-(\af_j,\af_j))}(\E_j,\E_j)(\E_{\ui_1},\E_{\ui_1}).
\end{align*}
Therefore, (2) follows by induction.
\end{proof}

%%%%%%%%%%%%%%%%%%%
\subsection{Type $B(m,n+1)$}

A general Dynkin diagram of type $B(m,n+1)$ is of the form

$$ \xy
(-30,0)*{\odot};(-20,0)*{\odot}**\dir{-};(-15,0)*{\cdots};(-10,0)*{\odot};(0,0)*{\odot}**\dir{-};
(0,0)*{\odot};(10,0)*{\odot}**\dir{-};
(15,0)*{\cdots};(20,0)*{\odot};(30,0)*{\yy}**\dir{=};(25,0)*{>};
(-30,-4)*{\scriptstyle 1};(-20,-4)*{\scriptstyle 2};(-10,-4)*{\scriptstyle n};(0,-4)*{\scriptstyle n+1};(10,-4)*{\scriptstyle n+2};
(20,-4)*{\scriptstyle M-1};(31,-4)*{\scriptstyle M};
(0,-8)*{};(0,8)*{};
\endxy$$
In order to facilitate computations below, we note the following properties of  the signs $s_{ij}$ ($i,j\in\I$) given in \eqref{E:signs}.

\begin{lem}   \label{P:Type B signs}
\begin{enumerate}
\item if $a_{ii}=0$, then $s_{i-1,i}=\pi s_{i,i+1}$;
\item if $a_{ii}\neq 0$, then $s_{i-1,i}=s_{i,i+1}=\pi s_{ii}$;
\item for any $k,l\in\I$ with $k\neq l$, we have
%\begin{align}\label{E:Type B root pairing}
$(\af_k,\af_l)\in\{2s_{kl},0\}.$
%\end{align}
\end{enumerate}
\end{lem}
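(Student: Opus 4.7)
The plan is to verify all three claims via the standard coordinate realization of the root system of type $B(m, n+1)$. I would work in a vector space with orthogonal basis $\{\varepsilon_1, \ldots, \varepsilon_m, \delta_1, \ldots, \delta_{n+1}\}$, with $\varepsilon_a$ even and $\delta_a$ odd, normalized so that $(\varepsilon_a, \varepsilon_a)$ and $(\delta_a, \delta_a)$ are nonzero of opposite signs (and all cross-terms vanish). A general simple system of type $B(m, n+1)$ can then be written as $\alpha_i = \mu_i - \mu_{i+1}$ for $1 \leq i \leq M-1$ and $\alpha_M = \mu_M$, where $\mu_1, \ldots, \mu_M$ is some ordering of the basis vectors; the parity of $\mu_j$ for $j < M$ determines whether node $j$ is of type $\fullmoon$ or $\otimes$, and the parity of $\mu_M$ determines whether node $M$ is of type $\fullmoon$ or $\newmoon$.

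For (3), observe that if $|k - l| \geq 2$ then $\alpha_k$ and $\alpha_l$ involve disjoint pairs of $\mu$'s and are hence orthogonal. For $l = k+1$ (including the endpoint $k+1 = M$), the only surviving term in $(\alpha_k, \alpha_{k+1})$ is $-(\mu_{k+1}, \mu_{k+1})$, which lies in $\{\pm 2\}$ under the chosen normalization. In either situation one has $(\alpha_k, \alpha_l) \in \{2s_{kl}, 0\}$.

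For (1) and (2), I would fix $2 \leq i \leq M-1$ so that both neighbors exist. Direct calculation, using that $\mu_i, \mu_{i+1}, \mu_{i-1}$ are pairwise distinct basis vectors, yields
\[
(\alpha_{i-1}, \alpha_i) = -(\mu_i, \mu_i), \quad (\alpha_i, \alpha_{i+1}) = -(\mu_{i+1}, \mu_{i+1}), \quad (\alpha_i, \alpha_i) = (\mu_i, \mu_i) + (\mu_{i+1}, \mu_{i+1}).
\]
If $a_{ii} = 0$, then $(\alpha_i, \alpha_i) = 0$ forces $(\mu_i, \mu_i)$ and $(\mu_{i+1}, \mu_{i+1})$ to have opposite signs, so $s_{i-1,i}$ and $s_{i,i+1}$ differ by $\pi$, proving (1). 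If $a_{ii} \neq 0$, then $(\alpha_i, \alpha_i) \neq 0$ forces the two squared lengths to share a common sign; thus $s_{i-1,i} = s_{i,i+1}$, each equal to the sign of $-(\mu_i, \mu_i)$, which is opposite to $s_{ii}$. This proves (2).

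The only potential subtlety is the boundary case $i+1 = M$, where $\alpha_{i+1} = \mu_M$ is a single coordinate vector rather than a difference. However, a direct computation gives $(\alpha_i, \alpha_{i+1}) = (\mu_i - \mu_M, \mu_M) = -(\mu_M, \mu_M)$ just as in the interior case, so the argument above applies uniformly.
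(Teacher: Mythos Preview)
Your proof is correct and takes essentially the same approach as the paper: the paper simply says the lemma follows immediately from the standard $\varepsilon\delta$-notation for the root system and simple systems of type $B$, noting that the factor $2$ in part (3) comes from the normalization of $(\cdot,\cdot)$ adopted in \S\ref{SS:Root Data}. Your write-up is a fleshed-out version of exactly this check, including the explicit computations $(\alpha_{i-1},\alpha_i)=-(\mu_i,\mu_i)$, $(\alpha_i,\alpha_{i+1})=-(\mu_{i+1},\mu_{i+1})$, and $(\alpha_i,\alpha_i)=(\mu_i,\mu_i)+(\mu_{i+1},\mu_{i+1})$, and the boundary case $i+1=M$.
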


\begin{proof}
This follows immediately using the standard $\ep\delta$-notation for the root system and the simple systems of type $B$; cf. \cite{kac, CW12}.
The factor $2$ in Part (3) is due to the normalization of $(\cdot,\cdot)$ adopted in \S\ref{SS:Root Data}.
\end{proof}

\begin{prp} The set of dominant Lyndon words is
$$\Lr^+=\{(i,\ldots,j)\mid 1\leq i\leq j\leq M\}\cup\{(i,\ldots,M,M,\ldots,j+1)\mid 1\leq i\leq j< M\}.$$
\end{prp}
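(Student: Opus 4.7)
The plan is to follow the type $A$ template, combining Proposition~\ref{P:ComputingLyndonWords} with the bijection $\Lr^+\leftrightarrow\Phi^+$ furnished by Theorem~\ref{thm:canonical factorization of dominant words}. First I would verify directly that every word in the proposed set is Lyndon: for $(i,\ldots,j)$, each proper right factor is of the form $(k,\ldots,j)$ with $i<k\le j$ and is thus lexicographically larger than the word itself; for $(i,\ldots,M,M,\ldots,j+1)$, the letter $i$ occurs uniquely at the first position (since $j+1>i$), so every proper right factor begins with a letter strictly greater than $i$.

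Next, using the standard $\varepsilon$-$\delta$ realization of the root system of $\mathfrak{osp}(2m+1,2n+2)$ with respect to the simple system prescribed by the Dynkin diagram, I would observe that each $\bt\in\Phi^+$ has simple-root coordinates of exactly one of the two shapes $\af_i+\cdots+\af_j$ (with $1\le i\le j\le M$) or $\af_i+\cdots+\af_M+\af_M+\cdots+\af_{j+1}$ (with $1\le i\le j<M$). A quick count gives $|\Phi^+|=M^2$, matching the cardinality of the proposed set, so by the bijection $\Lr^+\to\Phi^+$ it suffices to exhibit a dominant Lyndon word of each weight.

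I would then establish dominance by induction on height via Proposition~\ref{P:ComputingLyndonWords}. For a linear root $\bt=\af_i+\cdots+\af_j$, every decomposition $\bt=\bt_1+\bt_2$ into positive roots is a split of $[i,j]$ into $[i,k]$ and $[k+1,j]$; by induction the concatenation $\ui^+(\bt_1)\ui^+(\bt_2)$ is uniformly $(i,\ldots,j)$. For a folded root $\bt=\af_i+\cdots+\af_M+\af_M+\cdots+\af_{j+1}$, the key structural fact is that $\bt$ has coefficient $2$ on $\af_M$, while a positive root of $B(m,n+1)$ has coefficient at most $2$ on $\af_M$, with equality forcing the folded shape. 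Hence in any admissible decomposition $\bt=\bt_1+\bt_2$ at most one factor is folded. Enumerating, either $\bt_1=\af_i+\cdots+\af_k$ is linear with $i\le k\le j$ and $\bt_2$ is folded, or $\bt_1$ is folded and $\bt_2=\af_{k+1}+\cdots+\af_j$ is linear with $j\le k<M$; in every case the inductive hypothesis shows the concatenation equals $(i,\ldots,M,M,\ldots,j+1)$, which is therefore the required maximum $\ui^+(\bt)$.

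The main obstacle will be the careful enumeration of positive-root decompositions in the folded case, ensuring that the constraint on the $\af_M$-coefficient excludes all exotic pairs (for instance, two folded summands, or a linear pair whose supports fail to fit the required coefficient pattern) and that each admissible concatenation uniformly yields the target word. Once this enumeration is settled, the bijection $\Lr^+\leftrightarrow\Phi^+$ together with the induction closes the argument.
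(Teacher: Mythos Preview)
Your approach is the same as the paper's---inductive computation via Proposition~\ref{P:ComputingLyndonWords}---and the overall strategy is sound. However, the enumeration in the folded case has concrete errors that need correction.

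First, the decomposition list is incomplete. For a folded root $\bt=\af_i+\cdots+\af_j+2(\af_{j+1}+\cdots+\af_M)$ with $j<M-1$, the pair $\bt_1=\af_i+\cdots+\af_M$, $\bt_2=\af_{j+1}+\cdots+\af_M$ consists of two \emph{linear} roots, and since $i<j+1$ we have $\ui^+(\bt_1)=(i,\ldots,M)<(j+1,\ldots,M)=\ui^+(\bt_2)$, so $(\bt_1,\bt_2)\in C(\bt)$. Your enumeration (``either $\bt_1$ is linear with $k\le j$ and $\bt_2$ folded, or $\bt_1$ folded and $\bt_2$ linear'') misses this case entirely. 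The second branch also has a typo: ``$\bt_2=\af_{k+1}+\cdots+\af_j$ with $j\le k<M$'' is self-contradictory.

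Second, and relatedly, the claim that ``in every case the concatenation equals $(i,\ldots,M,M,\ldots,j+1)$'' is false. For the linear--linear decomposition above, the concatenation is $(i,\ldots,M,j+1,\ldots,M)$, which for $j<M-1$ is strictly smaller than $(i,\ldots,M,M,\ldots,j+1)$ (compare position $M-i+2$). Proposition~\ref{P:ComputingLyndonWords} only asserts that the \emph{maximum} over $C(\bt)$ is $\ui^+(\bt)$; you must show the target is attained (e.g.\ by the pair $\bt_1=\af_i+\cdots+\af_j+2(\af_{j+2}+\cdots+\af_M)+\af_{j+1}$ folded, $\bt_2=\af_{j+1}$) and that every other concatenation is $\le$ it, not that all concatenations coincide. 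Once the enumeration is corrected to include the linear--linear pairs and the comparison argument is made explicit, the induction closes as you outline.
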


We set
$$
\varpi_B(\ui)=\begin{cases}\varpi_A(\ui)&\mbox{if }\ui=(i,\ldots,j), \text{ for } 1\le i \le j \le M,\\
\varpi_A(i,\ldots j)\pi^{p(M)}&\mbox{if }\ui=(i,\ldots,M,M,\ldots,j+1), \text{ for } 1\le i \le j<M.\end{cases}
$$
%where $\om_B(\ui)=\sum_{k=j}^{M-2}p(i,\ldots,k+1)p(k+1)$.

\begin{prp}\begin{enumerate}
\item For $\ui=(i,\ldots,j)$ ($1\leq i\leq j\leq M$), the  Lyndon root vector is
$$\r_{\ui}=\pi^{P(|\ui|)}\pi^{j-i}\varpi_B(\ui)(q^2-q^{-2})^{j-i}(i,\ldots,j).
$$

\item For $\ui=(i,\ldots,M,M,\ldots,j+1)$ with $1\leq i\leq j < M$,  the Lyndon root vector is
$$\r_\ui=\pi^{P(|\ui|)}\pi^{i+j}\varpi_B(\ui)(q^2-q^{-2})^{2M-i-j}(i,\ldots,M,M,\ldots,j+1).$$
\end{enumerate}
\end{prp}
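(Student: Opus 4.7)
The plan is to prove both parts simultaneously by induction on $\ell(\ui)$, with the base case $\ell(\ui) = 1$ being immediate. For the inductive step I will apply Proposition~\ref{P:rofL}: if $\ui = \ui_1 \ui_2$ is the co-standard factorization, then $\r_\ui = \r_{\ui_1} \shqqi \r_{\ui_2}$. In each case that arises below, $\r_{\ui_1}$ is inductively a scalar multiple of the single word $\ui_1 \in \W$, so the computation reduces to expanding $\ui_1 \shqqi \ui_2$ via \eqref{E:Shuffle2} and identifying which insertion positions contribute.

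For Part (1) the co-standard factorization is $\ui_1 = (i, \ldots, j-1)$, $\ui_2 = (j)$, and the argument is that of Proposition~\ref{P:Type A Lyndon vectors} with $(q - q^{-1})$ replaced by $(q^2 - q^{-2})$. All intermediate partial sums vanish by non-adjacency in the Dynkin chain (Lemma~\ref{P:Type B signs}(3)), leaving only the terminal insertion, which contributes $q^{-(\af_{j-1}, \af_j)} - q^{(\af_{j-1}, \af_j)} = -s_{j-1, j}(q^2 - q^{-2})$.

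For Part (2) the co-standard factorization is $\ui_1 = (i, \ldots, M, M, \ldots, j+2)$, $\ui_2 = (j+1)$ when $j + 1 < M$, and $\ui_1 = (i, \ldots, M)$, $\ui_2 = (M)$ when $j + 1 = M$; both lie in the range of the induction. Expanding $\ui_1 \shqqi (j+1)$ by \eqref{E:Shuffle2}, each insertion at position $k$ contributes a coefficient proportional to $q^{-S_k} - q^{S_k}$, where $S_k = (\af_{l_1} + \cdots + \af_{l_k}, \af_{j+1})$. The task is to verify that only the terminal insertion survives, and this rests on three cancellation mechanisms: (a) $(\af_l, \af_{j+1}) = 0$ for $l \notin \{j, j+1, j+2\}$ by Lemma~\ref{P:Type B signs}(3), so $S_k$ changes only at a handful of positions; (b) the two insertions straddling the unique interior copy of $j+1$ in $\ui_1$ produce the same underlying word, and their coefficients cancel via the identity $(\af_{j+1}, \af_{j+1}) = -2(\af_j, \af_{j+1})$ when $j+1$ is even (giving $S_{k+1} = -S_k$) or via $S_{k+1} = S_k$ combined with $\pi^{p(j+1)} = \pi$ when $j+1$ is odd isotropic; (c) the accumulated partial sum $2 s_{j, j+1} + (\af_{j+1}, \af_{j+1}) + 2 s_{j+1, j+2}$ vanishes in both the even and isotropic odd cases by Lemma~\ref{P:Type B signs}(1)--(2), forcing all subsequent intermediate $S_k$ to be zero. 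The sub-case $j + 1 = M$ is handled similarly: two adjacent insertions both produce $\ui$ and combine (rather than cancel) to the required scalar.

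The hard part will be the super parity bookkeeping in Part (2). The cancellation in (b) operates through genuinely different mechanisms depending on the parity of $j+1$, and the surviving scalar must be reconciled with the target $\pi^{P(|\ui|)} \pi^{i+j} \varpi_B(\ui)$ using the additive decomposition $P(|\ui|) = P(|\ui_1|) + p(\ui_1) p(j+1)$ together with the definition of $\varpi_B$. Additionally, the boundary sub-case $j + 2 = M$ requires separate attention since then $s_{j+1, j+2} = s_{M-1, M}$ is governed by the terminal $\yy$-node rather than an interior $\odot$-node, and the identity in (c) must be reverified using the alternative arm of Lemma~\ref{P:Type B signs}.
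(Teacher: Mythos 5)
Your proposal is correct and follows essentially the same route as the paper: induction via the co-standard factorization and Proposition~\ref{P:rofL}, reducing each step to identifying which insertion positions in $\ui_1\shqqi\ui_2$ survive, using the locality of the bilinear form (Lemma~\ref{P:Type B signs}(3)), the vanishing of the partial sum $2s_{j,j+1}+(\af_{j+1},\af_{j+1})+2s_{j+1,j+2}$, and the pairwise cancellation of the two insertions straddling the interior letter $j+1$. The only cosmetic difference is that you expand once via \eqref{E:Shuffle2} while the paper peels recursively via \eqref{E:Shuffle}; one small imprecision is the description of the $j+1=M$ sub-case, where the terminal insertion coefficient actually vanishes outright (since $(\af_{M-1},\af_M)+(\af_M,\af_M)=0$), so a single position survives rather than two combining.
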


\begin{proof}
The proof of Part (1) is same as for type $A$ in Proposition \ref{P:Type A Lyndon vectors}.

We prove (2) by downward induction on $j$. For $j=M-1$, $\ui=(i,\ldots,M,M)$
and the co-standard factorization is $\ui=\ui_1\ui_2$ where $\ui_1=(i,\ldots,M)$ and $\ui_2=M$. Therefore,
\begin{align*}
\r_\ui=&\pi^{P(|\ui_1|)}\pi^{M-i}\varpi_A(\ui_1)(q^2-q^{-2})^{M-i}(i,\ldots,M)\shqqi M\\
    =&\pi^{P(|\ui_1|)}\pi^{M-i}\varpi_A(\ui_1)(q^2-q^{-2})^{M-i}((i,\ldots,M-1)\shqqi M)M\\
    &+\pi^{P(|\ui_1|)}\pi^{M-i}\varpi_A(\ui_1)(q^2-q^{-2})^{M-i}\pi^{p(i,\ldots,M)p(M)}((i,\ldots M,M)-(i,\ldots,M,M))\\
    =&\pi^{P(|\ui_1|)}\pi^{M-i}\varpi_A(\ui_1)(q^2-q^{-2})^{M-i}((i,\ldots,M-1)\shqqi M)M\\
    =&\pi^{P(|\ui_1|)}\pi^{M-i}\varpi_A(\ui_1)(q^2-q^{-2})^{M-i}\pi^{p(i,\ldots,M-1)p(M)} \\
    &\qquad\qquad\qquad\qquad\qquad \times (q^{-(\af_{M-1},\af_M)}-q^{(\af_{M-1},\af_M)})(i,\ldots,M,M).
\end{align*}
This case now follows since
$q^{-(\af_{M-1},\af_M)}-q^{(\af_{M-1},\af_M)}=\pi s_{M-1,M}(q^2-q^{-2})$ by Lemma~\ref{P:Type B signs}(3)
and $\pi^{P(|\ui_1|)+p(i,\ldots,M-1)p(M)}=\pi^{P(|\ui|)+p(M)}$.

We now proceed to the general case $\ui=(i,\ldots,M,M,\ldots,j+1)$. Let $\ui=\ui_1\ui_2$ be the co-standard factorization,
with $\ui_1=(i,\ldots,M,M,\ldots,j+2)$ and $\ui_2=j+1$. Then,
\begin{align}\label{E:Type B Lyndon vector}
\nonumber\r_\ui=&\pi^{P(|\ui_1|)}\pi^{i+j-1}\varpi_B(\ui_1)(q^2-q^{-2})^{2M-i-j-1}(i,\ldots,M,M,\ldots,j+2)\shqqi(j+1)\\
\nonumber=&\pi^{P(|\ui_1|)}\pi^{i+j-1}\varpi_B(\ui_1)(q^2-q^{-2})^{2M-i-j-1}(i,\ldots,M,M,\ldots,j+3)\shqqi(j+1))(j+2)\\
\nonumber&+\pi^{P(|\ui_1|)}\pi^{i+j-1}\varpi_B(\ui_1)(q^2-q^{-2})^{2M-i-j-1}\\
    &\times\pi^{p(j+1)p(\ui_1)}(q^{-(|\ui_1|,\af_{j+1})}-q^{(|\ui_1|,\af_{j+1})})(i,\ldots,M,M,\ldots,j+1)
\end{align}
Using Lemma~ \ref{P:Type B signs}, we have that $$-(|\ui_1|,\af_{j+1}))=-(\af_j+\af_{j+1}+2\af_{j+2},\af_{j+1})=-2s_{j+1,j+2}.$$
Also, $P(|\ui_1|)+p(j+1)p(\ui_1)=P(|\ui|)$. Therefore, last term in \eqref{E:Type B Lyndon vector} above is
$$\pi^{P(|\ui|)}\pi^{2M-i-j}\varpi_B(\ui)(q^2-q^{-2})^{2M-i-j-1}(q^2-q^{-2})(i,\ldots,M,M,\ldots,j+1).$$
Hence, the proposition will follow if we can show that
\begin{align}\label{E:Type B Lyndon vector 2}
((i,\ldots,M,M,\ldots,j+3)&\shqqi(j+1))(j+2)=0.
\end{align}
Indeed, since $(\af_k,\af_{j+1})=0$ for $j+2<k\leq M$, we have
\begin{align*}
((i,\ldots,& M,M,\ldots,j+3)\shqqi(j+1))(j+2)\\
    =&((i,\ldots,j+2)\shqqi (j+1))(j+3,\ldots,M,M,\ldots,j+2)\\
    =&((i,\ldots,j+1)\shqqi (j+1))(j+2,\ldots,M,M,\ldots,j+2)\\
        &+\pi^{p(j+1)(p(i)+\cdots+p(j+2))}(q^{-(\af_i+\cdots+\af_{j+2},\af_{j+1})}-q^{(\af_i+\cdots+\af_{j+1},\af_{j+1})})\\
        &\times(i,\ldots,j+2,j+1,j+3\ldots,M,M,\ldots,j+2)
\end{align*}
But, using Lemma~ \ref{P:Type B signs} again, we have $(\af_i+\cdots+\af_{j+2},\af_{j+1})=0$, so
\begin{align*}
((i,\ldots,& M,M,\ldots,j+3)\shqqi(j+1))(j+2)\\
    =&((i,\ldots,j+1)\shqqi (j+1))(j+2,\ldots,M,M,\ldots,j+2)\\
    =&((i,\ldots,j)\shqqi(j+1))(j+1,\ldots,M,M,\ldots,j+2))\\
    &+\pi^{p(i,\ldots,j+1)p(j+1)}(q^{-(\af_j+\af_{j+1},\af_{j+1})}-q^{(\af_j+\af_{j+1},\af_{j+1})})\\
    &\times(i,\ldots,j+1,j+1,\ldots,M,M,\ldots,j+2)\\
    =&((i,\ldots,j-1)\shqqi(j+1))(j,\ldots,M,M,\ldots,j+2))\\
    &+\pi^{p(i,\ldots,j)p(j+1)}(q^{-(\af_j,\af_{j+1})}-q^{(\af_j,\af_{j+1})})\\
    &\times(i,\ldots,j+1,j+1,\ldots,M,M,\ldots,j+2)\\
    &+\pi^{p(i,\ldots,j+1)p(j+1)}(q^{-(\af_j+\af_{j+1},\af_{j+1})}-q^{(\af_j+\af_{j+1},\af_{j+1})})\\
    &\times(i,\ldots,j+1,j+1,\ldots,M,M,\ldots,j+2)\\
\end{align*}
Obviously, $(i,\ldots,j-1)\shqqi(j+1)=0$ since $(\af_i+\ldots+\af_{j-r},\af_{j+1})=0$ for $r\geq 1$.
To treat the last two summands above, note that either $p(j+1)=0$, or $a_{j+1,j+1}=0$. If $p(j+1)=0$, then
\begin{align*}
\pi^{p(i,\ldots,j+1)p(j+1)} &=\pi^{p(i,\ldots,j)p(j+1)},
 \\
(q^{-(\af_j,\af_{j+1})}-q^{(\af_j,\af_{j+1})})
 &=\pi(q^{-(\af_j+\af_{j+1},\af_{j+1})}-q^{(\af_j+\af_{j+1},\af_{j+1})}),
 \end{align*}
and hence \eqref{E:Type B Lyndon vector 2} holds. If $a_{j+1,j+1}=0$, then
\begin{align*}
\pi^{p(i,\ldots,j+1)p(j+1)} &=\pi^{p(i,\ldots,j)p(j+1)+1},
  \\
(q^{-(\af_j,\af_{j+1})}-q^{(\af_j,\af_{j+1})})
 &=(q^{-(\af_j+\af_{j+1},\af_{j+1})}-q^{(\af_j+\af_{j+1},\af_{j+1})}),
\end{align*}
and hence \eqref{E:Type B Lyndon vector 2} still holds. The proposition is proved.
\end{proof}

\begin{cor}
The following formulas hold for $1\leq i\leq j\leq M$:
\begin{enumerate}
\item
%$\E_\ui =
% \begin{cases}
%\varpi_B(\ui)(q^2-q^{-2})^{j-i}q^{M(|\ui|)}(i,\ldots,j), & \text{if } \ui=(i,\ldots,j),
%   \\
%\varpi_B(\ui)(q^2-q^{-2})^{2M-i-j}q^{M(|\ui|)}[2]_M^{-1}(i,\ldots,M,M,\ldots,j+1), &
%  \\
%\qquad\qquad\qquad\qquad\qquad\qquad   \text{if } \ui=(i,\ldots,M,M,\ldots,j+1). &
%\end{cases}$
%
For $\ui=(i,\ldots,j)$, the PBW root vector is
$\E_\ui=\varpi_B(\ui)(q^2-q^{-2})^{j-i}q^{-N(|\ui|)}(i,\ldots,j);$
For $\ui=(i,\ldots,M,M,\ldots,j+1)$, the PBW root vector is
$$\E_\ui=\varpi_B(\ui)(q^2-q^{-2})^{2M-i-j}q^{-N(|\ui|)}[2]_M^{-1}(i,\ldots,M,M,\ldots,j+1).$$

\item
$(\E_\ui,\E_\ui) =
 \begin{cases}
\varpi_B(\ui)(q^2-q^{-2})^{j-i}q^{-N(|\ui|)}, & \text{if }\; \ui=(i,\ldots,j),
   \\
\varpi_B(\ui)(q^2-q^{-2})^{2M-i-j}q^{-N(|\ui|)}[2]_N^{-2}, & \text{if }\; \ui=(i,\ldots,N,N,\ldots,j+1).
\end{cases}$

\item
$\E_\ui^*=
 \begin{cases}
(i,\ldots,j), & \text{if }\; \ui=(i,\ldots,j),
   \\
[2]_M(i,\ldots,M,M,\ldots,j+1), & \text{if }\; \ui=(i,\ldots,M,M,\ldots,j+1).
\end{cases}$
\end{enumerate}
\end{cor}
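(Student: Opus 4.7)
The proof mirrors that of the preceding type~$A$ corollary, treating the two families of dominant Lyndon words separately in each of Parts~(1)--(3).

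For Part~(1), we use $\E_\ui=\kp_\ui^{-1}\sm(\r_\ui)$, evaluating $\sm$ via Proposition~\ref{P:automorphisms}(3) on the explicit formula for $\r_\ui$ just established. The crucial input is the normalization $\kp_\ui$. For $\ui=(i,\ldots,j)$, an induction on length shows, using the realization $\af_k=\ep_k-\ep_{k+1}$ for $k<M$ and $\af_M=\ep_M$, that $|\ui_1|-\af_j$ is never a positive root, so $p=0$ at every recursive step and $\kp_\ui=1$. For $\ui=(i,\ldots,M,M,\ldots,j+1)$, peeling off a single letter at each outer step of the recursion maintains $p=0$ (verified by computing that $|\ui_1|$ is of the form $\ep_i+\ep_k$ for a suitable $k$, and checking that $|\ui_1|$ minus the peeled simple root is not a root). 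At the innermost step, however, when the recursion reaches the subword $(i,\ldots,M,M)$ with co-standard factorization $(i,\ldots,M)\cdot(M)$, we have $\ep_i-\ep_M\in\Phi^+$ while $\ep_i-2\ep_M\notin\Phi^+$, giving $p=1$ and $[p+1]_{\bt_r}=[2]_M$; this is the unique nontrivial contribution, so $\kp_\ui=[2]_M$. The formulas in Part~(1) then follow after routine $\pi$-factor bookkeeping: $\pi^{2P(|\ui|)}=1$ and the bar-image of $(q^2-q^{-2})^{\ell}$ absorbs the explicit $\pi^{\ell}$ factors appearing in $\r_\ui$.

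For Part~(2), we induct on $\ell(\ui)$. Applying $\sm$ to the identity $\r_\ui=\r_{\ui_1}\shqqi\r_{\ui_2}$, using that $\sm$ is a $\shq$-antiautomorphism with $\sm(\r_{\ui_i})=\kp_{\ui_i}\E_{\ui_i}$, yields
\[\E_\ui=[p_\ui+1]_{\bt_r}^{-1}\bigl(\E_{\ui_2}\shq\E_{\ui_1}-\pi^{p(\ui_1)p(\ui_2)}q^{-(|\ui_1|,|\ui_2|)}\E_{\ui_1}\shq\E_{\ui_2}\bigr).\]
Writing $\E_\ui=c_\ui\,\ui$ from Part~(1), we have $(\E_\ui,\E_\ui)=c_\ui(\ui,\E_\ui)$, and we expand via $(x,y\shq z)=(\Dt(x),y\otimes z)$ together with Proposition~\ref{P:ShuffleCoproduct}. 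A direct inspection of $\Dt(\ui)$ reveals that only the split isolating the rightmost letter pairs nontrivially with $\E_{\ui_2}\otimes\E_{\ui_1}$, while no split pairs nontrivially with $\E_{\ui_1}\otimes\E_{\ui_2}$. This produces the recursive identity
\[(\E_\ui,\E_\ui)=[p_\ui+1]_{\bt_r}^{-1}\,\frac{c_\ui}{c_{\ui_1}}\,(\E_{\ui_1},\E_{\ui_1}).\]
In Case~1, $[p_\ui+1]_{\bt_r}=1$ throughout and the induction closes to $(\E_\ui,\E_\ui)=c_\ui$. In Case~2, the innermost step contributes an extra factor $[2]_M^{-1}$ from the $[p+1]_{\bt_r}^{-1}$ term; this propagates through the outer induction to yield $(\E_\ui,\E_\ui)=c_\ui\,[2]_M^{-1}$, which matches the stated formula with $[2]_M^{-2}$ overall since $c_\ui$ already carries a factor $[2]_M^{-1}$ by Part~(1). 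Part~(3) is immediate from $\E^*_\ui=\E_\ui/(\E_\ui,\E_\ui)$ and Parts~(1)--(2).

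The main subtlety lies throughout Case~2: one must carefully unfold the recursion for the folded word $(i,\ldots,M,M,\ldots,j+1)$ in order to isolate the single step at which $p=1$ and the $[2]_M$ is introduced, then track this factor consistently through both the formula for $\E_\ui$ and the recursion for the inner product. The remainder of the argument is a direct, if computationally involved, adaptation of the type~$A$ proof.
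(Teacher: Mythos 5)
Your proposal is correct and follows essentially the same route as the paper: Parts~(1) and~(3) come from applying $\sm$ and the normalization $\kp_\ui$ to the Lyndon root vectors, and Part~(2) is proved by the same co-standard-factorization recursion for the inner product, reducing to $(\ui,\E_{\ui_2}\shq\E_{\ui_1})$ via the coproduct and observing that only the split at the last letter contributes.

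The one place you add genuine detail that the paper glosses over is the computation of $\kp_\ui$: the paper says Part~(1) is ``proved in the same way as in the type $A$ case,'' but in type $A$ one has $\kp_\ui = 1$ identically, while in type $B$ the folded words pick up exactly one factor $[2]_M$ at the innermost co-standard step $(i,\ldots,M,M) = (i,\ldots,M)\cdot(M)$ where $p_\ui = 1$ (since $\ep_i - \ep_M \in \Phi^+$ but $\ep_i - 2\ep_M \notin \Phi$). You correctly verify that $p_\ui = 0$ at every other step of the unwinding, so $\kp_\ui = [2]_M$ for the entire folded family; this is the right explanation of the $[2]_M^{-1}$ in the stated formula and is worth making explicit. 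Two minor imprecisions to keep in mind: in the intermediate recursion for Part~(2) you should formally also carry the factors $c_{\ui_2}^{-1}(\E_{\ui_2},\E_{\ui_2})$, which happen to equal $1$ because $\ui_2$ is a single letter; and the identity $\sm(x\shqi y)$ produces $\overline{q^{(|\ui_1|,|\ui_2|)}}$ rather than literally $q^{-(|\ui_1|,|\ui_2|)}$, which coincide except in type $B(0,n+1)$ where one should check the extra $\pi$-powers cancel.
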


\begin{proof}
Parts (1) and (3) are proved in the same way as in the type $A$ case.

It remains to prove (2), the case $\ui=(i,\ldots,j)$ also being the same as in type $A(m,n)$.
Assume that $\ui=(i,\ldots,M,M)$. Then $\ui=\ui_1\ui_2$ is the co-standard factorization where $\ui_1=(i,\ldots,M)$ and $\ui_2=M$. We have
\begin{align*}
(\E_\ui,\E_\ui)&=[2]_M^{-1}(\E_\ui,M\shq\E_{\ui_1}-\pi^{p(M)}\E_\ui\shq M)\\
    &=\varpi_B(\ui)(q^2-q^{-2})^{M-i+1}q^{-N(|\ui|)}[2]_M^{-2}(\ui,M\shq\E_{\ui_1}-\pi^{p(M)}\E_{\ui_1}\shq M)\\
    &=\varpi_B(\ui)(q^2-q^{-2})^{M-i+1}q^{-N(|\ui|)}[2]_M^{-2}(M\otimes\ui_1,M\otimes\E_{\ui_1})\\
    &=\varpi_B(\ui)(q^2-q^{-2})^{M-i+1}q^{-N(|\ui|)}[2]_M^{-2}(\ui_1,\E_{\ui_1})\\
    &=s_{M-1,M}q^{-N(|\ui|)+N(|\ui_1|)}[2]_M^{-2}(\E_M,\E_M)(\E_{\ui_1},\E_{\ui_1})\\
    &=\varpi_B(\ui)(q^2-q^{-2})^{M-i+1}q^{-N(|\ui|)}[2]_M^{-2}.
\end{align*}
Finally, assume that $\ui=(i,\ldots,M,M,\ldots,j+1)$ with $i\leq j<M-1$. Then, $\ui=\ui_1\ui_2$ is the co-standard factorization,
where $\ui_1=(i,\ldots,M,M,\ldots,j+2)$ and $\ui_2=j+1$. Hence,
\begin{align*}
(\E_\ui,\E_\ui)&=(\E_{\ui},(j+1)\shq\E_{\ui_1}-\pi^{p(\ui_1)p(j+1)}q^{-(|\ui_1|,\af_j)}\E_{\ui_1}\shq(j+1))\\
    &=\varpi_B(\ui)(q^2-q^{-2})^{2M-i-j}q^{-N(|\ui|)}[2]_M^{-1}(\E_{j+1},\E_{j+1})({\ui_1},\E_{\ui_1})\\
    &=s_{j,j+1}(q^2-q^{-2})q^{-N(|\ui|)+N(|\ui_1|)}(\E_{\ui_1},\E_{\ui_1}).
\end{align*}
Therefore, (2) follows by induction.
\end{proof}

\subsection{Types $C(M)$ and $D(m,n+1)$, I}

We regard the type $C(M)$ as a limiting case of the type $D(m,n+1)$ with $m=1$ (and $M=n+2$), and will treat them simultaneously.
The Dynkin diagrams arise in two different shapes, with or without a branching node.
We separate the discussion into 2 parts, according to the shape of the Dynkin diagrams.
Here we consider a general Dynkin diagram without a branching node of the form
$$ \xy
(-20,0)*{\odot};(-10,0)*{\odot}**\dir{-};(-5,0)*{\cdots};(0,0)*{\odot};(10,0)*{\odot}**\dir{-};
(10,0)*{\odot};(20,0)*{\fullmoon}**\dir{=};(15,0)*{<};
(-20,-4)*{\scriptstyle 1};(-10,-4)*{\scriptstyle 2};
(10,-4)*{\scriptstyle M-1};(20,-4)*{\scriptstyle M};
(0,-8)*{};(0,8)*{};
\endxy$$

The root system is given in \cite[Chapter 1]{CW12} and \cite[$\S3$]{ya}. We have the following properties regarding the system of signs.

\begin{lem}  \label{P:Type C signs}
\begin{enumerate}
\item We have $s_{M-1,M}=1$;
\item if $a_{ii}=0$, then $s_{i-1,i}=\pi s_{i,i+1}$;
\item if $a_{ii}\neq 0$, then $s_{i-1,i}=s_{i,i+1}=\pi s_{ii}$;
\item for any $k,l\in\I$ with $k\neq l$, we have
$
(\af_k,\af_l)\in\{(1+\dt_{kN}+\dt_{lN})s_{kl},0\}.
$
\end{enumerate}
\end{lem}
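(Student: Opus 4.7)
The plan is to follow the strategy of Lemma~\ref{P:Type B signs}: verify all four assertions by direct inspection using a fixed $\epsilon\delta$-realization of the root systems of $\mathfrak{osp}(2|2M-2)$ and $\mathfrak{osp}(2m|2n+2)$ as in \cite{kac, CW12}. In the absence of a branching node the simple system is linear, and the terminal node $\af_M$ (corresponding to the $\fullmoon$ at the end of the double edge) is necessarily an even root of the form $\pm 2\epsilon_k$ or $\pm 2\delta_k$, while $\af_1,\ldots,\af_{M-1}$ are differences drawn from $\{\pm\epsilon_i\mp\epsilon_j,\ \pm\delta_i\mp\delta_j,\ \pm\epsilon_i\mp\delta_j\}$. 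The bilinear form $(\cdot,\cdot)$ is determined by $(\af_i,\af_j)=d_ia_{ij}$ with $d_i=1$ for $i<M$ and $d_M=2$.

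I would first establish item (4). For $k\neq l$, the inner product $(\af_k,\af_l)$ reduces to a sum of at most two evaluations on the $\epsilon\delta$-basis, taking values in $\{0,\pm 1\}$, with an extra factor of $2$ appearing exactly when $k$ or $l$ equals $M=N$ (forced by the coefficient $2$ in $\pm 2\epsilon_k$ or $\pm 2\delta_k$). Absorbing the sign into $s_{kl}$ yields the claimed formula. Item (1) then results from the fact that $\af_{M-1}$ shares exactly one basis vector with $\af_M$, with coefficients whose product has sign fixed by the orientation of the arrow on the double edge; the standard $\mathfrak{osp}$ normalization is arranged so that this contribution is non-negative, forcing $s_{M-1,M}=1$.

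For items (2) and (3), I would run a case analysis on the nature of $\af_i$. If $a_{ii}=0$, then $\af_i=\pm(\epsilon_a-\delta_b)$ is odd isotropic, and its two neighbors $\af_{i\pm 1}$ each share exactly one of $\epsilon_a$ or $\delta_b$; since the $\mathfrak{osp}$-form assigns opposite signs to $(\epsilon_a,\epsilon_a)$ and $(\delta_b,\delta_b)$, the inner products $(\af_{i-1},\af_i)$ and $(\af_i,\af_{i+1})$ acquire opposite signs, yielding (2). If $a_{ii}\neq 0$, then $\af_i$ is even of the form $\pm(\epsilon_a-\epsilon_{a+1})$ or $\pm(\delta_a-\delta_{a+1})$, and both neighbors contribute inner products of the same sign, opposite to the sign of $(\af_i,\af_i)/2=s_{ii}$; this yields (3). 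The only genuine obstacle is bookkeeping: since $\odot$ can denote either an even node or an odd isotropic one, the case analysis must be carried out uniformly over all allowed placements of the isotropic odd simple roots along a non-branching diagram of type $C$ or $D$. Once the $\epsilon\delta$-realization is fixed, the computations reduce to inspection.
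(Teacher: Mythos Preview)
Your proposal is correct and follows exactly the same approach as the paper: a direct case-by-case verification using the standard $\epsilon\delta$-realization of the root systems for $\mathfrak{osp}$, as in \cite{kac, CW12}. The paper's own proof is in fact a single sentence to this effect; your sketch simply supplies the details that the paper omits.
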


\begin{proof}
The lemma can be checked readily case-by-case by using the standard $\ep\dt$-notation for root systems and simple systems.
%As in the type $B$ case, we simply construct the simple system for $\UU$.
%To this end, let $V$ be a vector superspace with basis $\ep_1,\ldots,\ep_M$.
%We choose a system of signs $\{s_1,\ldots,s_{M-1}\}$ consisting of $m$ $s_i=+1$, $n$ $s_i=-1$, and we set $s_M=-1$.
%Define a bilinear form on $V$ by $(\ep_i,\ep_j)=s_i\dt_{ij}$.
%Then, $\af_i=\ep_i-\ep_{i+1}$ for $1\leq i<M$ and $\af_M=2\ep_M$ is the simple system for $\UU$.
%The lemma can now be checked case-by-case.
\end{proof}

The set of dominant Lyndon words are computed in the usual way.

\begin{prp}\label{P:Type C Lyndon Word}
The set of dominant Lyndon words is
\begin{align*}
\Lr^+=&\{(i,\ldots,j)\mid 1\le i\leq j \le M \}\cup\{(i,\ldots,M,\ldots,j+1)\mid 1\le i\leq j<M\}\\
    &\quad \cup\{(i,\ldots,M-1,i,\ldots,M)\mid 1\le i<M\mbox{ and }p(i,\ldots,M-1)=0\}.
\end{align*}
\end{prp}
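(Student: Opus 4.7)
The plan is to exploit the bijection $\ui \mapsto |\ui|$ from $\Lr^+$ onto $\Phi^+$ established in Theorem~\ref{thm:canonical factorization of dominant words}, together with the inductive formula from Proposition~\ref{P:ComputingLyndonWords}, which determines $\ui^+(\bt)$ as the lexicographically largest concatenation $\ui^+(\bt_1)\ui^+(\bt_2)$ taken over all $(\bt_1,\bt_2) \in C(\bt)$.

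First I would enumerate the positive reduced roots for the non-branching diagram in question. Using the standard $\ep\dt$-realization of the root system (cf.\ \cite[Chapter 1]{CW12}) one finds three families of positive roots: the ``short'' roots $\bt_{ij} = \af_i + \cdots + \af_j$ for $1 \le i \le j \le M$; the ``up-and-back-down'' roots $\gm_{ij} = \af_i + \cdots + \af_j + 2(\af_{j+1} + \cdots + \af_{M-1}) + \af_M$ for $1 \le i \le j < M$; and the ``long'' roots $\dt_i = 2(\af_i + \cdots + \af_{M-1}) + \af_M$ for $1 \le i < M$. The last family contributes to the reduced system $\Phi^+$ precisely when $p(\af_i + \cdots + \af_{M-1}) = 0$; otherwise $\tfrac{1}{2}\dt_i$ is already a root, placing $\dt_i$ in $\widetilde{\Phi}^+ \setminus \Phi^+$. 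Each of the three candidate words in the statement is easily checked to be Lyndon: the first is strictly increasing; the second begins with $i$, which is strictly smaller than every letter appearing later; and the third begins with $i$ and repeats $i$ only after completing the strictly increasing block $(i,\ldots,M-1)$.

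Next I would proceed by induction on the height of $\bt$ to verify the formula for $\ui^+(\bt)$. For $\bt_{ij}$, every decomposition $\bt_{ij} = \bt_{ik} + \bt_{k+1,j}$ yields the same concatenation $(i,\ldots,j)$, so the maximum is $(i,\ldots,j)$. For $\gm_{ij}$, the natural co-standard factorization suggested by the claim is $\gm_{ij} = \gm_{i,j+1} + \af_{j+1}$ (with the convention $\gm_{i,M-1} = \bt_{i,M-1} + \af_M$ handled appropriately when $j+1 = M-1$); comparing with the competing decompositions $\bt_{ik} + \gm_{k+1,j}$, $\gm_{ik} + \bt_{k+1,j}$, and (when $p(j+1,\ldots,M-1) = 0$) $\bt_{ij} + \dt_{j+1}$, a lexicographic analysis shows that $(i,\ldots,M,M-1,\ldots,j+1)$ dominates all alternatives because the $M$ appearing at position $M-i+1$ beats any competing letter $\le M-1$ at that position. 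For $\dt_i$, the decompositions in $C(\dt_i)$ are $\dt_i = \bt_{ik} + \gm_{ik}$ for $i \le k < M-1$ together with $\dt_i = \bt_{i,M-1} + \bt_{iM}$; comparing the corresponding concatenations $(i,\ldots,k,i,\ldots,M,M-1,\ldots,k+1)$ letter-by-letter shows the maximum is attained at $k = M-1$, giving $(i,\ldots,M-1,i,\ldots,M)$.

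Finally, since the listed words are in bijection with $\Phi^+$, Theorem~\ref{thm:canonical factorization of dominant words} forces the list to equal $\Lr^+$. The main technical hurdle is step (3) for family (ii), which requires a careful lexicographic comparison among several decompositions of $\gm_{ij}$; the subtle point is that the decomposition $\bt_{ij} + \dt_{j+1}$ is only available when the parity condition on $\af_{j+1} + \cdots + \af_{M-1}$ holds, and one must separately confirm that in the opposite case no ``missing'' decomposition would have produced a larger word. A secondary subtlety, already arising in step (1), is confirming that precisely the parity condition $p(\af_i + \cdots + \af_{M-1}) = 0$ characterizes when $\dt_i$ belongs to the reduced root system $\Phi^+$, which amounts to identifying $\af_i + \cdots + \af_{M-1}$ in $\ep\dt$-coordinates and detecting when it is isotropic odd.
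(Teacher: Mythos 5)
Your approach---enumerating $\Phi^+$ in $\ep\dt$-coordinates, invoking the bijection of Theorem~\ref{thm:canonical factorization of dominant words}, and then determining each $\ui^+(\bt)$ by the inductive maximization of Proposition~\ref{P:ComputingLyndonWords}---is precisely what the paper means by ``computed in the usual way,'' and the paper itself gives no further detail. So your route is correct in outline and matches the intended one. However, two points in your write-up should be repaired.

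First, your explanation of the parity condition is wrong. You claim that when $p(\af_i+\cdots+\af_{M-1})=1$ the root $\tfrac12\dt_i$ is ``already a root,'' which would place $\dt_i$ in $\widetilde{\Phi}^+\setminus\Phi^+$. But in types $C$ and $D$ the superalgebras $\mf{osp}(2|2n)$ and $\mf{osp}(2m|2n)$ have no odd non-isotropic roots, so $\tfrac12\dt_i$ (which in $\ep\dt$-coordinates is a single $\ep$) is never a root; one has $\widetilde{\Phi}=\Phi$ in these types. The correct statement is that when $p(\af_i+\cdots+\af_{M-1})=1$ the vector $\dt_i=2(\af_i+\cdots+\af_{M-1})+\af_M$ is not a root of $\widetilde{\Phi}$ at all (it would be $2\ep_\ell$ for some $\ep$-coordinate), which is exactly the remark the paper makes immediately after the proposition.

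Second, your enumeration of elements of $C(\gm_{ij})$ is incomplete and contains an index error. The decomposition $\gm_{ik}+\bt_{k+1,j}$ does not occur; what does occur is $\gm_{ik}+\bt_{j+1,k}$ for $j<k<M$ (the case $k=j+1$ recovers $\gm_{i,j+1}+\af_{j+1}$). You have also omitted the family $\bt_{il}+\gm_{j+1,l}$ for $j<l<M$. These omissions do not change the answer---one checks that $\bt_{il}+\gm_{j+1,l}$ yields the concatenation $(i,\ldots,l,j+1,\ldots,M,\ldots,l+1)$, which is strictly smaller than $(i,\ldots,M,\ldots,j+1)$ already at position $l-i+2$---but a complete argument must account for all elements of $C(\gm_{ij})$, since Proposition~\ref{P:ComputingLyndonWords} defines $\ui^+(\bt)$ as a maximum over the full set.
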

Note the parity condition $p(i,\ldots,M-1)=0$ above
corresponds to the fact that there is no non-isotropic odd root for type $C$ and $D$.
Set
$$
\varpi_C(\ui)=
\begin{cases}
\varpi_A(\ui), &\mbox{if } \; \ui=(i,\ldots,j) \ \text{ for }\ 1\le i\leq j \le M,\\
\varpi_A(i,\ldots, j+1), &\mbox{if } \; \ui=(i,\ldots,M,\ldots,j+1) \ \text{ for }\  1\le i<j<M-1.
\end{cases}
$$

\begin{prp}
The Lyndon root vectors are given as follows:
\begin{enumerate}
\item for $\ui=(i,\ldots,j)$ with $j<M$,
$$\r_{\ui}=\pi^{P(|\ui|)}\pi^{j-i}\varpi_C(\ui)(q-q^{-1})^{j-i}(i,\ldots,j);$$
\item for $\ui=(i,\ldots,M)$,
$$\r_{\ui}=\pi^{P(|\ui|)}\pi^{M-i}\varpi_C(\ui)(q-q^{-1})^{M-i-1}(q^2-q^{-2})(i,\ldots,M);$$
\item for $\ui=(i,\ldots,M,\ldots,j+1)$,
$$\r_\ui=\pi^{P(|\ui|)}\pi^{i+j+1}\varpi_C(\ui)(q-q^{-1})^{2M-i-j-1}(q^2-q^{-2})(i,\ldots,M,\ldots,j+1).$$
\item for $\ui=(i,\ldots,M-1,i,\ldots,M)$,
$$\r_{\ui}=q^{-1}(q-q^{-1})^{2M-2i-1}(q^2-q^{-2})((i,\ldots,M-1)\shq(i,\ldots,M-1))M.$$
\end{enumerate}
\end{prp}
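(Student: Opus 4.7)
The plan is to apply the recursive identity $\r_\ui = \r_{\ui_1} \shqqi \r_{\ui_2}$ from Proposition~\ref{P:rofL} to each of the four cases, where $\ui = \ui_1\ui_2$ is the co-standard factorization identified by Lemma~\ref{L:costd factorization}. For case (1) we have $\ui_1 = (i,\ldots,j-1)$ and $\ui_2 = j$; for case (2) we have $\ui_1 = (i,\ldots,M-1)$ and $\ui_2 = M$; for case (3) we have $\ui_1 = (i,\ldots,M,\ldots,j+2)$ and $\ui_2 = j+1$; and for case (4) we have $\ui_1 = (i,\ldots,M-1)$ and $\ui_2 = (i,\ldots,M)$.

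The argument in case (1) is formally identical to the proof of Proposition~\ref{P:Type A Lyndon vectors}: upward induction on $j-i$, with each step producing a factor $q^{-(\af_{j-1},\af_j)} - q^{(\af_{j-1},\af_j)} = -s_{j-1,j}(q-q^{-1})$, which holds for all $j \le M-1$ by Lemma~\ref{P:Type C signs}(4). Case (2) follows the same induction up to the last step, where now $(\af_{M-1},\af_M) = 2s_{M-1,M} = 2$ by parts~(1) and~(4) of Lemma~\ref{P:Type C signs}, so the factor $q^{-(\af_{M-1},\af_M)} - q^{(\af_{M-1},\af_M)}$ becomes $-(q^2-q^{-2})$ and yields the claimed expression.

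Case (3) proceeds by downward induction on $j$, mirroring the type $B(m,n+1)$ argument. The expansion $\r_\ui = \r_{\ui_1} \shqqi (j+1)$ splits into $\bigl((i,\ldots,M,\ldots,j+3)\shqqi(j+1)\bigr)(j+2)$ plus a scalar multiple of $(i,\ldots,M,\ldots,j+1)$. The first summand vanishes by iterated use of the shuffle recursion \eqref{E:Shuffle}, exploiting the vanishing of $(\af_k,\af_{j+1})$ for $k \notin \{j,j+2\}$ afforded by Lemma~\ref{P:Type C signs}(4) together with the cancellation argument already encountered in \eqref{E:Type B Lyndon vector 2}. The surviving scalar is $\pi^{p(j+1)p(\ui_1)}\bigl(q^{-(|\ui_1|,\af_{j+1})} - q^{(|\ui_1|,\af_{j+1})}\bigr)$; combining with the inductive formula for $\r_{\ui_1}$ and the identity $P(|\ui_1|) + p(j+1)p(\ui_1) = P(|\ui|)$ assembles the asserted form.

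Case~(4) is the genuinely new computation. Using Proposition~\ref{P:rofL} together with the identity $\r_{(i,\ldots,M)} = \r_{(i,\ldots,M-1)} \shqqi M$ obtained inside the proof of case~(2), one expresses $\r_\ui$ as $A_1 A_2 \cdot x \shqqi (x \shqqi M)$, where $x = (i,\ldots,M-1)$ and $A_1,A_2$ are the scalar prefactors from cases~(1) and~(2). Writing $x = x_-\cdot(M-1)$ and applying the recursion \eqref{E:Shuffle} twice gives
\[
x\shqqi(x\shqqi M) = \bigl(x_-\shqqi(xM)\bigr)(M-1) + \pi^{p(x)p(M)}\bigl[q^{-(|x|,\af_M)}(x\shq x) - q^{(|x|,\af_M)}(x\shqi x)\bigr]M.
\]
The parity condition $p(x) = p(i,\ldots,M-1) = 0$ built into the definition of this Lyndon word lets us invoke Proposition~\ref{P:ShuffleProductProperty} to rewrite $x\shqi x = q^{(|x|,|x|)}(x\shq x)$, collapsing the second summand into a scalar multiple of $(x\shq x)M$. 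The leftover first summand $\bigl(x_-\shqqi(xM)\bigr)(M-1)$ vanishes by the same type of cancellation argument used to establish~\eqref{E:Type B Lyndon vector 2}, again relying on the vanishing inner products from Lemma~\ref{P:Type C signs}(4). The main obstacle will be the final bookkeeping: verifying that $A_1 A_2 \cdot \bigl[q^{-(|x|,\af_M)} - q^{(|x|,\af_M)+(|x|,|x|)}\bigr]$ simplifies precisely to $q^{-1}(q-q^{-1})^{2M-2i-1}(q^2-q^{-2})$ once $(|x|,\af_M)$ and $(|x|,|x|)$ are evaluated from Lemma~\ref{P:Type C signs}.
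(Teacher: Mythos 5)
Your treatment of cases (1)--(3) matches the paper's in outline (the paper simply says these are similar to types $A$ and $B$ and omits details), and your bookkeeping there is essentially fine. The real divergence is case (4), where your route is genuinely different from the paper's, and I believe it has a gap.

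The paper does \emph{not} establish case (4) by expanding the shuffle recursion and cancelling terms directly. Instead it first shows $y := ((i,\ldots,M-1)\shq(i,\ldots,M-1))M$ lies in $\UU$ (via Proposition~\ref{T:In UU}), observes $\max(y)=\ui$, and then invokes Lemma~\ref{L:Lyndon max word} together with Corollary~\ref{C:min std word} to conclude $y=\ld_\ui \r_\ui$ for some scalar $\ld_\ui$. Since $y$ visibly consists only of words ending in $M$, this structural argument is what forces the $(M-1)$-ending part of $\r_{\ui_1}\shqqi\r_{\ui_2}$ --- namely $((i,\ldots,M-2)\shqqi(i,\ldots,M))(M-1)$ --- to vanish. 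Only after that is the scalar read off by comparing $M$-ending coefficients.

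Your proposal instead asserts that $\bigl(x_-\shqqi(xM)\bigr)(M-1)$ vanishes ``by the same type of cancellation argument used to establish \eqref{E:Type B Lyndon vector 2}.'' This is the gap. In the type $B$ computation, \eqref{E:Type B Lyndon vector 2} concerned the bracket of a long word with a \emph{single letter} $j+1$; the vanishing was an explicit termwise cancellation driven by moving $j+1$ past letters with $(\af_k,\af_{j+1})=0$. Here $x_-=(i,\ldots,M-2)$ and $xM=(i,\ldots,M)$ are both multi-letter words whose supports overlap in every simple root, so the inner products $(\af_k,\af_l)$ are far from uniformly zero and no analogous peel-off-one-letter induction applies. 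The identity $x_-\shqqi(xM)=0$ is a nontrivial $q$-commutation of two root vectors, and the paper gets it for free only because of the prior structural observation that $\r_\ui$ is supported on words ending in $M$. Without either that structural step or a substitute direct verification, your case (4) is incomplete.

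There is also a minor bookkeeping slip: with $A_1,A_2$ the scalar prefactors from cases (1) and (2) so that $\r_{\ui_1}=A_1 x$ and $\r_{\ui_2}=A_2 (xM)$, one has $\r_\ui = A_1A_2\,x\shqqi(xM)$, not $A_1A_2\,x\shqqi(x\shqqi M)$; equivalently, if you prefer to work with $x\shqqi(x\shqqi M)$ via $\r_{\ui_2}=\r_{\ui_1}\shqqi M$, the prefactor should be $A_1^2$. The displayed expansion you wrote is the correct one for $x\shqqi(xM)$, so this is just a mislabel, but it needs fixing before the final scalar check can be believed.
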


\begin{proof}
The proof of (1)-(3) are similar to the cases treated in types $A$ and $B$, and we omit the details.

We prove (4). To this end, note that $(i,\ldots,M-1)\shq(i,\ldots,M-1)\in\UU$ since $(i,\ldots,M-1)\in\UU$ by (1). Now, by Proposition~\ref{T:In UU}, we deduce that
$$x=((i,\ldots,M-1)\shq(i,\ldots,M-1))M\in\UU.$$
Evidently, $\max(x)=\ui$ and, therefore, $\max(x)=\max(\r_\ui)$ by Lemma~\ref{L:Lyndon max word}. Hence, we may express $x$ as
$$x=\sum_{\uj\leq\ui}\ld_\uj\r_\uj.$$
But, by Corollary~\ref{C:min std word}, $\ui$ is the smallest dominant word of its degree, so $x=\ld_\ui\r_\ui$.

We now compute the coefficient $\ld_\ui$. To this end, note that the co-standard factorization of $\ui$ is $\ui=\ui_1\ui_2$,
where $\ui_1=(i,\ldots,M-1)$ and $\ui_2=(i,\ldots,M)$. Hence, since $p(M)=p(i,\ldots,M-1)=0$ and $s_{M-1,M}=1$,
\begin{align*}
\r_\ui=&\r_{\ui_1}\shqqi\r_{\ui_2}\\
    =&-\pi^{P(|\ui_1|)+P(|\ui_2|)}\varpi_C(\ui_1)\varpi_C(\ui_2)(q-q^{-1})^{2M-2i-2}(q^2-q^{-2})(i,\ldots,M-1)\shqqi(i,\ldots,M)\\
    =&-(q-q^{-1})^{2M-2i-2}(q^2-q^{-2})((i,\ldots,M-2)\shqqi(i,\ldots,M))(M-1)\\
    &-(q-q^{-1})^{2M-2i-2}(q^2-q^{-2})(q^{-(\af_i+\cdots+\af_{M-1},\af_M)}((i,\ldots,M-1)\shq(i,\ldots,M-1))M\\
    &-q^{-(\af_i+\cdots+\af_{M-1},\af_M)}((i,\ldots,M-1)\shqi(i,\ldots,M-1))M).
\end{align*}
By the argument in the previous paragraph, $((i,\ldots,M-2)\shqqi(i,\ldots,M))(M-1)=0$. Therefore, applying the identity
$$(i,\ldots,M-1)\shqi(i,\ldots,M-1)=q^{(\af_i+\cdots+\af_{M-1},\af_i+\cdots+\af_{M-1})}(i,\ldots,M-1)\shq(i,\ldots,M-1),$$
and Lemma~\ref{P:Type C signs}, we see that
\begin{align*}
\r_\ui=&-(q-q^{-1})^{2M-2i-2}(q^2-q^{-2})\\
    &\times(q^{-(\af_{M-1},\af_M)}-q^{(\af_{M-1},\af_M)+(\af_i+\cdots+\af_{M-1},\af_i+\cdots+\af_{M-1})})((i,\ldots,M-1)\shq(i,\ldots,M-1))M\\
    =&-(q-q^{-1})^{2M-2i-2}(q^2-q^{-2})(q^{-2}-1)((i,\ldots,M-1)\shq(i,\ldots,M-1))M\\
    =&q^{-1}(q-q^{-1})^{2M-2i-1}(q^2-q^{-2})((i,\ldots,M-1)\shq(i,\ldots,M-1))M.
\end{align*}
This completes the proof.
\end{proof}

\begin{cor}
\begin{enumerate}
\item
The PBW root vectors are given as follows:

\begin{itemize}
\item[(a)]
for $\ui=(i,\ldots,j)$ with $j<M$,
$\E_{\ui}=\varpi_C(\ui)(q-q^{-1})^{j-i}q^{-N(|\ui|)}(i,\ldots,j);$

\item[(b)]
for $\ui=(i,\ldots,M)$,
$\E_{\ui}=\varpi_C(\ui)(q-q^{-1})^{M-i-1}(q^2-q^{-2})q^{-N(|\ui|)}(i,\ldots,M);$

\item[(c)]
for $\ui=(i,\ldots,M,\ldots,j+1)$, we have
$$\E_\ui=\varpi_C(\ui)(q-q^{-1})^{2M-i-j-1}(q^2-q^{-2})q^{-N(|\ui|)}(i,\ldots,M,\ldots,j+1);
$$

\item[(d)]
for $\ui=(i,\ldots,M-1,i,\ldots,M)$, we have
$$\E_{\ui}=\pi^{P(|\ui_1|)}q(q-q^{-1})^{2M-2i} q^{-N(|\ui|)}
  ((i,\ldots,M-1)\shq(i,\ldots,M-1))M,$$
where $\ui_1=(i,\ldots,M-1)$.
\end{itemize}

\item
The values of $(\E_{\ui},\E_\ui)$ are given by
$$
 \begin{cases}
\varpi_C(\ui)(q-q^{-1})^{j-i-\dt_{jM}}q^{-N(|\ui|)}, & \text{if }\; \ui=(i,\ldots,j)  \text{ for }  1\le i \le j  \le M,
   \\
\varpi_C(\ui)(q-q^{-1})^{2M-i-j-1}(q^2-q^{-2})q^{-N(|\ui|)},
  & \text{if }\; \ui=(i,\ldots,M,\ldots,j+1),
   \\
\pi^{P(|\ui_1|)}(q-q^{-1})^{2M-2i}q^{-N(|\ui|)},
  & \text{if }\; \ui=(i,\ldots,M-1,i,\ldots,M),
\end{cases}
$$
\noindent where $\ui_1=(i,\ldots,M-1)$.

\item
The dual PBW root vectors are given by
$$
\E_\ui^*=
 \begin{cases}
(i,\ldots,j), & \text{if }\; \ui=(i,\ldots,j)  \text{ for }  1\le i \le j \le M,
   \\
(i,\ldots,M,\ldots,j+1), & \text{if }\; \ui=(i,\ldots,M,\ldots,j+1),
   \\
q\, ((i,\ldots,M-1)\shq(i,\ldots,M-1))M, & \text{if }\; \ui=(i,\ldots,M-1,i,\ldots,M).
\end{cases}
$$
\end{enumerate}
\end{cor}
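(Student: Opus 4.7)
The plan is to deduce each of the three assertions directly from the formulas for $\r_\ui$ in the preceding proposition, using the definitional relation $\E_\ui = \kp_\ui^{-1}\sm(\r_\ui)$ from \eqref{E:PBW Definition 1} and the explicit action of $\sm$ from Proposition~\ref{P:automorphisms}(3).

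First, for Part~(1), I would verify that $\kp_\ui = 1$ in all four cases (a)--(d). Inspecting the co-standard factorization $\ui = \ui_1\ui_2$ of each dominant Lyndon word listed in Proposition~\ref{P:Type C Lyndon Word} and using the standard description of the positive roots of types $C(M)$ and $D(m,n+1)$, one checks that $|\ui_1| - |\ui_2| \notin \Phi^+$ (in cases (a)--(c) because one factor is a single simple root, and in case (d) because the difference is $-\af_M$). Hence $p_\ui = 0$, so $[p_\ui+1]_{\bt_r} = 1$, and an induction on the length of $\ui$ yields $\kp_\ui = \kp_{\ui_1}\kp_{\ui_2} = \cdots = 1$. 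With $\kp_\ui = 1$, I would then apply $\sm$ to the expression for $\r_\ui$. Since $\sm$ is $\Q$-antilinear with $\overline{q-q^{-1}} = -(q-q^{-1})$ and $\overline{q^2-q^{-2}} = -(q^2-q^{-2})$, and acts on a homogeneous word $\uk \in \W_\nu$ by $\sm(\uk) = \pi^{P(\nu)}q^{-N(\nu)}\uk$, the scalar factors in front of $\r_\ui$ combine with $\pi^{P(\nu)}$ (using $\pi^{2P(\nu)}=1$) and cancel precisely to yield the stated $\E_\ui$ in cases (a)--(c). For case (d), one additionally uses the compatibility $\overline{x\shq y} = \overline{x}\shq\overline{y}$ (Proposition~\ref{P:automorphisms}(2)) together with $\tau(x\shq y) = \tau(y)\shq\tau(x)$ to move $\sm$ inside the shuffle/concatenation expression.

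For Part~(2), I would use Proposition~\ref{P:bilinearformonF}(3) together with Part~(1). Writing the co-standard factorization $\ui = \ui_1\ui_2$, we have (up to the scalars computed in Part~(1)) that $\ui$ appears in $\E_\ui$ with an explicit leading coefficient, so
\[(\E_\ui, \E_\ui) = (\text{leading scalar})\cdot (\ui, \E_\ui),\]
and the latter pairs with $\Dt(\E_\ui)$ via Lemma~\ref{Conj:PBW coproduct}. For cases (a)--(c), the leading term of $\Dt(\E_\ui)$ in the triangular expansion is $\E_{\ui_2}\otimes \E_{\ui_1}$, so the recursion reduces $(\E_\ui,\E_\ui)$ to $(\E_{\ui_1},\E_{\ui_1})(\E_{\ui_2},\E_{\ui_2})$ multiplied by a factor of the form $s_{k,l}(q-q^{-1})q^{(|\ui_1|,|\ui_2|)}$, and induction on $\ell(\ui)$ gives the stated formula. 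Case (d) requires more care, since $\E_\ui$ is itself a shuffle of two copies of $(i,\ldots,M-1)$ concatenated with $M$; one must combine the pairings of these constituent pieces and track the sign $\pi^{P(|\ui_1|)}$, ultimately using $\sm(\E_\ui) = \pi^{P(|\ui|)}q^{-N(|\ui|)}\E_\ui$ together with the $\kp_\ui=1$ result.

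For Part~(3), the dual PBW basis vectors are obtained directly by substituting the results of Parts (1) and (2) into the definition $\E_\ui^* = \E_\ui/(\E_\ui,\E_\ui)$ from \eqref{E:Dual PBW}; the scalar factors $\varpi_C(\ui)$, $(q-q^{-1})^k$, $(q^2-q^{-2})$, and $q^{-N(|\ui|)}$ cancel between numerator and denominator, leaving the simple formulas stated.

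The main obstacle will be case (d) in Parts (1) and (2): verifying the precise scalar factor in front of $((i,\ldots,M-1)\shq(i,\ldots,M-1))M$, including the origin of $\pi^{P(|\ui_1|)}$, requires careful bookkeeping of signs arising from bar, from the twisted multiplication on $\F\otimes\F$, and from the shuffle-versus-concatenation interaction when computing $\sm$ and $\Dt$ on the doubly nested expression.
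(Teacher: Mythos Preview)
Your proposal follows essentially the same route as the paper. The paper likewise says Part~(1) ``follows directly from the definitions'' (i.e., apply $\sm$ to the Lyndon root vectors from the preceding proposition and divide by $\kp_\ui$), derives Part~(3) immediately from (1) and (2), and for Part~(2) reduces $(\E_\ui,\E_\ui)$ inductively to $(\E_{\ui_1},\E_{\ui_1})(\E_{\ui_2},\E_{\ui_2})$ via the bilinear form axiom (B3), with a separate detailed computation in case~(d). One minor difference: for Part~(2) the paper does not invoke Lemma~\ref{Conj:PBW coproduct} on $\Dt(\E_\ui)$ but instead writes $\E_\ui$ as a scalar multiple of a word (or of $X=((i,\ldots,M-1)\shq(i,\ldots,M-1))M$ in case~(d)), expresses the \emph{other} copy of $\E_\ui$ as a $q$-commutator $\E_{\ui_2}\shq\E_{\ui_1}-\pi^{\cdots}q^{\cdots}\E_{\ui_1}\shq\E_{\ui_2}$, and then applies (B3) together with the explicit formula $\Dt(X)=(\Dt(i,\ldots,M-1)\shq\Dt(i,\ldots,M-1))(M\otimes 1)+1\otimes X$. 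Your approach via the triangular coproduct expansion would also work, but the paper's direct route is slightly more concrete for case~(d).

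One caution on case~(d) of Part~(1): your plan to ``move $\sm$ inside the shuffle/concatenation expression'' needs the observation that applying $\sm$ to $X=((i,\ldots,M-1)\shq(i,\ldots,M-1))M$ does \emph{not} return $X$ times a scalar in an obvious way, because the coefficient-bar of $(i,\ldots,M-1)\shq(i,\ldots,M-1)$ is $(i,\ldots,M-1)\shqi(i,\ldots,M-1)=q^{(|\ui_1|,|\ui_1|)}(i,\ldots,M-1)\shq(i,\ldots,M-1)$. Tracking this extra power of $q$ carefully (together with $\pi^{P(\nu)}$) is exactly the bookkeeping you flag as the main obstacle.
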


\begin{proof}
The formulas in (1) follow directly from the definitions.

We prove (2). Note that for
$$\ui\in\{(i,\ldots,j)\mid i\leq j\}\cup\{(i,\ldots,M,\ldots,j+1)\mid i\leq j<M\}$$
the computations are similar to those performed in types $A$ and $B$, and we omit the details. Therefore, assume that $\ui=(i,\ldots,M-1,i,\ldots,M)$. We have
\begin{align*}
\Dt(((i,\ldots,M-1)\shq(i,\ldots,M-1))M)=&(\Dt(i,\ldots,M-1)\shq\Dt(i,\ldots,M-1))(M\otimes 1)\\
    &+ 1\otimes((i,\ldots,M-1)\shq(i,\ldots,M-1))M
\end{align*}
and, therefore, $(\E_\ui,\E_\ui)$ is equal to
\begin{align*}
\pi^{P(|\ui_1|)} & q(q-q^{-1})^{2M-2i-1}(q^2-q^{-2})q^{-N(|\ui|)}[2]^{-1}(\E_\ui,((i,\ldots,M-1)\shq(i,\ldots,M-1))M)\\
    =&\pi^{P(|\ui_1|)}q(q-q^{-1})^{2M-2i-1}(q^2-q^{-2})q^{-N(|\ui|)}[2]^{-2}\\
    &\times(E_{\ui_2}\otimes\E_{\ui_1},(\Dt(i,\ldots,M-1)\shq\Dt(i,\ldots,M-1))(M\otimes 1))\\
    =&\pi^{P(|\ui_1|)}q(q-q^{-1})^{2M-2i-1}(q^2-q^{-2})q^{-N(|\ui|)}[2]^{-2}\\
    &\times(E_{\ui_2}\otimes\E_{\ui_1},(q^{-2}+1)(i,\ldots,M)\otimes(i,\ldots,M-1))\\
    =&\pi^{P(|\ui_1|)}q(q-q^{-1})q^{-N(|\ui|)+N(|\ui_1|)+N(|\ui_2|)}[2]^{-2}(q^{-2}+1)(\E_{\ui_1},\E_{\ui_1})(\E_{\ui_2},\E_{\ui_2})\\
    =&\pi^{P(\ui_1)}(q-q^{-1})^{2M-2i-1}(q^2-q^{-2})q^{-N(|\ui|)}[2]^{-1}\\
    =&\pi^{P(\ui_1)}(q-q^{-1})^{2M-2i}q^{-N(|\ui|)}.
\end{align*}
This proves (2).
Finally, (3) immediately follows from (2).
\end{proof}

%%%%%%%%%%%%%%%%%
\subsection{Type $C(M)$ and $D(m,n+1)$, II}

In this subsection, we consider the remaining simple systems of type $C(M)$ and $D(m,n+1)$,
which correspond to Dynkin diagrams with a branching node as follows:
$$\xy
(-30,0)*{\odot};(-20,0)*{\odot}**\dir{-};(-15,0)*{\cdots};
(-10,0)*{\odot};(0,0)*{\odot}**\dir{-};(0,0)*{\odot};(10,0)*{\odot}**\dir{-};
(15,0)*{\cdots};(20,0)*{\odot};(27,5)*{\fullmoon}**\dir{-};(20,0)*{\odot};(27,-5)*{\fullmoon}**\dir{-};
(-30,-4)*{\scriptstyle 1};(-20,-4)*{\scriptstyle 2};(-10,-4)*{\scriptstyle n};(0,-4)*{\scriptstyle n+1};(10,-4)*{\scriptstyle n+2};
(33,5)*{\scriptstyle M-1};(34,-5)*{\scriptstyle M};
(0,-8)*{};(0,8)*{};
\endxy$$
and
$$\xy
(-30,0)*{\odot};(-20,0)*{\odot}**\dir{-};(-15,0)*{\cdots};
(-10,0)*{\odot};(0,0)*{\odot}**\dir{-};(0,0)*{\odot};(10,0)*{\odot}**\dir{-};
(15,0)*{\cdots};(20,0)*{\odot};(27,5)*{\otimes}**\dir{-};(20,0)*{\odot};(27,-5)*{\otimes}**\dir{-};
(27,5)*{\otimes};(27,-5)*{\otimes}**\dir{=};
(-30,-4)*{\scriptstyle 1};(-20,-4)*{\scriptstyle 2};(-10,-4)*{\scriptstyle n};(0,-4)*{\scriptstyle n+1};(10,-4)*{\scriptstyle n+2};
(33,5)*{\scriptstyle M-1};(34,-5)*{\scriptstyle M};
(0,-8)*{};(0,8)*{};
\endxy$$

\begin{prp}\label{P:Type D Lyndon Words} The set of dominant Lyndon words is
\begin{align*}
\Lr^+=&\{(i,\ldots,j)\mid i\leq j\leq M-1\}\cup\{(i,\ldots,M-2,M)\mid i\leq M-2\}\\
    &\{(i,\ldots,M-2,M,M-1,\ldots,j+1)\mid i\leq j\leq M-2\}\\
    &\cup\{(i,\ldots,M-1,i,\ldots,M-2,M)\mid i<M-1, p(i,\ldots,M-1)=1\}.
\end{align*}
\end{prp}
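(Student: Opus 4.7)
The plan is to proceed by induction on $\height(\bt)$ for $\bt\in\Phi^+$, using the recursive description $\ui^+(\bt)=\max\{\ui^+(\bt_1)\ui^+(\bt_2)\mid(\bt_1,\bt_2)\in C(\bt)\}$ from Proposition~\ref{P:ComputingLyndonWords} together with the bijection $\Lr^+\leftrightarrow\Phi^+$ established in Theorem~\ref{thm:canonical factorization of dominant words}. First I would recall the $\ep\delta$-realization of $\Phi^+$ for $C(M)$ and $D(m,n+1)$ attached to each given branching simple system, and carry out a cardinality check showing that the four families in the statement account for exactly $|\Phi^+|$ elements. This reduces the proof to verifying, for each word $\ui$ on the list, that $\ui=\ui^+(|\ui|)$.

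For the first three families---$(i,\ldots,j)$ with $j\leq M-1$, $(i,\ldots,M-2,M)$, and $(i,\ldots,M-2,M,M-1,\ldots,j+1)$---the arguments follow the template of Proposition~\ref{P:Type C Lyndon Word}. The first family lives entirely in the type-$A$ sub-diagram on nodes $\{1,\ldots,M-1\}$ and is handled as there. For the second and third families, since node $M$ is connected only to node $M-2$ in the branching diagram, the only admissible decompositions $\bt=\bt_1+\bt_2$ with $\ui^+(\bt_1)<\ui^+(\bt_2)$ produce the listed concatenations: at each inductive step the factorization prepending the smallest available simple root on the left maximizes $\ui^+(\bt_1)\ui^+(\bt_2)$, and that the resulting word is Lyndon follows from Lemma~\ref{L:costd converse}.

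The main obstacle is the fourth family, $(i,\ldots,M-1,i,\ldots,M-2,M)$ subject to $p(i,\ldots,M-1)=1$. These words encode the non-simple positive roots $\bt=2(\af_i+\cdots+\af_{M-2})+\af_{M-1}+\af_M$, and a preliminary substep is to verify from the $\ep\delta$-description that such a $\bt$ belongs to the reduced root system precisely when the stated parity condition holds (otherwise one of the two natural ``halves'' of $\bt$ would already be a root, and reducedness would be violated). That the listed word is Lyndon follows from Lemma~\ref{L:costd converse} applied with $\ui_1=(i,\ldots,M-1)$, proper left factor $\ui_1'=(i,\ldots,M-2)$, and letter $M$, since $\ui_1'\cdot M=(i,\ldots,M-2,M)>\ui_1$. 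To identify it as $\ui^+(\bt)$, I would enumerate the decompositions $\bt=\bt_1+\bt_2$ with $\bt_1,\bt_2\in\Phi^+$ and $\ui^+(\bt_1)<\ui^+(\bt_2)$, using the already-established formulas for the first three families to compare the resulting lexicographic values. The optimal decomposition should turn out to be $\bt_1=|(i,\ldots,M-1)|$ and $\bt_2=|(i,\ldots,M-2,M)|$, with the parity condition ensuring both that $\bt$ is reducedly expressed and that this is the unique maximizing choice among elements of $C(\bt)$.

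A concluding cardinality check, combined with the injectivity of $\ui\mapsto|\ui|$ on $\Lr^+$ from Theorem~\ref{thm:canonical factorization of dominant words}, confirms that the four listed families exhaust the set of dominant Lyndon words.
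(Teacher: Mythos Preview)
The paper states this proposition without proof (as with the analogous propositions for types $A$, $B$, and non-branching $C$/$D$); the implicit justification is precisely the inductive algorithm of Proposition~\ref{P:ComputingLyndonWords}, which is exactly what you propose. Your plan is correct and matches the paper's intended approach.

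One small correction to your parenthetical explanation of the parity condition for the fourth family: the claim that ``otherwise one of the two natural `halves' of $\bt$ would already be a root, and reducedness would be violated'' is not the right mechanism. The weight $\bt=2(\af_i+\cdots+\af_{M-2})+\af_{M-1}+\af_M$ corresponds in the $\ep\dt$-notation to $2e_i$ for the relevant basis vector $e_i$, and $\bt/2$ has half-integer coefficients in the simple roots, so it never lies in the root lattice---reducedness is not the issue. Rather, whether $\bt$ belongs to $\Phi^+$ at all depends on whether $e_i$ is of $\dt$-type (in which case $2\dt_i\in\Phi^+$) or of $\ep$-type (in which case $2\ep_i\notin\widetilde\Phi$). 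The condition $p(i,\ldots,M-1)=1$ encodes exactly this distinction, and that is what your verification substep would uncover; compare the paper's remark immediately following Proposition~\ref{P:Type C Lyndon Word} for the non-branching case, where the opposite parity condition $p(i,\ldots,M-1)=0$ appears for the same underlying reason.
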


Set
$$
\varpi_D(\ui)=
\begin{cases}\varpi_A(\ui)&\mbox{if }\ui=(i,\ldots,j), i\le j\leq M-1,\\
\varpi_A(i,\ldots,M-1)&\mbox{if }\ui=(i,\ldots,M-2,M),\\
\varpi_A(i,\ldots j+1)&\mbox{if }\ui=(i,\ldots,M-2,M,\ldots,j+1), i<j<M-1.
\end{cases}
$$

\begin{prp}
The Lyndon root vectors are given as follows:
\begin{enumerate}
\item For $\ui=(i,\ldots,j)$, $j\leq M-1$,
$$\r_{\ui}=\pi^{P(|\ui|)}\pi^{j-i}\varpi_D(\ui)(q-q^{-1})^{j-i}(i,\ldots,j);$$
\item for $\ui=(i,\ldots,M-2,M)$,
$$\r_{\ui}=\pi^{P(|\ui|)}\pi^{M-i-1}\varpi_D(\ui)(q-q^{-1})^{M-i-1}(i,\ldots,M-2,M);$$
\item for $\ui=(i,\ldots,M-2,M,M-1,\ldots,j+1)$,
\begin{align*}
\r_\ui=&\pi^{P(|\ui|)}\pi^{i+j}\varpi_D(\ui)(q-q^{-1})^{2M-i-j-2}((i,\ldots,M-1,M,M-2\ldots,j+1)\\
    &+(i,\ldots,M-2,M,M-1,\ldots,j+1)).
\end{align*}
\item for $\ui=(i,\ldots,M-1,i,\ldots,M-2,M)$,
$$\r_{\ui}=\pi(q-q^{-1})^{2M-2i-2}(q^2-q^{-2})((i,\ldots,M-2)\shq(i,\ldots,M-1))M.$$
\end{enumerate}
\end{prp}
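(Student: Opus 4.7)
The proof proceeds by induction on $\ell(\ui)$, using the identity $\r_\ui=\r_{\ui_1}\shqqi\r_{\ui_2}$ from Proposition~\ref{P:rofL} applied to the co-standard factorization $\ui=\ui_1\ui_2$. Part (1) is identical to the type $A$ computation in Proposition~\ref{P:Type A Lyndon vectors}, since the subdiagram on $\{1,\ldots,M-1\}$ is of type $A$. For Part (2), the co-standard factorization is $\ui=(i,\ldots,M-2)\cdot M$; in the shuffle $(i,\ldots,M-2)\shqqi M$ every interior insertion of $M$ cancels between $\shq$ and $\shqi$ because $(\af_s,\af_M)=0$ for $s<M-2$, leaving only the concatenation $(i,\ldots,M-2,M)$ with coefficient $q-q^{-1}$, and the prefactor is then assembled via Lemma~\ref{P:Type C signs}.

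For Part (3), I would use downward induction on $j$, with base case $j=M-2$. The co-standard factorization is $\ui=\ui_1\cdot(j+1)$ with $\ui_1=(i,\ldots,M-2,M,M-1,\ldots,j+2)$ for $j<M-2$, and $\ui_1=(i,\ldots,M-2,M)$ in the base case. Applying the inductive form of $\r_{\ui_1}$, the shuffle $\r_{\ui_1}\shqqi(j+1)$ has nonvanishing contributions only at insertion positions where the corresponding inner product with $\af_{j+1}$ is nonzero. Since the only such positions are those where the tail meets $\af_j$ or $\af_{j+2}$, the shuffle collapses to exactly the sum of the two words in the stated formula, with the common prefactor computed by tracking $\pi$- and $q$-powers.

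For Part (4), I would adapt the argument used in the non-branching $C/D$ case of the previous proposition. Set
\[x = \bigl((i,\ldots,M-2)\shq(i,\ldots,M-1)\bigr)M,\]
where the inner shuffle lies in $\UU$ by Part (1). Use Proposition~\ref{T:In UU} to verify that concatenating $M$ preserves membership in $\UU$, reducing to a finite check of the defining relations on the word coefficients. By inspection $\max(x)=\ui$, and by Corollary~\ref{C:min std word}, $\ui$ is the smallest dominant word of its degree, so $x=\lambda\r_\ui$ for some $\lambda\in\Qq$. To determine $\lambda$, expand $\r_\ui=\r_{\ui_1}\shqqi\r_{\ui_2}$ via the co-standard factorization $\ui_1=(i,\ldots,M-1)$, $\ui_2=(i,\ldots,M-2,M)$, apply Parts (1)--(2) for closed forms, and compare the coefficient of $\ui$ on both sides; the odd-isotropic parity condition $p(i,\ldots,M-1)=1$ enters through Corollary~\ref{C:not max} and Proposition~\ref{P:ShuffleProductProperty} to handle the self-interaction of the Lyndon word $(i,\ldots,M-1)$. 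The main obstacles will be the membership verification $x\in\UU$ in Part (4) together with the attendant $\pi/q$-bookkeeping, and the delicate cancellation in Part (3), where the two branching Dynkin diagrams have structurally different values of $(\af_{M-1},\af_M)$ and require a uniform analysis.
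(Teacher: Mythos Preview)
Your approach is essentially the same as the paper's: Parts (1)--(3) are handled ``as in previous types'' (the paper omits details entirely), and for Part (4) both you and the paper set $x=((i,\ldots,M-2)\shq(i,\ldots,M-1))M$, invoke Proposition~\ref{T:In UU} and Corollary~\ref{C:min std word} to get $x=\lambda\r_\ui$, and then determine $\lambda$ from the co-standard factorization $\ui_1=(i,\ldots,M-1)$, $\ui_2=(i,\ldots,M-2,M)$.

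Two small corrections. First, your invocation of Corollary~\ref{C:not max} is misplaced: that corollary concerns $\ui\shq\ui$ for a single odd isotropic $\ui$, but here you are shuffling two \emph{distinct} words $(i,\ldots,M-2)$ and $(i,\ldots,M-1)$, so there is no ``self-interaction'' to handle. The parity condition $p(i,\ldots,M-1)=1$ enters only through the sign bookkeeping (specifically $p(M)+p(i,\ldots,M-2)=1=p(i,\ldots,M-1)$). Second, the paper's computation of $\lambda$ does not proceed by comparing a single coefficient but by a direct simplification, and the key step you are missing is the identity
\[
(i,\ldots,M-2)\shq(i,\ldots,M-1)=(i,\ldots,M-2)\shqi(i,\ldots,M-1),
\]
proved by a short double induction. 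Without this, the expression $\r_{\ui_1}\shqqi\r_{\ui_2}$ does not collapse cleanly to a scalar multiple of $x$. Also, Lemma~\ref{P:Type C signs} is stated for the non-branching diagram; you will need the analogous sign facts for the branching case, which are not recorded separately in the paper.
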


\begin{proof}
Formulas (1)-(3) can be obtained in the same way as in previous types and we omit the details.

The proof of (4) is very similar to the long roots in type $C$ and we only outline the proof, leaving the details to the interested reader.
Indeed, let $\ui=(i,\ldots,M-1,i,\ldots,M-2,M)$ and let $\ui=\ui_1\ui_2$ be the co-standard factorization of $\ui$.
As in the type $C$ case, we deduce from
Proposition~\ref{T:In UU} that $x=((i,\ldots,M-2)\shq(i,\ldots,M-1))M\in\UU$.
Moreover, since $\max(x)=\ui$, it follows that $\r_\ui$ is proportional to $x$.
To compute the coefficient, note that $P(|\ui_1|)=P(|\ui_2|)$, $\varpi_D(\ui_1)=\varpi_D(\ui_2)$, so
\begin{align*}
\r_\ui=&(q-q^{-1})^{2M-2i-2}(i,\ldots,M-1)\shqqi(i,\ldots,M-2,M)\\
    =&(q-q^{-1})^{2M-2i-2}\pi^{p(M)}(q(i,\ldots,M-1)\shq(i,\ldots,M-2)\\
    &-q^{-1}(i,\ldots,M-1)\shqi(i,\ldots,M-2))M\\
    =&\pi(q-q^{-1})^{2M-2i-2}(q^2(i,\ldots,M-2)\shqi(i,\ldots,M-1)\\
    &-q^{-2}(i,\ldots,M-2)\shq(i,\ldots,M-1))M
\end{align*}
where we have used the fact that $p(M)+p(i,\ldots,M-2)=1=p(i,\ldots,M-1)$ to obtain the factor $\pi$ after the last equality.
Finally, the computation follows upon observing that
$$(i,\ldots,M-2)\shq(i,\ldots,M-1)=(i,\ldots,M-2)\shqi(i,\ldots,M-1).$$
This last statement can be proved as follows: first, we have $i\shq(i,\ldots,k)=i\shqi(i,\ldots,k)$ for any $k>i$ by induction on $k$, and,
for $i\leq j<k$, $(i,\ldots,j)\shq(i,\ldots,k)=(i,\ldots,j)\shqi(i,\ldots,k)$ by induction on $j$.
\end{proof}

\begin{cor}
\begin{enumerate}
\item The PBW root vectors are:
\begin{itemize}
\item[(a)]
for $\ui=(i,\ldots,j)$, $j\leq M-1$,
$\E_{\ui}=\varpi_D(\ui)(q-q^{-1})^{j-i}q^{-N(|\ui|)}(i,\ldots,j);$

\item[(b)]
for $\ui=(i,\ldots,M-2,M)$,
$\E_{\ui}=\varpi_D(\ui)(q-q^{-1})^{M-i-1}q^{-N(|\ui|)}(i,\ldots,M-2,M);$

\item[(c)]
for $\ui=(i,\ldots,M-2,M,M-1,\ldots,j+1)$,
\begin{align*}
\E_\ui=&\varpi_D(\ui)(q-q^{-1})^{2M-i-j-2}q^{-N(|\ui|)}((i,\ldots,M-1,M,M-2\ldots,j+1)\\
    &+(i,\ldots,M-2,M,M-1,\ldots,j+1));
\end{align*}

\item[(d)]
for $\ui=(i,\ldots,M-1,i,\ldots,M-2,M)$,
$$\E_{\ui}=(q-q^{-1})^{2M-2i-2}(q^2-q^{-2})[2]^{-1}q^{-N(|\ui|)}((i,\ldots,M-2)\shq(i,\ldots,M-1))M.$$
\end{itemize}

\item
The values of $(\E_{\ui},\E_{\ui})$ are given by
$$
 \begin{cases}
\varpi_D(\ui)(q-q^{-1})^{j-i}q^{-N(|\ui|)}, & \text{if }\; \ui=(i,\ldots,j) \ (j\leq M-1)
\\
 &\quad\; \text{ or }\ \ui=(i,\ldots,M-2,M),
   \\
\varpi_D(\ui)(q-q^{-1})^{2M-i-j-2}q^{-N(|\ui|)}, & \text{if }\; \ui=(i,\ldots,M-2,M,M-1,\ldots,j+1),
   \\
\frac{(q-q^{-1})^{2M-2i-1}q^{-N(|\ui|)}}{q+q^{-1}}, & \text{if }\; \ui=(i,\ldots,M-1,i,\ldots,M-2,M).
\end{cases}
$$

\item
The dual PBW root vectors are:
\begin{itemize}
\item[(a)]
For $\ui=(i,\ldots,j)$, $j\leq M-1$,
$\E_{\ui}^*=(i,\ldots,j);$

\item[(b)]
for $\ui=(i,\ldots,M-2,M)$,
$\E_{\ui}^*=(i,\ldots,M-2,M);$

\item[(c)]
for $\ui=(i,\ldots,M-2,M,M-1,\ldots,j+1)$,
\begin{align*}
\E_\ui^*=(i,\ldots,M-1,M,M-2\ldots,j+1)+(i,\ldots,M-2,M,M-1,\ldots,j+1));
\end{align*}

\item[(d)]
for $\ui=(i,\ldots,M-1,i,\ldots,M-2,M)$,
$$\E_{\ui}^*=(q+q^{-1})((i,\ldots,M-2)\shq(i,\ldots,M-1))M.$$
\end{itemize}
\end{enumerate}
\end{cor}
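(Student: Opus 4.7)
The plan is to follow the template established in the proof of the type $C$ corollary, since the three statements are related by the definitions once part (2) is established. For part (1), I would apply Proposition~\ref{P:automorphisms}(3) term-by-term: $\sigma$ acts on any homogeneous element of weight $\nu$ as multiplication by $\pi^{P(\nu)}q^{-N(\nu)}$, so $\sigma(\r_\ui)$ is $\r_\ui$ rescaled by this scalar (this applies even to the shuffle-product expressions in cases (c) and (d), since a shuffle $\ui_1\shq\ui_2$ is a sum of words all of weight $|\ui_1|+|\ui_2|$). Dividing by $\kappa_\ui$, where $\kappa_\ui$ is computed from the co-standard factorization via $\kappa_\ui = [p_\ui+1]_{\bt_r}\kappa_{\ui_1}\kappa_{\ui_2}$, gives the stated formulas. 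The only nontrivial bookkeeping is in case (d), where I need to check that $p_\ui+1 = 2$ so that the factor $[2]$ appears in the denominator, and to verify that the scalar $\pi$ from the proposition cancels correctly against $\pi^{P(|\ui|)}$.

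For part (2), I would use Proposition~\ref{P:bilinearformonF}(3) to reduce to simpler inner products via $(\E_\ui,\E_\ui) = (\Delta(\E_\ui), \E_{\uj} \otimes \E_{\uj'})$, where $\E_\uj\shqqi \E_{\uj'}$ expresses $\E_\ui$ through the co-standard factorization (up to a $q$-commutator constant). Cases (a), (b), (c) proceed exactly as in the corresponding computations for types $A$ and $C$: induction on the length reduces each to the rank-one inner product $(\E_i,\E_i)=1$, with a scalar of the form $s_{\cdot,\cdot}(q-q^{-1})q^{-N(|\ui|)+N(|\ui_1|)+N(|\ui_2|)}$ picked up at each step.

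The main obstacle, as in type $C$, is case (d) for the word $\ui=(i,\ldots,M-1,i,\ldots,M-2,M)$. Here $\E_\ui$ is proportional to the shuffle $((i,\ldots,M-2)\shq(i,\ldots,M-1))M$ (rather than being a $q$-commutator of two PBW vectors in the naive sense), so I need to compute $\Delta$ applied to this shuffle. Using Proposition~\ref{P:ShuffleCoproduct} and the formula $\Delta((i,\ldots,M-2)\shq(i,\ldots,M-1)\cdot M) = (\Delta(i,\ldots,M-2)\shq \Delta(i,\ldots,M-1))\cdot(M\otimes 1) + 1\otimes\bigl((i,\ldots,M-2)\shq(i,\ldots,M-1)\bigr)M$, I would extract the leading tensor component relevant to pairing against $\E_{(i,\ldots,M-2,M)}\otimes \E_{(i,\ldots,M-1)}$. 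The key numerical computation is that this pairing yields a coefficient $q^{-2}+1 = q^{-1}[2]$, which after combining with the $(q^2-q^{-2})[2]^{-1}$ factor in $\E_\ui$ produces the stated value $(q-q^{-1})^{2M-2i-1}q^{-N(|\ui|)}/(q+q^{-1})$.

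Finally, part (3) is immediate from the definition $\E_\ui^* = \E_\ui/(\E_\ui,\E_\ui)$: dividing each formula in (1) by the corresponding value in (2) causes the powers of $(q-q^{-1})$, $q^{-N(|\ui|)}$, and the sign factor $\varpi_D(\ui)$ to cancel, leaving the clean expressions in terms of words and shuffles. The factor $(q+q^{-1})$ in case (d) arises precisely from the cancellation $(q^2-q^{-2})/(q-q^{-1}) \cdot (1+q^{-2})^{-1}\cdot (\text{rescaling}) = q+q^{-1}$.
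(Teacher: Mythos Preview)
Your proposal is correct and follows essentially the same approach as the paper (which, for this corollary, gives no explicit proof and relies on the reader to repeat the type $A$, $B$, $C$ computations). Your description of part (2), case (d)---using $\Delta$ applied to the shuffle $((i,\ldots,M-2)\shq(i,\ldots,M-1))M$ and extracting the coefficient $q^{-2}+1$---is exactly what is done in the type $C$ analogue.

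One minor correction in case (d) of part (1): the factor $[2]^{-1}$ in the displayed formula for $\E_\ui$ is not coming from $\kappa_\ui$ via $p_\ui+1=2$. In fact $\beta_1-\beta_2=\af_{M-1}-\af_M$ is not a positive root here, so $p_\ui=0$ and $\kappa_\ui=1$. The $[2]^{-1}$ is purely cosmetic: $(q^2-q^{-2})[2]^{-1}=q-q^{-1}$, so the formula is just $\E_\ui=(q-q^{-1})^{2M-2i-1}q^{-N(|\ui|)}X$. The passage from $\r_\ui$ to $\E_\ui$ in this case uses that $(i,\ldots,M-2)\shq(i,\ldots,M-1)=(i,\ldots,M-2)\shqi(i,\ldots,M-1)$ (proved at the end of the preceding proposition), so that $q\mapsto q^{-1}$ leaves the shuffle part unchanged; the remaining sign bookkeeping with $\pi^{P(|\ui|)}$ then gives the result. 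This does not affect the structure of your argument, only the attribution of that particular scalar.
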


%%%%%%%%%%%%%%%%%%%
\subsection{Type $F(3|1)$}
Associated to the distinguished diagram
$$\xy (0,8)*{};
(-15,0)*{\fullmoon};(-5,0)*{\fullmoon}**\dir{-};
(-5,0)*{\fullmoon};(5,0)*{\fullmoon}**\dir{=};(0,0)*{>};(5,0)*{\fullmoon};(15,0)*{\otimes}**\dir{-};
(-15,-4)*{\scriptstyle 1};(-5,-4)*{\scriptstyle 2};(5,-4)*{\scriptstyle 3};(15,-4)*{\scriptstyle 4};
(0,-8)*{};
\endxy$$
we have the following table of dominant Lyndon words.

\begin{center}
\begin{tabular}{|c|c|}\hline
Height& Dominant Lyndon Words\\
\hline\hline
1&$1,2,3,4$\\
2&$(12),(23),(34)$\\
3&$(123),(233),(234)$\\
4&$(1233),(1234),(2343)$\\
5&$(12332),(12343)$\\
6&$(123432)$\\
7&$(1234323)$\\
8&$(12343234)$\\\hline
\end{tabular}
\end{center}

\subsection{Type $G(3)$}
Associated to the distinguished diagram

$$ \xy(0,5)*{};
{\ar@3{-}(0,0)*{\fullmoon};(10,0)*{\fullmoon}};(5,0)*{<};(-10,0)*{\otimes};(0,0)*{\fullmoon}**\dir{-};
(-10,-4)*{\scriptstyle 1};(0,-4)*{\scriptstyle 2};(10,-4)*{\scriptstyle 3};
(0,-8)*{};(0,8)*{};
\endxy$$
we have the following table of dominant Lyndon words.

\begin{center}
\begin{tabular}{|c|c|}\hline
Height& Dominant Lyndon Words\\
\hline\hline
1&$1,2,3$\\
2&$(12),(23)$\\
3&$(123),(223)$\\
4&$(1232),(2223)$\\
5&$(12322),(22323)$\\
6&$(123223)$\\
7&$(1232233)$\\\hline
\end{tabular}
\end{center}
\vspace{.4cm}

%%%%%%%%%%%%%%%%%%%%%
%%%%%%%%%%%%%%%%%%%%%
\section{Canonical Bases}\label{S:Canonical Bases}

In this section, we shall formulate and construct the canonical basis of type $A(m,0)$, $B(0,n+1)$, and $C(n+1)$ for the standard simple system.
Table~2 below compiles a list of standard simple systems for Lie superalgebras of basic type, with $D(2|1;\alpha)$ omitted.

 \begin{center}
  \vspace{.5cm}
 {Table 2: Dynkin diagrams for standard simple systems}
  \vspace{.5cm}
%\begin{tabular}
%\begin{table}[ht]
%\label{T:Dynkin Standard}
% \caption{Dynkin diagrams for standard simple systems}
%\begin{center}
\begin{tabular}{|c|c|}
\hline
$A(m,n)$&
$$ \xy
(-30,0)*{\fullmoon};(-20,0)*{\fullmoon}**\dir{-};(-15,0)*{\cdots};(-10,0)*{\fullmoon};(0,0)*{\otimes}**\dir{-};
(0,0)*{\otimes};(10,0)*{\fullmoon}**\dir{-};
(15,0)*{\cdots};(20,0)*{\fullmoon};(30,0)*{\fullmoon}**\dir{-};
(-30,-4)*{\scriptstyle \overline{n}};(-20,-4)*{\scriptstyle \overline{n-1}};(-10,-4)*{\scriptstyle \overline{1}};(0,-4)*{\scriptstyle 0};(10,-4)*{\scriptstyle 1};
(20,-4)*{\scriptstyle m-1};(30,-4)*{\scriptstyle m};
(0,-8)*{};(0,8)*{};
\endxy$$
\\\hline
$B(m,n+1)$&
$$ \xy
(-30,0)*{\fullmoon};(-20,0)*{\fullmoon}**\dir{-};(-15,0)*{\cdots};(-10,0)*{\fullmoon};(0,0)*{\otimes}**\dir{-};
(0,0)*{\otimes};(10,0)*{\fullmoon}**\dir{-};
(15,0)*{\cdots};(20,0)*{\fullmoon};(30,0)*{\fullmoon}**\dir{=};(25,0)*{>};
(-30,-4)*{\scriptstyle \overline{n}};(-20,-4)*{\scriptstyle \overline{n-1}};(-10,-4)*{\scriptstyle \overline{1}};(0,-4)*{\scriptstyle 0};(10,-4)*{\scriptstyle 1};
(20,-4)*{\scriptstyle m-1};(30,-4)*{\scriptstyle m};
(0,-8)*{};(0,8)*{};
\endxy$$
\\\hline
$B(0,n+1)$&
$$ \xy
(-20,0)*{\fullmoon};(-10,0)*{\fullmoon}**\dir{-};(-5,0)*{\cdots};(0,0)*{\fullmoon};(10,0)*{\fullmoon}**\dir{-};
(10,0)*{\fullmoon};(20,0)*{\newmoon}**\dir{=};(15,0)*{>};
(-20,-4)*{\scriptstyle \overline{n}};(-10,-4)*{\scriptstyle \overline{n-1}};
(10,-4)*{\scriptstyle \overline{1}};(20,-4)*{\scriptstyle 0};
(0,-8)*{};(0,8)*{};
\endxy$$
\\\hline
$C(n+1)$&
$$ \xy
(-20,0)*{\otimes};(-10,0)*{\fullmoon}**\dir{-};(-5,0)*{\cdots};(0,0)*{\fullmoon};(10,0)*{\fullmoon}**\dir{-};
(10,0)*{\fullmoon};(20,0)*{\fullmoon}**\dir{=};(15,0)*{<};
(-20,-4)*{\scriptstyle \overline{0}};(-10,-4)*{\scriptstyle \overline{1}};
(10,-4)*{\scriptstyle \overline{n-1}};(20,-4)*{\scriptstyle \overline{n}};
(0,-8)*{};(0,8)*{};
\endxy$$
\\\hline
$D(m,n+1)$&
$$\xy
(-30,0)*{\fullmoon};(-20,0)*{\fullmoon}**\dir{-};(-15,0)*{\cdots};
(-10,0)*{\fullmoon};(0,0)*{\otimes}**\dir{-};(0,0)*{\otimes};(10,0)*{\fullmoon}**\dir{-};
(15,0)*{\cdots};(20,0)*{\fullmoon};(27,5)*{\fullmoon}**\dir{-};(20,0)*{\fullmoon};(27,-5)*{\fullmoon}**\dir{-};
(-30,-4)*{\scriptstyle \overline{n}};(-20,-4)*{\scriptstyle \overline{n-1}};(-10,-4)*{\scriptstyle \overline{1}};(0,-4)*{\scriptstyle 0};(10,-4)*{\scriptstyle 1};
(33,5)*{\scriptstyle m-1};(33,-5)*{\scriptstyle m};
(0,-8)*{};(0,8)*{};
\endxy$$
\\\hline
$F(3|1)$&
$$\xy (0,8)*{};
(-15,0)*{\otimes};(-5,0)*{\fullmoon}**\dir{-};
(-5,0)*{\fullmoon};(5,0)*{\fullmoon}**\dir{=};(0,0)*{<};(5,0)*{\fullmoon};(15,0)*{\fullmoon}**\dir{-};
(-15,-4)*{\scriptstyle 0};(-5,-4)*{\scriptstyle \overline{1}};(5,-4)*{\scriptstyle \overline{2}};(15,-4)*{\scriptstyle \overline{3}};
(0,-8)*{};
\endxy$$
\\\hline
$G(3)$&
$$ \xy(0,5)*{};
{\ar@3{-}(0,0)*{\fullmoon};(10,0)*{\fullmoon}};(5,0)*{<};(-10,0)*{\otimes};(0,0)*{\fullmoon}**\dir{-};
(-10,-4)*{\scriptstyle 0};(0,-4)*{\scriptstyle \overline{1}};(10,-4)*{\scriptstyle \overline{2}};
(0,-8)*{};(0,8)*{};
\endxy$$
\\\hline
\end{tabular}
\end{center}
%\end{table}

%%%%%%%%%%%%%%%%%%%%%
\subsection{Integral Forms}

We start with some general discussions of root systems of basic type in order to define
suitable integral forms of $U_q$.

We will restrict our attention to the standard simple systems in Table~2, and fix the ordered set
$$(\I,\leq)=\{\bar{n}<\cdots<\bar{1}<0<1<\cdots<m\}.$$
Following Lusztig, we call $i\in\I$ and $\af_i\in\Pi$ \textbf{special} if  $c_i\leq 1$ in the expansions of every root $\bt$
in $\widetilde{\Phi}^+$ in terms of $\Pi$, $\bt=\sum_{j\in\I}c_j\af_j.$
%We say that $i\in\I$ is \textbf{semispecial} if $c_i\leq 1$ with the exception of a unique positive root where $c_i=2$.
We will call a Dynkin diagram (or the corresponding $\UU$)
appearing in Table~2  \textbf{special} if any $i\in\I_\iso$(which is unique if it exists) is special.
Note that we take into account the entire (positive) root system $\widetilde{\Phi}^+$ as opposed to the reduced one.
By inspection we have the following.

\begin{prp}\label{P:super-special}
The Dynkin diagrams in Table~2  are special if and only if they are of type $A(m,n)$, $B(0,n+1)$,  and $C(n+1)$.
%The semispecial diagrams are of type $B(m,1)$, $D(m,1)$, $F(3|1)$, and $G(3)$.
\end{prp}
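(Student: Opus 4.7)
The proposal is to verify the statement by a direct case-by-case inspection of the positive (non-reduced) root systems $\widetilde{\Phi}^+$ for each of the basic Lie superalgebras listed in Table~2. These root systems are classical and can be read off from \cite{kac, CW12}. Since each diagram in Table~2 contains at most one isotropic odd simple node, I only need to examine the coefficient $c_0$ (or $c_{\bar 0}$ for $C(n+1)$) of the unique $\alpha_i$ with $i\in \I_{\iso}$ in the expansion $\beta=\sum_j c_j\alpha_j$ of an arbitrary $\beta\in\widetilde{\Phi}^+$.

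For the backward direction (these three types are special), I would handle them as follows. First, $B(0,n+1)$ has $\I_{\iso}=\emptyset$, so it is vacuously special. For $A(m,n)$, working in the standard $\varepsilon/\delta$-basis, every positive root has the form $\varepsilon_i-\varepsilon_j$ (where $\varepsilon$ ranges over both even and odd weight labels), and an induction along the chain of simple roots shows that each simple root appears with coefficient $0$ or $1$; in particular $\alpha_0$ does. For $C(n+1)=\mathfrak{osp}(2|2n)$, the isotropic odd simple root is $\alpha_{\bar 0}=\varepsilon-\delta_1$, where $\varepsilon$ generates the $\mathfrak{so}(2)$-direction; the key observation is that $\varepsilon$ occurs with coefficient in $\{-1,0,1\}$ in every root of $\mathfrak{osp}(2|2n)$, and the coefficient of $\alpha_{\bar 0}$ in any $\beta\in\widetilde{\Phi}^+$ equals the absolute value of the $\varepsilon$-coefficient of $\beta$, hence is at most $1$.

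For the forward direction, I would exhibit explicit roots where the isotropic odd simple root occurs with coefficient $\geq 2$:
\begin{itemize}
\item For $B(m,n+1)=\mathfrak{osp}(2m+1|2n+2)$ with $m\geq 1$, the even root $2\delta_{n+1}\in\widetilde{\Phi}^+$ expands as $2\alpha_0+2\alpha_1+\cdots+2\alpha_{m-1}+2\alpha_m$, so $c_0=2$.
\item For $D(m,n+1)=\mathfrak{osp}(2m|2n+2)$, the same root $2\delta_{n+1}\in\widetilde{\Phi}^+$ appears (it lies in the $\mathfrak{sp}(2n+2)$ part of the even subalgebra) and again expands with $c_0=2$.
\item For the distinguished simple systems of $F(3|1)$ and $G(3)$, one checks directly against the tables of positive roots in \cite{kac}: for $F(3|1)$ the highest root is $2\alpha_0+3\alpha_{\bar 1}+4\alpha_{\bar 2}+2\alpha_{\bar 3}$, and for $G(3)$ the highest root is $2\alpha_0+4\alpha_{\bar 1}+2\alpha_{\bar 2}$, each with $c_0=2$.
\end{itemize}

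The computation itself is essentially routine once the right roots are identified, so the main obstacle is only cosmetic: organizing the bookkeeping so that the same $\varepsilon/\delta$-conventions are used throughout and so that the two halves of the equivalence are clearly separated. Given the short length, I would simply present the argument as a table or a brief case analysis rather than developing general machinery.
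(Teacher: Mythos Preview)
Your proposal is correct and takes essentially the same approach as the paper: the paper's proof consists of the single phrase ``By inspection we have the following,'' so your case-by-case verification against the explicit root systems in \cite{kac, CW12} is precisely the intended argument, carried out in full. One minor caution: the exact numerical coefficients you quote for the highest roots of $F(3|1)$ and $G(3)$ depend on the labeling conventions, so it is worth double-checking them against the diagrams in Table~2; but since any root coming from the $\mathfrak{sl}(2)$ factor of the even subalgebra already has $c_0=2$, the conclusion is unaffected.
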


Let $\A=\Z[q,q^{-1}]$ and define $U_\A$ to be the $\A$-subalgebra of $\Uq$ generated by $e_i$ ($i\in\I_\iso$)
and the divided powers $e_i^{(k)}=e_i^k/[k]_i!$ ($i\in\I_{\zero}\sqcup\I_{\niso}$, $k\geq 1$). Set
$$U_\A^*=\{u\in\Uq\mid (u,v)\in\A\mbox{ for all }v\in U_\A\}.$$
Denote by $\W'$ the subset of words in $\ui\in\W$ of the form $\ui=i_1^{n_1}\cdots i_d^{n_d}$,
where $i_k\neq i_{k+1}$ for all $1\leq k<d$
 and $n_l \in \{0,1\}$ whenever $i_l \in \I_\iso$. For such $\ui\in \W'$, we set
$$\varsigma_\ui=[n_1]_{i_1}!\cdots[n_d]_{i_d}!$$
and write $e_\ui=e_{i_1}^{n_1}\cdots e_{i_d}^{n_d}$.
Then, $\varsigma_\ui^{-1} e_\ui$ is a product of divided powers.
Consider the free $\A$-module $\F_\A=\bigoplus_{\ui\in\W' }\A\varsigma_\ui \ui$ and define
\begin{align}\label{E:FAstar}
\UU_\A^*=\F_\A\cap\UU.
\end{align}

We have the following analogue of \cite[Lemma 8]{lec} with an entirely similar proof.

\begin{lem}\label{L:UUAstar} We have $\UU_\A^*=\Psi(U_\A^*)$.
\end{lem}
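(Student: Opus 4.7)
The plan is to establish the two inclusions $\Psi(U_\A^*) \subseteq \UU_\A^*$ and $\UU_\A^* \subseteq \Psi(U_\A^*)$ by translating the pairing condition defining $U_\A^*$ into a coefficient condition on the image under $\Psi$, via the defining formula \eqref{eq:psiu} combined with a careful identification of coefficients in terms of the bilinear form.

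The preliminary step is to note that, for any $u \in U_q$ and any word $\ui = (i_1,\ldots,i_d)$, iterating the adjunction $(e_ix,y)=(x,e_i'(y))$ from Proposition~\ref{P:BilinearForm} yields
\[
(e_\ui,u) \,=\, (e_{i_1}e_{i_2}\cdots e_{i_d},u) \,=\, e'_{i_d}\cdots e'_{i_1}(u) \,=\, e'_{\tau(\ui)}(u),
\]
so the coefficient of $\ui$ in $\Psi(u)=\sum_{\uj} e'_\uj(u)\,\uj$ is precisely $(e_{\tau(\ui)},u)$. Writing $\ui = i_1^{n_1}\cdots i_d^{n_d}$ with $i_k\neq i_{k+1}$ (so that $\tau(\ui)$ has the same block structure and $\varsigma_{\tau(\ui)}=\varsigma_\ui$), we obtain
\[
e_{\tau(\ui)} \,=\, e_{i_d}^{n_d}\cdots e_{i_1}^{n_1} \,=\, \varsigma_\ui\cdot e_{i_d}^{(n_d)}\cdots e_{i_1}^{(n_1)}.
\]
When some $n_l\ge 2$ with $i_l$ isotropic, relation (Iso) forces $e_{i_l}^2=0$, so $e_{\tau(\ui)}=0$; thus $\Psi(u)$ automatically lies in $\bigoplus_{\ui\in\W'}\Q(q)\,\ui$.

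For the first inclusion, take $u\in U_\A^*$ and $\ui\in\W'$. Since $e_{i_d}^{(n_d)}\cdots e_{i_1}^{(n_1)}\in U_\A$, the coefficient of $\ui$ in $\Psi(u)$ is
\[
(e_{\tau(\ui)},u) \,=\, \varsigma_\ui\,\bigl(e_{i_d}^{(n_d)}\cdots e_{i_1}^{(n_1)},u\bigr) \,\in\, \varsigma_\ui\,\A.
\]
Hence $\Psi(u)\in\F_\A$, and since $\Psi(u)\in\UU$ by Corollary~\ref{cor:psiUF}, we conclude $\Psi(u)\in\F_\A\cap\UU=\UU_\A^*$.

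For the reverse inclusion, suppose $x=\Psi(u)\in\UU_\A^*$ and let $v\in U_\A$ be arbitrary. It suffices to verify $(u,v)\in\A$ on a spanning set. Using the divided-power identity $e_i^{(k)}e_i^{(l)}=\bbinom{k+l}{k}_i e_i^{(k+l)}$ (with $\A$-coefficient) together with the vanishing $e_i^{(k)}=0$ for $i\in\I_\iso$, $k\ge 2$, any generator of $U_\A$ can be rewritten as an $\A$-linear combination of elements of the form $\varsigma_\uj^{-1}e_\uj$ with $\uj=i_1^{n_1}\cdots i_d^{n_d}\in\W'$. For such $\uj$,
\[
(u,\varsigma_\uj^{-1}e_\uj) \,=\, \varsigma_\uj^{-1}\,e'_{\tau(\uj)}(u)
\]
is, up to the factor $\varsigma_\uj^{-1}=\varsigma_{\tau(\uj)}^{-1}$, exactly the coefficient of $\tau(\uj)$ in $x$, which lies in $\varsigma_{\tau(\uj)}\A$ by the assumption $x\in\F_\A$. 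Thus $(u,v)\in\A$ and $u\in U_\A^*$, completing the proof.

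The only substantive obstacle is the reduction step in the reverse inclusion, namely ensuring that every element of $U_\A$ can indeed be expressed as an $\A$-combination of the normal-form products indexed by $\W'$; this uses the $\A$-integrality of the quantum binomial coefficients $\bbinom{k+l}{k}_i$ in both the even and non-isotropic-odd cases, the vanishing of higher divided powers at isotropic nodes, and the identity $\varsigma_{\tau(\uj)}=\varsigma_\uj$, all of which are mild bookkeeping rather than genuine difficulties.
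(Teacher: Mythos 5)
Your proof is correct and takes essentially the same approach as the paper's, translating $u\in U_\A^*$ into a coefficient condition on $\Psi(u)$ via the pairing $(u,\varsigma_\ui^{-1}e_\ui)=\varsigma_\ui^{-1}e'_{\tau(\ui)}(u)$. The paper's proof is much terser, but the points you make explicit---the $\tau$-twist in the adjunction, the automatic vanishing of coefficients on words outside $\W'$, and the spanning of $U_\A$ by the normal-form divided-power monomials $\varsigma_\uj^{-1}e_\uj$---are exactly the details it glosses over.
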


\begin{proof}
%This is proved exactly as for \cite[Lemma 8]{lec}.
Any $u\in\Uq$ belongs to $U_\A^*$ if and only if $(u,\varsigma_{\ui}^{-1}e_\ui)\in\A$ for all $\ui\in\W'$.
This holds if and only if $\Psi(u)$ is a linear combination of elements $\varsigma_\ui\ui$ for $\ui\in\W'$,
which is true if and only if $\Psi(u)\in\F_\A$.
\end{proof}

\begin{cor}\label{C:UAstar is a subalgebra}
The free $\A$-module $U_\A^*$ is an $\A$-subalgebra of $\Uq$.
\end{cor}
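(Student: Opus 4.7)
The plan is to use property (B3) of the bilinear form from Proposition~\ref{T:BilinearForm} to translate closure of $U_\A^*$ under multiplication into a statement about the coproduct $\Delta$. For $u, u' \in U_\A^*$ and any $v \in U_\A$, symmetry of the form together with (B3) give
\[
(uu', v) = (v, uu') = (\Delta(v), u \otimes u').
\]
Once it is shown that $\Delta(U_\A) \subseteq U_\A \otimes_\A U_\A$, writing $\Delta(v) = \sum v' \otimes v''$ with $v', v'' \in U_\A$ expresses the right-hand side as $\sum (v', u)(v'', u')$, which lies in $\A$ because $u, u' \in U_\A^*$. Since $1 \in U_\A^*$ and $U_\A^*$ is clearly an $\A$-submodule of $U_q$, this will prove the corollary.

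The main task is therefore to verify $\Delta(U_\A) \subseteq U_\A \otimes_\A U_\A$. The key preliminary observation is that the twisting factor $\pi^{p(b)p(c)} q^{-(|b|,|c|)}$ on $U_q \otimes U_q$ lies in $\A$ whenever $b, c$ are homogeneous of weights in $Q^+$, so $U_\A \otimes_\A U_\A$ is stable under the twisted multiplication. Since $\Delta$ is an algebra homomorphism into this twisted product, it suffices to check closure on the algebra generators of $U_\A$. For $i \in \I_\iso$, relation (Iso) forces $e_i^2 = 0$, so the only generator at index $i$ is $e_i$ itself, and $\Delta(e_i) = e_i \otimes 1 + 1 \otimes e_i$ is manifestly integral.

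For $i \in \I_\zero \sqcup \I_\niso$, I must establish a divided-power coproduct formula
\[
\Delta(e_i^{(n)}) = \sum_{r+s = n} c_{r,s}^{(i)}\, e_i^{(r)} \otimes e_i^{(s)}, \qquad c_{r,s}^{(i)} \in \A.
\]
A direct computation with the twisted product gives the commutation $(1 \otimes e_i)(e_i \otimes 1) = \pi_i q_i^{-2}(e_i \otimes 1)(1 \otimes e_i)$, so $e_i \otimes 1$ and $1 \otimes e_i$ quasi-commute with parameter $\pi_i q_i^{-2}$. The corresponding quantum binomial theorem then expands $\Delta(e_i)^n = (e_i \otimes 1 + 1 \otimes e_i)^n$ as $\sum_r \gamma_r\, e_i^r \otimes e_i^{n-r}$, and after dividing by $[n]_i!$ the coefficients can be rewritten as quantum binomials $\bbinom{n}{r}_i$, which lie in $\A$ by the standard identities for the bracket symbols $[n]_i$. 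The main technical nuisance---and the only place the super sign genuinely intervenes---is the bookkeeping for $i \in \I_\niso$, where $\pi_i = -1$ introduces alternating signs that must align with the definition $[n]_i = (\pi_i^n q_i^n - q_i^{-n})/(\pi_i q_i - q_i^{-1})$ from \S\ref{S:QuantGrps}; this is elementary but warrants careful verification.
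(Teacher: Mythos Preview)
Your argument is correct and takes a genuinely different route from the paper. The paper's proof transfers the question to the shuffle algebra via Lemma~\ref{L:UUAstar}: since $\UU_\A^* = \F_\A \cap \UU$ and the lattice $\F_\A = \bigoplus_{\ui\in\W'} \A\,\varsigma_\ui\,\ui$ is visibly closed under $\shq$ (the shuffle coefficients are units in $\A$, and the $\varsigma$-factors match up via quantum multinomial identities), closure of $U_\A^*$ under multiplication is immediate. Your approach instead stays inside $U_q$ and exploits the bialgebra duality: you show that $U_\A$ is a sub-\emph{coalgebra}, i.e.\ $\Delta(U_\A)\subseteq U_\A\otimes_\A U_\A$, via the divided-power coproduct formula, and then closure of $U_\A^*$ under multiplication follows formally from (B3). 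The paper's route is shorter given the shuffle machinery already in place, while yours is the standard Hopf-algebraic argument and does not rely on the embedding $\Psi$ at all; it would work in any setting where the integral form is generated by divided powers and the relevant $q$-binomials are integral. Your handling of the $\I_\niso$ case is fine: the $(\pi,q)$-binomials $\bbinom{n}{r}_i$ do lie in $\A$, as established for instance in \cite{CHW1}.
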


\begin{proof}
It is clear from the definitions that $(\F_\A,\shq)$ is an $\A$-subalgebra of $(\F,\shq)$ and, therefore, so is $\UU_\A^*$.
By Lemma \ref{L:UUAstar}, $U_\A^*$ is an $\A$-subalgebra of $\Uq$.
\end{proof}

Let $\UU_\PBW$ be the $\A$-lattice spanned by the PBW basis $\{\E_\ui\mid\ui\in\W^+\}$, and $\UU_\PBW^*$ the
$\A$-lattice spanned by  the dual PBW basis $\{\E_\ui^*\mid\ui\in\W^+\}$ in \eqref{E:Dual PBW}.

\begin{prp}\label{P:special UA basis}
Assume that $\UU$ is special. Then $\UU_\PBW^*=\UU_\A^*$ and $\UU_\PBW=\UU_\A$.
%\begin{enumerate}
%\item $\UU_\PBW^*=\UU_\A^*$, and
%\item $\UU_\PBW=\UU_\A$.
%\end{enumerate}
\end{prp}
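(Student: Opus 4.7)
The plan is to deduce both lattice equalities from a single one via duality with respect to the bilinear form, and then to establish that remaining equality by a triangularity argument.

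First I would observe that $\UU_\PBW$ and $\UU_\PBW^*$ are a pair of mutually dual $\A$-lattices with respect to $(\cdot,\cdot)$: by Theorem~\ref{T:OrthogonalPBW} and the very definition of the dual PBW basis in~\eqref{E:Dual PBW}, one has $(\E_\ui,\E_\uj^*)=\delta_{\ui\uj}$. In parallel, $\UU_\A$ and $\UU_\A^*$ are mutually dual by construction, via Lemma~\ref{L:UUAstar}. Hence the two claimed equalities are equivalent, and it suffices to prove $\UU_\PBW^*=\UU_\A^*$.

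For the inclusion $\UU_\PBW^*\subseteq\UU_\A^*$ I need $\E_\ui^*\in\F_\A$ for all $\ui\in\Wr^+$. I would first dispatch the Lyndon case by inspection of the explicit formulas of Section~\ref{S:Computations}: in each special standard type, the normalization $(\E_\ui,\E_\ui)^{-1}$ cancels exactly the powers of $(q-q^{-1})$ or $(q^2-q^{-2})$ appearing in $\E_\ui$, leaving either the word $\ui$ itself or $[2]_M$ times a shuffle of integer-coefficient words, both manifestly in $\F_\A$. Specialness enters here to guarantee that every isotropic simple letter appears at most once in $\ui$, so that $\ui\in\W'$ and no isotropic quantum factorial hides in $\varsigma_\ui$. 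For a general $\ui=\ui_1^{n_1}\cdots\ui_d^{n_d}\in\Wr^+$, Theorem~\ref{T:OrthogonalPBW} rearranges to
\[
\E_\ui^* \,=\, \pi^{-\xi_\ui}q^{c_\ui}\,(\E_{\ui_d}^*)^{\shq n_d}\shq\cdots\shq(\E_{\ui_1}^*)^{\shq n_1},
\]
and by Theorem~\ref{thm:canonical factorization of dominant words} we have $n_l=1$ whenever $|\ui_l|$ is isotropic odd; hence no divided power of an isotropic Lyndon word is ever needed, and integrality follows from the $\A$-subalgebra property of $(\F_\A,\shq)$ recorded in Corollary~\ref{C:UAstar is a subalgebra}.

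For the reverse inclusion $\UU_\A^*\subseteq\UU_\PBW^*$ I would induct on the lex-maximum of an arbitrary $x\in\F_\A\cap\UU=\UU_\A^*$. Combining Theorem~\ref{T:max word} with the integrality just established yields
\[
\E_\ui^* \,=\, \varsigma_\ui\,\ui + \sum_{\substack{\uk>\ui\\ \uk\in\W'}} a_{\ui\uk}\,\varsigma_\uk\,\uk,\qquad a_{\ui\uk}\in\A.
\]
Given nonzero $x\in\F_\A\cap\UU$, its maximum $\uj=\max(x)$ is dominant (apply Proposition~\ref{P:monomial basis} to the monomial expansion of $x$, since every $\ep_\uk$ has maximum $\uk\in\Wr^+$), and the coefficient of $\uj$ in $x$ has the form $\lambda\varsigma_\uj$ with $\lambda\in\A$. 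Hence $x-\lambda\E_\uj^*$ lies in $\F_\A\cap\UU$ with strictly smaller maximum, and the induction terminates on each fixed weight.

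The main obstacle is the Lyndon-case integrality $\E_\ui^*\in\F_\A$, which is precisely where specialness enters: if an isotropic simple root appeared with coefficient $\geq 2$ in some positive root $\bt$, the corresponding dominant Lyndon word $\ui^+(\bt)$ would contain that letter multiple times and force a genuine rational (non-integral) correction in $\E_\ui^*$ beyond what the prefactor $\varsigma_\ui$ can absorb. Once this integrality is secured, both the triangularity argument and the duality conclusion are formal.
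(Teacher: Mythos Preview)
Your reduction to a single lattice equality via duality is sound and matches the paper, as does your inclusion $\UU_\PBW^*\subseteq\UU_\A^*$ via the explicit Section~\ref{S:Computations} formulas together with Corollary~\ref{C:UAstar is a subalgebra}.

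The reverse inclusion, however, has a genuine gap. Your triangularity display
\[
\E_\ui^* \,=\, \varsigma_\ui\,\ui + \sum_{\uk>\ui} a_{\ui\uk}\,\varsigma_\uk\,\uk
\]
is wrong on two counts. First, since $\max(\E_\ui^*)=\ui$, the sum should run over $\uk<\ui$, not $\uk>\ui$. More seriously, the leading coefficient of $\E_\ui^*$ at the word $\ui$ is \emph{not} $\varsigma_\ui$ in general. Already for $\mathfrak{gl}(3)$ with $\ui=(1212)$ (canonical factorization $\ui_1=(12)$, $n_1=2$) one computes $\E_{(12)}^*=(12)$ and
\[
\E_{(1212)}^*=q\,(12)\shq(12)=q\bigl((1+q^{-2})(1212)+\cdots\bigr),
\]
so the coefficient of $(1212)$ is $q+q^{-1}=[2]$, whereas $\varsigma_{(1212)}=1$. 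Since $[2]$ is not a unit in $\A$, your subtraction step $x-\lambda\E_\uj^*$ fails: from $x\in\F_\A$ you only know the coefficient of $\uj$ lies in $\A\varsigma_\uj$, and you cannot divide by $[2]$ to produce $\lambda\in\A$.

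The paper circumvents this by reading off the coefficient not of $\ui$ itself but of the ``gathered'' word $\widetilde{\ui}=\ui_1^{(n_1)}\cdots\ui_r^{(n_r)}$ (where $\ui_s^{(n)}=(i_{s1}^{na_{s1}},\ldots,i_{sd_s}^{na_{sd_s}})$), showing via quantum multinomial identities that this coefficient is, up to a power of $q$, exactly $\varsigma_{\widetilde{\ui}}$, and that $\widetilde{\uj}$ does not occur in $\E_\ui^*$ for $\ui<\uj$. This is precisely where specialness is used a second time (to ensure nonvanishing of the multinomial coefficient), and it is the missing idea in your argument.
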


\begin{proof}
The two identities are equivalent, and we shall prove that $\UU_\PBW^*=\UU_\A^*$.
To this end, note that by the computations in Section~\ref{S:Computations}, $\E_\ui^*\in\UU_\A^*$ for all $\ui\in\L^+$.
By Corollary~\ref{C:UAstar is a subalgebra}, it follows that
$$\UU_\PBW^*\subset\UU_\A^*.$$
We will now prove that equality holds when $\UU$ is special. To this end, suppose that
$$\sum_{\ui\in\W^+}\ld_\ui\E_\ui^*\in\UU_\A^*.$$
We will prove that all $\ld_\ui\in\A$ by induction on $|\{\ui\in\W^+\mid \ld_\ui\neq0\}|$.

First, suppose that $\ld_\ui\E_\ui^*\in\UU_\A^*$.
Suppose that $\ui=(i_1^{a_1},\ldots, i_d^{a_d})\in\L^+_\zero\sqcup\L^+_\niso$, $i_r\neq i_{r+1}$.
Note that the coefficient of $\ui$ in $\E_\ui$ is $\varsigma_\ui$ (except for the long roots in type $C$,
where we instead consider the word $\ui'=(i,i,i+1,i+1,\ldots,M-1,M-1,M)$ whose coefficient is $\varsigma_{\ui'}$). For $n\geq 1$, let
$$\ui^{(n)}=(i_1^{na_1},\ldots,i_d^{na_d}).$$
Since the diagram for $\UU$ is special, the coefficient of $\ui^{(n)}$ in $(\E_\ui^*)^{\shq n}$ is nonzero, and (up to a power of $q$) equals
$$\varsigma_\ui^n\left[{na_1\atop a_1,\ldots,a_1}\right]_{i_1}\cdots\left[{na_d\atop a_d,\ldots,a_d}\right]_{i_d}=\varsigma_{\ui^{(n)}}$$
where, for $r\geq 1$,
$$\left[{na_r\atop a_r,\ldots,a_r}\right]_{i_r}=\frac{[na_r]_{i_r}!}{([a_r]_{i_r}!)^n}$$
is the quantum multinomial coefficient.

Now, if $\ui\in\W^+$ and $\ui=\ui_1^{n_1}\cdots\ui_r^{n_r}$, $\ui_1>\cdots>\ui_r$, $\ui_s=(i_{s1}^{a_{s1}},\ldots,i_{sd_s}^{a_{sd_s}})\in\L^+$,
then the coefficient of $\widetilde{\ui}:=\ui_1^{(n_1)}\cdots\ui_r^{(n_r)}$ in $\E_\ui^*$ is nonzero
(again, because the diagram is special) and (up to a power of $q$) equals
$$\prod_{s=1}^r\varsigma_{\ui_s}^{n_s}\prod_{t=1}^{d_s}\left[{n_sa_{st}\atop a_{st},\ldots,a_{st}}\right]_{i_{st}}
=\prod_{s=1}^r\varsigma_{\ui_s^{(n_s)}}=\varsigma_{\widetilde{\ui}}.$$
(Above, we make the appropriate adjustments in type $C$ as in the last paragraph).
Hence, if $\ld_\ui\E_\ui^*\in\UU_\A^*$, then $\ld_\ui\varsigma_{\widetilde{\ui}}\in\A\varsigma_{\widetilde{\ui}}$ which forces $\ld_\ui\in\A$ as required.

We now proceed to the inductive step. Let $\uj=\max\{\ui\mid \ld_\ui\neq 0\}$. Then, the coefficient of $\widetilde{\uj}$ in $\E_\uj^*$
(making the appropriate adjustments in type $C$) is $\varsigma_{\widetilde{\uj}}$.
Moreover, $\widetilde{\uj}$ does not occur in $\E_\ui^*$ for $\ui<\uj$. It follows that $\ld_\uj\in\A$, and induction applies to
$$\sum_{\ui\neq\uj}\ld_\ui\E_\ui^*=\left(\sum_{\ui}\ld_\ui\E_\ui^*\right)-\ld_\uj\E_\uj^*\in\UU_\A^*.$$
This completes the proof.
\end{proof}

\begin{exa} It is not true that $\UU_\PBW=\UU_\A$ for non-special standard Dynkin diagrams in general.
Indeed, consider type $B(1,1)$:
$$
\xy
(0,0)*{\otimes};(10,0)*{\fullmoon}**\dir{=};(0,-3)*{\scriptstyle 1};(10,-3)*{\scriptstyle 2};(5,0)*{>};
\endxy
$$
The root  %$\af_1$ is semispecial with
$\bt=\af_1+\af_2$ non-isotropic. We have $\ui(\bt)=(12)$, $\E_{(12)}^*=(12)$, and
$$\E_{(1212)}^*=q^{-1}\E_{(12)}^*\shq\E_{(12)}^*=(\pi q+q^{-1})(1212).$$
In particular, $\frac{1}{\{2\}}\E_{(1212)}\in\UU_\A^*$ showing that $\UU_\PBW^*\neq\UU_\A^*$.
\end{exa}

%%%%%%%%%%%%%%%%%%%%
\subsection{Pseudo-canonical  and Canonical Bases}

\begin{lem}\label{L:Bar PBW}
For $\ui\in\Wr^+$, write
\begin{align}\label{E:Bar PBW}
\overline{\E_\ui}=\sum_{\uj\in\Wr^+}a_{\ui\uj}\E_\uj, \qquad \text{ for } \; a_{\ui\uj} \in \Q(q).
\end{align}
Then, $a_{\ui\ui}=1$ for all $\ui\in\Wr^+$ and $a_{\ui\uj}=0$ if $\ui>\uj$.
\end{lem}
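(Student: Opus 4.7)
The plan is to deduce bar-triangularity of $\E_\ui$ from the (already-known) triangular expansion in the shuffled monomial basis $\{\ep_{\tau(\uj)}\}_{\uj\in\Wr^+}$ supplied by Proposition~\ref{P:PBW Triangularity}, using the fact that both the monomials $\ep_\uj$ and the normalizing scalars $\kp_\ui$ are bar-invariant.

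First I would verify that $\ep_\uj$ is bar-invariant for every word $\uj=(j_1,\ldots,j_r)\in\Wr$. This is immediate: $\ep_\uj=\Psi(e_{j_1}\cdots e_{j_r})$, each Chevalley generator $e_{j_s}$ is fixed by the bar involution on $\Uq$ (Proposition~\ref{P:autoUq}(2)), and Proposition~\ref{P:automorphisms}(2) tells us that $\Psi$ intertwines the two bar involutions. Combined with Proposition~\ref{P:Bar Invariant Renormalization}, which gives $\overline{\kp_\ui}=\kp_\ui$, applying the bar involution to
$$\E_\ui=\kp_\ui^{-1}\,\ep_{\tau(\ui)}+\sum_{\uj>\ui}\af_{\ui\uj}\,\ep_{\tau(\uj)}$$
yields
$$\overline{\E_\ui}=\kp_\ui^{-1}\,\ep_{\tau(\ui)}+\sum_{\uj>\ui}\overline{\af_{\ui\uj}}\,\ep_{\tau(\uj)},$$
so the diagonal coefficient is unchanged under barring and the effect is confined to the higher-order terms.

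Next I would invert the change of basis. Working within a fixed weight space $\UU_\nu$ and ordering $\Wr^+_\nu$ lexicographically, the transition matrix from $\{\E_\ui\}$ to $\{\ep_{\tau(\uj)}\}$ is upper triangular with diagonal entries $\kp_\ui^{-1}$, hence invertible with inverse of the same shape:
$$\ep_{\tau(\uj)}=\kp_\uj\,\E_\uj+\sum_{\uk>\uj}b_{\uj\uk}\,\E_\uk,\qquad b_{\uj\uk}\in\Q(q).$$
Substituting this back into the expression for $\overline{\E_\ui}$ and collecting terms, every $\E_\uk$ that appears satisfies $\uk\geq\ui$, and $\uk=\ui$ occurs only from the diagonal product $\kp_\ui^{-1}\cdot\kp_\ui\,\E_\ui=\E_\ui$. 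Hence
$$\overline{\E_\ui}=\E_\ui+\sum_{\uk>\ui}c_{\ui\uk}\,\E_\uk$$
for some $c_{\ui\uk}\in\Q(q)$, which is precisely $a_{\ui\ui}=1$ and $a_{\ui\uj}=0$ for $\ui>\uj$.

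I anticipate no serious obstacle: the argument is formal once bar-invariance of $\ep_\uj$ and of $\kp_\ui$ is in hand, both of which are provided by earlier propositions. The only cosmetic point to check is that the $\tau$-reindexing does not disturb triangularity—but since $\tau$ is a bijection on $\Wr$ preserving weight and the triangularity is measured with respect to the (lexicographic) order on $\Wr^+$ used in Proposition~\ref{P:PBW Triangularity}, the inversion goes through unchanged.
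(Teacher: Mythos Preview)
Your argument is correct and is essentially the same as the paper's own proof: both use the upper-triangular expansion of $\E_\ui$ in the basis $\{\ep_{\tau(\uj)}\}_{\uj\in\Wr^+}$ from Proposition~\ref{P:PBW Triangularity}, bar-invariance of $\ep_{\tau(\uj)}$ and of $\kp_\ui$, and then invert the change of basis to read off $a_{\ui\ui}=1$ and $a_{\ui\uj}=0$ for $\uj<\ui$. Your closing remark about $\tau$-reindexing is unnecessary (the triangularity in Proposition~\ref{P:PBW Triangularity} is already phrased with the index $\uj\in\Wr^+$ ordered lexicographically, and the family $\{\ep_{\tau(\uj)}\}$ is a basis since the transition matrix to the PBW basis is unitriangular up to the invertible diagonal $\kp_\ui$), but it does no harm.
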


\begin{proof}
This proof is identical to that of \cite[Lemma 37]{lec}. By Propositions~\ref{P:Bar Invariant Renormalization} and \ref{P:PBW Triangularity} we have
$$\ep_{\tau(\ui)}=\sum_{\uj\geq\ui}\beta_{\ui\uj}\E_\uj$$
with $\overline{\bt_{\ui\ui}}=\bt_{\ui\ui}=\kp_{\ui}$. As $\overline{\ep_{\tau(\ui)}}=\ep_{\tau(\ui)}$, substituting \eqref{E:Bar PBW} into the equation above yields
$$a_{\ui\uj}=\sum_{\ui\leq\uk\leq\uj}\overline{\af_{\ui\uk}}\,\bt_{\uk\uj}.$$
Therefore, $a_{\ui\uj}=0$ if $\ui>\uj$ and $a_{\ui\ui}=\overline{\af_{\ui\ui}}\bt_{\ui\ui}=\kp_\ui^{-1}\kp_\ui=1$ by Proposition~\ref{P:Bar Invariant Renormalization}.
\end{proof}

\begin{lem}\label{L:Integral coefficients}
Suppose that $\UU$ is special. Then, the coefficients $a_{\ui\uj}$ in  \eqref{E:Bar PBW} belong to $\A$.
\end{lem}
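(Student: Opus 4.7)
The plan is to reduce the integrality of the $a_{\ui\uj}$ to two ingredients: (i) the bar involution preserves the integral form $U_\A$ (equivalently $\UU_\A := \Psi(U_\A)$), and (ii) the PBW $\A$-lattice coincides with the integral form in the special case. Once both are in hand, the result follows from the triangularity already established in Lemma~\ref{L:Bar PBW}.

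First I would record that $\E_\ui\in\UU_\PBW$ trivially, and by Proposition~\ref{P:special UA basis} the hypothesis that $\UU$ is special gives
\[
\UU_\PBW=\UU_\A.
\]
Hence $\E_\ui\in\UU_\A$. Next I would verify that the bar involution of Proposition~\ref{P:autoUq}(2) stabilises $U_\A$. This reduces to checking bar-invariance of the generators: $\overline{e_i}=e_i$ for all $i\in\I$ is immediate, while for the divided powers $e_i^{(k)}=e_i^k/[k]_i!$ one needs $\overline{[k]_i!}=[k]_i!$. In types $A(m,n)$ and $C(n+1)$ only even and isotropic-odd simple roots occur, and the bar sends $q\mapsto q^{-1}$, so ordinary quantum factorials are trivially bar-invariant. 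In type $B(0,n+1)$ the bar sends $q\mapsto \pi q^{-1}$; for the long even simple roots one has $d_i=2$, so $\overline{q_i}=q_i^{-1}$ and $\overline{[k]_i}=[k]_i$, while for the unique non-isotropic odd simple root $0$ one computes directly
\[
\overline{\pi^k q^k-q^{-k}}=-(\pi^k q^k-q^{-k}),\qquad \overline{\pi q-q^{-1}}=-(\pi q-q^{-1}),
\]
so $\overline{\{k\}}=\{k\}$ and hence $\overline{\{k\}!}=\{k\}!$. This shows in every special type that bar preserves $U_\A$, and therefore, transporting via the algebra isomorphism $\Psi:U_q\to\UU$ of Corollary~\ref{cor:psiUF} together with the compatibility $\overline{\Psi(u)}=\Psi(\overline{u})$ of Proposition~\ref{P:automorphisms}(2), that bar preserves $\UU_\A$.

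Combining these two ingredients: $\overline{\E_\ui}\in\overline{\UU_\PBW}=\overline{\UU_\A}=\UU_\A=\UU_\PBW$. Because $\{\E_\uj\mid\uj\in\Wr^+\}$ is, by definition, an $\A$-basis of $\UU_\PBW$, the expansion
\[
\overline{\E_\ui}=\sum_{\uj\in\Wr^+}a_{\ui\uj}\E_\uj
\]
must have all coefficients $a_{\ui\uj}\in\A$, which is the claim.

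The only nontrivial point is the bar-invariance check in type $B(0,n+1)$, where the non-standard bar $\bar q=\pi q^{-1}$ interacts with the $\pi$-quantum integers $\{k\}$; once the sign cancellation in numerator and denominator is verified as above, the rest of the argument is bookkeeping and an appeal to Proposition~\ref{P:special UA basis}. I do not expect to need any further combinatorics of dominant words beyond the triangularity already supplied by Lemma~\ref{L:Bar PBW}.
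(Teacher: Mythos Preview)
Your proof is correct and follows exactly the same route as the paper: invoke Proposition~\ref{P:special UA basis} to identify $\UU_\PBW=\UU_\A$, observe that the bar involution preserves $\UU_\A$, and conclude that the expansion of $\overline{\E_\ui}$ in the PBW basis has $\A$-coefficients. The paper compresses the bar-invariance of $\UU_\A$ into the phrase ``clearly bar invariant''; you have supplied the explicit verification (including the nontrivial $B(0,n+1)$ case where $\bar q=\pi q^{-1}$), which is a welcome elaboration but not a different argument.
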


\begin{proof}
This is immediate since $\UU_\PBW=\UU_\A$ by Proposition~\ref{P:special UA basis} and $\UU_\A$ is clearly bar invariant.
\end{proof}

It is well known that Lemmas \ref{L:Bar PBW} and \ref{L:Integral coefficients} imply the existence of a unique basis of the form
\begin{align}\label{E:Weak Canonical}
\b_\ui=\E_\ui+\sum_{\uj>\ui}\theta_{\ui\uj}\E_\uj
\end{align}
such that $\theta_{\ui\uj}\in q\Z[q]$ and $\overline{\b_\ui}=\b_{\ui}$. We call the basis $\{\b_\ui\mid\ui\in\Wr^+\}$ a \textbf{pseudo-canonical basis} for
$\UU_\A$ or for $\UU$.

A pseudo-canonical basis will be called a \textbf{canonical basis}
if it is \textbf{almost orthogonal} in the sense that there exists $\epsilon\in\{1,-1\}$ such that, for all $\ui,\uj\in\Wr^+$,
\begin{enumerate}
\item $(\b_\ui,\b_\uj)\in\Z[q^{\epsilon}]$, and
\item $(b_\ui,\b_\uj)=\pi^\te \dt_{\ui\uj}$ (mod $q^\epsilon$) for some $\te\in\{0,1\}$.
\end{enumerate}

\begin{thm}\label{T:canonical basis}
When $\UU$ is special it admits a pseudo-canonical basis.
In types $A(m,0)$, $A(0,n)$, $B(0,n+1)$ and $C(n+1)$ the pseudo-canonical basis is canonical.
\end{thm}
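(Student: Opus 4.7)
The argument splits into two logically independent parts.

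\textbf{Part 1: existence of the pseudo-canonical basis.} Assume $\UU$ is special. Lemmas~\ref{L:Bar PBW} and~\ref{L:Integral coefficients} together show that the bar involution acts on the PBW basis $\{\E_\ui : \ui \in \W^+\}$ by an upper unitriangular matrix with respect to the lexicographic order on $\W^+$, with entries in $\A$. A standard Lusztig-type linear-algebra lemma then produces a unique bar-invariant $\A$-basis $\{\b_\ui\}$ satisfying $\b_\ui = \E_\ui + \sum_{\uj > \ui} \theta_{\ui\uj}\E_\uj$ with $\theta_{\ui\uj} \in q\Z[q]$, giving the pseudo-canonical basis.

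\textbf{Part 2: almost orthogonality.} By orthogonality of the PBW basis (Theorem~\ref{T:OrthogonalPBW}),
$$
(\b_\ui,\b_\uj) \;=\; \sum_{\uk \geq \ui,\, \uk \geq \uj} \theta_{\ui\uk}\,\theta_{\uj\uk}\,(\E_\uk,\E_\uk), \qquad \theta_{\ui\ui} := 1.
$$
The plan is to show that in each of the four listed types there is a single $\epsilon \in \{1,-1\}$ such that for every $\uk \in \W^+$ one has $(\E_\uk,\E_\uk) \in \Z[q^\epsilon]$ and $(\E_\uk,\E_\uk) \equiv \pi^{\theta(\uk)} \pmod{q^\epsilon}$ for some $\theta(\uk) \in \{0,1\}$. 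Given this, the diagonal $(\b_\ui,\b_\ui)$ reduces to $\pi^{\theta(\ui)}$ modulo $q^\epsilon$ (each off-diagonal correction carries a factor $\theta_{\ui\uk}^2 \in q^2\Z[q]$ of sufficiently high $q$-valuation), and for $\ui \neq \uj$ the sum vanishes modulo $q^\epsilon$ since every summand contains a factor in $q\Z[q]$.

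The required congruence for $(\E_\uk,\E_\uk)$ is a case-by-case computation using the explicit formulas of Section~\ref{S:Computations} together with Theorem~\ref{T:OrthogonalPBW}. Each norm is a product of a sign $\varpi_\bullet(\uk)$, a factor $q^{-N(|\uk|)}$, powers of $(q-q^{-1})$ or $(q^2-q^{-2})$, and (for the canonical factorization $\uk = \ui_1^{n_1}\cdots\ui_d^{n_d}$) quantum factorials $[n_l]_{\ui_l}!$. For $A(m,0)$, $A(0,n)$, and $C(n+1)$ the bar is the ordinary involution $q \mapsto q^{-1}$; for $B(0,n+1)$ it is the twisted bar $q \mapsto \pi q^{-1}$, with $\{n\}$ in place of $[n]$ in the factorials. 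In each setting the factors regroup into bar-invariant units of the form $1 - q^{\pm 2}$ (or their twisted analogues), and one reads off the leading coefficient directly. Speciality (Proposition~\ref{P:super-special}) is essential here, as it forces divided powers to occur only at even or non-isotropic simple roots, so the quantum factorials reduce cleanly modulo $q^\epsilon$.

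The main obstacle is precisely this last piece of case-by-case bookkeeping: one must match signs, $N$-exponents, $(q-q^{-1})$-powers, and quantum factorials so that every PBW norm collapses uniformly (over all dominant words) to $\pm 1$ or $\pm\pi$ modulo a single $q^\epsilon$. This is also what restricts the canonical-basis statement to the four listed types—in non-special or more general super types one picks up quantum-binomial factors at an isotropic simple root whose reduction modulo $q^\epsilon$ is not constant, which breaks the almost-orthogonality uniformly.
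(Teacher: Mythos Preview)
Your proposal is correct and follows essentially the same strategy as the paper. The paper's proof is extremely terse: it simply asserts that the Section~\ref{S:Computations} formulas show the PBW basis is almost orthogonal in the four listed types, and says ``hence'' the pseudo-canonical basis is canonical. You have unpacked that ``hence'' via the expansion $(\b_\ui,\b_\uj)=\sum_{\uk}\theta_{\ui\uk}\theta_{\uj\uk}(\E_\uk,\E_\uk)$, which is exactly the right mechanism and is worth making explicit.

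Two small remarks. First, your reduction argument tacitly assumes $\epsilon=1$: the fact that the corrections $\theta_{\ui\uk}\in q\Z[q]$ kill off-diagonal contributions only helps modulo $q$, not modulo $q^{-1}$. This is fine, because the Section~\ref{S:Computations} norms in these four types do lie in $\Z[q]$ with constant term $\pm 1$ (or $\pm\pi$), but you should say so rather than leave $\epsilon$ unspecified. Second, your closing explanation for why the argument fails beyond the four types is slightly off. In $A(m,n)$ with $m,n\ge 1$ the diagram is still special, so the obstruction is not divided powers or quantum binomials at the isotropic node; rather, there are even roots of \emph{both} positive and negative squared length, so the factors $q^{-N(|\ui|)}$ and $q^{-c_\ui}$ in Theorem~\ref{T:OrthogonalPBW} pull in opposite directions and no single choice of $\epsilon$ makes all PBW norms land in $\Z[q^\epsilon]$ with unit constant term. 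That is the actual breakdown.
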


\begin{proof} It has already been explained that $\UU$ has a pseudo-canonical basis when it is special.
By the computations in Section~\ref{S:Computations} to verify that in types $A(m,0)$, $A(0,n)$,
$B(0,n+1)$ and $C(n+1)$ one checks
that  the PBW basis is almost orthogonal. Hence, the pseudo-canonical basis is canonical.
\end{proof}

\begin{rmk}
The constructions in this paper (see Lemma~\ref{L:Lyndon max word}, and Theorems~\ref{T:max word}, \ref{T:OrthogonalPBW}, and \ref{T:canonical basis})
work equally well for $U_q$ associated to
semisimple Lie algebras, providing a new self-contained approach to the canonical basis in the non-super setting.
\end{rmk}

\begin{rmk}
For type $B(0,n)$, a canonical (sign) $\pi$-basis for $U_q$ was constructed in \cite{CHW} via a crystal basis approach.
The canonical basis $\bold{B}$  for $U_q$ of type $B(0,n)$ constructed in this paper is an honest basis which depends on the choice of
an ordering of $I$, We expect that the associated $\pi$-basis $\bold{B} \cup \pi \bold{B}$ will be independent of
the orderings and coincides with the one constructed in \cite{CHW}.
\end{rmk}

Given a (pseudo-)canonical basis $\mathrm{B}=\{\b_\ui\}_{\ui\in\Wr^+}$,
let $\mathrm{B}^*=\{\b^*_\ui\}_{\ui\in\Wr^+}$ be the dual (pseudo) canonical basis satisfying
$(\b^*_\ui,\b_\uj)=\dt_{\ui\uj}.$
Then, as in \cite[Proposition 39, Theorem 40]{lec} we have the following.

\begin{thm} The vector $\b^*_\ui$ is characterized by the following two properties:
\begin{enumerate}
\item $\b^*_\ui-\E^*_\ui$ is a linear combination of vectors $\E_{\uj}^*$, $\uj<\ui$, with coefficients in $q\Z[q]$;
\item The coefficients of $\b^*_\ui$ in the word basis $\W$ of $\F$ are symmetric in $q$ and $q^{-1}$.
\end{enumerate}
In particular, we have $\max(\b^*_\ui)=\ui$ for all $\ui\in\Wr^+$, and $\b^*_\ui=\E^*_\ui$ if $\ui\in\Lr^+$.
\end{thm}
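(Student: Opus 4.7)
The plan is to prove the theorem in four stages: establish triangularity property (1), deduce symmetry property (2), prove uniqueness, and derive the two "in particular" consequences.

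For property (1), I would invoke a duality argument. By Lemma \ref{L:Bar PBW} the transition $\b_\uj=\E_\uj+\sum_{\uk>\uj}\theta_{\uj\uk}\E_\uk$ is upper unitriangular with off-diagonal entries $\theta_{\uj\uk}\in q\Z[q]$ (integrality being Lemma \ref{L:Integral coefficients} in the special case). Combining the orthogonality $(\E^*_\ul,\E_\uk)=\dt_{\ul\uk}$ from Theorem \ref{T:OrthogonalPBW} with the defining pairing $(\b^*_\ui,\b_\uj)=\dt_{\ui\uj}$ yields $\b^*_\ui=\sum_\ul[(M^T)^{-1}]_{\ui\ul}\E^*_\ul$, where $M=(\theta_{\uj\uk})$. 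Since $M-I$ is strictly upper triangular with entries in $q\Z[q]$, the Neumann-type series $M^{-1}-I=\sum_{k\geq 1}(I-M)^k$ terminates in each entry and keeps all entries in $q\Z[q]$; transposing gives a strictly lower triangular matrix, proving (1).

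For property (2), I would exploit an adjoint bar-invariance argument. The identity $\overline{(x,y)}=(\bar x,\bar y)$ on $\Uq$ is readily checked on generators via (B1)--(B3) and compatibility of $\Dt$ with the bar, and hence descends via $\Psi$ to $\UU$. Since each $\b_\uj$ is bar-invariant by construction, the computation $\overline{\dt_{\ui\uj}}=\overline{(\b^*_\ui,\b_\uj)}=(\bar{\b^*_\ui},\b_\uj)$ shows $\bar{\b^*_\ui}=\b^*_\ui$. Using the explicit formula for the bar on $\F$ from Proposition \ref{P:automorphisms}(2), one sees that bar-invariance of $\b^*_\ui$ on $\UU$ is equivalent, after absorbing the $\pi^{P(\cdot)}q^{-N(\cdot)}$ factors weight by weight and composing with $\tau$, to the map $q\mapsto q^{-1}$ fixing $\b^*_\ui$ in its word-basis expansion. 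This is precisely the claimed $q,q^{-1}$-symmetry.

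Uniqueness follows by a Kazhdan--Lusztig-style induction: if $x$ satisfies (1) and (2), then $x-\b^*_\ui\in\sum_{\uj<\ui}q\Z[q]\,\E^*_\uj$ is fixed by $q\mapsto q^{-1}$; applying induction on the ordering (with the Lyndon case as base, where by Corollary \ref{C:min std word} the sum is empty) forces $x=\b^*_\ui$. The ``in particular'' consequences then fall out immediately: from Theorem \ref{T:max word} we have $\max(\E^*_\uj)=\uj$ for all $\uj\in\Wr^+$, so (1) gives $\max(\b^*_\ui)=\ui$; and when $\ui\in\Lr^+$, Corollary \ref{C:min std word} says $\ui$ is the smallest dominant word of its weight, so no $\uj<\ui$ in $\Wr^+$ has $|\uj|=|\ui|$, making the correction term in (1) vacuous and forcing $\b^*_\ui=\E^*_\ui$.

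The main obstacle will be the rigorous translation in step 2 between bar-invariance on $\UU$ (with the $\pi, q$-laden formula of Proposition \ref{P:automorphisms}(2), further modified by the $\bar q=\pi q^{-1}$ convention in type $B(0,n+1)$) and the naive $q\leftrightarrow q^{-1}$ symmetry of word-basis coefficients; this amounts to carefully identifying the appropriate adjoint involution on $\F^*\cong(\F,\shq)$ whose fixed elements are exactly those whose word-basis coefficients satisfy the required symmetry, and then reconciling the $B(0,n+1)$ sign twist with the statement of (2).
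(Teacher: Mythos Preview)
Your steps (1), (3), and (4) are fine, but step (2) rests on the identity $\overline{(x,y)}=(\bar x,\bar y)$, which is \emph{false}. For adjacent even nodes $1,2$ with $(\af_1,\af_2)=-1$ one computes from (B3) that $(e_1e_2,e_2e_1)=q$, while $\overline{e_1e_2}=e_1e_2$ and $\overline{e_2e_1}=e_2e_1$, so $(\bar{e_1e_2},\bar{e_2e_1})=q\neq q^{-1}=\overline{(e_1e_2,e_2e_1)}$. Consequently your conclusion $\overline{\b^*_\ui}=\b^*_\ui$ is also false: already for $\ui=(12)\in\Lr^+$ one has $\b^*_{(12)}=\E^*_{(12)}=(12)$, whereas $\overline{(12)}=q\,(21)\neq(12)$ by Proposition~\ref{P:automorphisms}(2). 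So the attempt to deduce word-coefficient symmetry from bar-invariance of $\b^*_\ui$ under the algebra bar cannot work; the obstacle you flagged is not a technicality but a wrong turn.

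The argument that does work (and is the one behind the reference to \cite[Proposition~39, Theorem~40]{lec}) avoids any bar on $\b^*_\ui$ altogether. By the description \eqref{eq:psiu} of $\Psi$ and the adjunction defining $e_i'$, the coefficient of a word $\uk=(k_1,\ldots,k_d)$ in $\b^*_\ui$ equals $e'_\uk(\Psi^{-1}\b^*_\ui)=(e_{k_d}\cdots e_{k_1},\,\Psi^{-1}\b^*_\ui)$. Expanding the bar-invariant monomial $e_{k_d}\cdots e_{k_1}=\sum_\uj c_{\uk\uj}\,\b_\uj$ in the bar-invariant (pseudo-)canonical basis and applying bar forces $c_{\uk\uj}=\overline{c_{\uk\uj}}$; pairing against $\b^*_\ui$ then gives that the coefficient of $\uk$ is $c_{\uk\ui}$, hence bar-invariant. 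This is exactly property~(2) (with the understanding that in type $B(0,n+1)$ ``symmetric in $q,q^{-1}$'' is to be read via $\bar q=\pi q^{-1}$). Once you replace your step~(2) by this computation, your uniqueness and ``in particular'' arguments go through unchanged.
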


%%%%%%%%%%%%%%%%%%%%%%%
%%%%%%%%%%%%%%%%%%%%%%%
\section{Canonical Bases in the $\gl(2|1)$ Case}
 \label{S:gl(21)}

%%%%%%%%
\subsection{Canonical basis for $U_q^+(\gl(2|1))$}

We now compute canonical bases arising from quantum  $\gl(2|1)$ and its modules.
The root datum in this case is given by
$$ \xy
(-30,0)*{\fullmoon};(-20,0)*{\otimes,}**\dir{-};
(-30,-4)*{\scriptstyle 1};(-20,-4)*{\scriptstyle 2};
(0,-8)*{};(0,8)*{};
\endxy\qquad \Phi^+=\set{\alpha_1,\alpha_2,\alpha_1+\alpha_2}.
%\qquad A=\begin{bmatrix}2&-1\\-1&0\end{bmatrix},
$$

The algebra $\Uq=\Uq^+(\gl(2|1))$ is generated by $\Psi^{-1}(\E_1), \Psi^{-1}(\E_2)$,
with $\Psi^{-1}(\E_2)$ odd. Abusing notation slightly, we will identify these elements with $\E_1$ and $\E_2$, respectively.
We note that by \eqref{E:PBW Definition 1},
$$\E_{(12)}:= \E_2\E_1 -q \E_1\E_2.
$$
Then since $\E_2^2=0$, we have
%\begin{align*}
$$
\E_{(12)}^2 =0,
\quad
\E_2 \E_{(12)} =-q \E_{(12)}\E_2,
  \quad
\E_1\E_{(12)}  =q \E_{(12)}\E_1,
\quad
 \E_{(12)}\E_2 =\E_2 \E_{1}\E_2.
$$
%\end{align*}
Moreover, we can verify that, for $r, s\ge 1$,
\begin{align}
% \E_1\E_2\E_1\E_2 &=\E_2\E_1\E_2\E_1,
% \\
\E_1^{(r)}\E_2\E_1\E_2 &= \E_2\E_1\E_2 \E_1^{(r)}, \label{eq:EEEE}
 \\
 \E_2\E_1^{(r)} &=q^{r}  \E_1^{(r)}\E_2  + \E_1^{(r-1)}\E_{(12)} , \notag
 \\
\E_1^{(r)}\E_2\E_1^{(s)}
&=\bbinom{r+s-1}{s}\E_1^{(r+s)}\E_2+\bbinom{r+s-1}{r}\E_2\E_1^{(r+s)}.
 \label{eq:EE13}
\end{align}
The formula \eqref{eq:EE13} is the same as for quantum $\mathfrak{sl}(3)$ \cite{lu1}. One checks that
\begin{equation}
 \label{eq:4E}
 \E_2\E_1^{(r+1)} \E_2  = \E_1^{(r)}\E_{(12)}\E_2
  =\E_1^{(r)}\E_2\E_1\E_2 =\E_2\E_1\E_2\E_1^{(r)}.
 \end{equation}

Now note that the Lyndon words are $2>12>1$, and so
relative to this ordering we see that the PBW basis
for $\Uq$ is
$$
\{\E_1^{(r)} \E_{(12)}^b \E_2^a \mid 0\le a,b \le 1, r \ge 0\}.
$$
They span a $\Z[q]$-lattice $\mc L$
of $\Uq^+$.

The following has appeared in \cite{kh}, who works with quantum $\gl(1|2)$ instead.

\begin{prp}    \label{prp:CBU+}
$\Uq^+(\mathfrak{gl}(2|1))$ admits the following canonical basis:
$$
\E_1^{(r)}, \quad \E_1^{(r)}\E_2, \quad \E_2\E_1^{(r+1)}, \quad
\E_2\E_1^{(r+1)} \E_2  \qquad (\forall
r\ge 0).
$$
\end{prp}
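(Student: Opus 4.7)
The plan is to verify that each of the four families of elements coincides with some canonical basis element $\b_\ui$ by checking the two defining properties of \eqref{E:Weak Canonical}: bar-invariance and triangular expansion in the PBW basis with coefficients in $q\Z[q]$.

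First I would catalog the indexing. The Lyndon words for the ordering $1<2$ are $\Lr^+ = \{1,12,2\}$, with $1 < 12 < 2$ in lexicographic order, and by Theorem~\ref{thm:canonical factorization of dominant words} every dominant word has a unique canonical factorization $\ui = 2^a(12)^b 1^r$ with $a,b\in\{0,1\}$ and $r\ge 0$ (the restriction $a,b\le 1$ reflects the isotropy of $\af_2$ and $\af_1+\af_2$). The corresponding PBW element is $E_\ui = \E_1^{(r)} \E_{(12)}^b \E_2^a$. Bar-invariance of all four proposed families is immediate: the generators $\E_1,\E_2$ are fixed by the bar involution, each $[k]_1!$ is bar-invariant since $\bar q = q^{-1}$ in this type, so each $\E_1^{(r)}$ is bar-invariant, and since bar is an algebra homomorphism (Proposition~\ref{P:autoUq}) any product built from these is bar-invariant.

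Next I would expand each candidate in the PBW basis. The cases $\E_1^{(r)} = E_{1^r}$ and $\E_1^{(r)}\E_2 = E_{2\cdot 1^r}$ are already PBW elements, so triangularity is trivial. For $\E_2\E_1^{(r+1)}$, the identity $\E_2\E_1^{(r+1)} = q^{r+1}\E_1^{(r+1)}\E_2 + \E_1^{(r)}\E_{(12)}$ stated in the excerpt yields
\[
\E_2\E_1^{(r+1)} = E_{(12)\cdot 1^r} + q^{r+1}\,E_{2\cdot 1^{r+1}}.
\]
Both words have length $r+2$, the former beginning in $1$ and the latter in $2$, so $(12)\cdot 1^r < 2\cdot 1^{r+1}$ in lex order, and the coefficient $q^{r+1}$ lies in $q\Z[q]$ for $r\ge 0$. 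For $\E_2\E_1^{(r+1)}\E_2$, multiplying this identity on the right by $\E_2$ and using $\E_2^2 = 0$ (relation (Iso) at $i=j=2$) kills the second summand, giving the pure PBW element $\E_2\E_1^{(r+1)}\E_2 = \E_1^{(r)}\E_{(12)}\E_2 = E_{2\cdot(12)\cdot 1^r}$.

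A count then confirms that the four families, parametrized by $(a,b)\in\{0,1\}^2$ and $r\ge 0$, exhaust $\{\b_\ui\mid\ui\in\Wr^+\}$. Since $\gl(2|1)$ is of type $A(m,0)$, Theorem~\ref{T:canonical basis} automatically promotes this pseudo-canonical basis to a canonical basis. The argument is essentially bookkeeping; the only points requiring a moment of care are the lex comparison of $(12)\cdot 1^r$ with $2\cdot 1^{r+1}$, which fixes the direction of triangularity for the third family, and the vanishing $\E_2^2=0$, which is what makes the fourth family a single PBW element rather than a two-term correction.
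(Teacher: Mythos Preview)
Your proof is correct and follows essentially the same approach as the paper's: verify bar-invariance of each family, expand each in the PBW basis to check the triangularity condition \eqref{E:Weak Canonical}, and then invoke the almost-orthogonality of the PBW basis in type $A(m,0)$ to upgrade from pseudo-canonical to canonical. The paper phrases the last step as ``computing the norms,'' while you cite Theorem~\ref{T:canonical basis}, but these amount to the same thing.
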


\begin{proof}
Now the first two elements $ \E_1^{(r)}, \E_1^{(r)}\E_2$ are already
bar-invariant PBW basis elements, whence pseudo-canonical basis elements.
Similarly, the element $\E_2\E_1^{(r+1)} \E_2$ is
bar-invariant and also a PBW element by \eqref{eq:4E},
whence a pseudo-canonical basis element.
One writes the remaining PBW  elements as $\E_{2} \E_1^{(r+1)}
=q^{r+1}\E_1^{(r+1)}\E_2 +\E_1^{(r)} \E_{(12)}$, for $r \ge 0$. Hence
$\E_2\E_1^{(r+1)}$ is a bar-invariant element, which equals a PBW
element modulo $q\mc L$, whence a pseudo-canonical basis element.

Clearly the elements as in the proposition form a basis of the
lattice $\mc L$, by comparing to the PBW basis,
hence this is the promised pseudo-canonical basis. On the other hand,
computing the norms of these elements proves that they are actually
a canonical basis.
\end{proof}

\begin{rmk}
In contrast to Proposition~\ref{prp:CBU+},
$\E_2\E_1\E_2$ is not a canonical basis element for the positive half of quantum
$\mf{sl}(3)$.
\end{rmk}

\begin{rmk}
When multiplying any canonical basis element for $\Uq^+(\gl(2|1))$
with $\E_1^{(s)}$ or $\E_2$ (either on the left or on the right) and
then expanding as a linear combination of the canonical basis, the
coefficients are always in $\Z_{\ge 0} [q,q^{-1}]$.
\end{rmk}

%\begin{proof}
%We shall check the non-obvious cases.
%First, that this holds for left or right multiplication by $\E_1^{(r)}$ is obvious in all cases
%but when the resulting element is of the form $\E_1^{(r)}\E_2\E_1^{(s)}$,
%and we compute that
%\begin{align*}
%\E_1^{(r)}\E_2\E_1^{(s)}
%&=q^{s}\bbinom{r+s}{s}\E_1^{(r+s)}\E_2+\bbinom{r+s-1}{r}\E_1^{(r+s-1)}\E_{(12)}\\
%&=\parens{q^{s}\bbinom{r+s}{r}-q^{r+s}\bbinom{r+s-1}{r}}\E_1^{(r+s)}\E_2+\bbinom{r+s-1}{r}\E_2\E_1^{(r+s)}\\
%&=\bbinom{r+s-1}{s}\E_1^{(r+s)}\E_2+\bbinom{r+s-1}{r}\E_2\E_1^{(r+s)}.
%\end{align*}
%
%Second, the lemma obviously holds for multiplication by $E_2$ on the left or right
%in every case but $\E_2\E_1^{(r)}\E_2$, in which case we compute that
%$\E_2\E_1^{(r)}\E_2=\E_1^{(r-1)}\E_{(12)}\E_2.$
%\end{proof}

Denote by
$$
{\bf B} =\{\rF_1^{(r)}, \quad \rF_2\rF_1^{(r)}, \quad \rF_1^{(r+1)} \rF_2, \quad
\rF_2\rF_1^{(r+1)} \rF_2  \; (\forall r\ge 0) \}
$$
the canonical basis of $\Uq^-$, which consists of the images of the elements
in Proposition~\ref{prp:CBU+} under the anti-isomorphism $\Uq^+\rightarrow \Uq^-$
defined by $\E_i\mapsto \rF_i$. Below we often use the identifications
$\rF_2\rF_1^{(r+1)} \rF_2 = \rF_2 \rF_{(12)} \rF_1^{(r)}$
and $\rF_{(12)}=\rF_1\rF_2-q\rF_2\rF_1$.

%%%%%%%%%%%%%%%%%
\subsection{Canonical basis for Kac modules}

The subalgebra $\Uq^0$ of $\Uq$ is generated by $K_1=q^{e_{11}}, K_2=q^{e_{22}}, K_3=q^{e_{33}}$.
Let $\Uq^{2,1}$ be the subalgebra of $\Uq$ generated by
$U_q^0$, $\E_1$,  and $\rF_1$, and
let $P_q$ be the subalgebra generated by $\Uq^{2,1}$
and $\E_2$.
Denote by $\{\dt_1, \dt_2, \ep_1\}$ the dual basis for $\{e_{11}, e_{22}, e_{33}\}$.
Let $\mu =a \delta_1+b \delta_2 + c\varep_1$, with  $a-b \in \Z_{\ge
0}$. Set $L^0(\mu)$ to be the simple $\Uq^{2,1}$-module
of highest weight $\mu$. Then $L^0(\mu)$ is a $P_q$-module
with trivial $\E_2$-action.
The {\bf Kac module} $K(\mu):=\Uq\otimes_{P_q} L^0(\mu)$ over $\Uq$
is finite dimensional and has a simple quotient $L(\mu)$.
Moreover, $\dim K(\mu) =4 \dim L^0(\mu)$.
Denote by $v_\mu$ the highest weight vector of $K(\mu)$ and by  $v_\mu^+$ the image of $v_\mu$ in $L(\mu)$.
Note that
\begin{equation}  \label{eq:Kbasis}
K(\mu) \cong L^0(\mu) \oplus \rF_2 L^0(\mu) \oplus \rF_{(12)} L^0(\mu) \oplus \rF_2\rF_{(12)}L^0(\mu).
%\Q(q)\langle 1, \rF_2, \rF_{(12)}, \rF_2\rF_{(12)}\rangle  \otimes L^0(\mu).
\end{equation}
Hence, when applying elements in $\bf B$  to $v_\mu$, the resulting elements are
nonzero exactly when $0\le r \le a-b$, thanks to
$\rF_1^{(a-b+1)} v_\mu=0$.

\begin{prp}  \label{prp:CBKac}
Let $\mu =a \delta_1+b \delta_2 + c\varep_1$, with  $a-b \in \Z_{\ge
0}$. Then we have
 $$
 \{u v_\mu  \mid u v_\mu
\neq 0, u \in {\bf B}\} = \big \{\rF_1^{(r)}v_\mu, \rF_2\rF_1^{(r)}v_\mu, \rF_1^{(r+1)}\rF_2v_\mu, \rF_2\rF_{(12)}
\rF_1^{(r)}v_\mu \mid 0\le r \le a-b \big \},
$$
and this set forms a basis of the
Kac module $K(\mu)$. It is canonical in the sense that it descends from the canonical basis.
\end{prp}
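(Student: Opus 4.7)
The plan is to compute $\mathbf{B}\cdot v_\mu$ explicitly against the PBW $\Qq$-basis
\[
\{\rF_2^{\epsilon_1} \rF_{(12)}^{\epsilon_2} \rF_1^{(r)} v_\mu : \epsilon_1, \epsilon_2 \in \{0,1\},\ 0 \le r \le a-b\}
\]
of $K(\mu)$ provided by \eqref{eq:Kbasis} (of cardinality $4(a-b+1) = \dim K(\mu)$), and then match the result with the displayed list. The vanishing of $\rF_1^{(r)} v_\mu$ for $r > a-b$ is immediate from finite-dimensionality of the simple $\Uq^{2,1}$-module $L^0(\mu)$, and for the remaining three families I will use the $F$-side analogues of \eqref{eq:EE13} and \eqref{eq:4E}, notably
\[
\rF_1^{(r+1)}\rF_2 \;=\; \rF_{(12)}\rF_1^{(r)} \;+\; q^{r+1}\, \rF_2 \rF_1^{(r+1)},
\qquad
\rF_2 \rF_1^{(r+1)} \rF_2 \;=\; \rF_2 \rF_{(12)} \rF_1^{(r)}.
\]
Combined with the commutation $\rF_{(12)}\rF_1^{(n)} = q^n \rF_1^{(n)} \rF_{(12)}$ and the weight multiplicities of $K(\mu)$ read off from \eqref{eq:Kbasis}, these identities pin down the nonvanishing range as $0 \le r \le a-b$ for every one of the four families, so that $\{u v_\mu : u \in \mathbf{B},\ u v_\mu \neq 0\}$ is exactly the set displayed in the proposition.

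To verify the basis property, observe that $\rF_1^{(r)} v_\mu$, $\rF_2 \rF_1^{(r)} v_\mu$, and $\rF_2 \rF_{(12)} \rF_1^{(r)} v_\mu$ are already PBW basis vectors, while $\rF_1^{(r+1)} \rF_2 v_\mu$ is a $(1,q)$-combination of $\rF_{(12)} \rF_1^{(r)} v_\mu$ and $\rF_2 \rF_1^{(r+1)} v_\mu$ via the first identity above; in the boundary case $r = a-b$ the second summand vanishes. The transition matrix from the displayed list to the PBW basis is therefore block-triangular with blocks of size at most $2$, yielding linear independence, and the cardinality match forces a basis of $K(\mu)$. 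The canonicity assertion is then tautological: by construction, the basis is the image of $\mathbf{B}$ under the surjection $\Uq^- \twoheadrightarrow K(\mu)$, $u \mapsto u v_\mu$, restricted to elements that do not lie in its kernel.

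The principal technical point is establishing the $F$-side identities used above. Since the paper only records $\tau$ on $U_q^+$ (Proposition~\ref{P:autoUq}), I will either invoke a Chevalley-type anti-involution of the full algebra $\Uq(\gl(2|1))$ exchanging $\E_i \leftrightarrow \rF_i$ (and reversing products) in order to transport \eqref{eq:EEEE}--\eqref{eq:4E} to the $F$-side, or verify the two boxed identities directly from the Serre relations combined with $\rF_{(12)} = \rF_1 \rF_2 - q \rF_2 \rF_1$ and $\rF_2^2 = 0$. Either route is short for this rank-two algebra; once the identities are in hand, the remaining steps are a transparent triangularity computation inside \eqref{eq:Kbasis}.
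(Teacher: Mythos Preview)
Your proposal is correct and follows essentially the same approach as the paper: both arguments compare the displayed list against the PBW-type basis coming from \eqref{eq:Kbasis} and observe that the transition matrix is unitriangular (the paper says ``upper-unitriangular''; your ``block-triangular with blocks of size at most $2$'' amounts to the same thing once you note the diagonal entries are $1$). The paper's proof is somewhat terser, recording only the two key vanishing identities $\rF_1^{(a-b+1)}v_\mu=0$ and $\rF_1^{(a-b+2)}\rF_2 v_\mu=0$, whereas you spell out the $F$-side commutation relations more explicitly; but the underlying argument is the same.
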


\begin{proof}
The equality of the two sets in the proposition follows by the two identities that
$\rF_1^{(a-b+1)} v_\mu=0$ and $\rF_1^{(a-b+2)} \rF_2 v_\mu=0$.

Note that $F_1^{(r)}v^0_\mu$ with $0\leq r\leq a-b$ forms a basis of $L^0(\mu)$.
Then by \eqref{eq:Kbasis}, the elements
\[\set{\rF_1^{(r)}v_\mu,\ \rF_2\rF_1^{(r)}v_\mu,\ \rF_{(12)}\rF_1^{(r)}v_\mu,\ \rF_2\rF_{(12)}\rF_1^{(r)}v_\mu\mid 0\leq r\leq a-b }\]
form a basis of $K(\mu)$. Since the transition matrix from this basis to the set given in the proposition
is upper-unitriangular, this set must form a basis of $K(\mu)$.
\end{proof}

%%%%%%%%%%%%%%%%
\subsection{Canonical basis for simple modules}

Recall that the Weyl vector for $\gl(2|1)$ is
$\rho%=\frac{1}{2}\parens{\sum_{\alpha\in \Phi_\zero^+}\alpha-\sum_{\beta\in \Phi_\one^+}\beta}
=-\delta_2+\varep_1$.
A weight $\lambda$ is called typical if $\ang{\alpha,\lambda+\rho}\neq 0$
for all $\alpha\in \Phi_{\one}^+$; otherwise, we say the weight is atypical.

Let $\mu = a \delta_1+b \delta_2 + c \varep_1$, with  $a-b \in \Z_{\ge
0}$. Then $\mu$ is typical only if $a\neq -c-1$ and $b\neq-c$.
If $\mu$ is typical, then $K(\mu)$ is irreducible.

\begin{cor}  \label{cor:typical}
If $\mu$ is typical, then $L(\mu)$ has a canonical basis given by Proposition~\ref{prp:CBKac}.
\end{cor}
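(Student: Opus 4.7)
The plan is extremely short, since this corollary is essentially a direct transport of structure. The key input is the classical fact, cited in the paragraph immediately preceding the corollary, that the Kac module $K(\mu)$ is irreducible when $\mu$ is typical. Hence the canonical surjection $K(\mu) \twoheadrightarrow L(\mu)$, sending $v_\mu \mapsto v_\mu^+$, is an isomorphism of $\Uq$-modules.

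Under this isomorphism, the basis
\[
\bigl\{\rF_1^{(r)}v_\mu,\ \rF_2\rF_1^{(r)}v_\mu,\ \rF_1^{(r+1)}\rF_2 v_\mu,\ \rF_2\rF_{(12)}\rF_1^{(r)}v_\mu \mid 0\le r\le a-b\bigr\}
\]
of $K(\mu)$ produced in Proposition~\ref{prp:CBKac} maps bijectively to the corresponding set of vectors in $L(\mu)$ obtained by replacing $v_\mu$ with $v_\mu^+$. Since this image set is the image of a canonical basis under a $\Uq$-module isomorphism, it is itself a canonical basis of $L(\mu)$ in the same sense as in Proposition~\ref{prp:CBKac} (in particular, it descends from the canonical basis $\mathbf{B}$ of $\Uq^-$ via the action on the highest weight vector).

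There is no real obstacle: everything reduces to the irreducibility statement $K(\mu)\cong L(\mu)$ for typical $\mu$, which is standard (e.g.\ \cite{kac,CW12}), together with Proposition~\ref{prp:CBKac}. The only mild care needed is to note that the nonvanishing conditions from Proposition~\ref{prp:CBKac} (namely $0 \le r \le a-b$) are preserved under the isomorphism, so the cardinality and indexing of the basis are unchanged.
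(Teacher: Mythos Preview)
Your argument is correct and matches the paper's intent: the corollary is stated without proof precisely because it follows immediately from the sentence just before it (``If $\mu$ is typical, then $K(\mu)$ is irreducible'') together with Proposition~\ref{prp:CBKac}, exactly as you explain.
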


Therefore, it remains
to consider $L(\mu)$ when $\mu$ is atypical. The first step is to determine
when canonical basis vectors are zero in $L(\mu)$.

\begin{lem}  \label{lem:21}
Assume that  $\mu =a \delta_1+b \delta_2 + c\varep_1$, where  $a-b \in \Z_{\ge
0}$, is atypical; that is, $a=-c-1$ or $b=-c$.  Then the following statements hold in $L(\mu)$:
\begin{enumerate}
\item
 $\rF_1^{(r)}v_\mu^+ \neq 0$ if and only if $\ 0\le r \le a-b$.

\item If $a=-1-c$, then
 $\rF_2 \rF_1^{(r)}v_\mu^+ \neq 0$ if and only if $\ 0\le r \le a-b$.

\noindent
If $b=-c$, then $\rF_2 \rF_1^{(r)}v_\mu^+ \neq 0$ if and only if $1\le r \le a-b$.

 \item $([r+b+c] \rF_1^{(r)} \rF_2 - [b+c] \rF_2 \rF_1^{(r)})v_\mu^+=0$ for all $r\ge 0$.

 \item $\rF_2\rF_1^{(r+1)} \rF_2 v_\mu^+= \rF_2\rF_{(12)}\rF_1^{(r)} v_\mu^+= \rF_1^{(r)}\rF_2\rF_{1} \rF_2   v_\mu^+=0$ for all $r\ge 0$.

 \item
 $\rF_1^{(r+1)} \rF_2 v_\mu^+ \neq 0$ if and only if $b\neq -c$ and $0\le r \le a-b$.
\end{enumerate}
\end{lem}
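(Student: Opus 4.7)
The plan is to combine finite-dimensional simplicity of $L(\mu)$ (valid since $a-b\in\Z_{\ge 0}$) with standard commutator computations in $\Uq$, repeatedly using the principle that any vector in $L(\mu)$ of weight strictly less than $\mu$ annihilated by both $\E_1$ and $\E_2$ must vanish. For (1), restricting $L(\mu)$ to $\Uq(\mathfrak{sl}_2(\alpha_1))$ makes $v_\mu^+$ an $\mathfrak{sl}_2$-highest weight vector of integer weight $a-b\ge 0$, and finite-dimensionality forces the $\mathfrak{sl}_2$-submodule it generates to be the simple $(a-b+1)$-dimensional one, giving (1).

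The key identity driving (2)--(5) is $\E_2 \rF_2 \rF_1^{(r)} v_\mu^+ = [r+b+c] \rF_1^{(r)} v_\mu^+$, obtained from $[\E_2,\rF_1]=0$ and the super-commutator $\E_2 \rF_2 + \rF_2 \E_2 = (K_{\alpha_2}-K_{\alpha_2}^{-1})/(q-q^{-1})$. For (2): in case $a=-c-1$, the scalar $[r+b+c]$ is nonzero throughout $0\le r\le a-b$, which together with (1) yields nonvanishing. In case $b=-c$, the vector $\rF_2 v_\mu^+$ is annihilated by $\E_2$ (since $[b+c]=0$) and by $\E_1$ (via $[\E_1,\rF_2]=0$), hence is a singular vector of weight below $\mu$ and vanishes by simplicity; the cases $r\ge 1$ of (2) use $[r]\ne 0$.

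The crux of the lemma is (3), which I prove by a singular-vector induction. Set $g_r := [r+b+c] \rF_1^{(r)} \rF_2 v_\mu^+ - [b+c] \rF_2 \rF_1^{(r)} v_\mu^+$; the key identity immediately yields $\E_2 g_r = 0$. Using $[\E_1,\rF_2]=0$ together with the standard $\mathfrak{sl}_2$-commutator $[\E_1, \rF_1^{(r)}] = \rF_1^{(r-1)}[K_{\alpha_1};1-r]$, one computes $\E_1 g_r = [r+b+c][a-b-r+2] \rF_1^{(r-1)}\rF_2 v_\mu^+ - [b+c][a-b-r+1] \rF_2 \rF_1^{(r-1)}v_\mu^+$. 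Under the atypicality $a=-c-1$, the arithmetic identities $[a-b-r+1] = -[r+b+c]$ and $[a-b-r+2] = -[r+b+c-1]$ collapse this to $\E_1 g_r = -[r+b+c]\, g_{r-1}$. Since $g_0 = 0$ trivially, induction on $r$ shows that $g_r$ is a singular vector of weight $\mu - r\alpha_1 - \alpha_2 < \mu$, whence $g_r=0$. The case $b=-c$ is immediate since both terms of $g_r$ carry a vanishing factor. This arithmetic collapse under atypicality is the main obstacle to overcome.

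For (4), the $F$-analogs of \eqref{eq:4E} yield $\rF_2 \rF_1^{(r+1)} \rF_2 = \rF_1^{(r)} \rF_2 \rF_1 \rF_2 = \rF_2 \rF_{(12)} \rF_1^{(r)}$, reducing the claim to $\rF_2 \rF_1 \rF_2 v_\mu^+ = 0$. Its $\E_2$-image is exactly $g_1 = 0$ by (3), while its $\E_1$-image, computed using $[\E_1,\rF_2]=0$ twice and $\rF_2^2=0$, equals $[a-b+1]\rF_2^2 v_\mu^+ = 0$; the singular-vector argument then closes (4). For (5), (3) gives $\rF_1^{(r+1)} \rF_2 v_\mu^+ = ([b+c]/[r+1+b+c])\, \rF_2 \rF_1^{(r+1)} v_\mu^+$ whenever $[r+1+b+c]\ne 0$: in case $b=-c$, $[b+c]=0$ forces $\rF_1^{(r+1)}\rF_2 v_\mu^+=0$, and in case $a=-c-1$ the quantity $[r+1+b+c]$ vanishes exactly at $r=a-b$, with (2) providing nonvanishing for $0\le r\le a-b-1$. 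For the boundary $r=a-b$, I will exploit that $\rF_2 v_\mu^+\neq 0$ is an $\E_1$-highest vector with $K_{\alpha_1}$-eigenvalue $q^{a-b+1}$, so by simplicity of the $\Uq(\mathfrak{sl}_2(\alpha_1))$-submodule it generates, $\rF_1^{(a-b+1)} \rF_2 v_\mu^+$ is nonzero while $\rF_1^{(r+1)} \rF_2 v_\mu^+ = 0$ for $r>a-b$.
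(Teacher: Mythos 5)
Your proof is correct and follows essentially the same strategy as the paper: establish the key identity $\E_2\rF_2\rF_1^{(r)}v_\mu^+=[r+b+c]\rF_1^{(r)}v_\mu^+$, use $\mathfrak{sl}_2$-representation theory for (1) and (5), and prove (3) and (4) by exhibiting the relevant vectors as singular (annihilated by both $\E_1$ and $\E_2$) and invoking simplicity. One small bonus: your explicit computation in (3), showing $\E_1 g_r=-[r+b+c]g_{r-1}$ under the arithmetic identities $[a-b-r+1]=-[r+b+c]$ and $[a-b-r+2]=-[r+b+c-1]$, is cleaner than the corresponding display in the paper (which appears to contain stray $F_1,F_2$ factors); and your observation that the $\E_2$-image of $\rF_2\rF_1\rF_2 v_\mu^+$ is exactly $g_1$ makes (4) follow directly from (3). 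In case (3), $b=-c$, the precise reason the first term of $g_r$ vanishes is $\rF_2 v_\mu^+=0$ (from (2)), not a vanishing scalar factor; your wording there is slightly loose but the conclusion is right.
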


\begin{proof}
We will use repeatedly the fact that a  $\nu$-weight vector in $L(\mu)$ with $\nu \neq \mu$
which is annihilated by $\E_1$ and $\E_2$ must be zero.

(1) It follows from the representation theory of $\Uq(\mathfrak{sl}_2)$
generated by $\E_1$ and $\rF_1$.

(2) By a direct computation we have that % in $K(\mu)$,
$\E_2\rF_2\rF_1^{(r)} v_\mu^+ =[r+b+c]\rF_1^{(r)}v_\mu^+$.

If $a=-1-c$, then $r+b+c=0$ implies that $r=a-b+1$, and so $\rF_2 \rF_1^{(r)}v_\mu^+ \neq 0$ if $0\le r \le a-b$.
Note that $\rF_2\rF_1^{(a-b+1)} v_\mu^+=0$ since this vector is annihilated by $\E_1$ and $\E_2$ simultaneously.

If $b=-c$, then $r+b+c=0$ implies that $r=0$, and so $\rF_2 \rF_1^{(r)}v_\mu^+ \neq 0$ if $1\le r \le a-b$.
Note that $\rF_2 v_\mu^+=0$ since $\rF_2 v_\mu^+$ is annihilated by $\E_1$ and $\E_2$ simultaneously.

Hence (2) is proved when we take (1) into account.

% First we compute that
%\begin{align*}
%\E_1([b+1+c]\rF_1\rF_2-[b+c]\rF_2 \rF_1)v_\mu^+ &=([b+c+1][a+1-b] - [b+c][a-b])F_2v_\mu^+,
%  \\
%\E_2( [b+1+c] \rF_1 \rF_2 - [b+c] \rF_2 \rF_1) v_\mu^+ &=[b+c]([b+1+c]-[b+1+c])F_1v_\mu^+=0.
%\end{align*}
%If $b=-c$, then $F_2v_\mu^+=0$ by (2), and we are done.
%If $a=-1-c$, then $[b+c+1][a+1-b]-[b+c][a-b] %=[b-a][a+1-b]-[b-a-1][a-b]
%=0$.
%In either case, we see that
%$([b+1+c]\rF_1\rF_2-[b+c]\rF_2 \rF_1)v_\mu^+=0$.

(3) This is trivial for $b=-c$, since $F_2v_\mu^+=0$. So, we may assume $a=-1-c$.
We shall proceed by induction, with the case $r=0$ being trivial.
Set $d=b+c$.
Then (3) follows by the following computations (and by inductive assumption):
\begin{align*}
\E_1( [d+r] \rF_1^{(r)} \rF_2 - [d] \rF_2 \rF_1^{(r)}) v_\mu^+
%&=([d+r] [-d-(r-1)] \rF_1^{(r-1)} \rF_2-[d][-d-r]  \rF_2 \rF_1^{(r-1)})v_\mu^+\\
&=- [d+r]([d+(r-1)] \rF_1^{(r-1)} \rF_2 - [d] \rF_2 \rF_1^{(r-1)})F_2v_\mu^+=0,
 \\
\E_2( [d+r] \rF_1^{(r)} \rF_2 - [d] \rF_2 \rF_1^{(r)}) v_\mu^+ &=[d]([d+r]-[d+r])F_1v_\mu^+=0.
\end{align*}

(4)
By an $\rF$-version of \eqref{eq:EEEE}, we have
$\rF_2\rF_1^{(r+1)} \rF_2 v_\mu^+=\rF_2\rF_{(12)}\rF_1^{(r)} v_\mu^+= \rF_1^{(r)}\rF_2\rF_{1} \rF_2   v_\mu^+$.
It remains to show that $\rF_2\rF_{1} \rF_2   v_\mu^+=0$.
This follows from the computations below which use (4) in the second line:
\begin{align*}
\E_1 \rF_2 \rF_1 \rF_2 v_\mu^+ &=\rF_2 \E_1 \rF_1 \rF_2v_\mu^+=0,
  \\
\E_2 \rF_2 \rF_1 \rF_2 v_\mu^+ &=([b+1+c]\rF_1\rF_2-[b+c]\rF_2 \rF_1)v_\mu^+=0.
\end{align*}

(5)
Note that $b\neq -c$ if and only if $F_2v_{\mu}\neq 0$.
As in (1) the claim follows from
the representation theory of $\Uq(\mathfrak{sl}_2)$ generated by $\E_1$ and $\rF_1$
(when applied to the highest weight vector $F_2 v_\mu^+$).
 \end{proof}

\begin{thm}  \label{CBmod}
Assume that $\mu = a \delta_1+b \delta_2 + c \varep_1$, where  $a-b \in \Z_{\ge
0}$, is atypical; that is, $a=-c-1$ or $b=-c$.
\begin{enumerate}
\item
If $b=-c$ or $b=a=-c-1$, then  $\{u v_\mu^+  \mid u v_\mu^+
\neq 0, u \in {\bf B} \}$ forms a (canonical) basis of
$L(\mu)$.  In particular, $\dim L(\mu) =2(a-b)+1$.

\item
It $b \neq a=-c-1$, then $\{u v_\mu^+  \mid u v_\mu^+
\neq 0, u \in {\bf B} \}$ is linearly dependent in
$L(\mu)$,
but the subset
$\{\rF_1^{(r)}v_\mu^+ \, (0\leq r\leq a-b),\;  \rF_1^{(r)}\rF_2v_\mu^+ \, (0\le r \le a-b+1)\}$
is a basis for $L(\mu)$. In particular, $\dim L(\mu) =2(a-b)+3$.
\end{enumerate}
\end{thm}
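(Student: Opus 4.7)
The plan is to leverage the fact that $v_\mu^+$ is a cyclic vector for $L(\mu)$, so $\mathbf{B}\, v_\mu^+$ automatically spans $L(\mu)$ (since $\mathbf{B}$ is a basis of $\Uq^-$). The theorem thus reduces to identifying which $u v_\mu^+$ vanish in $L(\mu)$, cataloguing the linear relations among the surviving vectors, and verifying linear independence via weight-space considerations. All of the required vanishing and relation data is already supplied by Lemma~\ref{lem:21}.

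For case~(1) in the subcase $b=-c$, Lemma~\ref{lem:21}(2),(4),(5) annihilate $F_2 v_\mu^+$, every $F_1^{(r+1)}F_2 v_\mu^+$ and every $F_2 F_1^{(r+1)}F_2 v_\mu^+$; what remains is the set $\{F_1^{(r)} v_\mu^+ : 0\le r\le a-b\} \cup \{F_2 F_1^{(r)} v_\mu^+ : 1\le r\le a-b\}$, totalling $2(a-b)+1$ vectors in the pairwise distinct weight spaces $\mu - r\alpha_1$ and $\mu - r\alpha_1 - \alpha_2$, hence a basis. The boundary subcase $b=a=-c-1$ of part~(1) follows analogously: the indexing collapses since $a-b=0$, and the relations in Lemma~\ref{lem:21}(3), combined with the structure of the maximal submodule of $K(\mu)$ coming from Proposition~\ref{prp:CBKac} and~\eqref{eq:4E}, collapse the surviving vectors to $v_\mu^+$ alone.

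For case~(2), with $a=-c-1$ and $b<a$, we have $b+c<0$, so $[b+c]\ne 0$, and Lemma~\ref{lem:21}(3) supplies the relations
\begin{equation*}
F_2 F_1^{(r)} v_\mu^+ \;=\; \frac{[r+b+c]}{[b+c]}\, F_1^{(r)} F_2 v_\mu^+ \qquad (r\ge 1),
\end{equation*}
giving $a-b$ linear dependencies among the $3(a-b)+3$ vectors identified as nonzero by Lemma~\ref{lem:21}(1),(2),(5). Eliminating the redundant $F_2 F_1^{(r)} v_\mu^+$ for $r\ge 1$ leaves exactly the collection $\{F_1^{(r)} v_\mu^+ : 0\le r\le a-b\} \cup \{F_1^{(s)} F_2 v_\mu^+ : 0\le s\le a-b+1\}$ of $2(a-b)+3$ vectors, sitting in pairwise distinct weight spaces, hence linearly independent and forming the claimed basis of $L(\mu)$.

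The main technical obstacle is the boundary subcase $b=a=-c-1$ of part~(1): the naive application of Lemma~\ref{lem:21} still permits three candidate nonzero vectors $v_\mu^+$, $F_2 v_\mu^+$ and $F_1 F_2 v_\mu^+$ in distinct weight spaces, so the claimed one-dimensionality must be established by additional vanishing extracted from the $K(\mu)\twoheadrightarrow L(\mu)$ quotient together with the relations of Lemma~\ref{lem:21}(3)-(4). Everything else is a bookkeeping consequence of Lemma~\ref{lem:21}.
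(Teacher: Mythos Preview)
Your treatment of the subcase $b=-c$ in part~(1) and of part~(2) is correct and matches the paper's argument essentially line for line (the paper cites Lemma~\ref{lem:21}(4) rather than~(3) for the dependence in part~(2), but~(3) is the direct relation and your use of it is fine).

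The genuine error is in the boundary subcase $b=a=-c-1$. You read the formula $\dim L(\mu)=2(a-b)+1$ as forcing $\dim L(\mu)=1$ and then try to manufacture extra vanishing to kill $\rF_2 v_\mu^+$ and $\rF_1\rF_2 v_\mu^+$. But Lemma~\ref{lem:21} explicitly rules this out: part~(2) says $\rF_2 v_\mu^+\neq 0$ when $a=-1-c$, and part~(5) says $\rF_1\rF_2 v_\mu^+\neq 0$ since $b=-c-1\neq -c$. These three vectors $v_\mu^+,\ \rF_2 v_\mu^+,\ \rF_1\rF_2 v_\mu^+$ lie in the distinct weight spaces $\mu,\ \mu-\alpha_2,\ \mu-\alpha_1-\alpha_2$, hence are linearly independent, and together with Lemma~\ref{lem:21}(4) (which kills the fourth basis vector $\rF_2\rF_1\rF_2 v_\mu$ of the four-dimensional Kac module) this gives $\dim L(\mu)=3$. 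That is exactly what the paper's proof records. The stated dimension formula $2(a-b)+1$ is accurate only for the $b=-c$ subcase of part~(1); you should not try to force it in the $b=a=-c-1$ subcase, and no ``additional vanishing'' of the sort you propose exists, since it would directly contradict Lemma~\ref{lem:21}(2) and~(5).
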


\begin{proof}
For (1), there are two cases.
If $b=-c$, then Lemma \ref{lem:21} shows that
\[\set{u v_\mu^+ \mid u\in\mathbf{B},  uv_\mu^+\neq 0}=\set{\rF_1^{(r)}v_\mu^+\, (0\leq r\leq a-b),  \rF_2\rF_1^{(r)}v_\mu^+ \, (1\leq r\leq a-b)\; }.
\]
If $b=a=-1-c$, then Lemma \ref{lem:21} implies
%\[
$\set{u v_\mu^+ \mid u\in\mathbf{B},  uv_\mu^+\neq 0}=\set{v_\mu^+,\, \rF_2v_\mu^+,\, \rF_1 \rF_2v_\mu^+}.
$
%\]
In either case, the set $\set{u\in\mathbf{B}\mid uv_\mu^+\neq 0}$ spans $L(\mu)$; it is indeed a basis since
each vector lies in a different weight space.

For (2), Lemma \ref{lem:21} implies that
\[\set{u v_\mu^+ \mid u\in\mathbf{B},  uv_\mu^+\neq 0}=\set{\rF_1^{(r)}v_\mu^+, \rF_1^{(r+1)}\rF_2v_\mu^+, \rF_2\rF_1^{(r)} v_\mu^+\mid 0\leq r\leq a-b}.\]
All of these elements lie in different weight spaces except for $\rF_1^{(r)}\rF_2$ and $\rF_2\rF_1^{(r)}$ for $0\leq r\leq a-b$.
Now $(\mu-r\alpha_1-\alpha_2)$-weight space is spanned by $\rF_1^{(r)}\rF_2v_\mu^+$ and $\rF_2\rF_1^{(r)} v_\mu^+$.
However, Lemma \ref{lem:21}(4) shows that these vectors are linearly dependent.
Then we may choose one of the vectors as a basis element, and  (2) follows.
\end{proof}

 We call $L(\mu)$  a {\bf polynomial representation} of $\Uq$ if $\mu =a\dt_1 +b\dt_2 +c \ep_1$ with
$ (a, b, \underbrace{1, \ldots, 1}_c)$ being a partition (This is analogous to the polynomial
 representations of the Lie superalgebra $\gl(m|n)$; see \cite{CW12}).
% with all $a,b,c \in \Z_{\ge 0}$, $a\ge b$, and $b>0$ whenever $c>0$.
 Note that a polynomial representation $L(\mu)$ is atypical if and only if $b=c=0$.
 We have the following corollary from  Theorem~\ref{CBmod}(1) and Corollary~\ref{cor:typical}.

\begin{cor}
 \label{cor:pol}
The set $\{u v_\mu^+  \mid u v_\mu^+
\neq 0, u \in {\bf B} \}$ forms a canonical basis for every polynomial representation $L(\mu)$.
\end{cor}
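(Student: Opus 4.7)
The plan is to reduce to the two cases handled by Theorem~\ref{CBmod}(1) and Corollary~\ref{cor:typical} via an elementary analysis of the polynomial weight condition. Write $\mu = a\delta_1 + b\delta_2 + c\varepsilon_1$ with $(a,b,\underbrace{1,\ldots,1}_c)$ a partition. First I would observe that the partition condition forces $a\geq b\geq 1$ whenever $c\geq 1$, while for $c=0$ it only imposes $a\geq b\geq 0$.

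Next, I would recall the atypicality criterion: $\mu$ is atypical iff $a=-c-1$ or $b=-c$. If $c\geq 1$ the partition condition gives $a\geq b\geq 1>0>-c$, so neither equality can hold; hence $\mu$ is typical, and Corollary~\ref{cor:typical} produces the desired canonical basis directly. If instead $c=0$, the condition $a=-1$ is ruled out by $a\geq 0$, so atypicality is equivalent to $b=0$. In this case $\mu=a\delta_1$ lies in the regime $b=-c=0$ of Theorem~\ref{CBmod}(1), which yields that $\{uv_\mu^+\mid uv_\mu^+\neq 0,\ u\in\mathbf B\}$ is a basis of $L(\mu)$ descended from the canonical basis $\mathbf B$ of $\Uq^-$.

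Combining the two cases gives the corollary. The only mild subtlety is confirming that, in the atypical subcase $b=c=0$, the basis claim from Theorem~\ref{CBmod}(1) genuinely qualifies as a canonical basis in the sense compatible with the canonical basis of $\Uq^-$: this is immediate from the construction there, since each surviving element $uv_\mu^+$ is the image of a canonical basis vector $u\in\mathbf B$, so no additional rescaling or bar-involution argument is required. The main (and essentially only) work is the case split above; there is no genuine obstacle since both Theorem~\ref{CBmod}(1) and Corollary~\ref{cor:typical} have already been established.
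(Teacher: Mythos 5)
Your proof is correct and takes essentially the same route as the paper: the paper itself just observes that a polynomial $L(\mu)$ is atypical if and only if $b=c=0$ and then cites Theorem~\ref{CBmod}(1) and Corollary~\ref{cor:typical}, and your case analysis is exactly the (unstated) elementary verification of that observation followed by the same citations.
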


In a setting similar to Proposition~\ref{prp:CBKac}, Theorem~\ref{CBmod}(1), Corollarys~\ref{cor:typical} and \ref{cor:pol},
we will simply say that the canonical basis of $U_q^-$ descends to the canonical bases of the corresponding $U_q$-modules.

%%%%%%%%%%%%%%%% \subsection{Some conjectures}

We end with formulating some general conjectures regarding canonical basis for representations of
quantum supergroup of $\gl(m+1|1)$.
let $U_q^-$ be the negative half of quantum $\gl(m+1|1)$ of type $A(m,0)$, for $m\ge 1$.
We transport the canonical basis of the positive half quantum supergroup $U_q$ (see Theorem~\ref{T:canonical basis})
to that for $U_q^-$ via an  (anti-)isomorphism sending $\E_i$ to $\rF_i$
for all $i$.

\begin{cnj}
For type $A(m,0)$,
the canonical basis of $U_q^-$ descends to the canonical bases of the Kac modules as well as those of polynomial representations of $U_q$.
\end{cnj}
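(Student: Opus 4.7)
The plan is to follow the blueprint of the $\gl(2|1)$ analysis in Section~\ref{S:gl(21)}, organizing the argument around the parabolic decomposition $U_q^-(\gl(m+1|1)) \cong \Lambda_q \otimes U_q^-(\gl(m+1))$, where $\Lambda_q$ is the $q$-exterior algebra generated by the PBW root vectors $\rF_{\beta}$ attached to the odd positive roots $\beta = \alpha_0 + \alpha_1 + \cdots + \alpha_j$ ($0 \le j \le m$). First I would fix the standard ordering $0 < 1 < \cdots < m$ on $\I$. Then, by the explicit computation of dominant Lyndon words in type $A$ from Section~\ref{S:Computations}, every element $\ui \in \Wr^+$ admits a unique canonical factorization of the form $\ui = \uj_1 \cdots \uj_s \uk$, where the $\uj_l$ correspond to (distinct, strictly ordered) odd positive roots and $\uk$ is built only from even Lyndon words. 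Transporting through $\Psi^{-1}$ and using Levendorskii–Soibelman (Theorem~\ref{T:LS formula}), the PBW basis of $U_q^-$ factors into a product of an ordered monomial in the odd root vectors and a PBW basis vector for $U_q^-(\gl(m+1))$. Combining this with Lemma~\ref{L:Bar PBW} and Proposition~\ref{P:special UA basis}, the canonical basis $\mathbf B$ of $U_q^-$ is unitriangularly related (over $q\Z[q]$) to this factored PBW basis.

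For the Kac modules, I would exploit that $K(\mu) = U_q^- \otimes_{P_q^-} L^0(\mu)$, where $P_q^-$ is the subalgebra generated by $U_q^0$ and the $\rF_i$ for $i \neq 0$, so that $K(\mu)$ is free over the exterior subalgebra $\Lambda_q$: concretely,
\[
K(\mu) \cong \Lambda_q \otimes L^0(\mu),
\]
with $L^0(\mu)$ realized as the $U_q(\gl(m+1))$-simple module of highest weight $\mu$ (this generalizes \eqref{eq:Kbasis}). Under this identification, a canonical basis element $\b_\ui \in \mathbf B$ acts on $v_\mu$ by producing an element whose leading term (with respect to the PBW filtration) is of the form $\rF_{\uj_1}\cdots \rF_{\uj_s}\,(b\cdot v_\mu)$, where $b$ is a canonical basis element of $U_q^-(\gl(m+1))$ acting on $v_\mu$. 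By Lusztig's theorem for finite type $A$, the nonzero $b \cdot v_\mu$ form Lusztig's canonical basis of $L^0(\mu)$, and the ordered odd monomials $\rF_{\uj_1}\cdots \rF_{\uj_s}$ are a $\Q(q)$-basis of $\Lambda_q$. A triangularity argument, together with the bar-invariance of each $\b_\ui$ and the almost-orthogonality of $\mathbf B^*$ from Theorem~\ref{T:OrthogonalPBW} and Theorem~\ref{T:canonical basis}, then shows that $\{\b_\ui v_\mu : \b_\ui v_\mu \neq 0\}$ is the canonical basis of $K(\mu)$.

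For the polynomial representations $L(\mu)$, I would proceed in two steps. First, $L(\mu)$ is the image of $K(\mu)$ under the evaluation map onto the unique simple quotient, so one needs to identify the ideal of $\mathbf B$-vectors which project to zero. The idea is to combine the Kac-module result with the crystal basis results of Benkart–Kang–Kashiwara \cite{bkk} for polynomial $\Uq(\gl(m|n))$-modules: the associated crystal embeds in the tensor product of natural crystals, where the canonical basis is explicitly controlled by semistandard super-tableaux. I would lift this crystal basis to a global basis and verify, via the bar-involution on $L(\mu)$ determined by $\overline{v_\mu^+} = v_\mu^+$, that it coincides with $\{\b_\ui v_\mu^+ : \b_\ui v_\mu^+ \neq 0\}$ modulo a sign ambiguity as in Theorem~\ref{CBmod}.

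The hard part will be the final verification in the polynomial case, namely identifying exactly which $\b_\ui$ have nonzero image in $L(\mu)$ and showing that the surviving vectors are almost orthogonal. The difficulty is that, unlike the Kac module where the odd root vectors act freely, in $L(\mu)$ there are multiple atypical vanishing relations generalizing Lemma~\ref{lem:21}(3)–(4), and the combinatorics of these relations depends delicately on the atypicality pattern of $\mu$; in the $\gl(2|1)$ case there is essentially one such relation per weight, but for $\gl(m+1|1)$ one must argue inductively over the chain of odd root vectors. I expect this will require a careful bar-invariant lifting of the Kashiwara operators from the BKK crystal, combined with the triangularity of the canonical basis with respect to the PBW basis, to pin down the zero-locus of $\mathbf B$ on $v_\mu^+$.
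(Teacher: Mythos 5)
This statement is presented in the paper as an open conjecture --- the authors prove only the rank-one ($\gl(2|1)$) analogues (Proposition~\ref{prp:CBKac}, Corollaries~\ref{cor:typical}--\ref{cor:pol}) and then formulate the conjecture for $\gl(m+1|1)$ --- so there is no paper proof to compare against; your proposal must be assessed on its own terms, and it contains genuine gaps.

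For the Kac modules, your parabolic factorization $U_q^-(\gl(m+1|1)) \cong \Lambda_q \otimes U_q^-(\gl(m+1))$ and $K(\mu) \cong \Lambda_q \otimes L^0(\mu)$ is correct as a PBW-level vector-space statement and is the right starting point. However, the step ``a triangularity argument \ldots{} shows that $\{\b_\ui v_\mu : \b_\ui v_\mu \neq 0\}$ is the canonical basis'' is asserted rather than proved. What you actually need is to show (i) that the set of $\b_\ui$ killing $v_\mu$ spans the kernel of the map $U_q^- \to K(\mu)$, and (ii) that the surviving images are linearly independent. In the $\gl(2|1)$ case this is done by hand in Lemma~\ref{lem:21} via explicit $\E_i$-annihilation computations and the relation $\rF_1^{(a-b+1)}v_\mu = 0$. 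Your claim that each $\b_\ui$ has leading PBW term of the clean form $\rF_{\uj_1}\cdots\rF_{\uj_s}(b\cdot v_\mu)$ is only a leading-term statement; the lower-order corrections under the bar-involution need not respect the $\Lambda_q \otimes U_q^-(\gl(m+1))$ factorization, and nothing in your sketch controls them. This can likely be filled in by the techniques already in the paper (e.g.\ combining Theorem~\ref{T:LS formula} with the explicit straightening relations of the type-$A$ root vectors from Section~\ref{S:Computations}), but it is not a one-line triangularity observation.

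The more serious gap is in the polynomial case. You propose to lift the Benkart--Kang--Kashiwara crystal basis to a global basis and then check it agrees with $\{\b_\ui v_\mu^+\}$, but the existence of such a global (bar-invariant, $q\Z[q]$-triangular) lift is precisely what is at issue: BKK construct only crystal (local) bases, and the passage from a crystal basis to a global basis in the super setting cannot be taken for granted, since the grand loop argument and Lusztig's geometric construction are both unavailable here (as emphasized in the introduction). Your plan thus essentially presupposes the object whose existence is being conjectured. Moreover, Theorem~\ref{CBmod}(2) already shows that for certain non-polynomial atypical $\mu$ the descent fails (the nonzero $\b_\ui v_\mu^+$ become linearly dependent), so any correct argument must isolate what is special about the polynomial atypicality pattern; your sketch does not identify this mechanism, only anticipates that ``a careful bar-invariant lifting'' will work. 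Until that vanishing analysis is carried out explicitly for general $m$, the proposal does not close the conjecture.
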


For type $C(n)$, we also conjecture that the canonical basis of the negative half quantum supergroup
descends to the canonical bases of the Kac modules.


\begin{thebibliography}{CHW2}

\bibitem[A]{a} I.~Angiono, {\em A presentation by generators and relations of Nichols algebras of diagonal type}, arXiv:1008.4144.

\bibitem[BKK]{bkk} G. Benkart, S.-J. Kang and M. Kashiwara,
{\em Crystal bases for the quantum superalgebra }$U_q(\mathfrak{gl}(m,n))$,
Journal of Amer. Math. Soc.  {\bf 13} (2000), 295--331.

\bibitem[BKMc]{bkmc} J. Brundan, A. Kleshchev and P. J. McNamara, {\em Homological properties of finite type Khovanov-
Lauda-Rouquier algebras}, Duke Math. J. (to appear), arXiv:1210.6900.

\bibitem[CFLW]{cflw} S.~Clark, Z.~Fan, Y.~Li and W.~ Wang, {\em Quantum Supergroups III. Twistors}, Comm. Math. Phys. (to appear), arXiv:1307.7056.

\bibitem[CHW1]{CHW1} S.~Clark, D.~Hill and W.~Wang, {\em Quantum supergroups I. Foundations},  Transformation Groups (2013),
no. 4, 1019--1053.

\bibitem[CHW2]{CHW} S.~Clark, D.~Hill and W.~Wang,
{\em Quantum supergroups II. Canonical basis}, submitted to Rep. Theory, arXiv:1304.7837.

\bibitem[CW]{CW12} S.-J.~Cheng, W.~Wang,
{\em Dualities and Representations of Lie Superalgebras}.
Graduate Studies in Mathematics {\bf 144}, Amer. Math.  Soc., Providence, RI, 2012.

\bibitem[EL]{EL} A. Ellis and A. Lauda,
{\em An odd categorification of $U_q(\mathfrak{sl}_2)$},
arXiv:1307.7816.

\bibitem[G]{grn} J.A. Green,
{\em Quantum groups, Hall algebras and quantum shuffles}. In: Finite reductive groups (Luminy 1994), 273--290, Birkh\"{a}user Prog. Math. {\bf 141}, 1997.

\bibitem[HKS]{hks} D. Hill, J. Kujawa, and J. Sussan,
{\em Affine Hecke-Clifford algebras and type $Q$ Lie superalgebras},
Math. Z. {\bf 268} (2011), 1091--1158.

\bibitem[HMM]{hmm} D. Hill, G. Melvin and D. Mondragon,
{\em Representations of quiver Hecke algebras via Lyndon bases}, Journal of Pure and Applied Algebra,  {\bf 216} (2012), 1052--1079.

\bibitem[HW]{hw} D. Hill and W. Wang,
{\em Categorification of quantum Kac-Moody superalgebras}, Trans. AMS (to appear), arXiv:1202.2769v2.

\bibitem[Kac]{kac} V. Kac, \emph{Lie superalgebras}, Adv. Math. \textbf{26} (1977), 8-96.

\bibitem[Kas]{ka} M. Kashiwara,
{\em On crystal bases of the $q$-analogue of universal enveloping algebras}, Duke Math. J. \textbf{63} (1991), 465--516.

\bibitem[KKO]{KKO} S.-J. Kang, M. Kashiwara and S.-J. Oh,
              {\em Supercategorification of quantum Kac-Moody algebras~ II},   arXiv:1303.1916.

%\bibitem[Kato]{Kato} Syu Kato, {\em PBW bases and KLR algebras}, arXiv:1203.5245.

\bibitem[Kh]{kh} M. Khovanov,
{\em  How to categorify one-half of quantum $\mathfrak{gl}(1|2)$}, arXiv:1007.3517.

\bibitem[KS]{ks} M. Khovanov and J. Sussan, \emph{A categorification of quantum $\gl(1,n)$}, in preparation.

\bibitem[KL]{khl} M. Khovanov and A. Lauda,
{\em A diagrammatic approach to categorification of quantum groups I,} Represent. Theory \textbf{13} (2009), 309--347.
%
%\bibitem[KL2]{khl2} \bysame, A diagrammatic approach to categorification of quantum groups
II. Trans. Amer. Math. Soc. 363 (2011), 2685--2700.

\bibitem[KR]{klram2} A. Kleshchev and A. Ram,
{\em Representations of Khovanov-Lauda-Rouquier algebras and combinatorics of Lyndon words},  Math. Ann. {\bf 349} (2011),  943--975.

\bibitem[Kw1]{Kw1} J.-H. Kwon,
{\em Crystal bases of $q$-deformed Kac modules over the quantum superalgebra $U_q(\gl(m|n))$},
    arXiv:1203.5590.

\bibitem[Kw2]{Kw2} J.-H. Kwon,
{\em Super duality and crystal bases for quantum orthosymplectic superalgebras}, arXiv:1301.1756.

\bibitem[LR]{lr} P. Lalonde and A. Ram,
{\em Standard Lyndon bases of Lie algebras and enveloping algebras}, Trans. Am. Math. Soc. \textbf{347} (1995), 1821--1830.

\bibitem[Lec]{lec} B. Leclerc,
{\em Dual canonical bases, quantum shuffles and $q$-characters}, Math. Z. \textbf{246} (2004),  691--732.

\bibitem[LS]{ls} S.Z. Levendorskii, S. Soibelman, {\em Algebras of functions on compact quantum groups, Schubert cells and quantum
tori}, Comm. Math. Physics \textbf{139} (1991), 141–-170.


\bibitem[Lo]{lo} M. Lothaire,
{\em Combinatorics on Words},  Cambridge University Press, Cambridge, 1997.

\bibitem[Lu1]{lu1} G. Lusztig,
{\em Canonical bases arising from quantized enveloping algebras},  Journal of Amer. Math. Soc. \textbf{3} (1990), 447--498.

\bibitem[Lu2]{lu2} G. Lusztig, {\em Quantum groups at roots of 1}, Geom. Dedicata \textbf{35} (1990), 89--114.

\bibitem[Lu3]{lu} G. Lusztig,
{\em Introduction to quantum groups}, Progress in Mathematics {\bf 110}, Birkh\"{a}user, Boston, MA, 1993.

\bibitem[Mc]{mc} P. McNamara,  {\em Finite dimensional representations of Khovanov-Lauda-Rouquier algebras I: Finite type}, J. reine angew. Math. (to appear), arXiv:1207.5860.

\bibitem[MZ]
{MZ} I.M. Musson and Y.-M.~ Zou,
{\em  Crystal basis for $U_q(\text{osp}(1,2r))$}, J. of Alg. \textbf{210} (1998), 514--534.

\bibitem[Ro]{ro2} M. Rosso,
{\em Quantum groups and quantum shuffles}, Invent. Math. \textbf{133}  (1998), 399--416.

\bibitem[Rou]{rq} R. Rouquier, {\em 2-Kac-Moody Algebras},  arXiv:0812.5023.

\bibitem[Ya1]{ya} H. Yamane,
{\em Quantized enveloping algebras associated with simple Lie superalgebras and their universal $R$-matrices},
Publ. Res. Inst. Math. Sci. \textbf{30} (1994), 15--87.

\bibitem[Ya2]{ya2} H.~Yamane,
{\em On defining relations of affine Lie superalgebras and affine quantized universal enveloping superalgebras},
Pub. Res. Inst. Math. Sci. {\bf 35} (1999), 321--390.

\end{thebibliography}
\end{document}